\theoremstyle{plain}
\newtheorem{thm}{Theorem}[section]
\newtheorem{lem}{Lemma}[section]
\newtheorem{cor}{Corollary}[section]
\newtheorem{prop}{Proposition}[section]
\theoremstyle{definition}
\newtheorem{assump}{Assumption}[section]
\newenvironment{assumpt}[1]{
	
	\assump
}{\endassump}
\newtheorem{defn}{Definition}[section]
\newtheorem{fact}{Fact}[section]
\newtheorem{rema}{Remark}[section]
\newtheorem{exam}{Example}[section]
\newcommand{\inner}[2]{\langle #1, #2 \rangle}
\def\LGx{{L}_{G,x}}
\def\LGy{{L}_{G,y}}
\def\Hsmooth{{\delta_H}}
\def\Fsmooth{{\delta_F}}
\def\Gsmooth{{\delta_G}}
\def\nablaH{\nabla H}
\def\Hholder{{\widetilde{S}_H}}
\def\SBF{{S}_{B,F}}
\def\SBG{{S}_{B,G}}
\def\invdifffast{\tilde{\alpha}}
\def\invdiffslow{\tilde{\beta}}
\def\cti{C^{ti}}
\def\hyz#1 {\textcolor{red}{Hyz: #1 }}
\def\lx#1 {\textcolor{magenta}{Lx: #1 }}
\def\rev#1 {\textcolor{blue}{#1}}
\def\Figref#1{Figure~\ref{#1}}
\def\1{\bm{1}}
\def\eps{{\varepsilon}}
\def\ermU{{\textnormal{U}}}
\def\ermW{{\textnormal{W}}}
\def\ermX{{\textnormal{X}}}
\def\ermY{{\textnormal{Y}}}
\def\mI{{\bm{I}}}
\DeclareMathAlphabet{\mathsfit}{\encodingdefault}{\sfdefault}{m}{sl}
\SetMathAlphabet{\mathsfit}{bold}{\encodingdefault}{\sfdefault}{bx}{n}
\def\d{\mathrm{d}}
\def\0{{\bf 0}}
\def\1{{\bf 1}}
\def\MM{{\mathcal M}}
\def\AM{{\mathcal A}}
\def\FM{{\mathcal F}}
\def\NM{{\mathcal N}}
\def\OM{{\mathcal O}}
\def\PM{{\mathcal P}}
\def\RM{{\mathcal R}}
\def\SM{{\mathcal S}}
\def\TM{{\mathcal T}}
\def\RB{{\mathbb R}}
\def\EB{{\mathbb E}}
\def\NB{{\mathbb N}}
\def\PB{{\mathbb P}}
\def\tr{\mathrm{tr}}
\newcommand{\xhat}{{\hat{x}}}
\newcommand{\yhat}{{\hat{y}}}
\newcommand{\xcheck}{{\check{x}}}
\newcommand{\ycheck}{{\check{y}}}
\newcommand{\zcheck}{{\check{z}}}
\newcommand{\ucheck}{{\check{u}}}
\newcommand{\xbar}{{\bar{\textnormal{X}}}}
\newcommand{\ybar}{{\bar{\textnormal{Y}}}}
\newcommand{\zbar}{{\bar{\textnormal{Z}}}}
\newcommand{\xx}{{x}}
\newcommand{\yy}{{y}}
\newcommand{\uu}{{\textnormal{U}}}
\def\mI{{I}}
\def\oprob{{{o}_P}}
\newcommand{\ssum}[3]{\sum\limits_{{#1}={#2}}^{#3}}
\def\Xii{\mbox{\boldmath$\Xi$\unboldmath}}
\def\Psii{\mbox{\boldmath$\Psi$\unboldmath}}
\def\bfB{{\bf B}}
\def\weakconverge{\Rightarrow}
\def\Ga{G_{\mathrm{GTD2}}}
\def\Gb{G_{\mathrm{TDC}}}
\def\psia{\psi^{\mathrm{GTD2}}}
\def\psib{\psi^{\mathrm{TDC}}}
\newcommand{\norm}[1]{\left\|{#1}\right\|}
\def\inner#1#2{\left\langle #1, #2 \right\rangle}
\title{Decoupled Functional Central Limit Theorems for Two-Time-Scale Stochastic Approximation}
\author{
Yuze Han 
\thanks{Center for Applied Statistics and School of Statistics, Renmin University of China; email: \texttt{hanyuze97@ruc.edu.cn}.}
\and
Xiang Li 
\thanks{School of Mathematical Sciences, Peking University; email: \texttt{lx10077@pku.edu.cn}. } 
\and
Jiadong Liang 
\thanks{School of Mathematical Sciences, Peking University; email: \texttt{jdliang@pku.edu.cn}. }
\and
Zhihua Zhang 
\thanks{School of Mathematical Sciences, Peking University; email: \texttt{zhzhang@math.pku.edu.cn}. }
}
\begin{document}

\maketitle

\begin{abstract}%
In two-time-scale stochastic approximation (SA), two iterates are updated at different rates, governed by distinct step sizes, with each update influencing the other. Previous studies have demonstrated that the convergence rates of the error terms for these updates depend solely on their respective step sizes, a property known as decoupled convergence. However, a functional version of this decoupled convergence has not been explored. Our work fills this gap by establishing decoupled functional central limit theorems for two-time-scale SA, offering a more precise characterization of its asymptotic behavior. Our results show that, on each time scale, the limiting dynamics has the same form as in standard SA, and the coupling between the two iterates enters the limit only through the associated coefficients. To achieve these results, we leverage the martingale problem approach and establish tightness as a crucial intermediate step. Furthermore, to address the interdependence between different time scales, we introduce an innovative auxiliary sequence to eliminate the primary influence of the fast-time-scale update on the slow-time-scale update.

\end{abstract}

\section{Introduction}

\label{sec:intro}

Stochastic approximation (SA) originally introduced in \cite{robbins1951stochastic} is an iterative method for finding the root of an unknown operator from noisy observations. It has been widely adopted in stochastic optimization and reinforcement learning~\citep{kushner2003stochastic, borkar2009stochastic, mou2020linear, mou2022banach, mou2022optimal, li2021polyak, li2023online}.
Although traditional SA addresses single-time-scale updates, many problems require managing two coupled updates with distinct step sizes, as exemplified by stochastic bilevel optimization~\citep{ghadimi2018approximation, chen2021closing, hong2023two}, temporal difference learning~\citep{sutton2009fast, xu2019two, xu2021sample, wang2021non}, and actor-critic methods~\citep{borkar1997actor, konda2003onactor, wu2020finite, xu2020non}. These scenarios necessitate a two-time-scale framework to handle the interaction between updates operating at different speeds.

In this paper, we study the two-time-scale SA~\citep{borkar1997stochastic}, a variant of classical SA designed to solve systems of two coupled equations. We consider two unknown Lipschitz operators,
$F:\RB^{d_x}\times\RB^{d_y}\rightarrow\RB^{d_x}$ and $G:\RB^{d_x}\times\RB^{d_y}\rightarrow\RB^{d_y}$, and aim to find a root pair $(x^{\star}, y^{\star})$ that satisfies
\begin{equation}
	\left\{
	\begin{aligned}
		&F(x^{\star},y^{\star}) = 0,\\
		&G(x^{\star},y^{\star}) = 0.
	\end{aligned}\right.\label{prob:FG}
\end{equation}
Because $F$ and $G$ are unknown, we assume access to a stochastic oracle that provides noisy evaluations $F(x,y)+ \xi$ and $G(x,y) + \psi$ for any $(x,y)$, where $\xi$ and $\psi$ are noise components.
With this oracle, we iteratively update estimates $x_{n}$ and $y_{n}$ using
\begin{subequations} \label{alg:xy}
	\begin{align}
		x_{n+1} &= x_{n} - \alpha_{n}\left(F(x_{n},y_{n}) + \xi_{n}\right), \label{alg:x} \\   
		y_{n+1} &= y_{n} - \beta_{n}\left(G(x_{n},y_{n}) + \psi_{n}\right), \label{alg:y}
	\end{align}
\end{subequations}
where $x_{0}$ and $y_{0}$ are  arbitrarily initialized, $\xi_n$ and $\psi_n$ are noise terms, and $\alpha_{n}$, $\beta_{n}$ are step sizes satisfying $\beta_{n}\ll \alpha_{n}$,
in $\RB^{d_x}$ and $\RB^{d_y}$.
This disparity in step sizes ensures that $y_n$ is ``quasi-static'' relative to $x_n$~\citep{konda2004convergence}.
Accordingly, $x_n$ and $y_n$ are termed the \textit{fast iterate} and \textit{slow iterate}, with their updates referred to as fast-time-scale and slow-time-scale updates, respectively.

For example, in stochastic bilevel optimization, a lower-level problem is nested within an upper-level optimization. 
Two-time-scale SA updates the lower-level variable $x$ with a larger step size, so that $x$ rapidly tracks an approximate solution of the inner problem while the outer variable $y$ remains quasi-static~\citep{hong2023two}.
In gradient temporal-difference (TD) learning with linear function approximation, one seeks a parameter $y$ that minimizes the mean-squared projected Bellman error. 
The resulting gradient typically involves the inverse of the feature-covariance matrix. 
To avoid explicit matrix inversion, an auxiliary variable $x$ is introduced and updated on a faster time scale to (approximately) fit the TD error using features while treating $y$ as quasi-static \citep{sutton2009fast}. 
In actor–critic methods, the critic parameter $x$ is updated on the faster time scale to approximate the value function of the current (quasi-static) policy parameterized by $y$.
The actor then uses the critic’s evaluation to update $y$ on the slower time scale to improve the policy \citep{konda2003onactor}.

The above examples and previous studies demonstrate that if the solution to $F(x,y) = 0$ is unique for each fixed $y$, two-time-scale SA can be interpreted as a single-loop approximation of a double-loop process~\citep{kaledin2020finite, doan2022nonlinear, han2024finite}
\begin{equation} \label{eq:two-loop-approx}
	\begin{aligned}    
		& \text{Inner loop: compute } x = H(y)
		, \text{ or equivalently, solve } F(x, y) = 0
		\text{ for a fixed } y; \\
		& \text{Outer loop: iteratively find the root of } G(H(y), y) = 0.
	\end{aligned}
\end{equation}
Here $H(y)$ is the unique solution to $F(x,y) = 0$ for a given $y$, satisfying  $x^\star = H(y^\star)$.
From this viewpoint, the error terms for the outer and inner loops are represented as $y_n - y^\star$ and $x_n - H(y_n)$, respectively.
Previous studies have shown that the convergence rates of these error terms depend sorely on their respective step sizes \citep{konda2004convergence, mokkadem2006convergence, kaledin2020finite, han2024finite}, 
a property known as \textit{decoupled convergence}~\citep{han2024finite}.
In particular, under certain regular conditions, it holds that $ \beta_{n}^{-1/2}  (y_{n} - y^\star)$ and $ \alpha_{n}^{-1/2} (x_{n} - H(y_{n}))$ both convergence weakly to Gaussian distributions, with non-asymptotic convergence rates $\EB \| y_n - y^\star \|^2 = \OM(\beta_n) $ and $\EB \| x_n - H(y_n) \|^2 = \OM(\alpha_n)$ under martingale difference noise.

However, existing results primarily focus on single-point convergence rates, overlooking the asymptotic behavior of trajectories. Analyzing trajectory behavior is a classical topic in SA research~\citep{kushner2003stochastic, borkar2009stochastic, benveniste2012adaptive}, with the functional central limit theorem (FCLT) serving as a foundational result. The FCLT extends Donsker's theorem to iterative methods by examining continuous-time trajectories of rescaled iterates, offering a deeper characterization of the method's asymptotic properties. 
Specifically, the weak limit of these trajectories is a multi-dimensional Ornstein-Uhlenbeck process.
Compared with single-point convergence results, FCLTs are substantially more informative because they characterize the limiting behavior of the trajectory. 
For instance, integrating over a finite interval yields the asymptotic distribution of averaged iterates \citep{negrea2022statistical}, while taking a supremum functional yields uniform control of the iterates over a prescribed range (see Remark~\ref{rema:simul-converge}).

For two-time-scale SA, the only existing FCLT analysis~\citep{faizal2023functional} rescales both iterates with the larger step size, leading to a deterministic ordinary differential equation (ODE) as the limiting behavior for the slow iterate, which fails to capture stochastic effects.
Motivated by the decoupled convergence property, our goal is to 
\begin{center}
	Establish \textit{decoupled FCLTs} for two-time-scale SA. 
\end{center}
Decoupling involves rescaling the error terms $y_{n} - y^\star$ and $x_{n} - H(y_{n})$ by the square root of their respective step sizes. Specifically, we construct continuous-time trajectories for the rescaled error terms $ \beta_{n}^{-1/2} (y_{n} - y^\star)$ and $ \alpha_{n}^{-1/2} (x_{n} - H(y_{n}))$ and analyze their weak convergence limits. 

We focus on the regime $\beta_n/\alpha_n \to 0$, which can be referred to as the \emph{strict} two-time-scale setting. 
In contrast to the single-time-scale regime $\beta_n/\alpha_n \to \kappa>0$, strict time-scale separation can substantially reduce sensitivity to the initial step-size ratio: the algorithm can converge for a much broader range of $\beta_0/\alpha_0$, thereby easing step-size tuning in practice~\citep{haque2023tight}. 
From a theoretical standpoint, FCLTs in the single-time-scale regime can largely be obtained by adapting standard arguments for SA~\citep{borkar2009stochastic}. By comparison, FCLTs for the error sequences  $ \{\beta_{n}^{-1/2} (y_{n} - y^\star)\}$ and $\{\alpha_{n}^{-1/2} (x_{n} - H(y_{n}))\}$, which are rescaled on distinct time scales, are far less developed. 
Such results are particularly valuable because they can reveal how the coupling between the two iterates affects the limiting dynamics.

We would emphasize that the goal of our paper is not to optimize step-size schedules; rather, it characterizes the asymptotic behavior via FCLTs under appropriately chosen strict two-time-scale step sizes.
Practical guidance on step-size selection across different scenarios can be found in \cite{han2024finite, chandak2025non, hong2023two} and the references therein.

\subsection{Contributions} Our contributions can be summarized as follows.

\begin{itemize}
    \item \textbf{Theoretical contribution.} We establish decoupled FCLTs for two-time-scale SA under certain regularity conditions.
    Specifically, we construct two sequences of continuous-time processes, one on the fast time scale and one on the slow time scale, and show that each converges weakly to its own stationary multi-dimensional Ornstein–Uhlenbeck limit (see Theorem~\ref{thm:fclt-twotime}).
    In particular, the limiting process on the fast time scale coincides with that of standard SA. The limiting process on the slow time scale can be interpreted as the FCLT limit for SA applied to the operator $G(H(\cdot), \cdot)$, with a modified noise term that depends on the original noise on both time scales.
    As a simple application, our FCLTs yield high-probability simultaneous convergence of the iterates over a prescribed range (see Remark~\ref{rema:simul-converge}). 
	
    \item \textbf{Technical contribution.} We introduce a novel construction of continuous-time trajectories for two-time-scale SA, rescaling the error terms for each time scale by the square root of their respective step sizes (see Section~\ref{sec:fclt:construct}). Despite the increased complexity introduced by this rescaling, we establish the tightness of the trajectory sequences (see Lemma~\ref{lem:tight-twotime}) as a crucial intermediate result. 
    Building on the classical martingale problem approach for Markov processes (see Section~\ref{sec:fclt:prelim:mart}), we extend it to the two-time-scale SA setting and use it to establish the desired weak convergence results.
    To address the interdependence between different time scales, we propose a novel auxiliary sequence that mitigates the primary influence of the fast-time-scale update on the slow-time-scale update (see Section~\ref{sec:fclt:one-step}), providing a powerful new analytical tool for studying two-time-scale algorithms.

\end{itemize}

\subsection{Related Work}

Our research explores the asymptotic behavior of trajectories in two-time-scale SA. To contextualize our results, we provide additional background on the decoupled convergence for two-time-scale SA and FCLTs for various SA-based algorithms.
Importantly, unlike studies on diffusions or systems with two time scales \citep{Khasminskij1968OnTP, kushner1988almost, kokotovic1984applications, khasminskii2005limit}, our investigation begins with the iterative algorithm in \eqref{alg:xy}. Our primary goal is to analyze the asymptotic properties of its trajectories through the lens of limiting continuous-time processes.
\vspace{0.1cm}

\textbf{Decoupled convergence for two-time-scale SA.}
Two-time-scale SA was originally introduced by \cite{borkar1997stochastic}, where the almost sure convergence was established. This result was further explored and extended in \cite{tadic2004almost, karmakar2018two, yaji2020stochastic}.
Regarding convergence rates, both asymptotic and non-asymptotic decoupled convergence rates have been rigorously analyzed and established for two-time-scale SA.

For linear $F$ and $G$, \citet{konda2004convergence} showed that $\beta_n^{-1/2} (y_n - y^\star)$ weakly converges to a normal distribution under martingale difference noise. Their analysis also implies that $\alpha_n^{-1/2} (x_n - x^\star)$ and $\alpha_n^{-1/2} (x_n - H(y_n))$ weakly converge to normal distributions. Inspired by this, \citet{kaledin2020finite} derived finite-time convergence rates, proving that $\EB \| y_n - y^\star \|^2 = \OM(\beta_n)$ and $\EB \| x_n - H(y_n) \|^2 = \OM(\alpha_n)$ for both the martingale noise and Markovian noise. \citet{haque2023tight} achieved the same rates with asymptotically optimal leading terms, while \citet{kwon2024two} provided a more refined characterization of the bias and variance terms for constant step-size schemes. Fora sparse projection variant, \citet{dalal2020tale} showed that $\| y_n - y^\star \| = \widetilde{\OM} (\sqrt{\beta_n})$ and $\| x_n - x^\star \| = \widetilde{\OM} (\sqrt{\alpha_n})$ with high probability.

For the general nonlinear case, \citet{mokkadem2006convergence} established a joint weak convergence limit for $\alpha_n^{-1/2} (x_n - x^\star)$ and $\beta_n^{-1/2} (y_n - y^\star)$ under local linearity. This result has been extended to the Markovian noise by~\citet{hu2024central} and continuous-time dynamics by \citet{sharrock2022two}. \citet{han2024finite} further established the finite-time decoupled convergence for nonlinear two-time-scale SA under a slightly different local linearity condition.
\vspace{0.1cm}

\textbf{Asymptotic trajectory analysis for SA-based algorithms.}
Characterizing the asymptotic behavior of iterate trajectories is a central focus in SA analysis. The functional central limit theorem~(FCLT) leverages diffusion processes to approximate the method's asymptotic behavior~\citep{kushner2003stochastic, borkar2009stochastic, benveniste2012adaptive, borkar2021ode}. These weak convergence results have been extended to various SA variants, including generalized regularized dual averaging algorithms (gRDA) \citep{chao2019generalization}, accelerated stochastic gradient descent \citep{wang2020asymptotic}, stochastic gradient Langevin dynamics (SGLD) \citep{negrea2022statistical}, and loopless projection stochastic approximation (LPSA) \citep{liang2023asymptotic}.
Furthermore, \citet{blanchet2024limit} established an FCLT for SA with heavy-tailed noise under constant step sizes or in the linear case.

Recently, FCLTs for rescaled partial-sum processes have been explored as a foundation for efficient online inference in fields such as stochastic optimization~\citep{lee2021fast}, federated learning~\citep{li2022statistical}, reinforcement learning~\citep{li2013statistical, xie2022statistical}, gradient-free optimization~\citep{chen2021online}, and non-smooth regression~\citep{lee2022fast}. Additionally, \citet{xie2024asymptotic} established an almost surely Gaussian approximation for the averaged iterate to perform time-uniform inference.
\vspace{0.1cm}

\textbf{FCLTs for two-time-scale SA.}
The only existing FCLT analysis for the general case is provided by \cite{faizal2023functional}. 
However, their analysis rescales both time scales by the same factor.\footnote{After communicating with the authors,
they recognized that an error exists in their derivation of the limiting behavior for Equation (30) on page 20 in \cite{faizal2023functional}.} 
It is also important to highlight the difference in motivation between their work and ours. Their goal is to characterize the fluctuations of the iterates around their limiting ODEs. In contrast, our objective is to establish decoupled FCLTs for rescaled error terms to describe the asymptotic behavior around the solution.

For specific examples, \citet{kushner1993stochastic} provided a heuristic discussion of SA with whole-trajectory averaging. However, their rigorous theoretical results apply to a different averaging scheme that does not fit in the framework of \eqref{alg:xy}. \citet{gadat2018stochastic} focused on the stochastic heavy ball (SHB) method, but their analysis is restricted to the single-time-scale setting where $\beta_n / \alpha_n$ remains a positive constant.

Thus, our work is the first to derive decoupled FCLTs for general two-time-scale SA by rescaling the iterates according to the square root of their respective step sizes.

\subsection{Notation and Organization}

For a vector $x$, $\| x \|$ denotes the Euclidean norm; 
for a matrix $A$, $\| A \|$ denotes the spectral norm and $\| A \|_F$ denotes the Frobenius norm.
We use $o(\cdot)$, $\OM(\cdot)$, $\Omega(\cdot)$, and $\Theta(\cdot)$ to hide universal constants and $\widetilde{\OM}(\cdot)$ to hide both universal constants and log factors.
The notation $o_p(1)$ and $\OM_p(1)$ indicate convergence in probability and stochastic boundedness, respectively.
For two non-negative numbers $a$ and $b$, $a \lesssim b$ indicates the existence of a positive number $C$ such that $a \le Cb$ with $C$ depending on parameters of no interest.
For two positive sequences $\{ a_n \}$ and $\{ b_n \}$, $a_n \sim b_n$ signifies $\lim_{n \to \infty} a_n / b_n = 1$.
For a random variable $\ermX$, $\ermX \sim \pi$ means $\ermX$ follows the distribution $\pi$.
We use $\weakconverge$ to denote weak convergence or convergence in distribution and $\overset{p}{\rightarrow}$ to signify convergence in probability.
For the step size sequences $\{ \alpha_{n} \}_{n=0}^\infty$ and $\{ \beta_{n} \}_{n=0}^\infty$, their ratio is defined as
$\kappa_{n} := \frac{\beta_{n}}{\alpha_{n}}$.
For $m < n$ we stipulate $\sum_{i=n}^{m} \alpha_{i} = \sum_{i=n}^{m} \beta_{i} = 0 $.
The identity matrix is denoted by $\mI$, with an appropriate size.
The standard $d$-dimensional Brownian motion is represented as $\ermW^d(t)$.
For a differentiable operator $T\colon \RB^{d_1} \to \RB^{d_2}$, we use $\nabla T \in \RB^{d_2 \times d_1}$ to denote its Jacobian matrix.

Given a set $D \subseteq \RB^{m}$ , $C(D; \RB^d)$ denotes the space of all $\RB^d$ -valued continuous functions defined on the domain $D$. When $d=1$, we use $C(D)$ in shorthand.
Furthermore, we use 
$C_c(D)$ to denote the subspace of $C(D)$ consisting of all continuous  functions with compact support, $C^m_c(D)$ for the subspace of $C(D)$ containing all functions with compact support and continuous $m$-th order derivatives, and $C^\infty_c(D)$ for the subspace of $C(D)$ consisting of all compactly supported smooth functions (i.e., infinitely differentiable).

The remainder of this paper is organized as follows. Section~\ref{sec:fclt:prelim} presents the preliminaries, followed by the necessary assumptions in Section~\ref{sec:fclt:assump}. Our main results are provided in Section~\ref{sec:fclt:main}, with the proof framework outlined in Section~\ref{sec:fclt:proof-frame}. Finally, Section~\ref{sec:fclt:conclude} concludes the main text, and the omitted details and proofs are provided in the appendices.

\section{Preliminaries}\label{sec:fclt:prelim}
In this section, we introduce essential definitions and properties concerning tightness, weak convergence and the martingale problem approach.
For more details, please refer to \cite{billingsley2013convergence, ethier2009markov}.

\subsection{Tightness and Weak Convergence}
\label{sec:fclt:prelim:tight}

Throughout this paper, we primarily focus on the space $C([0, \infty); \RB^d)$.
Following the approach in
\cite{whitt1970weak}, we equip this space with the following metric: for any $f,g \in C([0, \infty); \RB^d)$,
\begin{equation}\label{eq:metric}
	\rho (f, g) := \ssum{j}{1}{\infty} 2^{-j} \frac{\rho_j(f,g)}{1+\rho_j(f,g)} \ \text{with} \ 
	\rho_{j} (f, g) := \max_{0 \le t \le j} \| f(t) - g(t) \|.
\end{equation}
This metric induces the topology of uniform convergence on compact sets, i.e., 
$\rho(f,g) \to 0$ if and only if $\rho_j (f,g) \to 0$ for each positive integer $j$.
Furthermore, $(C([0, \infty); \RB^d), \rho) $ is a complete separable metric space~\citep[Theorem~1]{whitt1970weak}.
A random element in this space can be interpreted as a stochastic process indexed by
$t \in  [0, \infty)$, with the state space $\RB^d$ and continuous sample paths. 
For brevity, we refer to such random elements as random functions in $C([0, \infty); \RB^d)$.

The topology induced by $\rho$ allows us to define compact sets in $C([0, \infty); \RB^d)$.
Moreover, each random function $U(\cdot)$ in $C([0, \infty); \RB^d)$ induces a probability measure on this space, namely its law (the pushforward of the underlying probability measure on the probability space $(\Omega, \FM, \PB)$; for simplicity, we omit the explicit dependence of $U(\cdot)$ on $\omega \in \Omega$). 
Using the standard definition of tightness for probability measures, we extend the concept of tightness to random functions as follows.

\begin{defn}[Tightness of random functions in $C( [0, \infty); \RB^d)$ ]\label{defn:tight}
Let $\{ \uu_{n}(\cdot) \}_{n=0}^\infty$ be a sequence of random functions in $C( [0, \infty); \RB^d)$.
The sequence $\{ \uu_{n}(\cdot) \}_{n=0}^\infty$ is \textit{tight}, if for any $\eps > 0$, there exists a compact set $K \subset C( [0, \infty); \RB^d)$ 
such that $\PB( \uu_{n}(\cdot) \in K ) \ge 1 - \eps$ for any $n$. 
\end{defn}

Tightness is a key property for establishing the weak convergence of random functions. The following proposition shows that tightness ensures weak convergence in $C([0,\infty); \RB^{d})$ can be characterized through finite-dimensional distributions or convergent subsequences.

\begin{prop}[Properties of tightness]\label{prop:tight}
	Suppose that $\{ \uu_{n}(\cdot) \}_{n=0}^\infty$ is a tight sequence of random functions in $C([0, \infty); \RB^{d})$ 
	and $\uu(\cdot)$ is another random function in $C([0, \infty); \RB^{d})$. 
	\begin{enumerate}[(i)]
		\item \label{prop:tight:fdd}
		If the finite-dimensional distributions of $ \uu_{n}(\cdot) $ converge weakly to the finite-dimensional distributions of $\uu(\cdot)$, i.e., for each positive integer $k$ and $t_1, t_2, \dots, t_k > 0$, it holds that $(\uu_{n}(t_1), \uu_{n}(t_2), \dots, \uu_{n}(t_k)) \weakconverge (\uu(t_1), \uu(t_2), \dots, \uu(t_k))$, then $\uu_{n}(\cdot) \weakconverge \uu(\cdot)$ {\citep[Theorem~3]{whitt1970weak}}.
		
		\item \label{prop:tight:prok} Prokhorov's theorem {\citep[Theorem~5.1]{billingsley2013convergence}}: Each subsequence of $\{ \uu_{n}(\cdot) \}_{n=0}^\infty$ contains a further subsequence that converges weakly to some random function in $C([0, \infty); \RB^{d})$.
		\item \label{prop:tight:converge} If each convergent subsequence of $\{ \uu_{n}(\cdot) \}_{n=0}^\infty$ converges weakly to $\uu(\cdot)$, then $\uu_{n}(\cdot) \weakconverge \uu(\cdot)$ \citep[Corollary on page~59]{billingsley2013convergence}.
	\end{enumerate}
\end{prop}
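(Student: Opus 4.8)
The plan is to treat all three parts as consequences of the standard theory of weak convergence of probability measures on the Polish space $(C([0,\infty);\RB^d),\rho)$, which is complete and separable by \citep[Theorem~1]{whitt1970weak}. The first observation to record is that, under Definition~\ref{defn:tight}, asserting that $\{\uu_n(\cdot)\}$ is tight is exactly asserting that the induced laws $\{\PB\circ\uu_n^{-1}\}$ form a tight family of Borel probability measures on this space, so the classical machinery applies verbatim. Part~(ii) is then an immediate application of Prokhorov's theorem \citep[Theorem~5.1]{billingsley2013convergence}: on a complete separable metric space a tight family of probability measures is relatively compact in the topology of weak convergence, so every subsequence of $\{\PB\circ\uu_n^{-1}\}$ has a further subsequence converging weakly to some probability measure $\mu$; separability guarantees $\mu$ is the law of some random function in $C([0,\infty);\RB^d)$, which is the claim.

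For part~(iii) I would argue by contradiction. If $\uu_n(\cdot)\not\weakconverge\uu(\cdot)$, there exist $f\in C_b(C([0,\infty);\RB^d))$, $\eps>0$, and a subsequence $\{n_k\}$ with $|\EB f(\uu_{n_k}(\cdot))-\EB f(\uu(\cdot))|\ge\eps$ for all $k$. Applying part~(ii) to this subsequence extracts a further subsequence $\{n_{k_j}\}$ along which $\uu_{n_{k_j}}(\cdot)\weakconverge\uu'(\cdot)$ for some random function $\uu'(\cdot)$; by the hypothesis of~(iii), $\uu'(\cdot)$ has the same law as $\uu(\cdot)$, hence $\EB f(\uu_{n_{k_j}}(\cdot))\to\EB f(\uu'(\cdot))=\EB f(\uu(\cdot))$, contradicting the lower bound. (This is the argument behind \citep[Corollary on page~59]{billingsley2013convergence}.)

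For part~(i) the cleanest route is to combine (ii) and (iii) with the fact that the finite-dimensional distributions determine the law on $C([0,\infty);\RB^d)$. Concretely: the Borel $\sigma$-algebra induced by $\rho$ coincides with the $\sigma$-algebra generated by the coordinate evaluations $f\mapsto f(t)$, which is the classical identification for $C([0,1])$ (using separability) \citep{billingsley2013convergence}, lifted to $C([0,\infty);\RB^d)$ through the continuous restriction maps $f\mapsto f|_{[0,j]}$ together with the fact that $\rho$-convergence is precisely uniform convergence on each $[0,j]$. Consequently, two random functions with the same finite-dimensional distributions at all positive times have the same law, the values at $t=0$ being recovered from continuity as $\lim_{t\downarrow0}f(t)$. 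Given any subsequence of $\{\uu_n(\cdot)\}$, part~(ii) produces a weakly convergent further subsequence with limit $\uu'(\cdot)$; passing finite-dimensional distributions to the limit and invoking the hypothesis of~(i), $\uu'(\cdot)$ and $\uu(\cdot)$ share all finite-dimensional distributions, hence the same law, and part~(iii) then delivers $\uu_n(\cdot)\weakconverge\uu(\cdot)$. Alternatively, one simply cites \citep[Theorem~3]{whitt1970weak}, which states exactly this for this metric space.

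The only non-routine ingredient is the measurability lemma underpinning part~(i): verifying that on $(C([0,\infty);\RB^d),\rho)$ the Borel $\sigma$-algebra is generated by the finite-dimensional projections, so that "equal finite-dimensional distributions" upgrades to "equal law." Everything else is a direct transcription of Prokhorov's theorem and the standard subsequence-extraction argument, and indeed each part can be attributed to the references cited in the statement.
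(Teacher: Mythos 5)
The paper gives no proof of this proposition at all — it simply cites \cite[Theorem~3]{whitt1970weak} for (i) and \cite[Theorem~5.1 and the Corollary on p.~59]{billingsley2013convergence} for (ii) and (iii} — and your proposal correctly reconstructs exactly the standard arguments behind those citations, including the only delicate point (that the Borel $\sigma$-algebra on $(C([0,\infty);\RB^d),\rho)$ is generated by the coordinate projections, with the value at $t=0$ recovered by path continuity since the hypothesis of (i) only covers $t_i>0$). Your proof is correct and is essentially the intended one.
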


\subsection{The Martingale Problem Approach}
\label{sec:fclt:prelim:mart}

In the 1960s, Stroock and Varadhan introduced the martingale problem as a framework for characterizing Markov processes~\citep{stroock1997multidimensional}. This approach also serves as a powerful tool for establishing weak convergence to a Markov process~\citep{ethier2009markov} and has been applied in \cite{benveniste2012adaptive, gadat2018stochastic, liang2023asymptotic} to derive FCLTs for various SA-based algorithms. In this paper, we also leverage this method to prove our main results.

Before introducing the formal definition of the martingale problem, we first specify the limiting process that arises in our setting.
In our context, the limiting process is a multi-dimensional Ornstein-Uhlenbeck process, defined as the solution to the following stochastic differential equation (SDE)
\begin{equation}\label{eq:sde-general}
	d \uu(t) = - B \uu(t) dt + \Sigma^{1/2} d \ermW^d(t),
\end{equation}
where $B, \Sigma \in \RB^{d \times d}$ and $\Sigma$ is positive definite.
Given an initial distribution for $\uu(0)$,
the solution to~\eqref{eq:sde-general} exists uniquely. Furthermore, this solution is a time-homogeneous Markov process~\citep[Chapter 5, Section 3]{ethier2009markov}.
To characterize this process, its \textit{infinitesimal generator} is defined as 
\begin{equation}\label{eq:generator-general}
    \AM f(x) = - \inner{Bx}{\nabla f(x)} + \frac{1}{2} \tr ( \nabla^2 f(x) \Sigma ), \ \forall f \in C^\infty_c (\RB^d) .
\end{equation}
Here we restrict attention to the space of compactly supported smooth functions $C^\infty_c(\RB^d)$.
For the SDE in \eqref{eq:sde-general}, this choice is sufficient because the generator $\AM$ restricted to $C^\infty_c(\RB^d)$
uniquely characterizes the associated diffusion process; see \cite[Chapter~8, Theorem~1.6]{ethier2009markov}.

Given the generator in \eqref{eq:generator-general}, the corresponding martingale problem is formulated as follows \cite{stroock1997multidimensional, ethier2009markov}.

\begin{defn}[The martingale problem]\label{def:mart_prob}
	Suppose $\AM$ is defined in \eqref{eq:generator-general}
    and $\uu(\cdot)$ is a $(\FM_t)_{t \ge 0}$-adapted stochastic process. 
	We say $\uu(\cdot)$ solves the \textit{martingale problem} for $(\AM, C^\infty_c(\RB^d))$, if for any $f \in C^\infty_c(\RB^d)$, the following process
	\begin{equation}\label{eq:mart-prob}
		f(\uu(t)) - f(\uu(0)) - \int_0^t \AM f(\uu(s)) ds
	\end{equation}
	 is a martingale w.r.t.\ to filtration $(\FM_t)_{t \ge 0}$.
\end{defn}

The following proposition 
establishes the equivalence between the martingale problem for $ (\AM, C^\infty_c (\RB^d) )$ and the solution to  SDE \eqref{eq:sde-general}.
By this equivalence, the existence and uniqueness of the solution to \eqref{eq:sde-general} imply that the martingale problem for $(\AM, C^\infty_c (\RB^d) )$ also has a unique solution, given the initial distribution of $\uu(0)$.

\begin{prop}[{\cite[Chapter~5, Proposition~3.1 and Theorem~3.3]{ethier2009markov}}]
\label{prop:equiv-sde-martprob}
Suppose that the initial distribution $\uu(0)$ is given and the generator $\AM$ is defined in \eqref{eq:generator-general}.
    \begin{enumerate}[(i)]
        \item If $\uu(\cdot)$ is a solution to the SDE in \eqref{eq:sde-general}, then $\uu(\cdot)$ solves the martingale problem for $(\AM, C^\infty_c (\RB^d) )$.
        \item If $\uu(\cdot)$ solves the martingale problem for $(\AM, C^\infty_c (\RB^d) )$, then $\uu(\cdot)$ is a solution to the SDE in~\eqref{eq:sde-general}.
    \end{enumerate}
\end{prop}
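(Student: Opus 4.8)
The plan is to establish the two implications separately, using It\^o's formula for (i) and, for (ii), reconstructing the driving Brownian motion from the quadratic variation of $\uu(\cdot)$ in the spirit of the Stroock--Varadhan martingale-problem theory.

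For (i), suppose $\uu(\cdot)$ solves the SDE \eqref{eq:sde-general} and fix $f \in C_c^2(\RB^d)$. Since $f$, $\nabla f$ and $\nabla^2 f$ are bounded, It\^o's formula applies; substituting $d\uu(s) = -B\uu(s)\,ds + \Sigma^{1/2}\,d\ermW^d(s)$ and using that the quadratic covariation of $\uu$ is $\Sigma\,ds$, it yields
\begin{align*}
f(\uu(t)) - f(\uu(0)) - \int_0^t \AM f(\uu(s))\, ds = \int_0^t \inner{\nabla f(\uu(s))}{\Sigma^{1/2}\, d\ermW^d(s)},
\end{align*}
because $-\inner{B\uu(s)}{\nabla f(\uu(s))} + \frac12\tr(\nabla^2 f(\uu(s))\Sigma) = \AM f(\uu(s))$ by \eqref{eq:generator-general}. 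As $\nabla f$ is bounded, the integrand of the stochastic integral lies in $L^2([0,t])$ for every $t$, so the right-hand side is a square-integrable martingale w.r.t.\ the underlying filtration; this is exactly the claim that \eqref{eq:mart-prob} is a martingale.

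For (ii), suppose $\uu(\cdot)$ solves the martingale problem for $(\AM, C_c^2(\RB^d))$. The first step is to enlarge the class of test functions: for the coordinate maps $x \mapsto x_k$ and the quadratic maps $x \mapsto x_k x_\ell$, pick $C_c^2$ functions agreeing with them on $\{\|x\|\le R\}$, apply the martingale property there, and let $R\to\infty$; since $\uu(\cdot)$ has continuous, hence locally bounded, paths, the exit times of these balls diverge, and a standard localization argument produces local martingales on $[0,\infty)$. Testing against $x_k$ shows that $M(t) := \uu(t) - \uu(0) + \int_0^t B\uu(s)\, ds$ is a continuous local martingale, while testing against $x_k x_\ell$ together with It\^o's product rule applied to $\uu(t) = \uu(0) - \int_0^t B\uu(s)\,ds + M(t)$ identifies its cross-variations as $\inner{M_k}{M_\ell}_t = \Sigma_{k\ell}\, t$ for all $k,\ell$. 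Since $\Sigma$ is positive definite, $\ermW^d(t) := \Sigma^{-1/2}M(t)$ is then a continuous local martingale with cross-variations $\delta_{k\ell}\, t$, so L\'evy's characterization makes it a standard $d$-dimensional Brownian motion; substituting $M(t) = \Sigma^{1/2}\ermW^d(t)$ back recovers $d\uu(t) = -B\uu(t)\,dt + \Sigma^{1/2}\,d\ermW^d(t)$, so $\uu(\cdot)$ is a weak solution of \eqref{eq:sde-general}.

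The main obstacle is the test-function extension in (ii): because $x_k$ and $x_k x_\ell$ are not compactly supported, the martingale property is only granted for their $C_c^2$ truncations, so one must control $\uu(\cdot)$ near the boundary of large balls and verify the integrability needed to pass from ``martingale up to the exit time of a ball'' to a genuine (local) martingale on all of $[0,\infty)$. Once $\inner{M_k}{M_\ell}_t = \Sigma_{k\ell}t$ is in hand, the rest is a routine application of L\'evy's characterization, and since the statement concerns only the law of $\uu(\cdot)$ there is no need to specify the probability space.
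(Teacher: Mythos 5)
This proposition is not proved in the paper at all: it is imported verbatim from Ethier and Kurtz (Chapter~5, Proposition~3.1 and Theorem~3.3), so there is no in-paper argument to compare against. Your proposal is essentially a correct reconstruction of the standard textbook proof that underlies that citation. Direction (i) is exactly It\^o's formula plus the observation that $\nabla f$ is bounded for $f\in C_c^2(\RB^d)$, so the stochastic integral is a true (square-integrable) martingale rather than merely a local one. Direction (ii) is the Stroock--Varadhan reconstruction: localize the coordinate and quadratic test functions with exit times of balls (legitimate here because the paths are continuous, so the exit times diverge), identify $M(t)=\uu(t)-\uu(0)+\int_0^t B\uu(s)\,ds$ as a continuous local martingale with $\langle M_k,M_\ell\rangle_t=\Sigma_{k\ell}t$, and invoke L\'evy's characterization. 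The one step worth flagging is your definition $\ermW^d:=\Sigma^{-1/2}M$: this uses the invertibility of $\Sigma$, which the paper does assume ($\Sigma$ positive definite), but in the general positive-semidefinite case one would have to enlarge the probability space with an auxiliary Brownian motion to manufacture the driving noise --- which is precisely why the conclusion is phrased in terms of weak solutions. Since the paper's hypotheses give you nondegeneracy, your argument goes through as written.
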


\section{Assumptions}\label{sec:fclt:assump}

In this section, we present the main assumptions.

\begin{assump}[Lipschitz conditions]
	\label{assump:smooth}
	There exists an operator $H\colon \RB^{d_y}\rightarrow\RB^{d_x}$ such that for any fixed $y\in\RB^{d_y}$, $x = H(y)$ is the unique solution of
		$F(x,y) = 0$.    
	The operators $H$, $F$, and $G$ satisfy the following Lipschitz conditions: 
	$\forall\, x \in \RB^{d_x}, y_{1}, y_{2} \in\RB^{d_y}$,
	\begin{align}
		\|H(y_{1}) - H(y_{2})\| & \leq L_{H}\|y_{1}-y_{2}\|,\label{assump:smooth:FH:ineqH}\\
		\|F(x,y_1) - F(H(y_1),y_1)\| & \leq L_{F} \|x - H(y_1) \|, \nonumber    
		\\
		\|G(x,y_{1}) - G(H(y_{1}),y_{1})\| & \leq \LGx \|x - H(y_{1})\|, \nonumber 
		\\   
		\| G(H(y_{1}),y_{1}) - G(H(y^\star), y^\star) \| & \leq \LGy \| y - y^\star \|. \nonumber 
	\end{align}	
\end{assump}

\begin{assump}
	[Uniform local linearity of $H$ up to order $1+\Hsmooth$]
	 \label{assump:smoothH}
	Assume that $H$ is differentiable and there exist 
	constants $S_H \ge 0$ and $\Hsmooth \in [0.5, 1]$ such that $\forall\, y_1, y_2 \in \RB^{d_y}$,
	\begin{equation*}
		\| H(y_1) -  H(y_2) - \nablaH(y_2) (y_1-y_2)  \| \le  S_H  \|y_1-y_2\|^{1+\Hsmooth}.
	\end{equation*}
\end{assump}

\begin{assump}[Nested local linearity of $F$ and $G$ up to order ($1+\Fsmooth, 1+\Gsmooth$)]
	\label{assump:near-linear}
	There exist matrices $B_1, B_2, B_3$ with compatible dimensions, constants $\SBF, \SBG \ge 0$ and $\Fsmooth, \Gsmooth \in (0,1]$ such that
	\begin{align}
		\|F(x, y) - B_1(x {-} H(y))\| &\le \SBF \left( \|x {-} H(y)\|^{1+\Fsmooth} +  \|y {-} y^{\star}\|^{1+\Fsmooth} \right),
		\label{assump:near-linear-F}
		\\
		\|G(x, y) - B_2(x {-} H(y))- B_3(y {-} y^{\star}) \| &\le \SBG \left( \|x {-} H(y)\|^{1+\Gsmooth} + \|y {-} y^{\star}\|^{1+\Gsmooth} \right). 
		\label{assump:near-linear-G}
	\end{align}
	Furthermore, $\|B_1\| \le L_F$, $\| B_2\| \le \LGx$, and $\|B_3\|\le \LGy$. 
\end{assump}
Assumptions~\ref{assump:smooth} -- \ref{assump:near-linear}, inspired by \cite{han2024finite}, impose regularity conditions on $F$ and $G$.
The nested structure of Assumptions~\ref{assump:smooth} and \ref{assump:near-linear} aligns with the double-loop framework of \eqref{eq:two-loop-approx}.
Notably, differing from the original formulation in \cite{han2024finite}, we additionally require $\Hsmooth \ge 0.5$ in Assumption~\ref{assump:smoothH}.
This modification ensures the validity of the decoupled convergence rates in Proposition~\ref{prop:mart-decouple-rate}, which helps control higher-order residual terms in Assumptions~\ref{assump:smoothH} and \ref{assump:near-linear}.

\begin{rema}\label{remar:fclt:B-form}
    We have the following discussion about Assumption~\ref{assump:near-linear}.
    \begin{enumerate}[(i)] 
        \item  Under the Lipschitz conditions in Assumption~\ref{assump:smooth}, we only require Assumption~\ref{assump:near-linear} holds in a neighborhood of $(x^\star, y^\star)$. See Proposition~\ref{prop:near-linear-tolocal} in Appendix~\ref{sec:fclt:proof:assump}. 
        \item Under Assumptions~\ref{assump:smooth} and \ref{assump:smoothH}, we can obtain Assumption~\ref{assump:near-linear} from the standard local linearity around $(x^\star, y^\star)$, i.e., $F$ and $G$ can be approximated by linear functions of $x - x^\star$ and $y - y^\star$. See Proposition~\ref{prop:local-linear-local} in Appendix~\ref{sec:fclt:proof:assump}. 
        \item Under Assumption~\ref{assump:near-linear}, if $F$ and $G$ are differentiable at $(x^\star, y^\star)$ and $\nabla_x F(x^{\star}, y^{\star}) $ is invertible, then we have
	\begin{align*}
		B_1 & = \nabla_x F(x^{\star}, y^{\star}), \quad
		B_2 =  \nabla_x G(x^{\star}, y^{\star}), \\
		B_3 & = \nabla_y G(x^{\star}, y^{\star}) - \nabla_x G(x^{\star}, y^{\star}) [\nabla_x F(x^{\star}, y^{\star})]^{-1} \nabla_y F(x^{\star}, y^{\star}).
	\end{align*}
    Here, $B_3$ is the Schur complement of $B_1$ in the Jacobian matrix
    $\begin{pmatrix}
        \nabla_x F(x^{\star}, y^{\star}) &  \nabla_y F(x^{\star}, y^{\star}) \\
        \nabla_x G(x^{\star}, y^{\star}) &  \nabla_y G(x^{\star}, y^{\star})) 
    \end{pmatrix} $.
    In particular, if $G(H(\cdot), \cdot)$ is differentiable, then $B_3$ is its Jacobian matrix at $y^\star$.
    For the special linear case with $F(x,y) = A_{11}x + A_{12}y + b_1$ and $G(x,y) = A_{21}x + A_{22} y + b_2$, $B_1 = A_{11}$ and $B_3 = A_{22} - A_{21} A_{11}^{-1} A_{12}$.
    \end{enumerate}
\end{rema}


\begin{assump}[Conditions on step sizes]\label{assump:stepsize-twotime} The step sizes $\{ \alpha_{n} \}_{n=0}^\infty$ and $\{ \beta_{n} \}_{n=0}^\infty$ together with their ratios $\{ \kappa_{n} = \beta_{n} / \alpha_{n} \}_{n=0}^\infty$ satisfy the following conditions.
	\begin{enumerate}[(i)]
		\item \label{assump:stepsize-twotime-tozero} As $n \to \infty$, $\alpha_{n}, \beta_{n}, \kappa_{n} \rightarrow 0$ and $\alpha_{n}$, $\beta_{n}$ are decreasing in $n$.
		\item \label{assump:stepsize-twotime-ratiob} $\frac{\beta_{n-1}}{\beta_{n}} = 1 + \OM(\beta_{n})$ and $\invdiffslow := \lim_{n \to \infty} (\beta_{n+1}^{-1} - \beta_{n}^{-1} )  \in [0, \infty) $. 
		\item \label{assump:stepsize-twotime-ratioa} $\frac{\alpha_{n-1}}{\alpha_{n}} = 1 + \OM(\beta_{n})$ and $\frac{\kappa_{n-1}}{\kappa_{n}} = 1 + \OM\big(\beta_{n}\big)$.
		\item \label{assump:stepsize-twotime-nanb} $n \alpha_{n}$ and $n \beta_{n}$ are non-decreasing in $n$.
		\item \label{assump:stepsize-twotime-moreab} $\beta_{n}^{(1+\Hsmooth) / 2 } = \OM(\alpha_{n})$ and $ \alpha_{n}^{1+\Gsmooth}, \alpha_{n}^{1+\Fsmooth} = o(\beta_{n}) $, with $\Hsmooth$, $\Fsmooth$ and $\Gsmooth$ defined in Assumptions~\ref{assump:smoothH} and \ref{assump:near-linear}.
		
	\end{enumerate}
\end{assump}

Conditions~\ref{assump:stepsize-twotime-tozero} -- \ref{assump:stepsize-twotime-ratioa}
require the step sizes and their ratios to decrease (monotonically) to $0$ at an appropriate rate, ensuring convergence by satisfying the condition that the sum of step sizes diverges to infinity (see Remark~\ref{rema:step-size}).
These conditions are standard in the analysis of two-time-scale SA~\citep{konda2004convergence, mokkadem2006convergence, faizal2023functional, kaledin2020finite}.
The parameter $\invdiffslow$, defined as the limit of $\beta_{n+1}^{-1} - \beta_{n}^{-1}$,  plays a key role in the subsequent analysis.
Condition~\ref{assump:stepsize-twotime-nanb}, inspired by \cite{li2023online}, is an additional regularity requirement designed to facilitate theoretical analysis by preventing extreme variations in the step-size sequences.
Condition~\ref{assump:stepsize-twotime-moreab} ensures that the higher-order residual terms arising from Assumptions~\ref{assump:smoothH} and \ref{assump:near-linear} remain negligible compared to the dominant linear terms. It requires the distinction between the two time scales to fall within an appropriate range, as illustrated in the following example.

\begin{exam}[Polynomially diminishing step sizes]\label{exmp:fclt-stepsize}
	For 
	$\alpha_{n} = \alpha_{0} (n+1)^{-a}$ and $\beta_{n} = \beta_{0} (n+1)^{-b}$ with $a, b \in (0, 1]$ and $k \ge 1$, 
	we have $\invdiffslow = \beta_{0}^{-1}$ if $b = 1$ and $\invdiffslow = 0$ otherwise.
	Assumption~\ref{assump:stepsize-twotime} requires $1 < \frac{b}{a} < 1 + \min\{ \Fsmooth, \Gsmooth \} $ and $\frac{b}{a} \ge \frac{2}{1+\Hsmooth} $. 
	If $\Hsmooth=1$,  $\frac{b}{a} \ge \frac{2}{1+\Hsmooth} $ naturally holds.

    Since our FCLTs rely on the decoupled convergence rates $\EB \|y_n-y^\star\|^2=\OM(\beta_n)$ and $\EB \|x_n-H(y_n)|^2=\OM(\alpha_n)$, the condition $\frac{b}{a}\le 1+\Gsmooth$ is necessary for the decoupling phenomenon, as shown by the phase-transition mechanism in \citep{sarkar2026two}. We impose the strict inequality $\frac{b}{a}<1+\Gsmooth$ to ensure that higher-order error terms do not affect the leading term. The other conditions, $\frac{b}{a}<1+\Fsmooth$ and $\frac{b}{a}\ge\frac{2}{1+\Hsmooth}$, are technical, and whether they can be relaxed is left for future work.
\end{exam}

\begin{assump}[Hurwitz matrices]\label{assump:hurwitz}
	The matrices $B_1$ and $B_3$ defined in Assumption~\ref{assump:near-linear} and the parameter $\invdiffslow$ defined in Assumption~\ref{assump:stepsize-twotime} satisfy that both $- B_1 $ and $- \left( B_3 - \frac{\invdiffslow}{2} \mI \right)$ are Hurwitz matrices, i.e, the real parts of their eigenvalues are negative.
\end{assump}

Assumption~\ref{assump:hurwitz} guarantees that operators $F$ and $G$ exhibit desirable stability properties near the root $(x^\star, y^\star)$, a fundamental criterion in the SA literature~\citep{polyak1992acceleration, konda2004convergence, mokkadem2006convergence, li2023online}.
This condition is weaker than requiring the symmetric parts to be negative definite, and it is also weaker than the strong monotonicity condition commonly used in non-asymptotic analyses~\cite{doan2022nonlinear, han2024finite}. 
As shown in Example~\ref{exmp:fclt-stepsize}, with $\beta_n = \frac{\beta_0}{n+1}$, we have $\invdiffslow = \beta_0^{-1} > 0$.
To guarantee that Assumption~\ref{assump:hurwitz} holds, the initial step size must be chosen carefully and cannot be too small. This requirement is consistent with non-asymptotic SA analyses, where sufficiently large initial step sizes are used to ensure the effect of initialization decays rapidly \citep{moulines2011non}.

The following assumption is about the noise.
Define filtration $( \FM_{n} )_{n \ge 1}$ as 
\begin{equation*}
	\FM_{n} = \sigma ( x_{0},y_{0},\xi_{0},\psi_{0},\xi_{1},\psi_{1},\ldots,\xi_{n-1},\psi_{n-1} ).
\end{equation*}

\begin{assump}[Conditions on the noise]\label{assump:noise-fclt}
	The noise sequences $\{ \xi_{n} \}_{n=0}^\infty$ and $\{ \psi_{n} \}_{n=0}^\infty$ are martingale difference sequences satisfying $\EB[ \xi_{n} \,|\, \FM_{n}] = 0$ and $\EB [ \psi_{n} \,|\, \FM_{n}] = 0$.
	Their fourth moments are uniformly bounded. That is,
	\begin{equation*}
		\sup_{n \ge 0} \EB \| \xi_{n} \|^4 + \sup_{n \ge 0} \EB \| \psi_{n} \|^{4} < \infty.
	\end{equation*}
	Moreover, as $n \to \infty$, 
	\begin{equation*}
		\EB \left[ \begin{pmatrix}
			\xi_{n} \xi_{n}^\top & \xi_{n} \psi_{n}^\top \\
			\psi_{n} \xi_{n}^\top & \psi_{n} \psi_{n}^\top
		\end{pmatrix}  \Bigg|\, \FM_{n} \right]
		\overset{p}{\to} 
		\begin{pmatrix}
			\Sigma_{\xi} & \Sigma_{\xi, \psi} \\
			\Sigma_{\xi, \psi}^\top & \Sigma_{\psi}
		\end{pmatrix}.
	\end{equation*}
\end{assump}

The boundedness of fourth moments is assumed to manage the higher-order residual terms arising from Assumptions~\ref{assump:smooth} and \ref{assump:near-linear}~\citep{han2024finite}. The convergence in probability of the conditional second moment is required to accurately capture the asymptotic behavior~\citep{polyak1992acceleration, mokkadem2006convergence}.

Some intermediate results related to the assumptions are deferred to Appendix~\ref{sec:fclt:proof:assump}.

\section{Main Results}
\label{sec:fclt:main}
In this section, we present our main results. Section~\ref{sec:fclt:construct} details the construction of continuous-time trajectories. Section~\ref{sec:fclt:theo} formally states our decoupled FCLTs and discusses their implications. Finally, Section~\ref{sec:fclt:exam} showcases several examples illustrating applications of our theorems.

\subsection{Construction of the Continuous Trajectories}\label{sec:fclt:construct}

In this subsection, we construct the stochastic processes from the original iterates $ \{ x_{n} \}_{n=0}^\infty $ and $\{ y_{n} \}_{n=0}^\infty $.
Since two-time-scale SA can be viewed as an approximation of the procedure in \eqref{eq:two-loop-approx}, previous work studies the following error terms~\citep{kaledin2020finite, doan2022nonlinear, han2024finite}
\begin{equation}
	\begin{aligned}
		\xhat_{n} = x_{n} - H(y_{n}), \quad 
		\yhat_{n} = y_{n} - y^{\star}.
	\end{aligned}    \label{alg:xyhat}
\end{equation}
Under Assumption~\ref{assump:smooth}, $\| x_{n} - x^\star \|  = \| \xhat_{n} + H(y_{n}) - H(y^\star) \| \le \| \xhat_{n} \|  + L_H \| \yhat_{n} \|$.
Consequently, it suffices to focus on the error terms in \eqref{alg:xyhat}.
In particular, under the assumptions in Section~\ref{sec:fclt:assump}, the following non-asymptotic convergence rates hold.

\begin{prop}[Non-asymptotic convergence rates, informal, {\cite[Theorem~3]{han2024finite}}]\label{prop:mart-decouple-rate}
	Under Assumptions~\ref{assump:smooth} -- \ref{assump:noise-fclt} and some additional regular conditions,
	we have
	$\EB \| \xhat_{n} \|^2 = \OM(\alpha_{n}) $, $\EB \| \yhat_{n} \|^2 = \OM(\beta_{n})$ and $\EB \| \xhat_{n} \|^4 + \EB \| \yhat_{n} \|^4 = \OM (\alpha_{n}^2)$.
\end{prop}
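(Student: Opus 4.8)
The plan is to reduce the three bounds to a coupled system of one-step recursions for the error pair $(\xhat_n,\yhat_n)$ of \eqref{alg:xyhat} and then solve the induced second- and fourth-moment inequalities; the crux is that the \emph{slow} rate must come out as $\OM(\beta_n)$ rather than the naive $\OM(\alpha_n)$. From \eqref{alg:xy} we have $\yhat_{n+1}=\yhat_n-\beta_n(G(x_n,y_n)+\psi_n)$, and inserting $H(y_n)-H(y_{n+1})$ into $\xhat_{n+1}=x_{n+1}-H(y_{n+1})$ and then linearizing $H$ via Assumption~\ref{assump:smoothH} and $F,G$ via Assumptions~\ref{assump:smooth} and \ref{assump:near-linear} yields
\begin{align*}
\xhat_{n+1}&=(\mI-\alpha_n B_1)\xhat_n+\beta_n\,\nablaH(y_n)\bigl(B_2\xhat_n+B_3\yhat_n\bigr)-\alpha_n\xi_n+\beta_n\,\nablaH(y_n)\psi_n+\widetilde R^x_n,\\
\yhat_{n+1}&=(\mI-\beta_n B_3)\yhat_n-\beta_n B_2\xhat_n-\beta_n\psi_n+\widetilde R^y_n,
\end{align*}
where the residuals $\widetilde R^x_n,\widetilde R^y_n$ collect the $H$-linearization error $e_n$ (with $\|e_n\|\le S_H\beta_n^{1+\Hsmooth}\|G(x_n,y_n)+\psi_n\|^{1+\Hsmooth}$) and the $F,G$-linearization errors $R^F_n,R^G_n$ (with $\|R^F_n\|\lesssim\|\xhat_n\|^{1+\Fsmooth}+\|\yhat_n\|^{1+\Fsmooth}$ and $\|R^G_n\|\lesssim\|\xhat_n\|^{1+\Gsmooth}+\|\yhat_n\|^{1+\Gsmooth}$).

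\textbf{Second moments and the obstacle.}
Let $u_n:=\EB\|\xhat_n\|^2$ and $v_n:=\EB\|\yhat_n\|^2$. Squaring, conditioning on $\FM_n$ (which kills the terms linear in $\xi_n,\psi_n$ and lets Assumption~\ref{assump:noise-fclt} bound their conditional second moments), invoking the Lyapunov-drift property of $\mI-\alpha B_1$ from Assumption~\ref{assump:hurwitz}, and applying Young's inequality to the bilinear terms (the coupling in the fast recursion carries a factor $\beta_n^2/\alpha_n=\kappa_n\beta_n$), I obtain
\begin{align*}
u_{n+1}&\le(1-c_1\alpha_n)u_n+C\kappa_n\beta_n v_n+C\alpha_n^2+(\text{residual}),\\
v_{n+1}&\le(1-c_3\beta_n)v_n+2\beta_n\bigl|\EB\inner{\yhat_n}{B_2\xhat_n}\bigr|+C\beta_n^2(u_n+1)+(\text{residual}).
\end{align*}
A standard step-size lemma (of the type ``$z_{n+1}\le(1-c\gamma_n)z_n+C\gamma_n\delta_n\Rightarrow z_n=\OM(\delta_n)$'' for slowly decaying $\delta_n$) applied to the first line already gives $u_n=\OM(\alpha_n+\kappa_n v_n)$. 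The difficulty is the cross term: bounding $|\EB\inner{\yhat_n}{B_2\xhat_n}|$ by Cauchy--Schwarz with $u_n=\OM(\alpha_n)$ only yields $v_n=\OM(\alpha_n)$, which is \emph{weaker} than the claimed $v_n=\OM(\beta_n)$. This is exactly the decoupled-convergence phenomenon, and resolving it is the heart of the proof.

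\textbf{Decoupling and fourth moments.}
To extract the sharp slow rate I would track the cross-moment $w_n:=\EB[\yhat_n\xhat_n^{\top}]$ (equivalently, use a coupled Lyapunov function, or an auxiliary variable that subtracts the leading response of $\xhat_n$ to $\yhat_n$). Multiplying the two recursions and taking expectations gives $w_{n+1}=w_n-\alpha_n(\text{contraction by }B_1)+\OM\bigl(\alpha_n\beta_n+\beta_n(u_n+v_n)\bigr)+(\text{residual})$, whose $\alpha_n$-rate contraction against an $\OM(\alpha_n\beta_n)$ forcing (using $u_n=\OM(\alpha_n)$) forces $w_n=\OM(\beta_n)$, hence $|\EB\inner{\yhat_n}{B_2\xhat_n}|=\OM(\beta_n)$. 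Plugging this into the $v_n$-inequality gives $v_{n+1}\le(1-c_3\beta_n)v_n+C\beta_n^2+(\text{residual})$, so $v_n=\OM(\beta_n)$; feeding this back into the $u_n$-inequality upgrades it to $u_n=\OM(\alpha_n)$. For the fourth moments I would bootstrap: expanding $\|\xhat_{n+1}\|^4$ and $\|\yhat_{n+1}\|^4$, conditioning on $\FM_n$, and using the uniformly bounded fourth moments of the noise together with the second-moment bounds just obtained, I get $\EB\|\xhat_{n+1}\|^4\le(1-c_1\alpha_n)\EB\|\xhat_n\|^4+C\alpha_n^2\,\EB\|\xhat_n\|^2+(\text{smaller})$ and an analogous inequality for $\yhat_n$, which the same step-size lemma closes to $\EB\|\xhat_n\|^4=\OM(\alpha_n^2)$ and $\EB\|\yhat_n\|^4=\OM(\beta_n^2)=\OM(\alpha_n^2)$, giving the stated bound on the sum. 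Throughout, Condition~\ref{assump:stepsize-twotime-moreab} of Assumption~\ref{assump:stepsize-twotime} and the requirement $\Hsmooth\ge 0.5$ are precisely what make every residual term negligible compared with the linear terms in each recursion.

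\textbf{Main obstacle.}
The genuinely nontrivial step is the decoupling: the term $-\beta_n B_2\xhat_n$ in the slow recursion has the ``wrong'' order, so a naive second-moment bound contaminates $\yhat_n$ with the fast rate $\alpha_n$, and one must exhibit cancellation---through the cross-moment recursion or an equivalent auxiliary construction---to see that $\xhat_n$ affects $\yhat_n$ only at order $\beta_n$. The rest is careful bookkeeping of higher-order residuals, which is where Assumptions~\ref{assump:smoothH}--\ref{assump:stepsize-twotime} and the bounded fourth moment of the noise are consumed. Since the statement is \cite[Theorem~3.1]{han2024finite} specialized to the present setting, an alternative route is simply to verify that Assumptions~\ref{assump:smooth}--\ref{assump:noise-fclt} imply the hypotheses there and invoke it directly.
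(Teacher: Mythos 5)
The paper does not prove this proposition at all: it is imported verbatim from \cite[Theorem~3.1]{han2024finite}, and the ``additional regular conditions'' are spelled out only in the formal restatement in Appendix~\ref{append:construct-non-asymp} (Proposition~\ref{prop:mart-decouple-rate-formal}), namely \emph{global} strong monotonicity of $x\mapsto F(x,y)$ around $H(y)$ and of $y\mapsto G(H(y),y)$ around $y^\star$, plus the extra step-size restriction $b/a\le 1+\Fsmooth/2$ for the second-moment bounds. The rates are then simply assumed thereafter (Assumption~\ref{assump:mart-decouple-rate}). So the last sentence of your proposal --- verify the hypotheses of \cite[Theorem~3.1]{han2024finite} and invoke it --- is exactly the paper's route, and is the only route the paper actually takes.

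Your from-scratch sketch is a reasonable outline of how such decoupled rates are obtained in the literature, and you correctly isolate the genuine obstacle (the $-\beta_n B_2\xhat_n$ term contaminating the slow rate). But as written it has a concrete gap: you invoke the ``Lyapunov-drift property of $\mI-\alpha_n B_1$ from Assumption~\ref{assump:hurwitz}'' to get the contraction $(1-c_1\alpha_n)$. The Hurwitz condition is a \emph{local} condition at the root and controls only the linearized dynamics; a non-asymptotic moment bound holding for all $n$ from arbitrary initialization needs a global drift condition, which is precisely the strong monotonicity hypothesis you omitted (and which the informal statement hides inside ``additional regular conditions''). Relatedly, the nonlinear residuals $\|R^F_n\|\lesssim\|\xhat_n\|^{1+\Fsmooth}+\|\yhat_n\|^{1+\Fsmooth}$ cannot be absorbed into the linear drift without an a priori boundedness or global monotonicity argument, since far from the root they dominate. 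Finally, your cross-moment bootstrap does not obviously close in one pass: starting from $v_n=\OM(1)$ you get $u_n=\OM(\alpha_n+\kappa_n^2)$, hence $w_n=\OM(\beta_n+\kappa_n^3)$ and $v_n=\OM(\beta_n+\kappa_n^3)$, and $\kappa_n^3=\OM(\beta_n)$ is not implied by Assumption~\ref{assump:stepsize-twotime}; one must iterate the bootstrap (or, as in \cite{han2024finite} and in this paper's own Section~\ref{sec:fclt:one-step}, introduce a corrected auxiliary iterate that cancels the $\beta_n B_2\xhat_n$ term exactly) and check that the iteration converges to the claimed orders. These are fixable, but they are the substance of the cited theorem rather than routine bookkeeping.
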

This proposition lays the groundwork for our asymptotic analysis. 
The additional regularity conditions consist of the strong monotonicity assumptions and mild additional requirements on the step sizes. 
The formal presentation is deferred to Appendix~\ref{append:construct-non-asymp}.
Because our primary objective is to establish the asymptotic FCLT based on non-asymptotic rates, we assume these rates as given from this point onward.
\begin{assump}[Non-asymptotic decoupled convergence]\label{assump:mart-decouple-rate}
	The convergence rates in Proposition~\ref{prop:mart-decouple-rate} hold.
\end{assump}

To achieve the goal of ``decoupling'',
we rescale the two sequences $\{ \xhat_{n} \}_{n=1}^\infty$ and $\{ \yhat_{n} \}_{n=1}^\infty$ by the square root of their respective step sizes.
For $n \ge 1$, we define
\begin{equation}\label{eq:xycheck-def}
    \xcheck_{n} := \frac{\xhat_{n}}{ \sqrt{ \alpha_{n-1}} } = \frac{ \xx_n - H(\yy_n) }{ \sqrt{\alpha_{n-1}} }, \quad 
    \ycheck_{n} := \frac{ \yhat_{n} }{ \sqrt{\beta_{n-1}}  } = \frac{ \yy_{n} - \yy^\star }{ \sqrt{ \beta_{n-1} } }.
\end{equation}
Here we divide the errors by $\sqrt{\alpha_{n-1}}$ and $\sqrt{\beta_{n-1}}$ is to simplify the calculation. Under Assumption~\ref{assump:stepsize-twotime}, it makes no difference to divide the errors by  $\sqrt{\alpha_{n-1}}$ and $\sqrt{\beta_{n-1}}$ or $\sqrt{\alpha_{n}}$ and $\sqrt{\beta_{n}}$.
Assumption~\ref{assump:mart-decouple-rate} implies $ \EB \| \xcheck_{n} \|^2 + \EB \| \ycheck_{n} \|^2 = \OM(1) $, indicating that $\xcheck_{n}, \ycheck_{n} = \OM_p(1)$.

Now we construct the continuous trajectories $\{ \xbar_{n}(\cdot) \}_{n=1}^\infty$ and $\{ \ybar_{n}(\cdot) \}_{n=1}^\infty$ by linearly interpolating between the terms of the rescaled sequence $\{ \xcheck_{n} \}_{n=1}^\infty$ and $\{ \ycheck_{n} \}_{n=1}^\infty$:
\begin{align}
	\label{eq:xbar-short}
	\xbar_{n}(t) & = \begin{cases}
		\xcheck_{n}, &  t = 0; \\
		\xcheck_{m} + \frac{t - \sum_{i=n}^{m-1} \alpha_{i} }{\alpha_{m}} (\xcheck_{m+1} - \xcheck_{m}), & t \in \left( \sum_{i=n}^{m-1} \alpha_{i}, \sum_{i=n}^{m} \alpha_{i} \right] \text{ for } m \ge n;
	\end{cases} \\
	\label{eq:ybar-short}
	\ybar_{n}(t) & = \begin{cases}
		\ycheck_{n}, &  t = 0; \\
		\ycheck_{m} + \frac{t - \sum_{i=n}^{m-1} \beta_{i} }{\beta_{m}} (\ycheck_{m+1} - \ycheck_{m}), & t \in \left( \sum_{i=n}^{m-1} \beta_{i}, \sum_{i=n}^{m} \beta_{i} \right] \text{ for } m \ge n.
	\end{cases} 
\end{align}
\Figref{fig:xybar} illustrates this construction intuitively.
This linear interpolation, also employed in \cite{faizal2023functional}, ensures that
$\xbar_{n}(t)$ and $\ybar_{n}(t)$ are continuous w.r.t. $t$. 
Consequently, the analysis can focus on weak convergence within the space of continuous functions. 
Moreover, in constructing $\xbar_{n}(\cdot)$ and $\ybar_{n}(\cdot)$, we place $\xcheck_{n+k}$ and $\ycheck_{n+k}$ with different time increments. This choice aligns with the different step sizes, so that the rescaled sequences can be interpreted as discretizations of their respective limiting processes. As shown in Remark~\ref{rema:separate-converge}, our results are separate-convergence results, and the FCLTs for each time scale are established individually.
Thus, using $t$ as a generic time index does not induce ambiguity.  
Additional details of the construction are provided in Appendix~\ref{append:construct-details}.

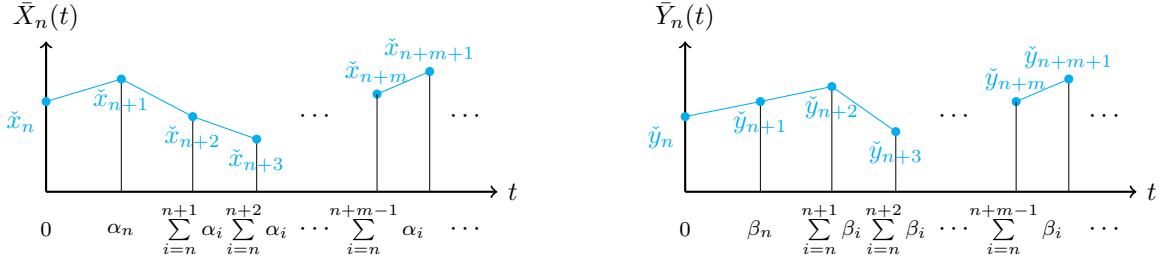
\begin{figure}[t]
	\centering
	\begin{minipage}[b]{0.45\textwidth} 
		\centering
	\begin{tikzpicture}
	\draw[->,thick] (0,0) -- (6,0) node[right] {\small $t$};
	\draw[->,thick] (0,0) -- (0,2) node[above] {\small $\bar{X}_{n}(t)$};
	
	\coordinate (x0) at (0, 1.2);
	\node (0) at (0, -0.5) { \scriptsize $0$};
	\coordinate (x1) at (1.0, 1.5);
	\coordinate (t1) at (1.0, 0);
	\node (alphan) at (1.0, -0.5) { \scriptsize $\alpha_{n}$};
	\coordinate (x2) at (1.95, 1.0);
	\coordinate (t2) at (1.95, 0);
	\coordinate (x3) at (2.8, 0.7);
	\coordinate (t3) at (2.8, 0);
	\node (dotsx) at (3.6, 1.0) {\small $\cdots$};
	\node (dotst) at (3.6, -0.5) {\small $\cdots$};
	\coordinate (xm) at (4.4, 1.3);
	\coordinate (tm) at (4.4, 0);
	\coordinate (xm1) at (5.1, 1.6);
	\coordinate (tm1) at (5.1, 0);
	\coordinate (xm2) at (5.75, 0.8);
	\coordinate (tm2) at (5.75, 0);
	\node (dotsx2) at (5.6, 1.0) {\small $\cdots$};
	\node (dotst2) at (5.6, -0.5) {\small $\cdots$};

	\filldraw [cyan] (x0) circle (1.5pt) node[below left] {\small $\xcheck_{n}$};
	
	\draw[very thin] (x1) -- (t1); 
	\draw[-, cyan] (x0) -- (x1);
	\filldraw[cyan] (x1) circle (1.5pt) node[below] {\small $\check{x}_{n+1}$};
	
	\draw[very thin] (x2) -- (t2) node[below] {\scriptsize $\sum\limits_{i=n}^{n+1}\alpha_{i}$};
	\draw[-, cyan] (x1) -- (x2);
	\filldraw[cyan] (x2) circle (1.5pt) node[below] {\small $\check{x}_{n+2}$};
	
	\draw[very thin] (x3) -- (t3) node[below] {\scriptsize $\sum\limits_{i=n}^{n+2}\alpha_{i}$};
	\draw[-, cyan] (x2) -- (x3);
	\filldraw[cyan] (x3) circle (1.5pt) node[below] {\small $\check{x}_{n+3}$};
	
	\filldraw [cyan] (xm) circle (1.5pt) node[above] {\small $\xcheck_{n+m}$};
	\draw[very thin] (xm) -- (tm) node[below] {\scriptsize $\sum\limits_{i=n}^{n+m-1}\alpha_{i}$};
	
	\draw[very thin] (xm1) -- (tm1); 
	\draw[-, cyan] (xm) -- (xm1);
	\filldraw[cyan] (xm1) circle (1.5pt)
	node[above] {\small $\check{x}_{n+m+1}$};
	
	
\end{tikzpicture}
	\end{minipage}
	\hspace{0.05\textwidth} 
	\begin{minipage}[b]{0.45\textwidth}
		\centering
	\begin{tikzpicture}
	\draw[->,thick] (0,0) -- (6,0) node[right] {\small $t$};
	\draw[->,thick] (0,0) -- (0,2) node[above] {\small $\bar{Y}_{n}(t)$};
	
	\coordinate (x0) at (0, 1.0);
	\node (0) at (0, -0.5) { \scriptsize $0$};
	\coordinate (x1) at (1.0, 1.2);
	\coordinate (t1) at (1.0, 0);
	\node (alphan) at (1.0, -0.5) { \scriptsize $\beta_{n}$};
	\coordinate (x2) at (1.95, 1.4);
	\coordinate (t2) at (1.95, 0);
	\coordinate (x3) at (2.8, 0.8);
	\coordinate (t3) at (2.8, 0);
	\node (dotsx) at (3.6, 1.0) {\small $\cdots$};
	\node (dotst) at (3.6, -0.5) {\small $\cdots$};
	\coordinate (xm) at (4.4, 1.2);
	\coordinate (tm) at (4.4, 0);
	\coordinate (xm1) at (5.1, 1.5);
	\coordinate (tm1) at (5.1, 0);
	\coordinate (xm2) at (5.75, 0.9);
	\coordinate (tm2) at (5.75, 0);
	\node (dotsx2) at (5.6, 1.0) {\small $\cdots$};
	\node (dotst2) at (5.6, -0.5) {\small $\cdots$};

	\filldraw [cyan] (x0) circle (1.5pt) node[below left] {\small $\ycheck_{n}$};
	
	\draw[very thin] (x1) -- (t1); 
	\draw[-, cyan] (x0) -- (x1);
	\filldraw[cyan] (x1) circle (1.5pt) node[below] {\small $\check{y}_{n+1}$};
	
	\draw[very thin] (x2) -- (t2) node[below] {\scriptsize $\sum\limits_{i=n}^{n+1}\beta_{i}$};
	\draw[-, cyan] (x1) -- (x2);
	\filldraw[cyan] (x2) circle (1.5pt) node[below] {\small $\check{y}_{n+2}$};
	
	\draw[very thin] (x3) -- (t3) node[below] {\scriptsize $\sum\limits_{i=n}^{n+2}\beta_{i}$};
	\draw[-, cyan] (x2) -- (x3);
	\filldraw[cyan] (x3) circle (1.5pt) node[below] {\small $\check{y}_{n+3}$};
	
	\filldraw [cyan] (xm) circle (1.5pt) node[above] {\small $\ycheck_{n+m}$};
	\draw[very thin] (xm) -- (tm) node[below] {\scriptsize $\sum\limits_{i=n}^{n+m-1}\beta_{i}$};
	
	\draw[very thin] (xm1) -- (tm1); 
	\draw[-, cyan] (xm) -- (xm1);
	\filldraw[cyan] (xm1) circle (1.5pt)
	node[above] {\small $\check{y}_{n+m+1}$};
	
\end{tikzpicture}
	\end{minipage}
	\caption{Construction of $\xbar_{n}(\cdot)$ and $\ybar_{n}(\cdot)$}
	\label{fig:xybar}
\end{figure}

\subsection{Decoupled Functional Central Limit Theorems}

\label{sec:fclt:theo}

With $\{ \xbar_{n}(\cdot) \}_{n=1}^\infty$ and $\{ \ybar_{n}(\cdot) \}_{n=1}^\infty$ defined in \eqref{eq:xbar-short} and \eqref{eq:ybar-short}, we could establish the following decoupled FCLT.

\begin{thm}[Decoupled functional central limit theorems]\label{thm:fclt-twotime}
	Suppose that Assumptions~\ref{assump:smooth} -- \ref{assump:noise-fclt} and \ref{assump:mart-decouple-rate} hold.
	\begin{enumerate}[(i)]
		\item \label{thm:fclt-x} The stochastic process $\xbar_{n}(\cdot)$ defined in \eqref{eq:xbar-short} converges weakly to the stationary solution of the following SDE 
		\begin{equation}\label{eq:fclt-twotime:x-sde}
			d \ermX(t) = - B_1 \ermX(t) dt + \Sigma_{\xi}^{1/2} d \ermW^{d_x}(t).
		\end{equation}
		The rescaled iterate $ \xcheck_{n} $ defined in \eqref{eq:xycheck-def} converges weakly to the invariant distribution of \eqref{eq:fclt-twotime:x-sde}, i.e., $\NM(0, \Sigma_x)$, where $\Sigma_x$ satisfies the following Lyapunov equation 
		\begin{equation}\label{eq:fclt-twotime:x-var}
			B_1 \Sigma_x + \Sigma_x B_1 = \Sigma_{\xi}.
		\end{equation}

		\item \label{thm:fclt-y} The stochastic process $\ybar_{n}(\cdot)$ defined in \eqref{eq:ybar-short} converges weakly to the stationary solution of the following SDE 
		\begin{equation}\label{eq:fclt-twotime:y-sde}
			d \ermY(t) = - \left(B_3 - \frac{\invdiffslow \mI}{2}  \right) \ermY(t) dt + \widetilde{\Sigma}_{\psi}^{1/2} d \ermW^{d_y}(t),
		\end{equation}
		where the matrix $\widetilde{\Sigma}_{\psi}$ is defined as 
		\begin{equation}\label{eq:cov-z}
			\widetilde{\Sigma}_{\psi} := \Sigma_{\psi} - B_2 B_1^{-1} \Sigma_{\xi,\psi} - \Sigma_{\xi, \psi}^\top B_1^{-\top} B_2^\top + B_2 B_1^{-1} \Sigma_{\xi} B_1^{-\top} B_2^\top.
		\end{equation}
		The rescaled iterate $ \ycheck_{n}$ defined in \eqref{eq:xycheck-def} converges weakly to the invariant distribution of \eqref{eq:fclt-twotime:y-sde}, i.e., $\NM(0, \Sigma_y)$, where $\Sigma_y$ satisfies the following Lyapunov equation
        \begin{equation}\label{eq:fclt-twotime:y-var}
			\left(B_3 - \frac{\invdiffslow \mI}{2} \right) \Sigma_y + \Sigma_y \left( B_3 - \frac{\invdiffslow \mI}{2} \right) = \widetilde{\Sigma}_{\psi}.
		\end{equation}
	\end{enumerate}
\end{thm}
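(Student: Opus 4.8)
The plan is to follow the martingale problem approach of Definition~\ref{def:mart_prob}: establish tightness of the interpolated trajectory sequences, identify every subsequential weak limit as a solution of the martingale problem for the generator of the target Ornstein--Uhlenbeck process, and then combine uniqueness of that martingale problem with an identification of the initial law as the invariant one to obtain weak convergence of the full sequences. The arguments for $\xbar_{n}(\cdot)$ and $\ybar_{n}(\cdot)$ run in parallel. For tightness (Lemma~\ref{lem:tight-twotime}), note that Assumption~\ref{assump:mart-decouple-rate} gives $\xcheck_{n},\ycheck_{n}=\OM_p(1)$, so the trajectories start from a tight family of laws, and the remaining ingredient is a modulus-of-continuity estimate. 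From \eqref{alg:xy}--\eqref{alg:xyhat} I would decompose each increment $\xcheck_{m+1}-\xcheck_{m}$ (resp.\ $\ycheck_{m+1}-\ycheck_{m}$) into a drift term of order $\alpha_{m}$ (resp.\ $\beta_{m}$), a martingale term $\propto\sqrt{\alpha_{m}}\,\xi_{m}$ (resp.\ involving both $\sqrt{\beta_{m}}\,\psi_{m}$ and the cross term $\sqrt{\beta_{m}}\,B_2\xhat_{m}$), and higher-order residuals controlled by Assumptions~\ref{assump:smoothH}--\ref{assump:near-linear}; summing over a block of indices spanning a time window of width $\delta$ and applying a maximal inequality together with the fourth-moment bound of Assumption~\ref{assump:noise-fclt} and the step-size conditions of Assumption~\ref{assump:stepsize-twotime} then yields the required oscillation bound.

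Next I would carry out the one-step expansion. For the fast iterate the rescaled recursion is already decoupled: up to negligible terms $\xcheck_{n+1}=(\mI-\alpha_{n}B_1)\xcheck_{n}-\sqrt{\alpha_{n}}\,\xi_{n}$, since the ratio $\sqrt{\alpha_{n-1}/\alpha_{n}}=1+o(\beta_{n})$ contributes nothing at the $\alpha_{n}$ scale, and this produces the generator $\AM f(x)=-\inner{B_1 x}{\nabla f(x)}+\tfrac12\tr(\nabla^{2}f(x)\,\Sigma_{\xi})$. For the slow iterate the coupling is genuine, since $\beta_{n}B_2\xhat_{n}$ divided by $\sqrt{\beta_{n}}$ is of order $\sqrt{\alpha_{n}\beta_{n}}$, strictly larger than the nominal drift $\sqrt{\beta_{n}}\,B_3\yhat_{n}$ of order $\beta_{n}$. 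Following Section~\ref{sec:fclt:one-step}, I would exploit the fast recursion in the form $\alpha_{n}B_1\xhat_{n}=-(\xhat_{n+1}-\xhat_{n})-\alpha_{n}\xi_{n}+(\text{residual})$ to rewrite $\beta_{n}B_2\xhat_{n}=-\kappa_{n}B_2 B_1^{-1}(\xhat_{n+1}-\xhat_{n})-\beta_{n}B_2 B_1^{-1}\xi_{n}+(\text{residual})$, and introduce the auxiliary sequence $z_{n}:=\yhat_{n}-\kappa_{n}B_2 B_1^{-1}\xhat_{n}$. Because $\kappa_{n}\to0$, $z_{n}$ agrees with $\yhat_{n}$ to leading order, so the interpolation of $z_{n}/\sqrt{\beta_{n-1}}$ shares the weak limit of $\ybar_{n}(\cdot)$; but $z_{n}$ obeys the decoupled recursion $z_{n+1}=(\mI-\beta_{n}B_3)z_{n}-\beta_{n}(\psi_{n}-B_2 B_1^{-1}\xi_{n})+(\text{l.o.t.})$. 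The effective noise $\psi_{n}-B_2 B_1^{-1}\xi_{n}$ has conditional covariance converging, by Assumption~\ref{assump:noise-fclt}, to $\widetilde\Sigma_{\psi}$ of \eqref{eq:cov-z}, and the ratio $\sqrt{\beta_{n-1}/\beta_{n}}=1+\tfrac12\invdiffslow\beta_{n}+o(\beta_{n})$ shifts the drift matrix to $B_3-\tfrac{\invdiffslow}{2}\mI$, producing exactly \eqref{eq:fclt-twotime:y-sde}.

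With these expansions in hand I would fix $f\in C_c^{2}(\RB^{d})$, write the discrete analogue of \eqref{eq:mart-prob} along the interpolated trajectory, Taylor-expand $f$ to second order across each step, and show that the conditional expectation given the filtration at the left endpoint vanishes in the limit: first-order terms reproduce $-\inner{B_1 x}{\nabla f}$ (resp.\ $-\inner{(B_3-\tfrac{\invdiffslow}{2}\mI)x}{\nabla f}$), second-order terms reproduce $\tfrac12\tr(\nabla^{2}f\,\Sigma_{\xi})$ (resp.\ $\tfrac12\tr(\nabla^{2}f\,\widetilde\Sigma_{\psi})$) via the conditional-covariance convergence of Assumption~\ref{assump:noise-fclt}, martingale increments average out, residuals are $o(1)$ by the moment bounds and Assumption~\ref{assump:stepsize-twotime}, and discrete sums converge to integrals by a Riemann-sum argument since $\alpha_{n},\beta_{n}\to0$. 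By tightness and Prokhorov's theorem (Proposition~\ref{prop:tight}), a subsequential weak limit exists; it solves the martingale problem for the Ornstein--Uhlenbeck generator of \eqref{eq:fclt-twotime:x-sde} (resp.\ \eqref{eq:fclt-twotime:y-sde}), hence by the equivalence stated after Definition~\ref{def:mart_prob} between that martingale problem and the SDE \eqref{eq:sde-general} it is the weak solution of that SDE, unique once its initial law is fixed. To identify the initial law as the invariant Gaussian, I would use the single-point marginal convergence $\xcheck_{n}\weakconverge\NM(0,\Sigma_x)$ (resp.\ $\ycheck_{n}\weakconverge\NM(0,\Sigma_y)$), which follows from the same one-step analysis specialized to $t=0$ (or from prior work), so that the initial law of any subsequential limit is invariant and the limit is therefore the stationary Ornstein--Uhlenbeck process, with covariance pinned down by the Lyapunov equations \eqref{eq:fclt-twotime:x-var} and \eqref{eq:fclt-twotime:y-var}; since all subsequential limits coincide, Proposition~\ref{prop:tight} upgrades this to convergence of the whole sequence, and evaluating at $t=0$ gives the stated weak limits of $\xcheck_{n}$ and $\ycheck_{n}$. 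Alternatively, the exact identity $\xbar_{n}\big(\cdot+\sum_{i=n}^{m-1}\alpha_{i}\big)=\xbar_{m}(\cdot)$, combined with $\sum_{i}\alpha_{i}=\infty$ and $\alpha_{i}\to0$, shows any limit is invariant under arbitrary positive time shifts, which again forces stationarity.

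The main obstacle is the slow-iterate decoupling: the cross-scale term $\beta_{n}B_2\xhat_{n}$ is asymptotically dominant over the nominal slow drift, and verifying that the auxiliary sequence $z_{n}$ is simultaneously a leading-order-faithful proxy for $\yhat_{n}$, governed by a genuinely decoupled recursion, and free of non-negligible residuals at the $\sqrt{\beta_{n}}$ scale—those residuals coming from the $H$-expansion, the nested linearizations of Assumptions~\ref{assump:smoothH}--\ref{assump:near-linear}, and the fluctuations of $\kappa_{n}$—consumes most of the technical effort and relies on the full strength of Assumptions~\ref{assump:smoothH}--\ref{assump:stepsize-twotime} together with the non-asymptotic rates of Assumption~\ref{assump:mart-decouple-rate}. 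A secondary difficulty is establishing tightness of the rescaled slow trajectory, where the cross term is amplified by the $1/\sqrt{\beta_{n}}$ rescaling and must be shown not to destroy equicontinuity.
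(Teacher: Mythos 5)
Your overall architecture coincides with the paper's: the same rescaling, the same auxiliary sequence $\zcheck_{n}=\ycheck_{n}-\sqrt{\kappa_{n-1}}B_2B_1^{-1}\xcheck_{n}$ with effective noise $\psi_{n}-B_2B_1^{-1}\xi_{n}$ producing $\widetilde{\Sigma}_{\psi}$, the same drift correction $\tfrac{\invdiffslow}{2}\mI$ from $\sqrt{\beta_{n-1}/\beta_{n}}$, and the same tightness-plus-martingale-problem route via Proposition~\ref{prop:mart-prob-approx}. However, two steps you leave open are precisely where the paper's technical work lives, and your stated plan for each would not go through as written.

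First, tightness of $\{\ybar_{n}(\cdot)\}$. You propose to bound the oscillation by ``a maximal inequality together with the fourth-moment bound,'' but for the cross term $\sum_{k}\sqrt{\beta_{k}\alpha_{k}}\,B_2\xcheck_{k}$ appearing in \eqref{eq:ycheck-twotime} this fails: Cauchy--Schwarz only gives $\EB\|\cdot\|^{2}\lesssim (s-t)^{2}\max_{k}\alpha_{k}/\beta_{k}$, and $\alpha_{k}/\beta_{k}\to\infty$. You correctly flag this as a difficulty but supply no mechanism to overcome it. The paper's resolution (Part~3 of the proof of Lemma~\ref{lem:tight-twotime}) is to unroll the fast recursion \emph{inside} the partial sum, expressing each $\xhat_{k}$ through products of the near-contractions $A_{i}=\mI-\alpha_{i}B_1+\beta_{i}H^\star B_2$, so that the relevant weighted sums $\sum_{k>l}\sqrt{\beta_{k}}\prod A_{k-i}$ are bounded by $O(\sqrt{\beta_{l}}/\alpha_{l})$ (Lemma~\ref{lem:slow:stepsize-aux}); this gain of $\alpha_{l}^{-1}$ from geometric decay is what cancels the offending ratio and restores the $(s-t)^{2}$ and $(s-t)^{p/2}$ bounds. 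Without something of this kind the equicontinuity estimate for $\ybar_{n}$ is simply false as derived.

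Second, identification of the initial law. You propose to feed in the marginal convergence $\xcheck_{n}\weakconverge\NM(0,\Sigma_x)$ ``from the same one-step analysis specialized to $t=0$,'' but the one-step recursion evaluated at a single index yields no marginal limit law; and importing it ``from prior work'' would invert the logical order, since the paper obtains the CLT as Corollary~\ref{cor:clt} \emph{of} the FCLT. Your fallback time-shift observation is the right idea but incomplete: starting the interpolation $T$ units earlier and passing to a further subsequence only shows that the limit $\tilde\pi$ of $\xcheck_{n_k}$ equals the law of $\ermX^{\mu_T}(T)$ for some subsequential initial law $\mu_T$, which in general differs from $\tilde\pi$, so shift-invariance alone does not force stationarity. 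The missing ingredient is geometric ergodicity of the limiting Ornstein--Uhlenbeck process uniformly over compact sets of initial conditions (Proposition~\ref{prop:ergo}), combined with tightness of $\{\xcheck_{n}\}$ to confine $\mu_T$ to such a compact set up to mass $\eps$; letting $T\to\infty$ then pins $\tilde\pi=\pi^\star$. With these two repairs your argument matches the paper's proof.
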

In Theorem~\ref{thm:fclt-twotime}, the ``stationary solution'' refers to a solution that is a stationary process. Specifically, this corresponds to a solution where the initial distribution matches the invariant distribution. Under Assumption~\ref{assump:hurwitz}, the solutions to \eqref{eq:fclt-twotime:x-var} and \eqref{eq:fclt-twotime:y-var}  are unique and have the following explicit forms \citep[Theorem~7.5]{antsaklis2005linear}
\begin{align*}
    \Sigma_x &= \int_0^\infty \exp(-B_1 t) \Sigma_{\xi} \exp(- B_1^\top t)  dt
    \\
    \Sigma_y &= \int_0^\infty \exp \left( - \Big(B_3 - \frac{\invdiffslow I}{2} \Big) t \right) \widetilde{\Sigma}_{\psi} \exp \left( - \Big(B_3^\top - \frac{\invdiffslow I}{2} \Big) t \right) dt.
\end{align*} 
The presence of $\invdiffslow$ 
highlights the impact of step sizes on asymptotic behavior. As illustrated in Example~\ref{exmp:fclt-stepsize}, for $\beta_{n} = \Theta(n^{-b})$ with $b \in (0,1]$, $\invdiffslow = \beta_{0}^{-1} \neq 0$ only when $b=1$, which requires the initial step size to be sufficiently large. This aligns with the convergence analysis for standard SA~\citep{moulines2011non}.
Due to the assumption $\beta_{n} / \alpha_{n} = o(1)$, the choice of $\alpha_n$ does not influence the asymptotic behavior for the fast time scale in \eqref{eq:fclt-twotime:x-sde} (see Remark~\ref{rema:step-size}).

For Theorem~\ref{thm:fclt-twotime}, we also have the following explanations.

\begin{rema}[Effect of interdependence]\label{rema:effect-inter}
The limiting process for the fast time scale coincides with that of standard SA applied to the operator $F(\cdot,y^*)$ driven by the original noise $\xi_n$.
This implies the slow-time-scale update does not affect the fast-time-scale limiting dynamics. 
Intuitively, because $x_n$ is updated with the larger step size, the evolution of $y_n$ is asymptotically negligible from the fast-time-scale perspective.

In contrast, the limiting process for the slow time scale is influenced by the fast-time-scale update. As shown in Remark~\ref{remar:fclt:B-form},
$B_3$ is the Jacobian matrix of $G(H(\cdot), \cdot)$ evaluated at $y^\star$.
Meanwhile, $\widetilde{\Sigma}_{\psi}$ is the asymptotic covariance of $\breve{\psi}_n := \psi_n - B_2 B_1^{-1} \xi_n$.
Thus, asymptotically, the slow iterate behaves like a standard SA iterate for the effective operator $G(H(\cdot), \cdot)$, but with a modified noise term $\breve{\psi}_n$ that incorporates contributions from both time scales.
This matches the approximation viewpoint in \eqref{eq:two-loop-approx} and confirms that $G(H(\cdot),\cdot)$ is the intrinsic objective operator for the outer loop.

Next we provide an intuitive illustration in the linear case
$F(x,y) = A_{11}x + A_{12} y$ and $G(x,y) = A_{21} x + A_{22} y$, where $A_{11} $ and $A_{22}$ are invertible.
Then $x^\star=0$, $y^\star$ = 0, $H(y) = - A_{11}^{-1} A_{12} y$, and $G(H(y), y) = (A_{22} - A_{21} A_{11}^{-1} A_{12} ) y = B_3 y$.
Because $\alpha_n \gg \beta_n$, the sequence $y_n$ is quasi-static from the fast-time-scale perspective (i.e., $y_{n+1} \approx y_n$).
With $\xhat_{n} = x_{n} - H(y_n) = x_{n} + A_{11}^{-1} A_{12} y_n $,
We have $\xhat_{n+1} - \xhat_{n} \approx x_{n+1} - x_{n}$ and  $A_{11}x_n + A_{12}y_n = A_{11} \xhat_{n}$.
So
        $\xhat_{n+1}
        \approx \xhat_{n} - \alpha_n (A_{11} \xhat_{n} + \xi_n)$,
which is approximately a standard SA iteration for $A_{11} x = 0$.
Because $-A_{11}$ is Hurwitz, the effect of initialization decays at the rate $\prod_{i=0}^n(1 - c \alpha_i)$ for some $c > 0$, 
and for large $n$ the iterate $\xhat_{n}$ is dominated by its accumulated noise; heuristically,
$\xhat_{n} = x_{n} + A_{11}^{-1} A_{12} y_n \approx \text{noise}$.
Substituting this behavior into the slow update gives
    \begin{equation*}
        y_{n+1} 
        \approx y_n - \beta_{n} \Big[ \left(  A_{22} - A_{21} A_{11}^{-1} A_{12} \right) y_n + \underbrace{ \psi_n + A_{21} \cdot \text{noise brought by}\ x_n }_{\approx \breve{\psi}_n} \Big].
    \end{equation*}
Detailed calculation yields that the original slow-time-scale noise $\xi_n$, together with the noise transmitted through the fast-scale iterate $x_n$, combines to yield (approximately) the modified noise~$\breve{\psi}_n$.
\end{rema}

\begin{rema}[Separate and decoupled convergence]\label{rema:separate-converge}
The asymptotic statements in Theorem~\ref{thm:fclt-twotime} are formulated in terms $\xbar_{n}(\cdot)$ or $\ybar_{n}(\cdot)$ separately,
rather than as a joint convergence result. 
This ``separate convergence'' also provides an additional interpretation of decoupled convergence: we can derive an FCLT for each time scale individually, and the influence of the other time scale appears only through the coefficient matrices in the corresponding limiting SDE. 
This implies that, at the level of the rescaled limits, each time scale can be analyzed in a standard SA fashion.

This perspective also highlights a benefit of adopting strict two-time-scale step sizes when compared with the single-time-scale regime. To illustrate that, consider the case $\beta_n / \alpha_n = \kappa $ and $ \invdiffslow=0$, assume $F$ and $G$ are linear, i.e., $F(x,y) = A_{11} x + A_{12} y$ and $G(x,y) = A_{21} x + A_{22} y$.
Then the two-time-scale recursion can be rewritten as a single-time-scale SA iteration
\begin{equation*}
        u_{n+1} = u_n - \alpha_n \big(\widetilde{A} u_n + \phi_n\big),
    \end{equation*}
where $u_n^\top =\left(x_n^\top,\  y_n^\top\right)
$, 
$\widetilde{A} = \left(
A_{11},\   A_{12};\ \  \kappa A_{21},\ \kappa A_{22} \right)$,
and $\phi_n^\top = \left(\xi^\top_n,\ \kappa\psi^\top_n \right)$.
Hence the asymptotic theory follows from standard SA arguments \cite{borkar2009stochastic}. In particular, if we construct $\bar{U}_n(\cdot)$ by linearly interpolating $\frac{u_n - u^\star}{\sqrt{\alpha_{n-1}}} $ with $(u^\star)^\top = \left( (x^\star)^\top,\  (y^\star)^\top \right)$ in the same way as $\xbar_{n}(\cdot)$ and $\ybar_{n}(\cdot)$,
then $\bar{U}_n(\cdot)$ converges weakly to the stationary solution of
\begin{equation*}
    d \ermU(t) = - \widetilde{A} \ermU(t) dt + \Sigma_{\phi}^{1/2} d \ermW^{d_x+d_y}(t),
\end{equation*}
where $\Sigma_{\phi} =\left( \Sigma_\xi, \  \kappa \Sigma_{\xi,\psi}; \ \ 
\kappa \Sigma^\top_{\xi, \psi}, \ \kappa^2\Sigma_{\psi} \right) $.
In general, neither $\widetilde{A}$ nor $\Sigma_{\phi}$ is block diagonal,
so the $x$- and $y$-components of the limiting dynamics are intrinsically coupled. The invariant distribution is  $\NM(0, \Sigma_u)$ where $\Sigma_u$ solves $\widetilde{A} \Sigma_u + \Sigma_u (\widetilde{A})^\top = \Sigma_\phi$.
As a result, if one is interested only in the $x$-part or only in the $y$-part, extracting an explicit asymptotic covariance is typically nontrivial. 
\end{rema}

\begin{rema}[Comparison with previous work]
Compared to the FCLT presented in~\cite{faizal2023functional}, which rescales the error terms $\xhat_{n}$ and $\yhat_{n}$ using the square root of the fast-time-scale step size, we apply distinct rescalings to $\xhat_{n}$ and $\yhat_{n}$ based on the square root of their respective step sizes.
This refinement enables us to derive decoupled FCLTs under local linearity conditions, offering a more precise characterization of the asymptotic behavior in two-time-scale SA. Furthermore, our FCLTs can recover the classical CLTs from \cite{konda2004convergence, mokkadem2006convergence}, as demonstrated in Corollary~\ref{cor:clt}.
\end{rema}

\begin{cor}[Central limit theorems]\label{cor:clt}
	Under the conditions of Theorem~\ref{thm:fclt-twotime}, we have 
	$\alpha_{n}^{-1/2} (\xx_{n} - H(\yy_{n})) \weakconverge \NM(0, \Sigma_x)$, 
	$ \beta_{n}^{-1/2} (\yy_{n} - \yy^\star) \weakconverge \NM(0,\Sigma_{y})$ and
	$ \alpha_{n}^{-1/2} (\xx_{n} - \xx^\star) \weakconverge \NM(0, \Sigma_{x})$.
\end{cor}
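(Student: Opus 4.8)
The plan is to derive all three limits as immediate consequences of Theorem~\ref{thm:fclt-twotime} together with the step-size regularity of Assumption~\ref{assump:stepsize-twotime}, using Slutsky-type arguments. The key input, which is already part of Theorem~\ref{thm:fclt-twotime}, is that the rescaled iterates satisfy $\xcheck_n \weakconverge \NM(0,\Sigma_x)$ and $\ycheck_n \weakconverge \NM(0,\Sigma_y)$; equivalently, one may evaluate the weak limits $\xbar_n(\cdot) \weakconverge \ermX(\cdot)$ and $\ybar_n(\cdot)\weakconverge \ermY(\cdot)$ at the fixed time $t=0$ (a continuous functional on $(C([0,\infty);\RB^d),\rho)$), using $\xbar_n(0)=\xcheck_n$, $\ybar_n(0)=\ycheck_n$, and the fact that the limiting processes are stationary, so their time-zero marginals are precisely the invariant Gaussians. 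Moreover, Assumption~\ref{assump:mart-decouple-rate} gives $\xcheck_n = \OM_p(1)$ and $\ycheck_n = \OM_p(1)$, which we shall use for the negligibility estimate below.

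For the first two assertions, recall from \eqref{eq:xycheck-def} that $\xx_n - H(\yy_n) = \sqrt{\alpha_{n-1}}\,\xcheck_n$ and $\yy_n - \yy^\star = \sqrt{\beta_{n-1}}\,\ycheck_n$, hence
\[
  \alpha_n^{-1/2}\bigl(\xx_n - H(\yy_n)\bigr) = \sqrt{\alpha_{n-1}/\alpha_n}\,\xcheck_n,
  \qquad
  \beta_n^{-1/2}\bigl(\yy_n - \yy^\star\bigr) = \sqrt{\beta_{n-1}/\beta_n}\,\ycheck_n.
\]
By parts~\ref{assump:stepsize-twotime-ratiob} and \ref{assump:stepsize-twotime-ratioa} of Assumption~\ref{assump:stepsize-twotime}, the deterministic prefactors $\sqrt{\alpha_{n-1}/\alpha_n}$ and $\sqrt{\beta_{n-1}/\beta_n}$ both converge to $1$, so Slutsky's theorem yields $\alpha_n^{-1/2}(\xx_n-H(\yy_n))\weakconverge\NM(0,\Sigma_x)$ and $\beta_n^{-1/2}(\yy_n-\yy^\star)\weakconverge\NM(0,\Sigma_y)$.

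For the third assertion I would decompose $\xx_n - \xx^\star = \bigl(\xx_n - H(\yy_n)\bigr) + \bigl(H(\yy_n) - H(\yy^\star)\bigr)$ and rescale by $\alpha_n^{-1/2}$. The first term converges weakly to $\NM(0,\Sigma_x)$ by the step just completed, so it suffices to show the second term is $o_p(1)$. Using the Lipschitz bound \eqref{assump:smooth:FH:ineqH},
\[
  \bigl\| \alpha_n^{-1/2}\bigl(H(\yy_n) - H(\yy^\star)\bigr) \bigr\|
  \le L_H\,\alpha_n^{-1/2}\|\yy_n - \yy^\star\|
  = L_H\,\sqrt{\beta_{n-1}/\alpha_n}\,\|\ycheck_n\|
  = L_H\,\sqrt{\kappa_{n-1}}\,\sqrt{\alpha_{n-1}/\alpha_n}\,\|\ycheck_n\|,
\]
which tends to $0$ in probability since $\kappa_{n-1}\to 0$ and $\alpha_{n-1}/\alpha_n\to 1$ by Assumption~\ref{assump:stepsize-twotime}, while $\|\ycheck_n\| = \OM_p(1)$; a final application of Slutsky's theorem gives $\alpha_n^{-1/2}(\xx_n-\xx^\star)\weakconverge\NM(0,\Sigma_x)$. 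The argument is essentially routine once Theorem~\ref{thm:fclt-twotime} is available; the only step requiring any care is this last one, where the point is that the increment $H(\yy_n)-H(\yy^\star)$ lives on the slower $\sqrt{\beta_n}$-scale, so after the faster rescaling $\alpha_n^{-1/2}$ it is damped by the vanishing factor $\sqrt{\kappa_{n-1}}$ and drops out of the limit — mere Lipschitz continuity of $H$ suffices, and no appeal to its local linearity is needed.
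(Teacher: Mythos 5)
Your proof is correct and follows essentially the same route as the paper: both read off $\xcheck_n \weakconverge \NM(0,\Sigma_x)$ and $\ycheck_n \weakconverge \NM(0,\Sigma_y)$ from Theorem~\ref{thm:fclt-twotime}, pass from $\alpha_{n-1}^{-1/2}$ to $\alpha_n^{-1/2}$ (and likewise for $\beta$) via the ratio conditions in Assumption~\ref{assump:stepsize-twotime} and Slutsky, and decompose $\xx_n - \xx^\star = (\xx_n - H(\yy_n)) + (H(\yy_n) - H(\yy^\star))$ for the third claim. The only variation is in showing $\alpha_n^{-1/2}(H(\yy_n)-H(\yy^\star)) = o_p(1)$: the paper applies the delta method to obtain $\beta_n^{-1/2}(H(\yy_n)-H(\yy^\star)) \weakconverge \NM(0, H^\star \Sigma_y (H^\star)^\top)$ and then multiplies by $\sqrt{\kappa_n}\to 0$, whereas you use only the Lipschitz bound \eqref{assump:smooth:FH:ineqH} together with $\ycheck_n = \OM_p(1)$ — a slightly more elementary argument for that step which, as you note, does not require differentiability of $H$.
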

Moreover, our theorem has the following implications.

\begin{rema}[Convergence rates to the invariant distribution]\label{rema:rate-to-invar}
Our FCLT suggests that, for sufficiently large $n$,
the rescaled iterate sequences $\{ \xcheck_{n+k} \}_{k=0}^\infty$ and $\{ \ycheck_{n+k} \}_{k=0}^\infty$
can be interpreted as discretizations of the limiting SDEs \eqref{eq:fclt-twotime:x-sde} and \eqref{eq:fclt-twotime:y-sde}, respectively. 
Although Theorem~\ref{thm:fclt-twotime} identifies a stationary limiting process, stationarity arises in the asymptotic regime $n \to \infty$.
For finite $n$, the initial distribution of $\xbar_{n}$ (resp. $\ybar_{n}$) is the distribution
of $\xcheck_{n}$ (resp. $\ycheck_{n}$), which need not coincide with the invariant distribution of the corresponding SDE. Nevertheless, $\xbar_{n}(\cdot)$ and $\ybar_n(\cdot)$ can still be viewed as approximations to non-stationary Ornstein-Uhlenbeck processes.

To illustrate, consider $\ybar_{n}(\cdot)$ as approximating the SDE \eqref{eq:fclt-twotime:y-sde} with initial distribution given by that of $\ycheck_{n}$. The solution admits the explicit representation
$\ermY(t) = \exp(-\widetilde{B}_3 t) \ermY(0) + \int_0^t \exp(-\widetilde{B}_3 (t-s) ) \widetilde{\Sigma}_\psi^{1/2} d \ermW^{d_y}(t)$,
where$\widetilde{B}_3 := B_3 - \frac{\invdiffslow \mI}{2}$.
If $\ermY(0)$ is distributed according to the invariant distribution, then $\ermY(\cdot)$ is stationary; otherwise, the rate at which $\ermY(t)$ converges in distribution to the invariant distribution is governed by the spectral properties of $\widetilde{B}_3$ (see Proposition~\ref{prop:hurwitz-expbound} in Appendix~\ref{append:fclt-interpre}).
Intuitively, larger real parts of the eigenvalues of $\widetilde{B}_3$ yield faster convergence to the invariant distribution.

This observation also suggests practical accelerations. For example, one may introduce a (typically diagonal) preconditioner $P$ and consider
$y_{n+1} = y_{n} - \beta_{n}P\left(G(x_{n},y_{n}) + \psi_{n}\right)$
so that the effective drift matrix becomes $P\widetilde{B}_3$ with potentially improved spectral properties. 
More generally, one can employ an adaptive preconditioner $P_n$, in the spirit of adaptive methods such as Adam \citep{kingma2014adam}.
\end{rema}

\begin{rema}[Simultaneous convergence over a range of iterates]\label{rema:simul-converge}
The FCLT captures trajectory-level behavior and therefore provides finer information than single-point convergence. We illustrate this via a simple example on the slow time scale. 

Let $\ermY_\infty(\cdot)$ denote a solution to the limiting SDE \eqref{eq:fclt-twotime:y-sde}. 
For any fixed \(T>0\), define \(h:C([0,\infty);\RB^d)\to\RB\) by \(h(f):=\sup_{0\le t\le T}\norm{f(t)}\), and let \(J\) be a positive integer such that \(J\ge T\). 
With $\rho_J$ and $\rho$ defined in \eqref{eq:metric},
for any \(\epsilon>0\), if \(\rho(f,g)<2^{-J}\epsilon/(1+\epsilon)\), then \(\rho_J(f,g)<\epsilon\), because \(\rho(f,g)\ge 2^{-J}\rho_J(f,g)/(1+\rho_J(f,g))\). Moreover, \(\left|h(f)-h(g)\right|\le \sup_{0\le t\le T}\norm{f(t)-g(t)}\le \rho_J(f,g)<\epsilon\). Thus, \(h\) is continuous with respect to \(\rho\). Since \(\ybar_n\weakconverge\ermY_\infty\) in \(C([0,\infty);\RB^d)\), the continuous mapping theorem yields \(\sup_{0\le t\le T}\norm{\ybar_n(t)}=h(\ybar_n)\weakconverge h(\ermY_\infty)=\sup_{0\le t\le T}\norm{\ermY_\infty(t)}\).
For any $\delta > 0$, there exists $M > 0$ such that $\PB(\sup_{0 \le t \le T} \norm{\ermY_\infty(t)} \ge M) \le \delta$ with the derivation deferred to Appendix~C.2 in the supplementary material.
It follows that $\limsup_{n \to \infty} \PB(\sup_{0 \le t \le T} \|\ybar_{n}(t) \| \ge M) \le \delta$.

We now translate this trajectory-level bound back to the original discrete iterates. Let $N^\beta(n,T)$ be the largest integer $n'$ such that $\sum_{k=n}^{n'-1} \beta_k \le T$. 
For $\beta_n =\Theta(n^b)$, we have $N^\beta(n,T) - n = \Theta(n^b) $ (see Proposition~\ref{prop:aux:step-size} in Appendix~\ref{append:construct-details}).
By the definition $\ybar_{n}(\cdot)$,
we have
\begin{equation*}
    \sup_{0 \le t \le T} \|\ybar_{n}(t) \| 
    \ge \sup_{n \le k \le N^\beta(n,T)} \norm{ \ycheck_{k} }
    = \sup_{n \le k \le N^\beta(n,T)} \frac{\norm{ y_k - y^\star }}{\sqrt{\beta_{k-1} }} 
    \ge \frac{1}{\sqrt{\beta_{n-1} }} \sup_{n \le k \le N^\beta(n,T)} \norm{ y_k - y^\star }.
\end{equation*}
Consequently,
\begin{equation*}
    \limsup_{n \to \infty}\PB\left(\sup_{n \le k \le N^\beta(n,T)} \norm{ y_k - y^\star } \ge M \sqrt{\beta_{n-1}}\right) 
  \le \limsup_{n \to \infty} \PB\left(\sup_{0 \le t \le T} \|\ybar_{n}(t) \| \ge M\right) \le \delta.
\end{equation*}
In words, with probability at least $1-\delta$ for sufficiently large $n$, all iterates  $\{y_k\}_{k=n}^{N^\beta(n,T)}$ remain in a neighborhood of $y^\star$ of radius on the order $\OM(\sqrt{\beta_{n}} )$.
Compared with single-point convergence of $y_n$, the FCLT yields a uniform guarantee over an entire window of iterates whose length is determined by the step-size selection. 
Such uniform-in-iteration guarantees are useful in online settings for mitigating peeking issues in sequential monitoring~\citep{johari2017peeking,howard2021time}.
In addition to the supremum functional, one may also consider averaging-type functionals, which can potentially reduce the variance; a rigorous characterization is left for future work.
\end{rema}

\begin{rema}[Inspiration for algorithm design]\label{rema:alg-design}
Our results suggest additional flexibility in designing two-time-scale schemes. Suppose the goal is to solve $\widetilde{G}(y) = 0$, and we have found $F(x,y)$ and $G(x,y)$ such that the inner solution map $H(y)$ satisfies $\widetilde{G}(y) = G(H(y),y)$.
Then we may replace $G$ by an another operator $G'(x,y) = G(x,y) + h(F(x,y))$ as long as $G'(H(y),y) = \widetilde{G}(y)$.
For example, consider a linear choice $h(z) = Az$, i.e., $G'(x,y) = G(x,y) + AF(x,y)$ for some matrix $A$.
Under local linearity, we have $F(x,y) \approx B_1(x-H(y))$ and $G(x,y) \approx B_2(x-H(y)) + B_3(y-y^\star) $ around $(x^\star, y^\star)$, and hence $G'(x,y) \approx (B_2 + AB_1) (x - H(y)) + B_3(y-y^\star)$.
Thus, the local linearity is preserved, with $B_2$ replaced by $B_2 + AB_1$ and $B_3$ unchanged.

Importantly, this modification does not alter the effective slow-scale noise. Indeed, because evaluating $F(x_n, y_n)$ 
incurs the noise $\xi_n$, the $y$-update under $G'$ becomes
\begin{equation*}
    y_{n+1}
    = y_{n} - \beta_n (G'(x_n, y_n) + \psi_{n} + A \xi_n).
\end{equation*}
Recall that in Remark~\ref{rema:effect-inter}, the effective noise for the slow-time-scale recursion is $\breve{\psi}_n := \psi_n - B_2 B_1^{-1} \xi_n$.
After replacing $G$ by $G'$, the effective noise becomes $\psi_n +A\xi_n - (B_2 + AB_1)B_1^{-1}\xi_n  = \breve{\psi}_n$,
which is unchanged. Consequently, decomposing the original problem into a two-loop form yields a family of algorithms parameterized by $A$, all of which share the same asymptotic limit under our theorem.
For further discussion and concrete examples, see Examples~\ref{exmp:momentum} and~\ref{exmp:gtd}.
\end{rema}

\subsection{Examples}
\label{sec:fclt:exam}

In this subsection, we present several examples of two-time-scale SA and demonstrate the application of our FCLTs to these cases. 
The operator-related assumptions are verified within each example. A brief discussion of how to ensure the remaining assumptions is deferred to Appendix~\ref{append:exmp}.

In the following two examples, we consider the problem of minimizing a strongly convex function $f$ with a unique minimizer $x_o^\star := \arg\min_{x} f(x)$, using noisy gradient observations.

\begin{exam}[SGD with Polyak-Ruppert averaging]\label{exmp:PRave}
	Stochastic gradient descent (SGD), introduced by \citep{robbins1951stochastic}, iteratively updates the iterate $x_t$ by~\eqref{eq:SGD}.
	To improve the convergence of SGD, an additional averaging step~\eqref{eq:SGD-averaging} is often used~\citep{polyak1992acceleration,ruppert1988efficient}
	\begin{subequations} \label{eq:SGD-both}
		\begin{align}
			x_{n+1} &= x_{n} - \alpha_n(\nabla f(x_{n}) + \xi_{n})~\text{with}~\alpha_n = \frac{\alpha_0}{ (n+1)^a }~\Big(\frac{1}{2}  < a < 1\Big)  \label{eq:SGD}, \\
			y_{n+1} &= \frac{1}{n+1}\sum_{\tau=0}^n x_{\tau} = y_n - \beta_n(y_n-x_n)~\text{with}~\beta_n = \frac{1}{n+1}. \label{eq:SGD-averaging}
		\end{align}
	\end{subequations}
	This update is a special case 
	of \eqref{alg:xy} with $F(x, y) = \nabla f(x)$, $G(x, y) = y -x$, and $H(y) \equiv x^{\star} = x_o^{\star}$.
	It follows that $G(H(y), y) = y - x_o^{\star}$, $y^{\star} 
    = x_o^{\star}$ and $\invdiffslow=1$. 
    If we assume $f$ is twice differentiable at $x_o^\star$ and $\nabla f$ is Lipschitz continuous, then Assumptions~\ref{assump:smooth} -- \ref{assump:hurwitz} holds with $\Fsmooth = \Gsmooth = \Hsmooth = 1$. One can check 
	$B_1 = \nabla^2 f(x_o^\star)$, $B_2 = - \mI$, $B_3 - \frac{\invdiffslow }{2} \mI =  \frac{1}{2} \mI $, $\Sigma_{\psi} = 0$, $\Sigma_{\xi,\psi} = 0$ and $\widetilde{\Sigma}_\psi = [ \nabla^2 f(x_o^\star) ]^{-1} \Sigma_{\xi} [ \nabla^2 f(x_o^\star) ]^{-1} $.
    A specialness of this example is that the slow-time-scale update is deterministic.
    Moreover, \citet[Theorems~1 and 3]{moulines2011non} establish that $\EB \norm{\xhat_n}^2 = \OM(\alpha_n) = \OM(n^{-\alpha})$ and $\EB\norm{\yhat_n}^2 = \OM(\beta_n) = \OM(1/n)$ when $\nabla^2 f$ is Lipschitz continuous.
    Eq.~(35) in their proof further implies $\EB\norm{\xhat_n}^4 = \OM(\alpha_n^2) = \OM(n^{-2\alpha})$.
    Given $\EB\norm{\xhat_n}^4 = \OM(\alpha_n^2)$, it is natural to expect $\EB \norm{\yhat_n}^4 = \OM(\alpha_n^2)$.
    In particular, by Minkowski's inequality,
    \begin{equation*}
        \EB \norm{\yhat_n}^4
        \le n^{-4} \left( \sum_{i=0}^{n-1} \left(\EB\norm{\xhat_{i}}^4 \right)^{1/4} \right)^4
        \lesssim n^{-4} n^{4(1-\alpha/2)}
        \lesssim n^{-2\alpha} = \OM(\alpha_n^2).
    \end{equation*}
    Consequently, Assumption~\ref{assump:mart-decouple-rate} still hold.

	By Theorem~\ref{thm:fclt-twotime}, the solution to \eqref{eq:fclt-twotime:x-var} is $ \Sigma_x = \int_0^\infty \exp(-\nabla^2 f(x_o^\star) t) \Sigma_{\xi} \exp(- \nabla^2 f(x_o^\star) t)  dt$, and the solution to \eqref{eq:fclt-twotime:y-var} is $\Sigma_{y} = \widetilde{\Sigma}_{\psi} = [ \nabla^2 f(x_o^\star) ]^{-1} \Sigma_{\xi} [ \nabla^2 f(x_o^\star) ]^{-1}$.
    Then $\alpha_n^{-1/2} (x_n - x_o^\star) \weakconverge \NM(0, \Sigma_x) $ and $\sqrt{n} (y_n - x_o^\star) \weakconverge \NM(0, \widetilde{\Sigma}_{\psi})$ and
	we recover the CLT for the averaged iterate from \cite{polyak1992acceleration}.
    The FCLT for the fast time scale is similar to the FCLT for standard SA in \cite[Section~II.4.5.3]{benveniste2012adaptive} and \cite[Section~8.4]{borkar2009stochastic}, 
    whereas the FCLT for the slow time scale additionally yields a trajectory-level characterization of the averaged iterate. In particular, it shows that the rescaled averaged iterate admits a diffusion approximation, and can be interpreted as a discretization of the Ornstein--Uhlenbeck SDE
    $d \ermY(t) = - \frac{1}{2}  \ermY(t) dt + \widetilde{\Sigma}_{\psi}^{1/2} d \ermW^{d_y}(t)$.
    Moreover, by Remark~\ref{rema:simul-converge}, for some fixed $c>1$, with high probability we have
    $\max_{n\le k\le \lfloor cn\rfloor}\|y_k-x_o^\star\|=\OM(n^{-1/2})$, i.e., the averaged iterates stay uniformly within an $\OM(n^{-1/2})$-neighborhood of $x_o^\star$ over the time window $[n,cn]$.
    In this example, while the update for the slow iterate $y_{n}$ is deterministic, the limiting trajectories remain stochastic, reflecting the influence of the fast-time-scale updates on the slow-time-scale dynamics.

    Moving forward, we may also consider a weighted average by taking
    $\beta_n=\frac{\beta_0}{n+1}$.
    Since
    $y_{n+1}=(1-\beta_n)y_n+\beta_n x_n$,
    a larger $\beta_n$ assigns more weight to the most recent iterate $x_n$.
    To ensure that $(B_3-\invdiffslow I/2)$ is Hurwitz, we require
    $\invdiffslow=\beta_0^{-1}<2$, i.e., $\beta_0>1/2$.
    Under this condition, the solution to \eqref{eq:fclt-twotime:y-var} is
    $\Sigma_y=\frac{1}{2-\invdiffslow}\widetilde{\Sigma}_\psi$.
    With
    $\ycheck_n=(y_n-y^\star)/\sqrt{\beta_{n-1}}
    =\invdiffslow^{1/2}\sqrt{n}\,(y_n-y^\star)$,
    we obtain
    $\sqrt{n}\,(y_n-y^\star)\ \weakconverge\
    \NM\!\left(0,\ \frac{1}{2\invdiffslow-\invdiffslow^2}\,\widetilde{\Sigma}_\psi\right)$.
    The factor $\frac{1}{2\invdiffslow-\invdiffslow^2}$ (for $0<\invdiffslow<2$) is minimized at
    $\invdiffslow=\beta_0^{-1}=1$.
    Therefore, among this family of weighted averages, the vanilla average ($\beta_0=1$) is asymptotically optimal in the sense of minimizing the asymptotic covariance of the estimator of $x_o^\star$.
   
    The same discussion can be equivalently phrased by keeping the original step size
    $\beta_n=\frac{1}{n+1}$ and modifying the operator to
    $G(x,y)=\beta_0(y-x)$.
    In this case, $B_2=-\beta_0\mI$ and $B_3-\frac{\invdiffslow}{2}\mI=\frac{2\beta_0-1}{2}\mI$,
    while $\widetilde{\Sigma}_\psi
    =\beta_0^2\,[\nabla^2 f(x_o^\star)]^{-1}\Sigma_\xi[\nabla^2 f(x_o^\star)]^{-1}$.
    Substituting these quantities into \eqref{eq:fclt-twotime:y-var} yields the same limit
    $\sqrt{n}\,(y_n-y^\star)\ \weakconverge\
    \NM\!\left(0,\ \frac{1}{2\beta_0^{-1}-\beta_0^{-2}}
    [\nabla^2 f(x_o^\star)]^{-1}\Sigma_\xi[\nabla^2 f(x_o^\star)]^{-1}\right)$.
    Note that replacing $G(x,y)=y-x$ by $\beta_0(y-x)$ does not change the fixed point $y^\star$.
    This illustrates that the framework allows some flexibility in the choice (or scaling) of the operators, and that asymptotic analysis can be used to identify an optimal choice.
\end{exam}

\begin{exam}[SGD with momentum]\label{exmp:momentum}
    SGD with momentum is ubiquitous in machine learning. We consider a special case of quasi-hyperbolic momentum \citep{gitman2019understanding}
    \begin{equation}\label{eq:SHB}
		\begin{aligned}
			x_{n+1} &= x_{n} - \alpha_n (x_n - \nabla f(y_n) - \xi_n),\\
			y_{n+1} &= y_{n} - \beta_n  [\nu x_{n} + (1-\nu) (\nabla f(y_n) + \xi_n) ].
		\end{aligned}
	\end{equation}
    Here $\nu\in[0,1]$ is an interpolation parameter: when $\nu=1$, the method becomes a ``normalized'' version of stochastic heavy ball (SHB)~\citep{gadat2018stochastic,gupal1972stochastic}; when $\nu=0$, it reduces to SGD.
    This update is a special case of 
	\eqref{alg:xy} with $F(x, y) = x-\nabla f(y), G(x, y) = \nu x + (1-\nu) \nabla f(y)$, $H(y) = \nabla f(y)$, and $\psi_n = - (1-\nu) \xi_n$.
    Note that $G(x,y) = \nabla f(y) + \nu(x - \nabla f(y))$.
    It follows that $G(H(y), y) =  \nabla f(y)$,
	$y^{\star} = x_o^{\star}$ and $x^{\star} = 0$.
    One may interpret $x_n$ as a (stochastic) search direction combining the current stochastic gradient $\nabla f(y_n)+\xi_n$ and the previous direction $x_n$.
    The fast-time-scale update aims to track the gradient map $H(y)=\nabla f(y)$ at (approximately) fixed $y$, while the slow loop updates $y$ toward the stationary point (and minimizer) $x_o^\star$.
    If we assume $f$ is twice differentiable at $x_o^\star$ and $\nabla f$ is Lipschitz continuous, then Assumptions~\ref{assump:smooth} -- \ref{assump:hurwitz} holds with $\Fsmooth = \Gsmooth = \Hsmooth = 1$.
    One can check
	$B_1 = \mI$, $B_2 = \nu \mI$, $B_3 - \frac{\invdiffslow}{2} \mI = \nabla^2 f(x_o^\star) - \frac{\invdiffslow}{2} \mI$, $\Sigma_\psi = (1-\nu)^2 \Sigma_\xi$, $\Sigma_{\xi,\psi} = - (1-\nu) \Sigma_\psi$ and $\widetilde{\Sigma}_{\psi} =  \Sigma_{\xi}$.
    
    By Theorem~\ref{thm:fclt-twotime}, the solution to \eqref{eq:fclt-twotime:x-var} is $\Sigma_x=\Sigma_\xi/2$, and the solution to \eqref{eq:fclt-twotime:y-var} is $
    \Sigma_y=\int_0^\infty \exp\!\Big(-\big[\nabla^2 f(x_o^\star)-\invdiffslow \mI/2\big]t\Big)\,
    \Sigma_\xi\,
    \exp\!\Big(-\big[\nabla^2 f(x_o^\star)-\invdiffslow \mI/2\big]t\Big)\,dt,$
    which does not depend on $\nu$. This is consistent with Remark~\ref{rema:alg-design} by noting $G(x,y) = \nabla f(y) + \nu F(x,y)$.
    Thus, for a strongly convex objective, interpolating between SGD and normalized SHB does not change the asymptotic covariance.
    This observation is consistent with prior results showing that, under constant step sizes, momentum methods have the same leading-order asymptotic covariance as SGD \citep{gitman2019understanding}, and that momentum can even hurt convergence in certain regimes \citep{kidambi2018insufficiency,liu2021diffusion}.
    The benefits of momentum are instead associated with escaping saddle points in nonconvex optimization \citep{liu2021diffusion} and with faster transient (early-stage) convergence toward a neighborhood of the optimum in the strongly convex case \citep{tang2023acceleration}.
    Our result complements these works by ruling out asymptotic gains from momentum in the strongly convex setting with diminishing step sizes. Turning to transient performance, \citet{ma2019quasi} reported improved training performance of QHM across several deep-learning tasks, providing empirical evidence that different choices of $\nu$ in $G(x,y)=\nabla f(y)+\nu F(x,y)$ can improve transient behavior, although the constant-step-size setting considered there falls outside our assumptions.
    
    Moreover, as discussed in Remark~\ref{rema:simul-converge}, our FCLT implies that, with high probability, all iterates between $y_n$ and $y_{N^\beta(n,T)}$ remain within an $\OM(\sqrt{\beta_n})$-neighborhood of $x_o^\star$.
    This also complements the FCLT in \cite{gadat2017stochastic}, which is proved for quadratic objectives and under a constant step-size ratio $\beta_n/\alpha_n$.

    Returning to \eqref{eq:SHB}, it is arguably more natural to update the slow iterate using the fresh direction $x_{n+1}$ as $
    y_{n+1}=y_n-\beta_n\big[\nu x_{n+1}+(1-\nu)\big(\nabla f(y_n)+\xi_n\big)\big]$.
    This can be rewritten in the form of \eqref{eq:SHB} by allowing the interpolation parameter to vary with $n$, as in \cite{gitman2019understanding}. Indeed,
    $y_{n+1}
    = y_n-\beta_n\big[\nu_n x_n+(1-\nu_n)\big(\nabla f(y_n)+\xi_n\big)\big]$ with $\nu_n=\nu(1-\alpha_n)$,
    so the induced operator $G_n(x,y)=\nu_n x+(1-\nu_n)\nabla f(y)$ becomes iteration-dependent.
    However, the composite operator $G(H(\cdot),\cdot)$ remains $\nabla f(\cdot)$, which is the object that drives the slow-time-scale limit.
    Our analysis extends to this case with only minor modifications.
    Finally, as shown in \cite[Appendix~A]{gitman2019understanding}, setting $\alpha_n\equiv \nu$ recovers a stochastic variant of Nesterov's accelerated gradient (NAG).
    Although a constant step size violates Assumption~\ref{assump:stepsize-twotime}, we expect analogous diffusion limits to hold, leading to a similar conclusion: the advantage of momentum/acceleration is largely offset by stochastic noise asymptotically, so variance reduction becomes the dominant concern near the optimum.
\end{exam}

\begin{exam}[Gradient-based temporal-difference learning]\label{exmp:gtd}
	Gradient-based temporal-difference (TD) algorithms are widely used in off-policy reinforcement learning (RL), such as policy evaluation with function approximation~\citep{sutton2008convergent, sutton2009fast, maei2009convergent}.
	We consider two specific algorithms: GTD2 and TDC with linear function approximation~\citep{sutton2009fast}. These algorithms serve as typical examples of two-time-scale SA~\citep{dalal2018finite}. For clarity, we concentrate on the specific update rules while omitting the details of the RL setup. A more comprehensive explanation is provided in Appendix~\ref{append:exmp}.

	In this problem, the goal is to estimate the value function $V(s)$ for a given policy using a linear parameterization $V(s; y) = \phi(s)^\top y$, where $\phi(s)$ represents the feature vector of state $s$, and $y$ denotes the parameter vector.
	Given i.i.d.\ samples $\{ (s_n, r_n, s'_n) \}_{n=0}^\infty$~\citep{sutton2008convergent}, where $s_n$ is the current state, $r_n$ is the reward and $s'_n$ is the next state, the optimal parameter $y$ can be estimated iteratively via the following update rules
	\begin{align}
		x_{n+1} &= x_{n} - \alpha_{n} \big( \phi(s_{n})^{\top} x_{n} - \rho_{n} \delta_{n} \big) \phi(s_{n}), \label{eq:gtd-x}\\
		y_{n+1} &= y_{n} - \beta_{n} \rho_{n} \big( \gamma \phi(s'_{n}) - \phi(s_{n}) \big) \phi(s_{n})^\top x_{n}, \label{eq:gtd2-y}\  \qquad \mathrm{ (GTD2) }   \\
		y_{n+1} &= y_{n} - \beta_{n} \rho_{n} \big( \gamma \phi(s'_n) \phi(s_n)^\top x_{n} - \delta_{n} \phi(s_{n}) \big), \label{eq:tdc-y} \ \quad \,  \mathrm{ (TDC) }
	\end{align}
	where $\gamma$ is the discount factor, $\rho_{n}$ is the importance weight with its explicit form deferred to Appendix~\ref{append:exmp}, and $\delta_{n}$ is the TD error defined as $	\delta_{n} = r_{n} + ( \gamma \phi(s'_{n}) - \phi(s_n) )^\top y_{n} $.
	Here, the fast iterate $x_{n}$ is an auxiliary variable.
	Both GTD2 and TDC share the same fast time-scale update but differ in their slow time-scale updates.
	
	To contextualize these methods within the two-time-scale SA~\eqref{alg:xy}, we define $A_n =  \rho_{n} \phi(s_n) ( \phi(s_n) - \gamma \phi(s'_n) )^\top $, $b_n = \rho_{n} r_{n} \phi(s_n) $, $C_n = \phi(s_{n}) \phi(s_n)^\top $, $D_n = \gamma \rho_{n} \phi(s_n) \phi(s'_n)^\top$, and their expectations $A= \EB[A_n]$, $b = \EB[b_n]$, $C=\EB[C_n]$, and $D=\EB[D_n]$.
	It can be checked that \eqref{eq:gtd-x} -- \eqref{eq:tdc-y} are an instance of \eqref{alg:xy} with
	\begin{equation*}
		F(x,y) = Cx +Ay-b,\ \ 
		\Ga(x,y) = -A^\top x,\ \ 
		\Gb (x,y) = D^\top x + Ay - b.
	\end{equation*}
	It follows that $H(y) = -C^{-1}(Ay-b)$, implying
	$\Ga(H(y), y) = A^\top C^{-1} (Ay- b)$ and $\Gb(H(y), y) = (C - D^\top) C^{-1} (Ay - b)$.
	Although these expressions appear different, it can be verified that $C - D^\top = A^\top$ \citep[Equation~(68)]{li2024high}, leading to $\Ga (H(y), y) \equiv \Gb(H(y),y)$.
	Therefore, GTD2 and TDC have the same optimal solution, $y^\star = A^{-1} b$ and $x^\star = H(y^\star) = 0$. 
    Since $F$, $G$ and $H$ are linear operators, Assumptions~\ref{assump:smooth} -- \ref{assump:near-linear} holds with $\Fsmooth = \Gsmooth = \Hsmooth = 1$.
    For GTD2 and TDC, the matrices $B_1 = C$ and $B_3 = A^\top C^{-1} A$ coincide,
    whereas $B_2$ differs: $B_2^{\mathrm{GTD2}}=-A^\top$ and $B_2^{\mathrm{TDC}}=D^\top$.
    If the features $\phi(s)$ are chosen appropriately, Assumption~\ref{assump:hurwitz} also holds; see \cite{dalal2018finite}.
	
	For the noise terms, both the methods share the same noise for the fast time scale $\xi_{n} = C_n x_{n} + A_n y_n - b_n - F(x_n, y_n) $.
	On the slow time scale, however, the noise differs slightly. For GTD2, the noise is expressed as $\psia_{n} = - A_{n}^\top x_n - \Ga(x_n, y_n) $, whereas for TDC, it is given by $\psib_{n} = D_n^\top x_{n} + A_n y_n - b_n - \Gb(x_n, y_n)$.
	Despite these differences, one can check $\psia_{n} - B_2^{\mathrm{GTD2}} B_1^{-1} \xi_{n} = \psib_{n} - B_2^{\mathrm{TDC}} B_1^{-1} \xi_{n} + \oprob(1)$, leading to the same $\widetilde{\Sigma}_{\psi}$ in \eqref{eq:cov-z}. Calculation details can be found in Appendix~\ref{append:exmp}.
    Thus, although GTD2 and TDC differ in form and may display distinct numerical behaviors~\citep{dann2014policy, ghiassian2020gradient}, their asymptotic dynamics are identically characterized by \eqref{eq:fclt-twotime:y-sde}, aligning with the results of~\citep{hu2024central}.
    Compared to their work, our results provide a more detailed characterization of trajectory properties for i.i.d.\ samples.
    As discussed in Remark~\ref{rema:simul-converge}, we can strengthen the convergence statement to a trajectory level: with high probability, all iterates between $y_n$ and $y_{N^\beta(n,T)}$ remain within an $\OM(\sqrt{\beta_n})$-neighborhood of $y^\star$.

    Similar to Remark~\ref{rema:alg-design} and Example~\ref{exmp:momentum}, different choices of $G$ can induce the same composite operator $G(H(\cdot),\cdot)$.
    Indeed, using the identity $\Gb(x,y)=\Ga(x,y)+F(x,y)$, we may consider the one-parameter family
    $G_\nu(x,y)=\Ga(x,y)+\nu F(x,y)$ with $\nu\in[0,1]$, which interpolates between GTD2 and TDC.
    This highlights the flexibility in algorithm design afforded by the framework~\eqref{eq:two-loop-approx}.
    Although all these algorithms share the same slow-scale asymptotic dynamics under our theorem, their non-asymptotic behavior may differ.
    In particular, TDC has been reported to outperform or match GTD2 in empirical comparisons, providing a concrete illustration that the modification in Remark~\ref{rema:alg-design} may improve transient performance without changing the asymptotic limit \citep{ghiassian2020gradient}.
\end{exam}

\section{Framework of Proof}
\label{sec:fclt:proof-frame}

In this section, we outline the framework of the proof, which is divided into four key steps: (i) deriving the one-step recursion; (ii) establishing the tightness of the sequences constructed in Section~\ref{sec:fclt:construct}; (iii) demonstrating that these sequences approximately solve the corresponding martingale problems; and (iv) combining the results from the previous steps to derive the decoupled FCLT.
The main content for these steps is detailed in Sections~\ref{sec:fclt:one-step} -- \ref{sec:fclt:intergrate}. 

Notably, in Section~\ref{sec:fclt:one-step}, we introduce an auxiliary sequence $\{ \zcheck_{n} \}_{n=1}^\infty$ and construct the associated continuous-time trajectories $\{ \zbar_{n}(\cdot) \}_{n=1}^\infty$. The introduction of this auxiliary sequence represents one of the primary novelties of our work and serves as a cornerstone of the proof. \Figref{fig:proof-sketch} offers a visual summary of the proof framework. The blue, green, yellow, and white blocks correspond to the four steps outlined above, providing a clear and intuitive depiction of the overall structure.

\begin{figure}[t]\centering
	\begin{tikzpicture}[>=stealth,every node/.style={shape=rectangle,draw,rounded corners, minimum width=1cm, node distance=0.8cm},]
				
		\node[fill=blue!20] (xcheck) at (-4.5,2) {\small \begin{tabular}{c}
				Recursion of $\xcheck_{n}$ \\
				(Lemma~\ref{lem:check-twotime-xy})
			\end{tabular} };
		\node[fill=blue!20] (zcheck) at (-4.5,0) 
		{\small \begin{tabular}{c}
				Recursion of $\zcheck_{n}$ \\
				(Lemma~\ref{lem:check-twotime-z})
			\end{tabular} };
		\node[fill=blue!20] (ycheck) at (-4.5,-2)
		{\small \begin{tabular}{c}
				Recursion of $\ycheck_{n}$ \\
				(Lemma~\ref{lem:check-twotime-xy})
			\end{tabular} };
		
		\draw[->, line width=.3mm] (xcheck) to (zcheck);
		\draw[->, line width=.3mm] (ycheck) to (zcheck);
		
		\node[fill=yellow!20] (martprob) at (0,0)
		{\small \begin{tabular}{c}
				Approximately solving \\ 
				of martingale problems \\
				(Lemma~\ref{lem:mart-prob-twotime})
		\end{tabular} };
		\node[fill=green!20] (xztight) at (0,2)
		{\small \begin{tabular}{c}
				Tightness of $\{ \xbar_{n}(\cdot) \}_{n=1}^\infty$ \\
				 and $\{ \zbar_{n}(\cdot) \}_{n=1}^\infty$ \\
				(Lemma~\ref{lem:tight-twotime})
			\end{tabular}};
		
		\node[fill=green!20] (ytight) at (0,-2)
		{\small \begin{tabular}{c}
				Tightness of $\{ \ybar_{n}(\cdot) \}_{n=1}^\infty$ \\
				(Lemma~\ref{lem:tight-twotime})
			\end{tabular} };
		
		\draw[->, line width=.3mm] (xcheck) to  (xztight);
		\draw[->, line width=.3mm] (zcheck) to  (xztight);
		\draw[->, line width=.3mm] (xcheck) to (martprob);
		\draw[->, line width=.3mm] (zcheck) to (martprob);
		\draw[->, line width=.3mm] (ycheck) to (ytight);
		
		\node (zfclt) at (4.5, 0)
		{\small \begin{tabular}{c}
				FCLT for $\{ \zbar_{n}(\cdot) \}_{n=1}^\infty$ \\
				(Corollary~\ref{cor:fclt-z})
		\end{tabular}  };
		\node (xfclt) at (4.5,2)
		{\small \begin{tabular}{c}
				FCLT for $\{ \xbar_{n}(\cdot) \}_{n=1}^\infty$ \\
				(Theorem~\ref{thm:fclt-twotime}~\ref{thm:fclt-x})
			\end{tabular}  };
		\node (yfclt) at (4.5, -2)
		{\small \begin{tabular}{c}
				 FCLT for $\{ \ybar_{n}(\cdot) \}_{n=1}^\infty$ \\
				 (Theorem~\ref{thm:fclt-twotime}~\ref{thm:fclt-y})
			\end{tabular}};
		
		\draw[->, line width=.3mm] (xztight) to (xfclt);
		\draw[->, line width=.3mm] (xztight) to (zfclt);
		\draw[->, line width=.3mm] (martprob) to (xfclt);
		\draw[->, line width=.3mm] (martprob) to (zfclt);
		\draw[->, line width=.3mm] (ytight) to (yfclt);
		\draw[->, line width=.3mm] (zfclt) to (yfclt);
	\end{tikzpicture}
	\caption{Illustration for the framework of proof.}
	\label{fig:proof-sketch}
\end{figure}
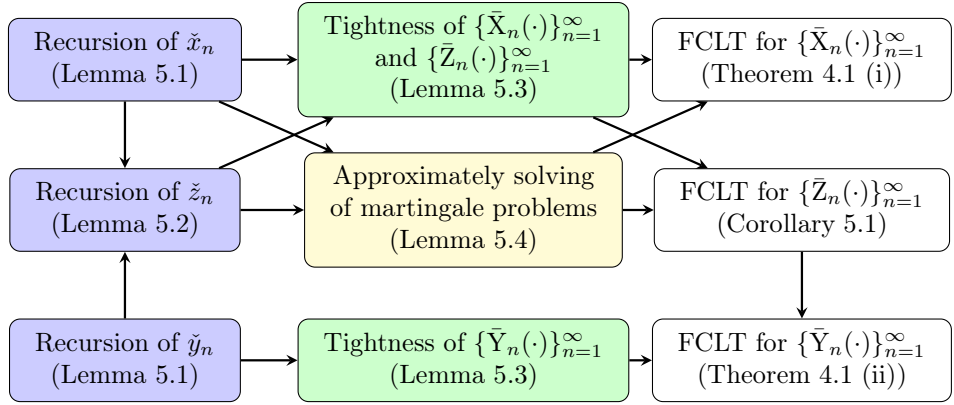

\subsection{Derivation of One-Step Recursions and Introduction of the Auxiliary Sequence}
\label{sec:fclt:one-step}

With the rescaled sequence defined in \eqref{eq:xycheck-def}, we derive their one-step recursion formulas as follows.

\begin{lem}[One-step recursions for $\xcheck_{n}$ and $\ycheck_{n}$]\label{lem:check-twotime-xy}
	Suppose that Assumptions~\ref{assump:smooth} -- \ref{assump:stepsize-twotime} and \ref{assump:mart-decouple-rate} hold.
	Define $\tilde{\xi}_{n} := \xi_{n} - \kappa_{n} H^\star \psi_{n}$ with  $H^\star := \nabla H(\yy^\star)$. 
	The rescaled sequences $\{ \xcheck_{n} \}_{n=0}^\infty$ and $\{ \ycheck_{n} \}_{n=0}^\infty$ defined in \eqref{eq:xycheck-def} 
	satisfy the following relationships  
	\begin{align}
		\xcheck_{n+1} & = (\mI - \alpha_{n} B_1) \xcheck_{n} - \sqrt{\alpha_{n}} \tilde{\xi}_{n} + R_{n}^x, \label{eq:xcheck-twotime} \\
		\ycheck_{n+1}
		& = \left( \mI - \beta_{n} B_3 + \frac{ \invdiffslow \beta_{n} }{2} \mI \right) \ycheck_{n} - \sqrt{\beta_{n} \alpha_{n} } B_2 \xcheck_{n} - \sqrt{\beta_n} \psi_{n} + R_{n}^y. \label{eq:ycheck-twotime}
	\end{align}
	For any $2 < p \le \frac{4}{1 + ( \Hsmooth \vee \Fsmooth \vee \Gsmooth )/2} $, 
	the residual terms $R_{n}^x$ and $R_{n}^y$ 
	have the following properties:
		(i) $\EB \| \EB [ R_{n}^x | \FM_n ] \| = o(\sqrt{\beta_{n} \alpha_{n}}) $, $\EB \| R_{n}^x \|^2 = \OM(\beta_{n} \alpha_{n})$ and $\EB \| R_{n}^x \|^p = o(\alpha_{n})$;
		(ii) $\EB \| R_{n}^y \|^2 = o(\beta_{n}^2)$ and $\EB \| R_{n}^y \|^p = o(\beta_{n})$.
\end{lem}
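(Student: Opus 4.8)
The plan is to derive the two recursions directly from the definitions in \eqref{eq:xycheck-def} and the original updates \eqref{alg:xy}, isolating the linear principal part and collecting everything else into $R_n^x$ and $R_n^y$, then verify the stated moment bounds on the residuals using Assumption~\ref{assump:mart-decouple-rate} together with the local-linearity conditions (Assumptions~\ref{assump:smoothH} and \ref{assump:near-linear}) and the noise bounds (Assumption~\ref{assump:noise-fclt}).

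\emph{Step 1: the fast recursion.} Starting from $\xhat_{n+1} = x_{n+1} - H(y_{n+1}) = \xhat_n - \alpha_n (F(x_n,y_n) + \xi_n) - (H(y_{n+1}) - H(y_n))$, I would Taylor-expand $H(y_{n+1}) - H(y_n)$ around $y_n$ using Assumption~\ref{assump:smoothH}: the leading term is $\nabla H(y_n)(y_{n+1}-y_n) = -\beta_n \nabla H(y_n)(G(x_n,y_n)+\psi_n)$, with a remainder of size $O(\|y_{n+1}-y_n\|^{1+\Hsmooth}) = O(\beta_n^{1+\Hsmooth})$ (after controlling $G$ by the Lipschitz bounds). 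Replacing $\nabla H(y_n)$ by $H^\star := \nabla H(y^\star) = -B_1^{-1}\nabla_y F(x^\star,y^\star)$ introduces further lower-order error (via Lipschitzness of $\nabla H$, or rather the uniform local linearity). Using Assumption~\ref{assump:near-linear-F} to replace $F(x_n,y_n)$ by $B_1 \xhat_n$ up to $O(\|\xhat_n\|^{1+\Fsmooth} + \|\yhat_n\|^{1+\Fsmooth})$, dividing through by $\sqrt{\alpha_n}$, and accounting for the step-size ratio $\sqrt{\alpha_{n-1}/\alpha_n} = 1 + o(\beta_n)$ from Assumption~\ref{assump:stepsize-twotime}\ref{assump:stepsize-twotime-ratioa}, yields \eqref{eq:xcheck-twotime} with $\tilde\xi_n = \xi_n - \kappa_n H^\star \psi_n$ (the $\beta_n \nabla H \psi_n$ term becomes $\sqrt{\alpha_n}\,\kappa_n H^\star \psi_n$ after rescaling, since $\beta_n/\sqrt{\alpha_n} = \sqrt{\alpha_n}\,\kappa_n$). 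Everything else goes into $R_n^x$.

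\emph{Step 2: the slow recursion.} Similarly, $\yhat_{n+1} = \yhat_n - \beta_n(G(x_n,y_n)+\psi_n)$; apply Assumption~\ref{assump:near-linear-G} to write $G(x_n,y_n) = B_2 \xhat_n + B_3 \yhat_n$ up to $O(\|\xhat_n\|^{1+\Gsmooth}+\|\yhat_n\|^{1+\Gsmooth})$, then divide by $\sqrt{\beta_n}$. The key subtlety is the rescaling factor $\sqrt{\beta_{n-1}/\beta_n}$: by Assumption~\ref{assump:stepsize-twotime}\ref{assump:stepsize-twotime-ratiob}, $\beta_{n-1}/\beta_n = 1 + \beta_n(\invdiffslow + o(1))$, so $\sqrt{\beta_{n-1}/\beta_n} = 1 + \frac{\invdiffslow \beta_n}{2} + o(\beta_n)$, which produces the $+\frac{\invdiffslow\beta_n}{2}\mI$ correction to the drift — this is exactly the mechanism by which the step-size decay enters the limiting OU process. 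The cross term $\beta_n B_2 \xhat_n$ becomes $\sqrt{\beta_n\alpha_n}\,B_2\xcheck_n$ after rescaling (using $\xhat_n = \sqrt{\alpha_{n-1}}\xcheck_n$ and $\sqrt{\alpha_{n-1}} = \sqrt{\alpha_n}(1+o(\beta_n))$). The $o(\beta_n)$ discrepancies and the higher-order $G$-remainder go into $R_n^y$.

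\emph{Step 3: residual bounds.} For $R_n^x$: its conditional mean $\EB[R_n^x|\FM_n]$ consists only of the non-random higher-order terms (the martingale noise drops out), each of order $\alpha_n^{(1+\Fsmooth)/2}\cdot(\text{something})$ or $\beta_n^{(1+\Hsmooth)/2}/\sqrt{\alpha_n}$ etc.; using Proposition~\ref{prop:mart-decouple-rate} ($\EB\|\xhat_n\|^2 = \OM(\alpha_n)$, $\EB\|\yhat_n\|^2=\OM(\beta_n)$) and the step-size conditions of Assumption~\ref{assump:stepsize-twotime}\ref{assump:stepsize-twotime-moreab} ($\alpha_n^{1+\Fsmooth},\alpha_n^{1+\Gsmooth} = o(\beta_n)$ and $\beta_n^{(1+\Hsmooth)/2} = \OM(\alpha_n)$), I would check each term is $o(\sqrt{\beta_n\alpha_n})$ after taking expectations (Cauchy–Schwarz / Jensen to pass from $\EB\|\cdot\|^{1+\delta}$ to powers of $\EB\|\cdot\|^2$, noting $\delta \ge 0.5$ ensures $1+\delta \le 2$ is false so one actually interpolates with the fourth-moment bound $\EB\|\xhat_n\|^4 = \OM(\alpha_n^2)$). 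The $\EB\|R_n^x\|^2 = \OM(\beta_n\alpha_n)$ and $\EB\|R_n^x\|^p = o(\alpha_n)$ bounds follow the same way, the latter using the fourth-moment control and the upper restriction $p \le 4/(1+(\Hsmooth\vee\Fsmooth\vee\Gsmooth)/2)$, which is precisely what makes $\EB\|\xhat_n\|^{p(1+\delta)} \lesssim (\EB\|\xhat_n\|^4)^{\cdot}$-type estimates close. The bounds on $R_n^y$ are analogous but cleaner since there is no division by a small quantity causing trouble — $\EB\|R_n^y\|^2 = o(\beta_n^2)$ comes from $\alpha_n^{2(1+\Gsmooth)} = o(\beta_n^2)$-type estimates and the $o(\beta_n)$ step-size discrepancies squared.

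\emph{Main obstacle.} I expect the bookkeeping in Step 3 to be the crux: there are several distinct sources of residual (the $H$-remainder, the $F$- and $G$-remainders, the $\nabla H \to H^\star$ replacement, and the step-size-ratio discrepancies), each scaling with a different combination of $\alpha_n$, $\beta_n$, $\kappa_n$, and the H\"older exponents, and one must verify that \emph{every} one of them meets the required rate uniformly. The role of the new assumption $\Hsmooth \ge 0.5$ is exactly to keep the $H$-remainder term $\beta_n^{(1+\Hsmooth)/2}$-related contributions negligible relative to $\sqrt{\alpha_n}$; getting the interpolation between second and fourth moments right (so that the $L^p$ bounds with $2 < p < 4$ hold) is the delicate part and is where the upper bound on $p$ in the statement is consumed.
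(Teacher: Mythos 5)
Your proposal follows essentially the same route as the paper: the same decomposition of the updates into a linear principal part, a rescaled martingale noise term, and a collected residual; the same mechanism $\sqrt{\beta_{n-1}/\beta_{n}} = 1 + \tfrac{\invdiffslow\beta_{n}}{2} + o(\beta_{n})$ producing the drift correction; and the same use of Assumption~\ref{assump:mart-decouple-rate} together with Condition~\ref{assump:stepsize-twotime-moreab} of Assumption~\ref{assump:stepsize-twotime} to verify the residual rates.

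One detail in Step~3 would not close as written. For the $L^p$ bounds you invoke ``$\EB\|\xhat_{n}\|^{p(1+\delta)} \lesssim (\EB\|\xhat_{n}\|^4)^{\cdot}$''-type estimates, but with $\delta = \Fsmooth$ (say) and $p$ at the top of the allowed range $p = 4/(1+\delta/2)$, the exponent $p(1+\delta)$ exceeds $4$, so Jensen's inequality runs in the wrong direction and the fourth-moment bound from Proposition~\ref{prop:mart-decouple-rate} cannot be applied directly. The paper's fix is the order-reduction step (Propositions~\ref{prop:H-reduce-order} and \ref{prop:FG-reduce-order}): under the Lipschitz conditions, Assumptions~\ref{assump:smoothH} and \ref{assump:near-linear} remain valid with the exponents halved, so one may rewrite each residual with order $1+\delta/2$ in place of $1+\delta$, and then $p(1+\delta/2) \le 4$ holds exactly on the stated range of $p$, making the fourth-moment interpolation legitimate. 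This is precisely where the specific form of the upper bound on $p$ is consumed; without the order reduction you would only obtain the bounds for $p \le 4/(1+\Hsmooth\vee\Fsmooth\vee\Gsmooth)$. The rest of your bookkeeping (separating the conditional mean of $R_n^{\nabla H}$, in which the $\psi_n$ contribution vanishes, from its second moment, in which it does not) matches the paper's treatment.
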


Under the local linearity assumption, $\xcheck_{n+1}$ can be decomposed into three components: a linear transformation of the previous iterate $\xcheck_{n}$,
a noise term $\sqrt{\alpha_{n}} \tilde{\xi}_{n}$ and a high-order residual term $R_n^x$.
The influence of $\ycheck_{n}$ on \eqref{eq:xcheck-twotime} is negligible for two reasons: (i) the noise term is adjusted to $\tilde{\xi}_{n}$, where $\tilde{\xi}_{n} = \xi_{n} + o_p(1)$; (ii) the residual term $R_n^x$ is asymptotically negligible.
As a result, the behavior of the sequence $\{ 
\xcheck_{n} \}_{n=0}^\infty$ closely resembles that of standard SA.
Ignoring residual terms, recursively applying \eqref{eq:xcheck-twotime} shows that each $\xcheck_{n}$ is a weighted sum of noise terms and an initialization term.
Under Assumption~\ref{assump:hurwitz}, $\mI - \alpha_{n} B_1$ is a contraction operator, ensuring that the influence of initialization term diminishes asymptotically.

For \eqref{eq:ycheck-twotime}, however, there is an additional term, $\sqrt{ \beta_{n} \alpha_{n} } B_2 \xcheck_{n}$, reflecting the influence of $\xcheck_{n}$ on $\ycheck_{n}$.
This term is of order $\OM_p( \sqrt{\beta_{n} \alpha_{n}} )$, which dominates the term $ \beta_{n} (B_3 - \invdiffslow \mI / 2) \ycheck_{n} $ (of order $\OM_p(\beta_{n}) $)
because $\beta_{n} / \alpha_{n} = o(1)$.
This dominance complicates the analysis for the slow-time-scale.

To address this, we introduce the auxiliary sequence
\begin{equation}\label{eq:zcheck-def}
	\zcheck_{n} = \ycheck_{n} - \sqrt{ \kappa_{n-1} } B_2 B_1^{-1} \xcheck_{n}
	= \frac{ \yhat_{n} - \kappa_{n-1} B_2 B_1^{-1} \xhat_{n} }{ \sqrt{\beta_{n-1}} }, \ n \ge 1,
\end{equation}
which subtracts the correction term $\sqrt{\kappa_{n-1}} B_2 B_1^{-1} \xcheck_{n}$ from $\ycheck_{n}$.
Since $\kappa_{n} = o(1)$, it holds that $\EB \| \zcheck_{n} \|^2 = \OM(1)$, and for sufficiently large $n$, $\zcheck_{n} = \ycheck_{n} + \oprob(1)$.
The following lemma establishes that the one-step recursion for $\zcheck_{n}$ takes a form similar to \eqref{eq:xcheck-twotime}.

\begin{lem}[One-step recursion for $\zcheck_{n}$]\label{lem:check-twotime-z}
	Suppose that Assumptions~\ref{assump:smooth} -- \ref{assump:stepsize-twotime} and \ref{assump:mart-decouple-rate} hold.
	Define 
	$\tilde{\psi}_{n} := \psi_{n} - B_2 B_1^{-1} \tilde{\xi}_n$ with $\tilde{\xi}_{n}$ defined in Lemma~\ref{lem:check-twotime-xy}. The auxiliary sequences $\{ \zcheck_{n} \}_{n=1}^\infty$ defined in \eqref{eq:zcheck-def} satisfy the following relationships  
	\begin{equation}
		\zcheck_{n+1} = \left( \mI - \beta_{n} B_3 + \frac{\invdiffslow \beta_{n}}{2} \mI \right) \zcheck_{n} - \sqrt{\beta_{n}} \tilde{\psi}_{n} + R_{n}^z. \label{eq:zcheck-twotime}
	\end{equation}
	For any $2 < p \le \frac{4}{1 + ( \Hsmooth \vee \Fsmooth \vee \Gsmooth )/2} $, 
	the residual terms $R_{n}^z$ have the following properties:
		$\EB \| \EB [ R_{n}^z | \FM_n ] \| = o(\beta_{n} ) $, $\EB \| R_{n}^z \|^2 = \OM(\beta_{n}^2 )$ and $\EB \| R_{n}^z \|^p = o(\beta_{n})$.
\end{lem}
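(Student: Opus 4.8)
The plan is to obtain the recursion for $\zcheck_n$ by substituting the definition $\zcheck_n = \ycheck_n - \sqrt{\kappa_{n-1}} B_2 B_1^{-1} \xcheck_n$ into the two recursions already established in Lemma~\ref{lem:check-twotime-xy} and algebraically cancelling the problematic cross term $\sqrt{\beta_n\alpha_n}\, B_2\xcheck_n$ in \eqref{eq:ycheck-twotime}. First I would write
\[
\zcheck_{n+1} = \ycheck_{n+1} - \sqrt{\kappa_n}\, B_2 B_1^{-1} \xcheck_{n+1},
\]
plug in \eqref{eq:xcheck-twotime} and \eqref{eq:ycheck-twotime}, and collect terms. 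The key observation is that $\sqrt{\kappa_n}\, B_2 B_1^{-1} (\mI - \alpha_n B_1)\xcheck_n = \sqrt{\kappa_n}\, B_2 B_1^{-1}\xcheck_n - \sqrt{\kappa_n\,}\alpha_n B_2\xcheck_n = \sqrt{\kappa_n}\,B_2B_1^{-1}\xcheck_n - \sqrt{\beta_n\alpha_n}\,B_2\xcheck_n$, since $\sqrt{\kappa_n}\,\alpha_n = \sqrt{\beta_n\alpha_n}$. This exactly matches the cross term appearing in $\ycheck_{n+1}$, so the two cancel. The noise contributions combine to $-\sqrt{\beta_n}\psi_n + \sqrt{\kappa_n}\,B_2B_1^{-1}\sqrt{\alpha_n}\,\tilde\xi_n = -\sqrt{\beta_n}(\psi_n - B_2B_1^{-1}\tilde\xi_n) = -\sqrt{\beta_n}\,\tilde\psi_n$, using again $\sqrt{\kappa_n\alpha_n} = \sqrt{\beta_n}$.

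The remaining work is to account for the mismatch between $\sqrt{\kappa_n}$ (appearing after substituting the $\xcheck_{n+1}$ recursion) and $\sqrt{\kappa_{n-1}}$ (appearing in the definition of $\zcheck_n$ on the right-hand side), and to absorb everything not of the stated clean form into $R_n^z$. Concretely, $R_n^z$ will consist of: (a) the term $-\sqrt{\kappa_n}\,B_2B_1^{-1} R_n^x$ coming from the residual in the $\xcheck$ recursion; (b) the term $R_n^y$ from the $\ycheck$ recursion; (c) a term of the form $\big(\sqrt{\kappa_{n-1}} - \sqrt{\kappa_n}\big) B_2 B_1^{-1}\xcheck_n$ times appropriate matrix factors, arising because the drift $(\mI - \beta_n B_3 + \invdiffslow\beta_n\mI/2)$ is applied to $\zcheck_n$ defined with $\kappa_{n-1}$ rather than $\kappa_n$; and (d) a lower-order term $\beta_n\,(B_3 - \invdiffslow\mI/2)\sqrt{\kappa_{n-1}}B_2B_1^{-1}\xcheck_n$-type expression that is $O_p(\beta_n\sqrt{\kappa_n})$. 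For the bounds: term (a) contributes $\EB\|\sqrt{\kappa_n}B_2B_1^{-1}R_n^x\|^2 = \OM(\kappa_n\cdot\beta_n\alpha_n) = \OM(\beta_n^2)$ and in $p$-th moment $\OM(\kappa_n^{p/2}\cdot o(\alpha_n)) = o(\beta_n)$ since $\kappa_n^{p/2}\alpha_n \le \kappa_n\alpha_n = \beta_n$ for $p\ge 2$; its conditional mean is $\sqrt{\kappa_n}\cdot o(\sqrt{\beta_n\alpha_n}) = o(\beta_n)$. Term (b) is already $o(\beta_n^2)$ and $o(\beta_n)$ respectively by Lemma~\ref{lem:check-twotime-xy}(ii). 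Term (c) uses Assumption~\ref{assump:stepsize-twotime}(iii), $\kappa_{n-1}/\kappa_n = 1 + o(\beta_n)$, which gives $|\sqrt{\kappa_{n-1}} - \sqrt{\kappa_n}| = \sqrt{\kappa_n}\,o(\beta_n)$, so this term is $O_p(\sqrt{\kappa_n}\,\beta_n) = o(\beta_n)$ in appropriate norms (using $\EB\|\xcheck_n\|^2 = \OM(1)$ from Assumption~\ref{assump:mart-decouple-rate}); and term (d) is $O_p(\beta_n\sqrt{\kappa_n})$, hence $o(\beta_n)$ as well. Summing these, the three claimed properties follow by the triangle inequality (for the $L^2$ and conditional-mean bounds) and by $\|a+b\|^p \lesssim \|a\|^p + \|b\|^p$ (for the $L^p$ bound).

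I expect the main obstacle to be the careful bookkeeping of the $\sqrt{\kappa_n}$ versus $\sqrt{\kappa_{n-1}}$ discrepancy and verifying that every leftover term genuinely lands in $o(\beta_n)$ in $p$-th moment rather than merely $\OM(\beta_n)$. In particular, one must check that the exponent constraint $2 < p \le \frac{4}{1+(\Hsmooth\vee\Fsmooth\vee\Gsmooth)/2}$ is exactly what is needed so that $\EB\|R_n^x\|^p = o(\alpha_n)$ (inherited from Lemma~\ref{lem:check-twotime-xy}) upgrades to $\EB\|\sqrt{\kappa_n}B_2B_1^{-1}R_n^x\|^p = o(\beta_n)$ — this is where the inequality $\kappa_n^{p/2}\alpha_n \le \beta_n$ (equivalently $\kappa_n^{p/2-1}\le 1$, true since $\kappa_n \le 1$ eventually and $p > 2$) does the work. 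A secondary subtlety is confirming that the drift matrix applied to the correction term does not produce any term larger than $O_p(\beta_n\sqrt{\kappa_n})$; this requires expanding $(\mI - \beta_n B_3 + \invdiffslow\beta_n\mI/2)\sqrt{\kappa_{n-1}}B_2B_1^{-1}\xcheck_n$ and noting the leading piece $\sqrt{\kappa_{n-1}}B_2B_1^{-1}\xcheck_n$ is precisely cancelled against the corresponding piece from the $\xcheck_{n+1}$ substitution (up to the $\kappa$-mismatch already handled in term (c)), leaving only the $\beta_n$-order remainder. Once these estimates are assembled, the recursion \eqref{eq:zcheck-twotime} and the stated residual bounds follow directly.
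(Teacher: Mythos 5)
Your proposal is correct and follows essentially the same route as the paper: substitute the recursions for $\ycheck_{n+1}$ and $\xcheck_{n+1}$ into $\zcheck_{n+1}=\ycheck_{n+1}-\sqrt{\kappa_n}B_2B_1^{-1}\xcheck_{n+1}$, cancel the $\sqrt{\beta_n\alpha_n}B_2\xcheck_n$ cross term via $\sqrt{\kappa_n}\,\alpha_n=\sqrt{\beta_n\alpha_n}$, combine the noises into $-\sqrt{\beta_n}\tilde\psi_n$, and collect exactly the four residual pieces you list (the $\kappa$-mismatch term, the $\beta_n$-order drift-times-correction term, $R_n^y$, and $-\sqrt{\kappa_n}B_2B_1^{-1}R_n^x$), with the same moment bounds driven by $\kappa_n^{p/2}\alpha_n\le\kappa_n\alpha_n=\beta_n$. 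No gaps.
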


Comparing \eqref{eq:ycheck-twotime} and \eqref{eq:zcheck-twotime}, the introduction of  $\zcheck_{n}$ effectively eliminates the primary impact of the fast-time-scale update on the slow-time-scale update. While prior works~\citep{konda2004convergence, kaledin2020finite} have addressed mitigating the influence of slow-time-scale updates on fast-time-scale dynamics, the reverse direction has largely been unexplored. Thus, the introduction of $\zcheck_{n}$  is both innovative and has the potential to inspire future research on two-time-scale stochastic approximation.

To establish Lemmas~\ref{lem:check-twotime-xy} and \ref{lem:check-twotime-z}, Condition~\ref{assump:stepsize-twotime-moreab} in Assumption~\ref{assump:stepsize-twotime} is critical for deriving upper bounds for the residual term $R_{n}^x$, $R_{n}^y$, and $R_{n}^z$.
These bounds play a key role in ensuring that the nonlinear components are negligible compared to the linear terms in the subsequent analysis. Notably, obtaining these bounds requires detailed and intricate calculations. The full proofs of Lemmas~\ref{lem:check-twotime-xy} and \ref{lem:check-twotime-z} are provided in Appendix~\ref{sec:fclt:proof:construct}.

For the subsequent analysis, we will utilize the corrected sequence $\{ \zcheck_{n} \}_{n=1}^\infty$ as a foundational step toward establishing the FCLT for the slow time scale. Analogous to the construction of   $\xbar_n(\cdot)$ and $\ybar_{n}(\cdot)$, we construct $\zbar_{n}(\cdot)$ as follows
\begin{align}
	\label{eq:zbar-short}
	\zbar_{n}(t) & = \begin{cases}
		\zcheck_{n}, &  t = 0; \\
		\zcheck_{m} + \frac{t - \sum_{i=n}^{m-1} \beta_{i} }{\beta_{m}} (\zcheck_{m+1} - \zcheck_{m}), & t \in \left( \sum_{i=n}^{m-1} \beta_{i}, \sum_{i=n}^{m} \beta_{i} \right] \text{ for } m \ge n.
	\end{cases} 
\end{align}

\subsection{Establishment of Tightness}
\label{sec:fclt:tight}

In this subsection, we aim to establish the tightness of the trajectory sequences $\{ 
\xbar_{n}(\cdot) \}_{n=1}^\infty$, $\{ 
\ybar_{n}(\cdot) \}_{n=1}^\infty$, and $\{ \zbar_{n} (\cdot) \}_{n=1}^\infty$.
From our construction, $\xbar_{n}(\cdot)$ is a random function in $C([0,\infty); \RB^{d_x})$ while $\ybar_{n}(\cdot)$, $\zbar_{n}(\cdot)$ are random functions in $C([0, \infty); \RB^{d_y})$.
To establish tightness, we apply the following sufficient conditions,
which is a corollary of \cite[Theorem~7.3]{billingsley2013convergence} and \cite[Corollary~5]{whitt1970weak}.

\begin{prop}
	[Sufficient conditions for tightness]
	\label{prop:tight-suff-slow}
	Suppose that $\{ \uu_{n}(\cdot) \}_{n=0}^\infty$ is a sequence of random functions in $C([0, \infty); \RB^{d})$.
	Then $\{ \uu_{n}(\cdot) \}_{n=0}^\infty$ is tight 
	if the following two conditions hold:
	\begin{enumerate}[(i)]
		\item For each positive $\eta$, there exists a positive number $a$ and a positive integer $n_0$ such that 
		\begin{equation*}
			\PB \left( \| \uu_{n}(0) \| \ge a \right) \le \eta, \quad \forall\, n \ge n_0.
		\end{equation*}
		\item For each 
		positive $T$, there exist positive numbers $\gamma_1, \gamma_2, c_1, c_2, K, n_0$ such that
		\begin{equation}\label{eq:tight-suff-slow}
			\PB \left(  \| \uu_{n}(s) {-} \uu_{n}(t) \| \ge \eps  \right) \le K \left( \frac{|s{-}t |^{1 {+} \gamma_1} }{\eps^{c_1} } + \frac{ | s{-}t |^{1 {+} \gamma_2}}{\eps^{c_2}} \right), \  \forall\, s,t \in [0, T], \forall \, n \ge n_0,
			\forall\, \eps > 0.
		\end{equation}
	\end{enumerate}
\end{prop}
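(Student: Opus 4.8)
\textbf{Proof proposal for Proposition~\ref{prop:tight-suff-slow}.}
The plan is to reduce the statement to the classical tightness criterion of \cite[Theorem~12.3]{billingsley1968convergence}, which is stated for the space $C([0,1];\RB)$ (or $C([0,1];\RB^d)$ after coordinatewise bookkeeping), and to lift it to $C([0,\infty);\RB^d)$ via the restriction maps $r_T\colon C([0,\infty);\RB^d)\to C([0,T];\RB^d)$. The key structural fact is that a set $K\subseteq C([0,\infty);\RB^d)$ is relatively compact in the metric $\rho$ if and only if $r_T(K)$ is relatively compact in $C([0,T];\RB^d)$ (equipped with the uniform norm) for every positive integer $T$; this follows because $\rho$ induces the topology of uniform convergence on compact sets, and by Arzel\`a--Ascoli relative compactness on each $[0,T]$ is characterized by uniform boundedness at a point together with uniform equicontinuity. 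Consequently, to produce a compact $K$ with $\PB(\uu_n(\cdot)\in K)\ge 1-\eps$ for all $n$, it suffices to control, for each $T$ and each $\eta>0$, the modulus of continuity $w_{[0,T]}(\uu_n,\delta):=\sup_{|s-t|\le\delta,\,s,t\in[0,T]}\|\uu_n(s)-\uu_n(t)\|$ and the value $\|\uu_n(0)\|$, uniformly in $n$.

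The steps I would carry out are as follows. First, fix $\eps>0$ and a summable allocation of error budget across the integers $T=1,2,\dots$, say $\eps/2^{T+1}$ for the modulus-of-continuity part on $[0,T]$, plus $\eps/2$ for the initial-value part. Step two: use condition~(i) to choose $a$ and $n_0$ so that $\PB(\|\uu_n(0)\|\ge a)\le\eps/2$ for $n\ge n_0$; then enlarge $a$ (finitely) so the same bound holds for the finitely many $n<n_0$ as well (each $\uu_n(0)$ is a genuine random vector, hence tight on its own). Step three: fix $T$; from the two-term moment-type bound \eqref{eq:tight-suff-slow} I would invoke the chaining argument in the proof of \cite[Theorem~12.3]{billingsley1968convergence} — applied to each of the $d$ coordinates and to each of the two terms on the right-hand side separately, since $1+\gamma_i>1$ and $c_i>0$ are exactly the exponents that make the Billingsley/Chentsov dyadic estimate go through — to obtain, for every $\eta'>0$, a $\delta=\delta(T,\eta',\eps)>0$ and an integer $m_0$ with $\PB(w_{[0,T]}(\uu_n,\delta)\ge\eta')\le\eps/2^{T+1}$ for all $n\ge m_0$; again absorb the finitely many small $n$ by continuity of sample paths (each individual $\uu_n$ has a.s.\ continuous, hence uniformly continuous on $[0,T]$, paths, so its modulus of continuity tends to $0$ a.s.\ as $\delta\downarrow0$, and one shrinks $\delta$ further). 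Step four: take a countable family $\{\eta'_k\downarrow0\}$, for each $(T,k)$ obtain the corresponding $\delta_{T,k}$, and define
\[
K=\Bigl\{\,f\in C([0,\infty);\RB^d): \|f(0)\|\le a,\ \ w_{[0,T]}(f,\delta_{T,k})\le\eta'_k\ \ \forall\,T\in\NB,\ \forall\,k\,\Bigr\}.
\]
By Arzel\`a--Ascoli applied on each $[0,T]$ together with the restriction-map characterization of compactness noted above, $K$ is compact in $(C([0,\infty);\RB^d),\rho)$; and by a union bound over $T$ and $k$ (with the geometric budget $\sum_T \eps/2^{T+1}\le\eps/2$), $\PB(\uu_n(\cdot)\notin K)\le\eps$ for all $n$, which is the desired tightness.

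The main obstacle is Step three: extracting a genuine modulus-of-continuity estimate from the pointwise two-point probability bound \eqref{eq:tight-suff-slow}. This is precisely the content of the Billingsley/Chentsov-type lemma, and the delicate points are (a) handling the \emph{two} terms with possibly different exponents $(\gamma_i,c_i)$ — one treats them additively, since the dyadic sum converges as long as $\gamma_i>0$, and the constraint $c_i$ merely rescales $\eps$; (b) passing from a bound on increments over dyadic rationals to a bound on the full supremum over $|s-t|\le\delta$, which uses path continuity; and (c) the uniformity in $n$, which the hypothesis grants because $K$ in \eqref{eq:tight-suff-slow} is explicitly $n$-free, so only the finitely-many-exceptions cleanup (via a.s.\ continuity of each fixed $\uu_n$) is needed. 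Everything else — the reduction to $[0,T]$, the Arzel\`a--Ascoli characterization, and the union bound — is routine bookkeeping that I would state but not belabor.
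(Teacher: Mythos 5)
The paper does not actually prove this proposition: it is stated as ``a direct generalization of \cite[Theorem~12.3]{billingsley1968convergence}'' and used as a black box, so there is no in-paper argument to compare against. Your proposal is a correct reconstruction of exactly the generalization that citation implicitly relies on, and you have identified the three points where something beyond a literal citation is needed: (1) the passage from $[0,1]$ to $[0,\infty)$ via the restriction maps and the Arzel\`a--Ascoli characterization of compact sets of $C([0,\infty);\RB^d)$ under the Whitt metric $\rho$; (2) the vector-valued setting (coordinatewise, or by running the chaining argument directly with norms); and (3) the two-term right-hand side of \eqref{eq:tight-suff-slow}, which you correctly observe is handled additively in the Chentsov/Billingsley dyadic chaining, since each term contributes a convergent geometric series as long as $\gamma_i>0$, and since for the modulus-of-continuity bound one only needs $\eps$ ranging over a fixed sequence $\eta'_k$. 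Two cosmetic remarks: the $m_0$ in your Step~three is unnecessary, because \eqref{eq:tight-suff-slow} is assumed for \emph{all} $n$, so the chaining estimate is already uniform in $n$ and only the initial-value condition~(i) needs the finitely-many-exceptions cleanup; and your error budget should be doubly indexed over $(T,k)$ (e.g.\ $\eps\,2^{-T-k-2}$) rather than over $T$ alone, since $K$ imposes one modulus constraint per pair. Neither affects the validity of the argument.
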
 

In Proposition~\ref{prop:tight-suff-slow}, the first condition ensures the tightness of the initial distributions, while the second condition guarantees a form of uniform equicontinuity for $\{ \uu_{n} (\cdot) \}_{n=0}^\infty$ over compact time intervals.
We take two different values of 
$\gamma$ and $c$ because the deterministic and stochastic components in the one-step recursions are controlled by different arguments.

\begin{lem}[Tightness]\label{lem:tight-twotime}
	Suppose that Assumptions~\ref{assump:smooth} -- \ref{assump:stepsize-twotime}, \ref{assump:noise-fclt}, and \ref{assump:mart-decouple-rate} hold.
	The sequences $\{ \xbar_{n}(\cdot) \}_{n=1}^\infty$, $\{ \ybar_{n}(\cdot) \}_{n=1}^\infty$,  
	and $\{ \zbar_{n}(\cdot) \}_{n=1}^\infty$ constructed in \eqref{eq:xbar-short}, \eqref{eq:ybar-short} and \eqref{eq:zbar-short}
	are tight.
\end{lem}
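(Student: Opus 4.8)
\textbf{Proof plan for Lemma~\ref{lem:tight-twotime} (Tightness).}
The plan is to verify the two sufficient conditions of Proposition~\ref{prop:tight-suff-slow} for each of the three sequences $\{\xbar_n(\cdot)\}$, $\{\zbar_n(\cdot)\}$, and $\{\ybar_n(\cdot)\}$. The first condition is immediate: since Assumption~\ref{assump:mart-decouple-rate} (via Proposition~\ref{prop:mart-decouple-rate} and the discussion after \eqref{eq:xycheck-def}) gives $\EB\|\xcheck_n\|^2 + \EB\|\ycheck_n\|^2 = \OM(1)$, and $\zcheck_n = \ycheck_n - \sqrt{\kappa_{n-1}}\,B_2 B_1^{-1}\xcheck_n$ with $\kappa_n = o(1)$ also satisfies $\EB\|\zcheck_n\|^2 = \OM(1)$, Markov's inequality yields $\PB(\|\uu_n(0)\| \ge a) \le \OM(1)/a^2$, which is $\le \eta$ for $a$ large, uniformly in $n$. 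So the work is entirely in the second (equicontinuity) condition \eqref{eq:tight-suff-slow}.

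For the second condition I would reduce the continuous-time increment $\|\uu_n(s) - \uu_n(t)\|$ to a discrete sum over the rescaled iterates using the linear-interpolation construction: for $s < t$ in $[0,T]$ with $s \in (\sum_{i=n}^{k-1}\beta_i, \sum_{i=n}^{k}\beta_i]$ and $t$ in the block ending at index $\ell$, one has $\|\ybar_n(s) - \ybar_n(t)\| \le \sum_{j=k}^{\ell}\|\ycheck_{j+1} - \ycheck_j\|$ up to the fractional end-pieces, and the number of blocks $\ell - k$ is controlled by $|s-t|$ through Condition~\ref{assump:stepsize-twotime-nanb} / the diminishing step sizes (so that $\sum_{j=k}^{\ell}\beta_j \asymp |s-t|$ when $n$ is large, by the usual Riemann-sum-type estimate). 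Then I would plug in the one-step recursions: for $\xcheck_n$ from \eqref{eq:xcheck-twotime}, $\xcheck_{j+1} - \xcheck_j = -\alpha_j B_1 \xcheck_j - \sqrt{\alpha_j}\,\tilde\xi_j + R_j^x$; for $\zcheck_n$ from \eqref{eq:zcheck-twotime}, $\zcheck_{j+1} - \zcheck_j = -\beta_j(B_3 - \invdiffslow\mI/2)\zcheck_j - \sqrt{\beta_j}\,\tilde\psi_j + R_j^z$; and for $\ycheck_n$ either directly from \eqref{eq:ycheck-twotime} or, more cleanly, via $\ycheck_j = \zcheck_j + \sqrt{\kappa_{j-1}}B_2B_1^{-1}\xcheck_j$, so that tightness of $\{\ybar_n\}$ follows from tightness of $\{\zbar_n\}$ and $\{\xbar_n\}$ plus the $\sqrt{\kappa_n}\to 0$ factor (this is the route suggested by the arrows into the ``Tightness of $\{\ybar_n(\cdot)\}$'' box in Figure~\ref{fig:proof-sketch}; if one instead works with \eqref{eq:ycheck-twotime} directly, the $\sqrt{\beta_j\alpha_j}B_2\xcheck_j$ term needs a separate but analogous bound).

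The estimate then splits the increment into a \emph{drift-plus-residual part} and a \emph{martingale (noise) part}. For the drift-plus-residual part I would use $\EB\|\xcheck_j\|^2 = \OM(1)$, the residual bounds from Lemmas~\ref{lem:check-twotime-xy}--\ref{lem:check-twotime-z} (e.g. $\EB\|R_j^z\|^2 = \OM(\beta_j^2)$, $\EB\|R_j^x\|^2 = \OM(\alpha_j\beta_j)$), and Minkowski/Cauchy--Schwarz to bound the $L^2$ norm of $\sum_{j=k}^\ell(\text{drift}_j + R_j)$ by $\OM(\sum_j \beta_j) = \OM(|s-t|)$, giving via Markov a term $\lesssim |s-t|^2/\eps^2$, i.e. $\gamma = 1$, $c = 2$ in \eqref{eq:tight-suff-slow}. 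For the martingale part $\sum_{j=k}^{\ell}\sqrt{\beta_j}\,\tilde\psi_j$ (resp. $\sqrt{\alpha_j}\,\tilde\xi_j$), I would invoke the Burkholder--Davis--Gundy inequality with the uniformly bounded fourth moments from Assumption~\ref{assump:noise-fclt}: $\EB\|\sum_{j=k}^\ell \sqrt{\beta_j}\tilde\psi_j\|^4 \lesssim (\sum_{j=k}^\ell \beta_j)^2 = \OM(|s-t|^2)$, hence by Markov a term $\lesssim |s-t|^2/\eps^4$, i.e. $\gamma_1 = 1$, $c_1 = 4$. Combining the two gives exactly a bound of the form \eqref{eq:tight-suff-slow} with $\gamma_1 = \gamma_2 = 1$, $c_1 = 4$, $c_2 = 2$, uniformly in $n$ (for $n$ beyond some $n_0$; small $n$ are handled by enlarging the constant $K$, since only finitely many are excluded — or one absorbs them by noting the estimates hold for all $n$ anyway once constants are chosen generously).

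The main obstacle I anticipate is the uniformity in $n$ of the block-counting step: one must show that $\sum_{j=k}^{\ell}\beta_j$ and $|s-t|$ are comparable \emph{with constants independent of the starting index $n$}, which relies on the regularity Conditions~\ref{assump:stepsize-twotime-ratiob}--\ref{assump:stepsize-twotime-nanb} ensuring the step sizes do not vary too wildly within a window, and on the fact that $\sqrt{\alpha_j}/\sqrt{\alpha_k}\to 1$ and $\sqrt{\beta_j}/\sqrt{\beta_k}\to 1$ across a block. A secondary subtlety is the fractional interpolation end-pieces (the partial first and last blocks in $(s,t)$): these contribute at most one extra increment $\|\ycheck_{k+1}-\ycheck_k\|$ each, which is $\OM_p(\sqrt{\beta_k})$ and hence negligible relative to $|s-t|^{1/2}$ when $\eps$ is not too small, but when $\eps$ is very small relative to $|s-t|$ one should check these are dominated by the same right-hand side; a standard trick is to note that \eqref{eq:tight-suff-slow} is only nontrivial for $\eps \lesssim |s-t|^{1/2}$ and handle the complementary regime trivially since the probability is then $0$ (the increment over a partial block plus a few full blocks is deterministically bounded) or absorbed into $K$.
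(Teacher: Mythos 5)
Your plan for $\{\xbar_n(\cdot)\}$ and $\{\zbar_n(\cdot)\}$ matches the paper's proof: verify Proposition~\ref{prop:tight-suff-slow}, split each increment into a drift-plus-residual part (bounded in $L^2$ by Cauchy--Schwarz, giving $(s-t)^2/\eps^2$) and a martingale part (bounded by Burkholder, giving $(s-t)^{p/2}/\eps^p$), with the block counting controlled uniformly in $n$ via the monotonicity of $n\alpha_n$, $n\beta_n$ (Lemma~\ref{lem:aux:bounded}). Your use of $p=4$ for the pure noise sums instead of the paper's $p\in(2,\tfrac{4}{1+(\Hsmooth\vee\Fsmooth\vee\Gsmooth)/2}]$ is fine, since in your split the residuals are handled in $L^2$ separately.

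The gap is in your treatment of $\{\ybar_n(\cdot)\}$, which is precisely the part the paper flags as the main difficulty. Your proposed shortcut --- deduce tightness of $\ybar_n$ from tightness of $\zbar_n$ and $\xbar_n$ via $\ycheck_j=\zcheck_j+\sqrt{\kappa_{j-1}}B_2B_1^{-1}\xcheck_j$ --- does not go through. The correction term $\sqrt{\kappa_{m-1}}B_2B_1^{-1}\xcheck_m$ must be interpolated on the $\beta$-time grid to form $\ybar_n-\zbar_n$, whereas the tightness of $\xbar_n$ controls increments of $\xcheck$ only over $\alpha$-time windows. A $\beta$-window of length $|s-t|$ spans indices whose $\alpha$-time length is $\approx|s-t|/\kappa_n\to\infty$, so the modulus-of-continuity bound for $\xbar_n$ is applied on a diverging horizon: the martingale contribution survives the $\sqrt{\kappa_n}$ prefactor (giving $\kappa_n^{p/2}\cdot(|s-t|/\kappa_n)^{p/2}=|s-t|^{p/2}$), but the drift contribution does not --- Cauchy--Schwarz yields $\kappa_n\cdot(|s-t|/\kappa_n)^2=|s-t|^2/\kappa_n\to\infty$. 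This is exactly the same obstruction as the term $\sum_k\sqrt{\beta_k\alpha_k}\,B_2\xcheck_k$ in \eqref{eq:ycheck-twotime}, which you dismiss as needing "a separate but analogous bound"; it is not analogous. The paper resolves it by unrolling the $\xhat$-recursion into products of the contraction matrices $A_j=\mI-\alpha_jB_1+\beta_jH^\star B_2$ and proving the key estimate $\bigl\|\sum_{k>l}\sqrt{\beta_k}\prod_{i}A_{k-i}\bigr\|\lesssim\sqrt{\beta_l}/\alpha_l$ (Lemma~\ref{lem:slow:stepsize-aux}), i.e.\ exploiting the geometric averaging of $\xcheck$ induced by the fast time scale so that the weighted sum behaves like $O_p(\sqrt{|s-t|/\kappa_n})$ rather than $O_p(|s-t|/\kappa_n)$. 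Without this (or an equivalent Abel-summation/mixing argument), condition \eqref{eq:tight-suff-slow} for $\ybar_n$ is not established. (Also note that in Figure~\ref{fig:proof-sketch} the tightness of $\{\ybar_n\}$ is derived from the recursion of $\ycheck_n$ directly; the $\zcheck$-reduction is used only for the finite-dimensional limits, not for tightness.)
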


By Proposition~\ref{prop:tight}, once tightness is established, proving the weak convergence of the random functions reduces to examining their finite-dimensional distributions.
From the definition of $\zcheck_{n}$ in \eqref{eq:zcheck-def} and the construction of $\ybar_{n}(\cdot)$ and $\zbar_{n}(\cdot)$ in \eqref{eq:ybar-short} and \eqref{eq:zbar-short}, it is straightforward to verify that the finite-dimensional distributions of $\ybar_{n}(\cdot)$ have the same weak convergence limits as those of $\zbar_{n}(\cdot)$.
Consequently, determining the limit of $\ybar_{n}(\cdot)$ now reduces to analyzing $\zbar_{n}(\cdot)$.

While Lemma~\ref{lem:tight-twotime} is highly powerful, establishing it is challenging, particularly for the tightness of $\{ \ybar_{n}(\cdot) \}_{n=1}^\infty$. This difficulty arises from the presence of the $\sqrt{ \beta_{n} \alpha_{n} } B_2 \xcheck_{n}$ term in \eqref{eq:ycheck-twotime}. The detailed proof of this lemma can be found in Appendix~\ref{sec:fclt:proof:tight}.

\subsection{Approximately Solving of Martingale Problems}\label{sec:fclt:mart-prob}

In this subsection, we demonstrate that 
the sequence 
$\{ \xbar_{n}(\cdot) \}_{n=1}^\infty$
\textit{approximately solves} the martingale problem for $(\AM^x,C^\infty_c (\RB^{d_x}) )$, where
$\AM^x$ is the generator defined as
\begin{equation}\label{eq:generator-twotime:Lx}
	\AM^x f(\xx) = \inner{- B_1 \xx }{ \nabla f(\xx) } + \frac{1}{2} \tr( \nabla^2 f(\xx)\, \Sigma_{\xi} ), \ \forall f \in C^\infty_c (\RB^{d_x}).
\end{equation}
Meanwhile, the sequence $\{ \zbar_{n}(\cdot) \}_{n=1}^\infty$
\textit{approximately solves} the martingale problem for $(\AM^y, C^\infty_c (\RB^{d_y}) )$, where
    \begin{equation}\label{eq:generator-twotime:Ly}
	\AM^y g(\yy) = \inner{- \Big( B_3 - \frac{\invdiffslow \mI}{2} \Big) \yy }{ \nabla g(\yy) } + \frac{1}{2} \tr( \nabla^2 g(\yy)\, \tilde{\Sigma}_{\psi} ), \ \forall g \in C^\infty_c (\RB^{d_y}) .
\end{equation}
Here the matrix $\widetilde{\Sigma}_{\psi}$ is defined in \eqref{eq:cov-z}.
Clearly, $\AM_x$ and $\AM_y$ are the generators associated to the SDEs in \eqref{eq:fclt-twotime:x-sde} and \eqref{eq:fclt-twotime:y-sde}, respectively.
The formal statement of this approximately solving of the martingale problem is as follows.

\begin{lem}[Approximately solving of martingale problems]\label{lem:mart-prob-twotime}
Suppose that Assumptions~\ref{assump:smooth} -- \ref{assump:stepsize-twotime}, \ref{assump:noise-fclt}, and \ref{assump:mart-decouple-rate} hold.
For each $n$, define the following filtrations
\begin{align}\label{eq:filtration-main}
	\begin{split}
	\FM^\xbar_{n,t} & = \begin{cases}
		\FM_{n}, &  t = 0; \\
		\FM_{m+1}, & t \in \left( \sum_{i=n}^{m-1} \alpha_{i}, \sum_{i=n}^{m} \alpha_{i} \right] \text{ for } m \ge n;
	\end{cases} \\
	\FM^\zbar_{n,t} & = \begin{cases}
		\FM_{n}, &  t = 0; \\
		\FM_{m+1}, & t \in \left( \sum_{i=n}^{m-1} \beta_{i}, \sum_{i=n}^{m} \beta_{i} \right] \text{ for } m \ge n.
	\end{cases} 
	\end{split}
\end{align}

\begin{enumerate}[(i)]
    \item  For any $f \in C^\infty_c(\RB^{d_x})$ and $t \ge 0$, with $\AM^x$ defined in \eqref{eq:generator-twotime:Lx}, we have the following decomposition
    \begin{equation*}
    f(\xbar_{n}(t)) - f(\xbar_{n}(0)) - \int_0^{t} \AM^x f(\xbar_{n}(s))\, \d s
    = \MM_{n}^f(t) + \RM_{n}^f(t), 
    \end{equation*}
    where for each $n$, $\MM_{n}^f(t)$ is a martingale w.r.t. $(\FM^\xbar_{n,t} )_{t \ge 0}$ and for each $t$, $\EB | \RM_{n}^f(t) | \to 0$ as $n \to \infty$. 
    \item For any $g \in C^\infty_c (\RB^{d_y})$ and $t \ge 0$, with $\AM^y$ defined in \eqref{eq:generator-twotime:Ly}, we have the following decomposition
    \begin{equation*}
    g(\zbar_{n}(t)) - g(\zbar_{n}(0)) - \int_0^{t} \AM^y f(\zbar_{n}(s))\, \d s
    = \MM_{n}^g(t) + \RM_{n}^g(t), 
    \end{equation*}
    where for each $n$, $\MM_{n}^g(t)$ is a martingale w.r.t. $( \FM^\zbar_{n,t})_{t \ge 0}$ and for each $t$, $\EB | \RM_{n}^g(t) | \to 0$ as $n \to \infty$.
\end{enumerate}
\end{lem}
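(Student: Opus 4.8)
The plan is to treat both parts in parallel, since the structure is identical; I describe part~(i) in detail and indicate the modifications for part~(ii). First I would reduce the continuous-time expression to a discrete telescoping sum. Fix $f\in C^2_c(\RB^{d_x})$ and $t>0$, and let $m=m(n,t)$ be the index with $t\in\big(\sum_{i=n}^{m-1}\alpha_i,\sum_{i=n}^{m}\alpha_i\big]$. Since $\xbar_n(\cdot)$ is piecewise linear and $\AM^x f$ is continuous and bounded, both $f(\xbar_n(t))-f(\xbar_n(0))$ and $\int_0^t \AM^x f(\xbar_n(s))\,\d s$ differ from their ``grid'' counterparts $\sum_{k=n}^{m-1}\big(f(\xcheck_{k+1})-f(\xcheck_k)\big)$ and $\sum_{k=n}^{m-1}\alpha_k\,\AM^x f(\xcheck_k)$ by $\OM_p$-terms of size $\OM(\alpha_m)+\OM(\alpha_n)$ (using that between consecutive grid points $\xbar_n$ moves by $\xcheck_{k+1}-\xcheck_k=\OM_p(\sqrt{\alpha_k})$ by Lemma~\ref{lem:check-twotime-xy}, and $f$ has bounded first derivative); these collect into $\RM_n^f(t)$ and vanish in $L^1$ by Assumption~\ref{assump:stepsize-twotime}\ref{assump:stepsize-twotime-tozero}. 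It therefore suffices to analyze the increment $f(\xcheck_{k+1})-f(\xcheck_k)$ via a second-order Taylor expansion.

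Next I would Taylor-expand. Write $\xcheck_{k+1}-\xcheck_k = -\alpha_k B_1\xcheck_k - \sqrt{\alpha_k}\,\tilde\xi_k + R_k^x$ from \eqref{eq:xcheck-twotime}. Then
\begin{align*}
f(\xcheck_{k+1})-f(\xcheck_k)
= \inner{\nabla f(\xcheck_k)}{\xcheck_{k+1}-\xcheck_k}
+ \tfrac12 \inner{\nabla^2 f(\xcheck_k)(\xcheck_{k+1}-\xcheck_k)}{\xcheck_{k+1}-\xcheck_k}
+ r_k,
\end{align*}
where $r_k$ is the Taylor remainder, bounded by $\|\nabla^2 f\|_{\mathrm{osc}}\,\|\xcheck_{k+1}-\xcheck_k\|^2$ or by $\OM(\|\xcheck_{k+1}-\xcheck_k\|^3)$ on the support of $f$; using the $p$-th moment bounds on $R_k^x$ and the fourth-moment bound on $\tilde\xi_k$ one gets $\sum_k \EB|r_k| = o(1)$. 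In the first-order term, $\inner{\nabla f(\xcheck_k)}{-\alpha_k B_1\xcheck_k}$ contributes the drift $-\alpha_k\inner{B_1\xcheck_k}{\nabla f(\xcheck_k)}$; the term $\inner{\nabla f(\xcheck_k)}{-\sqrt{\alpha_k}\,\tilde\xi_k}$ is a martingale difference w.r.t.\ $\FM_{k+1}$ (since $\EB[\tilde\xi_k\mid\FM_k]=0$); and $\inner{\nabla f(\xcheck_k)}{R_k^x}$ splits into $\inner{\nabla f(\xcheck_k)}{\EB[R_k^x\mid\FM_k]}$, which is $o(\sqrt{\alpha_k\beta_k})$ in $L^1$ and sums to $o(1)$, plus a martingale difference part $\inner{\nabla f(\xcheck_k)}{R_k^x-\EB[R_k^x\mid\FM_k]}$ with $L^1$-norm $\OM(\sqrt{\alpha_k\beta_k})$, again summing to $o(1)$. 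In the second-order term, only $\alpha_k\,\tilde\xi_k\tilde\xi_k^\top$ survives at order $\alpha_k$; replacing $\EB[\tilde\xi_k\tilde\xi_k^\top\mid\FM_k]$ by its limit $\Sigma_\xi$ (here one invokes Assumption~\ref{assump:noise-fclt}: since $\tilde\xi_k=\xi_k-\kappa_k H^\star\psi_k$ and $\kappa_k\to0$, the conditional covariance of $\tilde\xi_k$ converges in probability to $\Sigma_\xi$) yields $\tfrac12\alpha_k\tr(\nabla^2 f(\xcheck_k)\Sigma_\xi)$ up to an $o(\alpha_k)$-in-expectation error and a martingale difference remainder; the cross term $\alpha_k^{3/2}$ and $R_k^x$-quadratic terms are lower order. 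Collecting, $\sum_{k=n}^{m-1}\big[f(\xcheck_{k+1})-f(\xcheck_k)\big] - \sum_{k=n}^{m-1}\alpha_k\,\AM^x f(\xcheck_k) = \MM_n^f(t) + (\text{$L^1$-vanishing})$, where $\MM_n^f(t)$ is the sum of all the martingale-difference pieces and is a martingale w.r.t.\ $(\FM_{n,t}^{\xbar})_{t\ge0}$ by construction. Part~(ii) is identical with $\xcheck$, $\alpha$, $\tilde\xi$, $B_1$, $\Sigma_\xi$ replaced by $\zcheck$, $\beta$, $\tilde\psi$, $B_3-\frac{\invdiffslow}{2}\mI$, $\widetilde\Sigma_\psi$, using Lemma~\ref{lem:check-twotime-z}; the only extra point is that $\EB[\tilde\psi_k\tilde\psi_k^\top\mid\FM_k]\overset{p}{\to}\widetilde\Sigma_\psi$, which follows from Assumption~\ref{assump:noise-fclt} and the definition $\tilde\psi_k=\psi_k-B_2B_1^{-1}\tilde\xi_k$ (again $\kappa_k\to0$) together with the formula \eqref{eq:cov-z}.

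The main obstacle I anticipate is the bookkeeping needed to show that the ``interval-boundary'' and replacement errors are uniformly controlled: one must verify that all the $o(\cdot)$ and $\OM(\cdot)$ bounds from Lemmas~\ref{lem:check-twotime-xy} and \ref{lem:check-twotime-z} can be summed over $k=n,\dots,m-1$ against the step-size weights and still vanish as $n\to\infty$ for \emph{fixed} $t$ (so $m-n\to\infty$ but $\sum_{k=n}^{m-1}\alpha_k\le t$ stays bounded). The key leverage is that the residual $L^1$-errors are of the form $o(\alpha_k)$ or $o(\sqrt{\alpha_k\beta_k})=o(\alpha_k)$, whose partial sums are dominated by $o(1)\cdot\sum_{k=n}^{m-1}\alpha_k = o(1)\cdot\OM(t)$, while the martingale-difference remainders have conditional second moments of order $o(\alpha_k^{?})$ small enough that $\EB|\cdot|$ of their sum is controlled by $\big(\sum \EB\|\cdot\|^2\big)^{1/2}$ via orthogonality and still tends to zero. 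A secondary subtlety is that $f$ having compact support is what makes $\nabla f$, $\nabla^2 f$ bounded and the Taylor remainder integrable against $\OM_p(1)$ iterates without any a priori almost-sure boundedness of $\xcheck_n$; this is why the statement restricts to $C_c^2$ test functions, in accordance with the generator domain in \eqref{eq:generator-general}.
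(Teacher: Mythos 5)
Your proposal is correct and follows essentially the same route as the paper: the paper first proves a discrete one-step generator lemma via Taylor expansion of $f(\xcheck_{k+1})-f(\xcheck_k)$ using the recursions of Lemmas~\ref{lem:check-twotime-xy} and \ref{lem:check-twotime-z}, defines $\MM_n^f(t)$ as the Doob-compensated telescoping sum (equivalent to your ``sum of all martingale-difference pieces''), and controls the interpolation/boundary and residual terms in $L^1$ exactly as you outline. The only caveats are cosmetic: the per-step boundary and Riemann-sum errors are $\OM(\sqrt{\alpha_n})$ rather than $\OM(\alpha_n)$ (still vanishing), and the cubic Taylor-remainder bound should be replaced by the $2+\eps$ H\"older route with $\eps\le p-2$ since $p$ may be below $3$ — both of which you already implicitly cover.
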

The definition of filtrations in \eqref{eq:filtration-main} ensures that  each $\xbar_{n}(t)$ is adapted to $(\FM^\xbar_{n.t})_{t \ge 0}$ and each $\zbar_{n}(t)$ is adapted to $(\FM^\zbar_{n,t})_{t \ge 0}$.
The term \textit{approximately solving} indicates that the residual terms $\RM_{n}^f(t)$ and $\RM_{n}^g(t)$ converging to $0$ in the $L_1$ sense as $n \to \infty$.
The derivation of this lemma heavily relies on the one-step recursions presented in Lemmas~\ref{lem:check-twotime-xy} and \ref{lem:check-twotime-z}. The detailed proof of Lemma~\ref{lem:mart-prob-twotime} is provided in Appendix~\ref{sec:proof:fclt:mart-prob}.

\subsection{Integration of Pieces}
\label{sec:fclt:intergrate}

In this subsection, we combine the results in the previous subsections to derive the FCLTs in Theorem~\ref{thm:fclt-twotime}.
The combination is based on the following proposition.

\begin{prop}[{\citealp[Chapter~4, Theorem~8.10]{ethier2009markov}}]\label{prop:mart-prob-approx}
	Suppose that the generator $\AM$ is defined in \eqref{eq:generator-general}, $U(\cdot)$ is the solution to the martingale problem for $(\AM, C_c^\infty(\RB^d)))$, and the following conditions are met. 
	\begin{enumerate}[(i)]
		\item \label{prop:final:condition:unique} Given the initial distribution, the martingale problem for $(\AM, C_c^\infty(\RB^d) )$ has a unique solution.
		\item \label{prop:final:condition:tight} $\{ \uu_{n}(\cdot) \}_{n=0}^\infty$ is a tight sequence in $C([0, \infty); \RB^d)$.
		\item \label{prop:final:condition:initial}  $\uu_{n}(0) \weakconverge \uu(0)$.
		\item \label{prop:final:condition:mart-prob} For any $k \ge 1$, $0 \le t_1 < t_2 < \dots < t_k \le t \le t+s $, $f \in C^\infty_c (\RB^d)$ and $h_1, h_2, \dots, h_k \in C_c (\RB^d)$, 
		\begin{equation}\label{eq:generator-condition}
			\lim_{n \to \infty} \EB  \bigg[ \Big( f(\uu_{n}(t+s)) - f(\uu_{n}(t)) - \int_t^{t+s} \AM f(\uu_{n}(r)) \,\d r \Big) \prod_{i=1}^k h_i(\uu_{n}(t_i)) \bigg]  = 0.
		\end{equation}
	Then we have $\uu_{n}(\cdot) \weakconverge \uu(\cdot)$.
	\end{enumerate}
\end{prop}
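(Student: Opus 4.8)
The plan is the classical ``tightness plus identification of subsequential limits'' argument carried out in the Polish space $(C([0,\infty);\RB^d),\rho)$. Throughout, write $\uu(\cdot)$ for the unique (by the well-posedness hypothesis~\ref{prop:final:condition:unique}) solution of the martingale problem for $(\AM, C_c^2(\RB^d))$ whose initial distribution is the weak limit $\mathrm{Law}(\uu(0))$ appearing in the initial-law hypothesis~\ref{prop:final:condition:initial}. First I would invoke the tightness hypothesis~\ref{prop:final:condition:tight} together with Prokhorov's theorem (Proposition~\ref{prop:tight}\eqref{prop:tight:prok}): every subsequence of $\{\uu_n(\cdot)\}$ has a further subsequence $\{\uu_{n_j}(\cdot)\}$ converging weakly in $C([0,\infty);\RB^d)$ to some random function $\uu^\ast(\cdot)$. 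By Proposition~\ref{prop:tight}\eqref{prop:tight:converge}, it then suffices to show that every such subsequential limit satisfies $\uu^\ast(\cdot)\overset{d}{=}\uu(\cdot)$.

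Fix one such $\uu^\ast(\cdot)$. Its initial law is the right one: the evaluation $w\mapsto w(0)$ is $\rho$-continuous, so $\uu^\ast(0)$ is the weak limit of $\uu_{n_j}(0)$, which by the initial-law hypothesis~\ref{prop:final:condition:initial} has law $\mathrm{Law}(\uu(0))$. The substantive claim is that $\uu^\ast(\cdot)$ solves the martingale problem for $(\AM, C_c^2(\RB^d))$. Fix $f\in C_c^2(\RB^d)$; since $\nabla f$ and $\nabla^2 f$ are continuous with compact support, $\AM f(x)=\inner{-Bx}{\nabla f(x)}+\tfrac12\tr(\nabla^2 f(x)\Sigma)$ is continuous and bounded (indeed compactly supported) on $\RB^d$. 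For $k\ge1$, $0\le t_1<\cdots<t_k\le t\le t+s$ and $h_1,\dots,h_k\in C_c^2(\RB^d)$, consider the functional on $C([0,\infty);\RB^d)$
\[
	\Phi(w) := \Big( f(w(t+s)) - f(w(t)) - \int_t^{t+s} \AM f(w(r))\,\d r \Big)\prod_{i=1}^k h_i(w(t_i)).
\]
Each evaluation $w\mapsto w(r)$ is $\rho$-continuous, and if $w_m\to w$ in $\rho$ then $\AM f(w_m(r))\to \AM f(w(r))$ for every $r\in[t,t+s]$ with uniform bound $\|\AM f\|_\infty$, so dominated convergence makes $w\mapsto\int_t^{t+s}\AM f(w(r))\,\d r$ continuous and bounded; hence $\Phi$ is a bounded continuous functional. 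Weak convergence along the subsequence gives $\EB[\Phi(\uu_{n_j}(\cdot))]\to\EB[\Phi(\uu^\ast(\cdot))]$, while the generator hypothesis~\ref{prop:final:condition:mart-prob} gives $\EB[\Phi(\uu_n(\cdot))]\to0$, whence $\EB[\Phi(\uu^\ast(\cdot))]=0$ for every admissible choice of $f,k,t_i,s,t$ and $h_1,\dots,h_k$.

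Next I would promote this family of identities to the martingale property. Set $M^f_t:=f(\uu^\ast(t))-f(\uu^\ast(0))-\int_0^t\AM f(\uu^\ast(r))\,\d r$, which is bounded (hence integrable) and adapted to $\FM^{\uu^\ast}_t:=\sigma(\uu^\ast(r):0\le r\le t)$. The identity above reads $\EB\big[(M^f_{t+s}-M^f_t)\prod_i h_i(\uu^\ast(t_i))\big]=0$ for all $t_i\le t$ and all $h_i\in C_c^2(\RB^d)$. Since $C_c^2(\RB^d)$ is sup-norm dense in $C_0(\RB^d)$ and the latter is measure-determining on $\RB^d$, the multiplicative family $\{\prod_i h_i(\uu^\ast(t_i))\}$ generates $\FM^{\uu^\ast}_t$, and a functional monotone-class argument upgrades the identity to $\EB[(M^f_{t+s}-M^f_t)Z]=0$ for every bounded $\FM^{\uu^\ast}_t$-measurable $Z$; that is, $M^f$ is an $(\FM^{\uu^\ast}_t)$-martingale. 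As $f\in C_c^2(\RB^d)$ was arbitrary, $\uu^\ast(\cdot)$ solves the martingale problem for $(\AM, C_c^2(\RB^d))$ with initial distribution $\mathrm{Law}(\uu(0))$; by the well-posedness hypothesis~\ref{prop:final:condition:unique} this solution is unique in law, so $\uu^\ast(\cdot)\overset{d}{=}\uu(\cdot)$. Since this holds for every subsequential weak limit, Proposition~\ref{prop:tight}\eqref{prop:tight:converge} yields $\uu_n(\cdot)\weakconverge\uu(\cdot)$.

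The main obstacle is the identification step of the third and second paragraphs: (a) verifying that $\Phi$ is genuinely bounded and continuous for the uniform-on-compacts topology, where the only delicate ingredient is $\AM f\in C_b(\RB^d)$ for $f\in C_c^2(\RB^d)$ (needed so the drift-integral term is both continuous in the path and bounded), and (b) passing from the identities $\EB[\Phi(\uu^\ast)]=0$, indexed only by $C_c^2$ functions of finitely many past coordinates, to the full conditional-expectation statement defining a martingale, which relies on the measure-determining property of $C_c^2(\RB^d)$ plus a monotone-class argument. The remaining pieces — relative compactness from tightness, matching the initial law, and invoking well-posedness of the limiting martingale problem — are routine.
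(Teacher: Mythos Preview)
The paper does not supply its own proof of this proposition; it is quoted directly from \cite[Chapter~4, Theorem~8.10]{ethier2009markov} and used as a black box in Section~\ref{sec:fclt:intergrate}. Your proposal reproduces precisely the standard ``tightness plus identification of subsequential limits via the martingale problem'' argument that underlies that reference, and the key steps --- boundedness and $\rho$-continuity of $\Phi$ (using that $\AM f\in C_c(\RB^d)$ for $f\in C_c^2(\RB^d)$), the monotone-class upgrade from the $C_c^2$-indexed identities to the martingale property, and the appeal to well-posedness --- are all correctly identified and handled.
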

In this proposition, the choice of $h_i \in C_c(\RB^d)$ follows from \cite[Chapter~3, Proposition~4.4]{ethier2009markov}.

\begin{figure}[t]\centering
	\begin{tikzpicture}[>=stealth,every node/.style={shape=rectangle,draw,rounded corners, minimum width=1cm, node distance=0.8cm},]
		
		\node (xtight) at (0,1.5) 		{\small \begin{tabular}{c}
				Tightness of $\{ \xbar_{n}(\cdot) \}_{n=1}^\infty$ \\
				(Lemma~\ref{lem:tight-twotime})
		\end{tabular}};
		\node (martprob) at (0,-1.6)
		{\small \begin{tabular}{c}
				Approximately solving of \\
				martingale problems \\
				(Lemma~\ref{lem:mart-prob-twotime})
		\end{tabular} };
		
		\node (subseq) at (0,0)
		{\small \begin{tabular}{c}
				FCLT for convergent \\ subsequence 
				$\{ \xbar_{n_k}(\cdot) \}_{k=1}^\infty$
		\end{tabular} };
	
		\draw[->, line width=.3mm, color=cyan] (xtight) to (subseq);
		\draw[->, line width=.3mm, color=cyan] (martprob) to (subseq);

		\node (xinitial) at (3.8,0)
		{\small \begin{tabular}{c}
				$\xbar(0) \weakconverge \pi^\star$
		\end{tabular}};
		
		\node (xfclt) at (7.6,0)
		{\small \begin{tabular}{c}
				FCLT for $\{ \xbar_{n}(\cdot) \}_{n=1}^\infty$ \\
				(Theorem~\ref{thm:fclt-twotime}~\ref{thm:fclt-x})
		\end{tabular} };
		
		\draw[->, line width=.3mm, color=red] (subseq) to  (xinitial);
		\draw[->, line width=.3mm, color=red] (xtight) to  (xinitial);
		\draw[->, line width=.3mm] (xinitial) to  (xfclt);
		\draw[->, line width=.3mm] (xtight) to (xfclt);
		\draw[->, line width=.3mm] (martprob) to (xfclt);
		
	\end{tikzpicture}
	\caption{Illustration for the last step of proof.}
	\label{fig:proof-intergrate}
\end{figure}
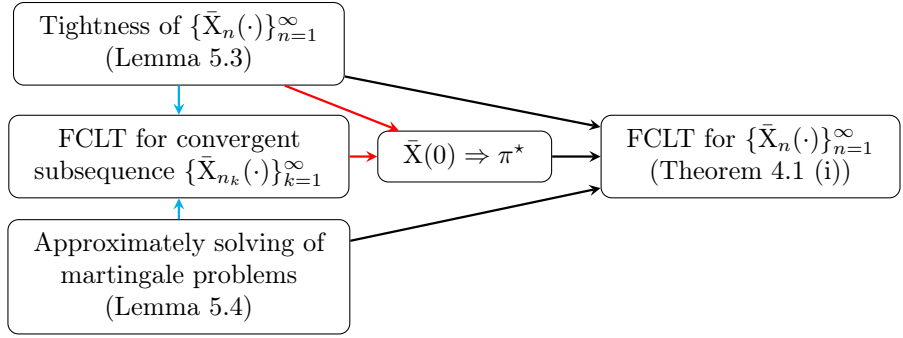

We now briefly illustrate how Proposition~\ref{prop:mart-prob-approx} is applied with $\AM = \AM_x$ and $\uu_{n}(\cdot) = \xbar_{n}(\cdot)$ to derive the FCLT for $\xbar_{n}(\cdot)$ in Theorem~\ref{thm:fclt-twotime}. The process can be divided into three parts, corresponding to the cyan, red, and black arrows in \Figref{fig:proof-intergrate}.

\begin{enumerate}[(i)]
	\item \textbf{FCLT for convergent subsequences}:
	Conditions~\ref{prop:final:condition:unique}, \ref{prop:final:condition:tight}, and \ref{prop:final:condition:mart-prob} are verified in Section~\ref{sec:fclt:prelim}, Lemmas~\ref{lem:tight-twotime} and \ref{lem:mart-prob-twotime}, respectively. Tightness ensures the existence of a convergent subsequence, and applying Proposition~\ref{prop:mart-prob-approx} shows that the subsequence converges to a solution of \eqref{eq:fclt-twotime:x-sde}.

	\item \textbf{Convergence of initial distributions}: 
	Assumption~\ref{assump:hurwitz} guarantees both the uniqueness of the invariant distribution $\pi^\star$ and the geometric ergodicity for \eqref{eq:fclt-twotime:x-sde} (see Proposition~\ref{prop:ergo}). 
     These properties ensure that all convergent subsequences of $\xbar_{n}(0)$ have the same limit $\pi^\star$, which implies the convergence of $\xbar_{n}(0)$.
		
	\item \textbf{FCLT for the entire sequence}: Applying Proposition~\ref{prop:mart-prob-approx} to the full sequence establishes the desired FCLT.
\end{enumerate}
Similarly, we could obtain the following FCLT for $\zbar_{n}(\cdot)$.
\begin{cor}[FCLT for the auxiliary sequence]\label{cor:fclt-z}
	Under the conditions of Theorem~\ref{thm:fclt-twotime},
	the stochastic process $\zbar_{n}(\cdot)$ defined in \eqref{eq:zbar-short} converges weakly to the stationary solution of the SDE in \eqref{eq:fclt-twotime:y-sde}.
\end{cor}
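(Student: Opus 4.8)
The plan is to repeat verbatim the integration argument of Section~\ref{sec:fclt:intergrate} that proves the FCLT for $\xbar_{n}(\cdot)$ in Theorem~\ref{thm:fclt-twotime}~\ref{thm:fclt-x}, only with the triple $(\AM^x,\xbar_{n}(\cdot),\{\alpha_{n}\})$ and the invariant law $\pi^\star$ replaced throughout by $(\AM^y,\zbar_{n}(\cdot),\{\beta_{n}\})$ and $\NM(0,\Sigma_y)$. This is legitimate because the two structural inputs required by Proposition~\ref{prop:mart-prob-approx} are already available for the auxiliary trajectories: Lemma~\ref{lem:tight-twotime} gives tightness of $\{\zbar_{n}(\cdot)\}_{n\ge1}$ in $C([0,\infty);\RB^{d_y})$, and Lemma~\ref{lem:mart-prob-twotime}(ii) shows that $\{\zbar_{n}(\cdot)\}$ approximately solves the martingale problem for $(\AM^y,C^2_c(\RB^{d_y}))$.

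First I would verify Conditions~\ref{prop:final:condition:unique}, \ref{prop:final:condition:tight} and \ref{prop:final:condition:mart-prob} of Proposition~\ref{prop:mart-prob-approx} with $\AM=\AM^y$ and $\uu_{n}(\cdot)=\zbar_{n}(\cdot)$. Condition~\ref{prop:final:condition:unique} holds because \eqref{eq:fclt-twotime:y-sde} is linear with constant diffusion, hence an Ornstein-Uhlenbeck SDE with a unique weak solution, so by the equivalence between SDEs and martingale problems recalled in Section~\ref{sec:fclt:prelim:mart} the martingale problem for $(\AM^y,C^2_c(\RB^{d_y}))$ is uniquely solvable once the initial law is fixed; Condition~\ref{prop:final:condition:tight} is Lemma~\ref{lem:tight-twotime}. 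For Condition~\ref{prop:final:condition:mart-prob} I would subtract the decomposition of Lemma~\ref{lem:mart-prob-twotime}(ii) at times $t+s$ and $t$: since each $h_i\in C^2_c$ is bounded and $\zbar_{n}(t_i)$ is $\FM^\zbar_{n,t}$-measurable for $t_i\le t$, the martingale increment $\MM^g_{n}(t+s)-\MM^g_{n}(t)$ drops out after multiplying by $\prod_i h_i(\zbar_{n}(t_i))$ and taking expectations, leaving a quantity bounded by $\big(\prod_i\|h_i\|_\infty\big)\big(\EB|\RM^g_{n}(t+s)|+\EB|\RM^g_{n}(t)|\big)\to0$. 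Combined with Proposition~\ref{prop:tight}, this already yields an FCLT along convergent subsequences: any subsequence of $\{\zbar_{n}(\cdot)\}$ has a further subsequence converging weakly to the Ornstein-Uhlenbeck process solving \eqref{eq:fclt-twotime:y-sde} from some (a priori unknown) initial law.

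The hard part is Condition~\ref{prop:final:condition:initial}, namely showing $\zbar_{n}(0)=\zcheck_{n}\weakconverge\NM(0,\Sigma_y)$; here I would reproduce the ``look-back'' argument of the $\xbar_{n}$ case. From $\EB\|\zcheck_{n}\|^2=\OM(1)$ (which follows from Assumption~\ref{assump:mart-decouple-rate} via \eqref{eq:zcheck-def}), the laws of $\zcheck_{n}$ are tight; let $\Pi$ be the resulting compact set of subsequential limits, noting $\sup_{\nu\in\Pi}\int\|x\|^2\,\d\nu<\infty$ by Fatou's lemma. Fix $\mu\in\Pi$ with $\zcheck_{n_k}\weakconverge\mu$ and fix $T>0$; choosing $m_k<n_k$ with $\sum_{i=m_k}^{n_k-1}\beta_{i}\to T$ (possible since $\sum_i\beta_{i}=\infty$), a short $L^2$ computation with the recursion \eqref{eq:zcheck-twotime} (exactly as for $\xbar_{n}$, using the martingale cancellation of the noise terms) gives $\zbar_{m_k}(T)=\zcheck_{n_k}+o_p(1)$. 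Passing to a further subsequence along which $\zbar_{m_k}(\cdot)$ converges, the previous step identifies the limit as the Ornstein-Uhlenbeck process started from some $\mu_0\in\Pi$, so $\zcheck_{n_k}\weakconverge P^y_T\mu_0$, where $(P^y_T)$ is the semigroup of \eqref{eq:fclt-twotime:y-sde}, whence $\mu=P^y_T\mu_0$. Assumption~\ref{assump:hurwitz} forces $\tfrac12\big((B_3-\tfrac{\invdiffslow}{2}\mI)+(B_3-\tfrac{\invdiffslow}{2}\mI)^\top\big)\succ0$, so \eqref{eq:fclt-twotime:y-sde} is geometrically ergodic (Proposition~\ref{prop:ergo}) and by the synchronous coupling $(P^y_T)$ contracts the Wasserstein-$2$ distance $W_2$ at some rate $\lambda>0$; combined with the uniform second moment this yields $W_2\big(\mu,\NM(0,\Sigma_y)\big)=W_2\big(P^y_T\mu_0,P^y_T\NM(0,\Sigma_y)\big)\le e^{-\lambda T}\sup_{\nu\in\Pi}W_2\big(\nu,\NM(0,\Sigma_y)\big)$ for every $T>0$, and letting $T\to\infty$ forces $\mu=\NM(0,\Sigma_y)$. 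Hence every subsequential limit of $\{\zcheck_{n}\}$ equals $\NM(0,\Sigma_y)$, and tightness gives $\zcheck_{n}\weakconverge\NM(0,\Sigma_y)$.

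With all four conditions of Proposition~\ref{prop:mart-prob-approx} verified---and since $\NM(0,\Sigma_y)$ is exactly the invariant distribution of \eqref{eq:fclt-twotime:y-sde} by the Lyapunov equation \eqref{eq:fclt-twotime:y-var}---the proposition applied to the whole sequence gives $\zbar_{n}(\cdot)\weakconverge\ermY(\cdot)$ with $\ermY(\cdot)$ the solution of \eqref{eq:fclt-twotime:y-sde} from the invariant initial law, i.e.\ the stationary solution. I expect the only genuinely delicate point, relative to the already-established Lemmas~\ref{lem:tight-twotime} and \ref{lem:mart-prob-twotime}, to be the identification of the limiting initial law in Condition~\ref{prop:final:condition:initial}: the look-back step needs both the geometric ergodicity of the limiting Ornstein-Uhlenbeck process (Assumption~\ref{assump:hurwitz}) and the uniform $L^2$ control of $\zcheck_{n}$ (Assumption~\ref{assump:mart-decouple-rate}), and it is precisely this interplay that pins the limit down to the \emph{stationary} solution rather than to some other solution of \eqref{eq:fclt-twotime:y-sde}.
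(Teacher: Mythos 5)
Your proposal is correct and follows essentially the same route as the paper, which itself disposes of this corollary by noting it is ``similar to the proof of Theorem~\ref{thm:fclt-twotime}~\ref{thm:fclt-x}'': verify Conditions~\ref{prop:final:condition:unique}, \ref{prop:final:condition:tight} and \ref{prop:final:condition:mart-prob} of Proposition~\ref{prop:mart-prob-approx} via Lemmas~\ref{lem:tight-twotime} and \ref{lem:mart-prob-twotime}(ii), then pin down the initial law $\zcheck_{n}\weakconverge\NM(0,\Sigma_y)$ by the look-back construction and ergodicity of the limiting Ornstein--Uhlenbeck process. The only (harmless) variation is the last identification step, where you use a synchronous-coupling $W_2$-contraction together with the uniform second moments of $\zcheck_{n}$, while the paper tests against $g\in C_c^2(\RB^{d_y})$ on a uniform compact set $K_\eps$ and invokes the compact-uniform ergodicity bound of Proposition~\ref{prop:ergo}; just note that the Hurwitz property only gives $\|e^{-(B_3-\invdiffslow\mI/2)t}\|\le Ce^{-\lambda t}$ with possibly $C>1$, so your contraction inequality should carry this constant, which is immaterial as $T\to\infty$.
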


From the construction of $\ybar_{n}(\cdot)$ and $\zbar_{n}(\cdot)$ in \eqref{eq:ybar-short} and \eqref{eq:zbar-short}, it follows that their finite-dimensional distributions share the same weak convergence limits. Consequently, by Proposition~\ref{prop:tight}, $\ybar_{n}(\cdot)$ and $\zbar_{n}(\cdot)$ converge to the same limit. The detailed proof for this subsection and Corollary~\ref{cor:clt} is deferred to the Appendix~\ref{sec:fclt:proof:intergrate}.

\section{Concluding Remarks}\label{sec:fclt:conclude}

In this paper, we have established decoupled FCLTs for two-time-scale SA, representing a significant advancement over prior single-point decoupled convergence results. Our results provide a more comprehensive characterization of the interdependence between different time scales. While the decoupling primarily pertains to the convergence rates, 
our separate limit theorems for the two time scales also offer an alternative perspective on decoupled convergence: one can derive an FCLT for each time scale in isolation, and the other time scale affects the limit at most through the coefficient matrices of the corresponding limiting SDE.

To achieve decoupled FCLTs, we have introduced a novel auxiliary sequence that effectively mitigates the primary influence of the fast-time-scale update on the slow-time-scale dynamics. We would believe this auxiliary sequence can serve as a useful tool for further investigations into two-time-scale algorithms.

Despite the progress presented in this work, several promising directions for future research remain open. First, extending our results to accommodate more general noise settings, such as Markovian or state-dependent noise, would be of significant interest. Second, establishing FCLTs for partial-sum processes of two-time-scale SA and leveraging these results for online inference presents another compelling avenue. Lastly, exploring scenarios involving more complex or heterogeneous time scales could further broaden the applicability of our framework.

We expect that our results can be extended to Markovian noise.
The main challenge is that Markovian noise typically exhibits strong temporal dependence and need not be conditionally unbiased given the history.
When the noise arises from a possibly non-stationary Markov chain, a standard approach is to use the Poisson equation to decompose the noise term into a martingale-difference term and a non-martingale remainder term.
The martingale-difference term can be handled using standard martingale arguments, as in our paper. The remainder term can be handled using telescoping arguments and the smoothness of the operators, together with the small one-step increments and the slow variation of the step sizes.
Such Poisson-equation-based decompositions have been widely used in the analysis of SA with Markovian noise and usually require suitable ergodicity conditions on the Markov chain \citep{li2023online,li2026convergence,kaledin2020finite,srikant2025rates,haque2025stochastic,wang2026steady,yang2026periodic}.

\begin{appendix}
\section{Intermediate Results about the Assumptions }
\label{sec:fclt:proof:assump}

This section presents some intermediate results about the Assumptions, with their proofs deferred to Appendix~\ref{sec:fclt:proof:assump:proof}.

Assumption~\ref{assump:smoothH} is equivalent to the H\"older continuity of $\nabla H$.

\begin{assumpt}{\ref*{assump:smoothH}$\dagger$}[$\Hsmooth$-H\"older continuity of $\nabla H$]
	\label{assump:H:holder-assump}
	Assume that $H(\cdot)$ is differentiable and there exist constants $\Hholder \ge 0$ and $\Hsmooth \in [0,1]$ such that $\forall \, y_1, y_2 \in \RB^{d_y}$, 
	\begin{equation}
		\label{assump:H:holder}
		\| \nabla H(y_1) - \nabla H(y_2) \|
		\le \Hholder \| y_1 - y_2 \|^{\Hsmooth}.
	\end{equation}
\end{assumpt}

\begin{prop}[{\cite[Proposition~1]{han2024finite}}]\label{prop:holder-equiv}
	Suppose $\Hsmooth \in [0,1]$.
	Under Assumption~\ref{assump:H:holder-assump},
	Assumption~\ref{assump:smoothH} holds with $S_H = \frac{\Hholder}{1+\Hsmooth} $;
	under Assumption~\ref{assump:smoothH}, Assumption~\ref{assump:H:holder-assump} holds with $\Hholder = 2^{1-\Hsmooth} \sqrt{1+\Hsmooth} \left( \frac{1+\Hsmooth}{\Hsmooth} \right)^\frac{\Hsmooth}{2} S_H$
		(we make the contention that $\left( \frac{1+\Hsmooth}{\Hsmooth} \right)^\frac{\Hsmooth}{2} = 1$ if $\Hsmooth = 0$). 
\end{prop}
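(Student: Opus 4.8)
The plan is to prove the two implications separately, since they call for genuinely different tools.

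\textbf{The direction Assumption~\ref{assump:H:holder-assump} $\Rightarrow$ Assumption~\ref{assump:smoothH}} is routine. Since $H$ is differentiable with a H\"older---hence continuous---gradient, the fundamental theorem of calculus along the segment joining $y_2$ to $y_1$ gives $H(y_1)-H(y_2)=\int_0^1 \nablaH(y_2+t(y_1-y_2))(y_1-y_2)\,\d t$, and subtracting $\nablaH(y_2)(y_1-y_2)$ written as $\int_0^1 \nablaH(y_2)(y_1-y_2)\,\d t$ leaves an integrand bounded, via \eqref{assump:H:holder}, by $\Hholder\,\|t(y_1-y_2)\|^{\Hsmooth}\,\|y_1-y_2\|$; integrating $\int_0^1 t^{\Hsmooth}\,\d t=1/(1+\Hsmooth)$ yields the remainder bound of Assumption~\ref{assump:smoothH} with $S_H=\Hholder/(1+\Hsmooth)$.

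\textbf{The reverse direction Assumption~\ref{assump:smoothH} $\Rightarrow$ Assumption~\ref{assump:H:holder-assump}} is the substantive part and the main obstacle, because Assumption~\ref{assump:smoothH} controls only the first-order Taylor remainder at a single base point whereas the conclusion must compare two gradients along every direction. Fix $y_1\neq y_2$ (the inequality being trivial otherwise), set $h:=y_1-y_2$, $r:=\|h\|$, $A:=\nablaH(y_1)-\nablaH(y_2)$, and let $v$ be an arbitrary unit vector. For a parameter $s>0$ put $m:=\tfrac12(y_1+y_2)$ and $z_{\pm}:=m\pm\tfrac{s}{2}v$. Applying Assumption~\ref{assump:smoothH} with base point $y_i$ at the two targets $z_{+},z_{-}$ and subtracting gives $H(z_+)-H(z_-)=\nablaH(y_i)(z_+-z_-)+E_i$ with $\|E_i\|\le S_H\big(\|z_+-y_i\|^{1+\Hsmooth}+\|z_--y_i\|^{1+\Hsmooth}\big)$, for $i=1,2$; taking the difference of these two identities (which eliminates the common $H(z_+)-H(z_-)$) and using $z_+-z_-=sv$ yields $s\|Av\|\le S_H\big(\|z_+-y_1\|^{1+\Hsmooth}+\|z_--y_1\|^{1+\Hsmooth}+\|z_+-y_2\|^{1+\Hsmooth}+\|z_--y_2\|^{1+\Hsmooth}\big)$.

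It then remains to optimize. A direct computation gives $\|z_{\pm}-y_i\|^2=\tfrac{r^2+s^2}{4}\pm\tfrac{s}{2}\langle h,v\rangle$ with two of the four terms carrying the $+$ sign and two the $-$ sign, so the bracket equals $2\big(\tfrac{r^2+s^2}{4}-\tfrac{s}{2}\langle h,v\rangle\big)^{(1+\Hsmooth)/2}+2\big(\tfrac{r^2+s^2}{4}+\tfrac{s}{2}\langle h,v\rangle\big)^{(1+\Hsmooth)/2}$; since $\Hsmooth\le 1$ makes $u\mapsto u^{(1+\Hsmooth)/2}$ concave on $[0,\infty)$ and $\tfrac{r^2+s^2}{4}\pm\tfrac{s}{2}\langle h,v\rangle\ge\tfrac{(r-s)^2}{4}\ge 0$, midpoint concavity bounds it by $4\big(\tfrac{r^2+s^2}{4}\big)^{(1+\Hsmooth)/2}=2^{1-\Hsmooth}(r^2+s^2)^{(1+\Hsmooth)/2}$. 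Hence $\|Av\|\le 2^{1-\Hsmooth}S_H\,(r^2+s^2)^{(1+\Hsmooth)/2}/s$ for every $s>0$; writing $s=tr$ turns the right side into $2^{1-\Hsmooth}S_H\,r^{\Hsmooth}\,(1+t^2)^{(1+\Hsmooth)/2}/t$, and a one-variable computation shows $\inf_{t>0}(1+t^2)^{(1+\Hsmooth)/2}/t=\sqrt{1+\Hsmooth}\,\big(\tfrac{1+\Hsmooth}{\Hsmooth}\big)^{\Hsmooth/2}$, attained at $t=1/\sqrt{\Hsmooth}$ when $\Hsmooth>0$ and approached as $t\to\infty$ when $\Hsmooth=0$ (matching the footnote convention). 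Taking the supremum over unit vectors $v$ gives $\|\nablaH(y_1)-\nablaH(y_2)\|\le\Hholder\,\|y_1-y_2\|^{\Hsmooth}$ with exactly the stated $\Hholder=2^{1-\Hsmooth}\sqrt{1+\Hsmooth}\,\big(\tfrac{1+\Hsmooth}{\Hsmooth}\big)^{\Hsmooth/2}S_H$. The delicate points are the cancellation enabled by the symmetric placement of $z_\pm$ about the midpoint $m$, the concavity step that delivers the bound uniformly over all directions, and the final scalar minimization that produces the sharp constant.
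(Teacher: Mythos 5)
Your proof is correct in both directions, and since the exact constant $2^{1-\Hsmooth}\sqrt{1+\Hsmooth}\left(\frac{1+\Hsmooth}{\Hsmooth}\right)^{\Hsmooth/2}$ drops out of precisely your symmetric two-point construction followed by the scalar minimization of $(1+t^2)^{(1+\Hsmooth)/2}/t$, your argument is essentially the one behind the cited result (the paper itself defers the proof to \cite[Proposition~B.1]{han2024finite} rather than reproducing it). Both the easy integral direction and the harder converse — including the nonnegativity check $\frac{r^2+s^2}{4}\pm\frac{s}{2}\langle h,v\rangle\ge\frac{(r-s)^2}{4}$, the concavity step, and the $\Hsmooth=0$ limit matching the footnote convention — are handled correctly.
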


Under Assumption~\ref{assump:smooth}, Assumption~\ref{assump:near-linear} can be replaced by the following weaker condition, which requires local linearity only in a neighborhood of $(x^\star, y^\star)$.
\begin{assumpt}{3.3\dag}[Nested local linearity of $F$ and $G$ up to order ($1+\Fsmooth, 1+\Gsmooth$)]
	\label{assump:near-linear-local}
	There exist matrices $B_1, B_2, B_3$ with compatible dimensions, constants $\SBF, \SBG \ge 0$, $\eps > 0$, and $\Fsmooth, \Gsmooth \in (0,1]$ such that for any $(x,y)$ satisfying $\max\{ \| x - H(y) \|, \| y-y^* \|\} \le \eps$,
	\begin{align}
		\|F(x, y) - B_1(x {-} H(y))\| &\le \SBF \left( \|x {-} H(y)\|^{1+\Fsmooth} +  \|y {-} y^{\star}\|^{1+\Fsmooth} \right),
		\label{assump:near-linear-F-local}
		\\
		\|G(x, y) - B_2(x {-} H(y))- B_3(y {-} y^{\star} )\| &\le \SBG \left( \|x {-} H(y)\|^{1+\Gsmooth} + \|y {-} y^{\star}\|^{1+\Gsmooth} \right). 
		\label{assump:near-linear-G-local}
	\end{align}
\end{assumpt}
\begin{prop}\label{prop:near-linear-tolocal} 
    Suppose that Assumptions~\ref{assump:smooth} and \ref{assump:near-linear-local} hold. Then Assumption~\ref{assump:near-linear} hold with $\SBF$ replaced by $\max\left\{ \SBF,  \frac{L_F + \| B_1 \|}{\eps^\Fsmooth} \right\}$ and $\SBG$ replaced by $\max\left\{ \SBG,  \frac{\LGx+ \| B_2 \| + \LGy + \|B_3 \|}{\eps^\Gsmooth} \right\}$.
\end{prop}
Moreover, the nested local linearity can be derived from the following standard local linearity.
\begin{assumpt}{3.3\ddag}[Local linearity up to order ($1+\Fsmooth, 1+\Gsmooth$)]
	\label{assump:local-linear-local}
	There exists matrices $A_{11}, A_{12}, A_{21}, A_{22}$ with compatible dimensions, constants $S_{A,F}, S_{A,G} \ge 0$, $\eps > 0$ and $\Fsmooth, \Gsmooth \in (0,1]$ such that for any $(x,y)$ satisfying $\max\{ \| x - x^* \|, \| y-y^* \|\} \le \eps$,
	\begin{align}
		\|F(x, y) - A_{11}(x -x^{\star}) - A_{12} (y-y^{\star}) \| &\le S_{A,F} \left( \|x - x^{\star}\|^{1+\Fsmooth} + \|y-y^{\star}\|^{1+\Fsmooth} \right), \label{assump:local-linear-F} \\
		\|G(x, y) - A_{21}(x -x^{\star}) - A_{22}(y-y^{\star} )\| &\le S_{A,G} \left( \|x - x^{\star}\|^{1+\Gsmooth} + \|y-y^{\star}\|^{1+\Gsmooth} \right). \label{assump:local-linear-G}
	\end{align}
\end{assumpt}
\begin{prop}\label{prop:local-linear-local}
Suppose that Assumptions~\ref{assump:smooth}, \ref{assump:smoothH}, and \ref{assump:local-linear-local} hold and $A_{11} $ is invertible.
If we further assume $\Hsmooth \ge \Fsmooth \vee \Gsmooth$, then Assumption~\ref{assump:near-linear} holds with parameters
		\begin{align*}
			B_1 & = A_{11},\ \ B_2 = A_{21},\ \ B_3 = A_{22}-A_{21}A_{11}^{-1}A_{12}.
            ,\\
			S_{B,F} &= S_{A,F} + \frac{L_F + \| A_{11} \| + L_F L_H + \|A_{12} \| }{\|\eps\|^{\Fsmooth}} + L_F (S_H \vee 2 L_H), \\
			S_{B,G} & = S_{A,G} + \frac{\LGx + \| A_{21} \| + \LGx L_H + \LGy + \|A_{22} \| }{\|\eps\|^{\Gsmooth}} + \LGx (S_H \vee 2 L_H).
		\end{align*}
\end{prop}

The following propositions suggest that with the Lipschitz conditions, we can always select a lower order without compromising the validity of Assumptions~\ref{assump:smoothH} and \ref{assump:near-linear}.

\begin{prop}\label{prop:H-reduce-order}
	Suppose that \eqref{assump:smooth:FH:ineqH} holds.
	If Assumption~\ref{assump:smoothH} holds up to order $1 + \Hsmooth$, then it also holds up to order $1+\delta'_H$ for any $\delta'_H \in [0, \Hsmooth] $.
\end{prop}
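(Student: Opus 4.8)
The plan is to prove this by splitting on the magnitude of $\|y_1 - y_2\|$ and controlling the Taylor remainder $E(y_1,y_2) := \|H(y_1) - H(y_2) - \nabla H(y_2)(y_1 - y_2)\|$ separately in a ``local'' regime $\|y_1 - y_2\| \le 1$ and a ``global'' regime $\|y_1 - y_2\| > 1$, showing in each case that $E(y_1,y_2)$ is bounded by a constant multiple of $\|y_1 - y_2\|^{1+\Hsmooth'}$. The final constant will be $S_H' := \max\{S_H, 2L_{H}\}$, where $S_H$ is the constant from Assumption~\ref{assump:smoothH} at order $1+\Hsmooth$ and $L_H$ is the Lipschitz constant in \eqref{assump:smooth:FH:ineqH}.

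In the local regime $\|y_1 - y_2\| \le 1$, I would simply observe that $1 + \Hsmooth' \le 1 + \Hsmooth$ and $t \mapsto t^{s}$ is nonincreasing in $s$ for $t \in [0,1]$, so $\|y_1-y_2\|^{1+\Hsmooth} \le \|y_1-y_2\|^{1+\Hsmooth'}$; Assumption~\ref{assump:smoothH} at order $1+\Hsmooth$ then gives $E(y_1,y_2) \le S_H \|y_1-y_2\|^{1+\Hsmooth} \le S_H \|y_1-y_2\|^{1+\Hsmooth'}$. In the global regime $\|y_1 - y_2\| > 1$, the key step is to establish the uniform bound $\|\nabla H(y)\| \le L_{H}$ for every $y$: for any unit vector $v$ one has $\nabla H(y) v = \lim_{t\to 0} t^{-1}\big(H(y+tv) - H(y)\big)$ with $\|H(y+tv)-H(y)\| \le L_{H}|t|$ by \eqref{assump:smooth:FH:ineqH}, so $\|\nabla H(y)v\| \le L_H$. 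With this in hand, the triangle inequality yields
\[
E(y_1,y_2) \le \|H(y_1)-H(y_2)\| + \|\nabla H(y_2)\|\,\|y_1-y_2\| \le 2L_{H}\|y_1-y_2\|,
\]
and since $\Hsmooth' \ge 0$ and $\|y_1-y_2\| > 1$ we have $\|y_1-y_2\| \le \|y_1-y_2\|^{1+\Hsmooth'}$, giving $E(y_1,y_2) \le 2L_{H}\|y_1-y_2\|^{1+\Hsmooth'}$.

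Combining the two regimes gives $E(y_1,y_2) \le \max\{S_H, 2L_{H}\}\,\|y_1-y_2\|^{1+\Hsmooth'}$ for all $y_1,y_2 \in \RB^{d_y}$, which is exactly Assumption~\ref{assump:smoothH} at order $1+\Hsmooth'$ with constant $S_H'$. I do not anticipate any substantive obstacle here; the only point that requires a short argument rather than a one-line inequality is the uniform bound $\|\nabla H(y)\| \le L_{H}$, i.e.\ the standard fact that the operator norm of the Jacobian of a differentiable map is dominated by its global Lipschitz constant, obtained by differentiating \eqref{assump:smooth:FH:ineqH} along rays.
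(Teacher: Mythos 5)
Your proof is correct and follows essentially the same route as the paper's: both arguments bound the remainder by $\min\{S_H\|y_1-y_2\|^{1+\Hsmooth},\ 2L_H\|y_1-y_2\|\}$ (the second bound coming from Lipschitzness of $H$ and the induced bound $\|\nabla H\|\le L_H$) and then split on the size of $\|y_1-y_2\|$ to turn each branch into a constant multiple of $\|y_1-y_2\|^{1+\Hsmooth'}$. The only cosmetic differences are that you place the cut at $\|y_1-y_2\|=1$ rather than at the crossover point $(2L_H/S_H)^{1/\Hsmooth}$, and that you make explicit the step $\|\nabla H(y)\|\le L_H$ which the paper leaves implicit.
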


\begin{prop}\label{prop:FG-reduce-order}
	Suppose that Assumption~\ref{assump:smooth} holds.
	If Assumption~\ref{assump:near-linear} holds up to order $(1+\Fsmooth, 1+\Gsmooth)$, then it also holds up to order $(1+\delta'_F, 1+\delta'_G)$ for any $\delta'_F \in (0, \Fsmooth)$ and $\delta'_G \in (0, \Gsmooth)$.
\end{prop}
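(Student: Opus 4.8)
The plan is to reduce the claim to a single scalar comparison between powers, after first establishing crude globally linear upper bounds on the left-hand sides of \eqref{assump:near-linear-F} and \eqref{assump:near-linear-G} valid irrespective of the size of the errors. I first note that the matrix-norm constraints $\|B_1\|\le L_F$, $\|B_2\|\le\LGx$, $\|B_3\|\le\LGy$ contain no exponents and so transfer verbatim; only the two residual inequalities need to be re-derived at the smaller orders $1+\Fsmooth'$, $1+\Gsmooth'$, and with the \emph{same} matrices $B_1,B_2,B_3$.

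\textbf{Step 1: linear bounds.} Since $F(H(y),y)=0$ by the defining property of $H$ in Assumption~\ref{assump:smooth}, the second Lipschitz inequality there gives $\|F(x,y)\|=\|F(x,y)-F(H(y),y)\|\le L_F\|x-H(y)\|$, whence
\[
\|F(x,y)-B_1(x-H(y))\|\;\le\;\|F(x,y)\|+\|B_1\|\,\|x-H(y)\|\;\le\;2L_F\|x-H(y)\|.
\]
Likewise, using $G(x^\star,y^\star)=0$, $H(y^\star)=x^\star$, and the third and fourth Lipschitz inequalities of Assumption~\ref{assump:smooth}, one gets $\|G(x,y)\|\le\LGx\|x-H(y)\|+\LGy\|y-y^\star\|$, so
\[
\|G(x,y)-B_2(x-H(y))-B_3(y-y^\star)\|\;\le\;2\LGx\|x-H(y)\|+2\LGy\|y-y^\star\|.
\]

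\textbf{Step 2: split on the error magnitude.} I would then fix $(x,y)$, write $u=\|x-H(y)\|$, $v=\|y-y^\star\|$, $M=\max\{u,v\}$, and argue by cases. If $M\le 1$, then $u,v\in[0,1]$ and, since $s\mapsto t^{s}$ is nonincreasing on $[0,1]$ for each fixed $t\in[0,1]$ (with the convention $0^{s}=0$), we have $u^{1+\Fsmooth}\le u^{1+\Fsmooth'}$ and $v^{1+\Fsmooth}\le v^{1+\Fsmooth'}$; thus \eqref{assump:near-linear-F} already gives the bound at order $1+\Fsmooth'$ with constant $\SBF$, and the same goes for $G$ at order $1+\Gsmooth'$ with constant $\SBG$. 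If $M>1$, then $M\le M^{1+\Fsmooth'}\le u^{1+\Fsmooth'}+v^{1+\Fsmooth'}$ (the middle step because $M>1$, the last because $M\in\{u,v\}$), so Step~1 yields $\|F(x,y)-B_1(x-H(y))\|\le 2L_F M\le 2L_F(u^{1+\Fsmooth'}+v^{1+\Fsmooth'})$, and similarly $\|G(x,y)-B_2(x-H(y))-B_3(y-y^\star)\|\le (2\LGx+2\LGy)M\le (2\LGx+2\LGy)(u^{1+\Gsmooth'}+v^{1+\Gsmooth'})$. Taking $\SBF'=\max\{\SBF,2L_F\}$ and $\SBG'=\max\{\SBG,2\LGx+2\LGy\}$ then verifies Assumption~\ref{assump:near-linear} at order $(1+\Fsmooth',1+\Gsmooth')$.

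I do not expect a genuine obstacle; the one delicate point is that the higher-order residual bounds \eqref{assump:near-linear-F}--\eqref{assump:near-linear-G} become vacuous once the errors are large, which is precisely why the crude linear bounds of Step~1 — obtained from the Lipschitz conditions together with $F(H(y),y)=0$ and $G(x^\star,y^\star)=0$ — are needed to handle the regime $M>1$. The companion Proposition~\ref{prop:H-reduce-order} follows the same template: \eqref{assump:smooth:FH:ineqH} together with differentiability gives $\|H(y_1)-H(y_2)-\nablaH(y_2)(y_1-y_2)\|\le 2L_H\|y_1-y_2\|$ (using $\|\nablaH\|\le L_H$), after which one compares $\|y_1-y_2\|^{1+\Hsmooth}$ with $\|y_1-y_2\|^{1+\Hsmooth'}$ on the two regimes $\|y_1-y_2\|\le1$ and $\|y_1-y_2\|>1$.
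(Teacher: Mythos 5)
Your proof is correct and follows essentially the same strategy as the paper's: derive crude global linear bounds on the residuals from the Lipschitz conditions in Assumption~\ref{assump:smooth} (using $F(H(y),y)=0$, $G(x^\star,y^\star)=0$, and the norm bounds on $B_1,B_2,B_3$), then use the original power bound where the errors are small and the linear bound where they are large. Your case split on $\max\{u,v\}\lessgtr 1$ is in fact cleaner than the paper's, which compares $\SBG\|\xhat\|^{1+\Gsmooth}$ against $2\LGx\|\xhat\|$ coordinate-by-coordinate and must separately handle the mixed case where one error is small and the other large.
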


With Assumptions~\ref{assump:smoothH} and \ref{assump:near-linear}, we can decompose these operators as linear parts and higher-order residual terms.

\begin{prop}[Operator decomposition]
	\label{prop:operator-decom}
	Suppose that Assumptions~\ref{assump:smooth} -- \ref{assump:near-linear} hold.
	With $\xhat_{n}$ and $\yhat_{n}$ defined in \eqref{alg:xyhat}, for any $\delta'_F \in (0, \Fsmooth]$, $\delta'_G \in (0, \Gsmooth]$ and $\delta'_H \in (0, \Hsmooth]$, we have the following results.
	\begin{enumerate}[(i)]
		\item $F(\xx_{n}, \yy_{n}) = B_1 \xhat_{n} + R_{n}^F$ with $\|R_{n}^F\| = \OM( \| \xhat_{n} \|^{1+\delta'_F} + \| \yhat_{n} \|^{1+\delta'_F} )
		$.
		\item $G(\xx_{n}, \yy_{n}) = B_2\xhat_{n} + B_3\yhat_{n} + R_{n}^{G}$ with $\|R_n^{G}\| = \OM( \| \xhat_{n} \|^{1+\delta'_G} + \| \yhat_{n} \|^{1+\delta'_G} )
		$. 
		\item $H(\yy_{n+1}) - H(\yy_{n}) = H^\star  (\yy_{n+1}-\yy_{n}) + R_{n}^{\nablaH} + R_{n}^H $ with $H^\star := \nabla H(\yy^\star)$, $\|R_{n}^H\| = \OM (\|\yy_{n+1}-\yy_{n}\|^{1+\delta'_H} )$ and $R_{n}^{\nablaH} =  (\nablaH(\yy_{n}) - \nablaH(\yy^{\star}))(\yy_{n+1}-\yy_{n})$.
	\end{enumerate}
\end{prop}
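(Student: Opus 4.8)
The plan is to read each decomposition directly off the local-linearity assumptions by substituting $x=\xx_n$, $y=\yy_n$, and then to pass to lower H\"older exponents via the reduce-order propositions. Throughout I would keep Assumption~\ref{assump:smooth} in force (as it is everywhere in the paper), since the order-reduction arguments rely on its global Lipschitz bounds.

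\textbf{Parts (i)--(ii).} Set $R_{n}^{F} := F(\xx_n,\yy_n) - B_1\xhat_n$ and $R_{n}^{G} := G(\xx_n,\yy_n) - B_2\xhat_n - B_3\yhat_n$. Since $\xhat_n = \xx_n - H(\yy_n)$ and $\yhat_n = \yy_n - \yy^\star$, plugging $(x,y)=(\xx_n,\yy_n)$ into \eqref{assump:near-linear-F} and \eqref{assump:near-linear-G} gives $\|R_{n}^{F}\| \le \SBF(\|\xhat_n\|^{1+\Fsmooth}+\|\yhat_n\|^{1+\Fsmooth})$ and $\|R_{n}^{G}\| \le \SBG(\|\xhat_n\|^{1+\Gsmooth}+\|\yhat_n\|^{1+\Gsmooth})$; these are exactly the claims at the top orders $\Fsmooth'=\Fsmooth$, $\Gsmooth'=\Gsmooth$. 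For strictly smaller $\Fsmooth'\in(0,\Fsmooth)$ and $\Gsmooth'\in(0,\Gsmooth)$, Proposition~\ref{prop:FG-reduce-order} tells us that Assumption~\ref{assump:near-linear} still holds with $(\Fsmooth,\Gsmooth)$ replaced by $(\Fsmooth',\Gsmooth')$, and the same substitution finishes it.

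\textbf{Part (iii).} I would insert the two natural linearizations of $H$. First write
\[
H(\yy_{n+1}) - H(\yy_n) = \Big[\,H(\yy_{n+1}) - H(\yy_n) - \nablaH(\yy_n)(\yy_{n+1}-\yy_n)\,\Big] + \nablaH(\yy_n)(\yy_{n+1}-\yy_n),
\]
calling the bracketed term $R_{n}^{H}$; Assumption~\ref{assump:smoothH} applied with $y_1=\yy_{n+1}$, $y_2=\yy_n$ gives $\|R_{n}^{H}\|\le S_H\|\yy_{n+1}-\yy_n\|^{1+\Hsmooth}$, and Proposition~\ref{prop:H-reduce-order} lowers the exponent to $1+\Hsmooth'$ for any $\Hsmooth'\in(0,\Hsmooth]$. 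Then split $\nablaH(\yy_n)(\yy_{n+1}-\yy_n) = H^\star(\yy_{n+1}-\yy_n) + R_{n}^{\nablaH}$ with $H^\star=\nablaH(\yy^\star)$ and $R_{n}^{\nablaH} = (\nablaH(\yy_n)-\nablaH(\yy^\star))(\yy_{n+1}-\yy_n)$, precisely as in the statement, so no further estimate on $R_{n}^{\nablaH}$ is needed here.

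\textbf{Expected obstacle.} There is essentially no analytic difficulty: the proposition merely repackages Assumptions~\ref{assump:smoothH}--\ref{assump:near-linear} in the error coordinates $(\xhat_n,\yhat_n)$. The one spot deserving care is the passage to strictly smaller exponents: bounding $\|\cdot\|^{1+\Fsmooth}$ by $\|\cdot\|^{1+\Fsmooth'}$ is legitimate only when the argument has norm at most $1$, so uniform control over the whole space genuinely comes from Propositions~\ref{prop:FG-reduce-order} and \ref{prop:H-reduce-order}, which interpolate the local-linearity residual against the Lipschitz bounds of Assumption~\ref{assump:smooth}. I would flag this dependence explicitly at the start of the proof.
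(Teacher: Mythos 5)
Your proposal is correct and matches the paper's (implicit) argument: the proposition is read directly off Assumptions~\ref{assump:smoothH} and \ref{assump:near-linear} at the error coordinates $(\xhat_n,\yhat_n)$, with Propositions~\ref{prop:H-reduce-order} and \ref{prop:FG-reduce-order} supplying the lower exponents. Your remark that the order-reduction step silently invokes the Lipschitz bounds of Assumption~\ref{assump:smooth}, which is not listed in the proposition's hypotheses, is an accurate and worthwhile observation.
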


\begin{rema}[Implications of Assumption~\ref{assump:stepsize-twotime}]\label{rema:step-size}
	We can derive the following results from Assumption~\ref{assump:stepsize-twotime}.
	\begin{itemize}
		\item
		Assumption~\ref{assump:stepsize-twotime} implies $\beta_n = \Omega(1/n)$ and $\alpha_{n} / \beta_{n} = \omega(1)$. Then we have
			$\ssum{n}{0}{\infty} \alpha_{n} = \ssum{n}{0}{\infty} \beta_{n} = \infty$,
		which is a ubiquitous condition in the SA analysis to ensure convergence to the target solution \citep{konda2004convergence, doan2022nonlinear, faizal2023functional}.
		
		\item 
		 Note that $\alpha_{n+1}^{-1} - \alpha_{n}^{-1} = \beta_{n+1}^{-1} \kappa_{n+1} - \beta_{n}^{-1} \kappa_{n} = \beta_{n+1}^{-1} (\kappa_{n+1} - \kappa_{n}) + \kappa_{n} ( \beta_{n+1}^{-1} - \beta_{n}^{-1} ) = \beta_{n+1}^{-1} \kappa_{n+1} ( 1 - \kappa_{n} / \kappa_{n+1} ) + \kappa_{n} ( \beta_{n+1}^{-1} - \beta_{n}^{-1} ) $.
		If we define  $\invdifffast := \lim_{n \to \infty} (\alpha_{n+1}^{-1} - \alpha_{n}^{-1} )$ then we have $\invdifffast = 0$. 
		This is why the choice of step sizes does not affect the SDE in \eqref{eq:fclt-twotime:x-sde}.
		The parameters $\invdiffslow$ and $\invdifffast$ also appear frequently in previous analysis \citep{konda2004convergence, faizal2023functional}.
	\end{itemize}
\end{rema}

\begin{rema}[Linear case]
	If $F$ and $G$ are linear operators, implying $H$ is also linear. the residual terms in Proposition~\ref{prop:operator-decom} vanish.
	Under this scenario, for Lemmas~\ref{lem:check-twotime-xy} and \ref{lem:check-twotime-z}, as well as Theorem~\ref{thm:fclt-twotime}, to hold, Condition~\ref{assump:stepsize-twotime-moreab} in Assumption~\ref{assump:stepsize-twotime} is no longer necessary. Moreover, the requirement for the boundedness of fourth moments can be relaxed to $p$-th moments for any $p>2$.
	For 
	polynomially diminishing step sizes 
	$\alpha_{n} = \alpha_{0} (n+1)^{-a}$ and $\beta_{n} = \beta_{0} (n+1)^{-b}$ with $a, b \in (0, 1]$, 
	it suffices to ensure $b / a > 1$.
\end{rema}

\section{Omitted Details for Section~\ref{sec:fclt:main}}\label{append:construct}
In this section, we present more details for the construction in Section~\ref{sec:fclt:construct} and the examples in Section~\ref{sec:fclt:exam}.

\subsection{Non-asymptotic Convergence Rates}\label{append:construct-non-asymp}
We first give the formal statement of Proposition~\ref{prop:mart-decouple-rate}.

\begin{prop}[Non-asymptotic convergence rates, {\cite[Theorem~3]{han2024finite}}]\label{prop:mart-decouple-rate-formal}
	Suppose that Assumptions~\ref{assump:smooth} -- \ref{assump:noise-fclt} and the following strong monotonicity conditions hold:
	(i) there exists a constant $\mu_{F} > 0$ such that for any $y \in \RB^{d_y}$ and $x \in \RB^{d_x}$, 
		$\left\langle x - H(y), F(x,y) - F(H(y),y) \right\rangle \geq \mu_{F} \|x-H(y)\|^2$;
	(ii) there exists a constant $\mu_{G} > 0$ such that for any $ y\in\RB^{d_y}$,
		$\left\langle y - y^{\star}, G(H(y),y) - G(H(y^\star), y^\star) \right\rangle \geq \mu_{G} \|y - y^{\star}\|^2$.
	Then we have $\EB \| \xhat_{n} \|^4 + \EB \| \yhat_{n} \|^4 = \OM (\alpha_{n}^2)$.
	If we further assume $b/a \le 1 + \Fsmooth/2$ in the step size selection of Example~\ref{exmp:fclt-stepsize} and $\alpha_0$ and $\beta_0$ are properly chosen, we have $\EB \| \xhat_{n} \|^2 = \OM(\alpha_{n}) $ and $\EB \| \yhat_{n} \|^2 = \OM(\beta_{n})$.
\end{prop}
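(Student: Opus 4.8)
The statement adapts \cite[Theorem~3.1]{han2024finite}, and the plan is to reproduce that argument: a bootstrap on the coupled moment recursions of the fast error $\xhat_n = x_n - H(y_n)$ and the slow error $\yhat_n = y_n - y^\star$, first obtaining a crude joint rate and then sharpening it into the decoupled rates via a correction of $\yhat_n$. First I would write down the exact recursions implied by \eqref{alg:xy}:
\begin{align*}
	\xhat_{n+1} &= \xhat_n - \alpha_n F(x_n, y_n) - \alpha_n \xi_n - \big( H(y_{n+1}) - H(y_n) \big), \\
	\yhat_{n+1} &= \yhat_n - \beta_n G(x_n, y_n) - \beta_n \psi_n,
\end{align*}
and then, using $F(H(y_n), y_n) = 0$ together with Assumption~\ref{assump:smooth} and Proposition~\ref{prop:operator-decom}, rewrite $F(x_n,y_n)$, $G(x_n,y_n)$ and $H(y_{n+1}) - H(y_n)$ as their linear parts plus higher-order residuals, recording the crude bound $\| H(y_{n+1}) - H(y_n) \| \lesssim \beta_n( \|\xhat_n\| + \|\yhat_n\| + \|\psi_n\| )$. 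A delicate point already here is that $H(y_{n+1}) - H(y_n)$ is not $\FM_n$-measurable, since $y_{n+1}$ carries $\psi_n$; I would split it into its $\FM_n$-conditional mean, to be merged with the drift, and a martingale-difference part, to be merged with the noise variance.

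\textbf{Crude joint bound.} Squaring the recursions, conditioning on $\FM_n$, and invoking the strong monotonicity conditions (i) and (ii) produces the contraction factors $1 - 2\mu_F \alpha_n + \OM(\alpha_n^2)$ and $1 - 2\mu_G \beta_n + \OM(\beta_n^2)$; all cross terms are then absorbed by Young's inequality into these contractions (losing only an $o(1)$ fraction, thanks to $\kappa_n = o(1)$), leaving residual increments of order $\alpha_n^2$ for the fast scale and $\beta_n\alpha_n + \beta_n^2$ for the slow scale, and the analogous fourth-moment recursions close because the fourth moments of $\xi_n,\psi_n$ are uniformly bounded by Assumption~\ref{assump:noise-fclt}. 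The resulting scalar recursions, of the form $u_{n+1} \le (1 - c\gamma_n + \OM(\gamma_n^2))u_n + (\text{residual})$, are solved by an induction against the target sequence; the step-size regularity in Assumption~\ref{assump:stepsize-twotime} (in particular $\gamma_{n-1}/\gamma_n = 1 + o(\gamma_n)$ and $n\gamma_n$ increasing) ensures the contraction slack dominates the variation of the target, giving $\EB\|\xhat_n\|^4 + \EB\|\yhat_n\|^4 = \OM(\alpha_n^2)$ and hence, by Jensen, $\EB\|\xhat_n\|^2, \EB\|\yhat_n\|^2 = \OM(\alpha_n)$.

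\textbf{Sharpening to the decoupled rates.} Feeding $\EB\|\yhat_n\|^2 = \OM(\alpha_n)$ back into the fast recursion, the coupling contributes only $\OM(\kappa_n\beta_n)\,\EB\|\yhat_n\|^2 + \OM(\beta_n^2) = o(\alpha_n)$ relative to the contraction, and the local-linearity residuals are $o(\alpha_n)$ because $\alpha_n^{1+\Fsmooth} = o(\beta_n)$; so the same contraction argument now closes at the sharp level $\EB\|\xhat_n\|^2 = \OM(\alpha_n)$. For the slow scale a direct substitution only yields $\OM(\alpha_n)$, since $\yhat_n$ is driven at order $\beta_n\|\xhat_n\|$; I would therefore pass to the corrected variable $\widetilde{y}_n := \yhat_n - \kappa_{n-1} B_2 B_1^{-1}\xhat_n$, the finite-time analogue of $\zcheck_n$ in \eqref{eq:zcheck-def}, whose recursion no longer contains the dominant $\OM(\beta_n)\|\xhat_n\|$ term, the influence of $\xhat_n$ being reduced to noise-level and higher-order pieces. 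Running the contraction machinery above on $\EB\|\widetilde{y}_n\|^2$ then gives $\OM(\beta_n)$, and since $\kappa_{n-1}^2\EB\|\xhat_n\|^2 = \OM(\kappa_n^2\alpha_n) = \OM(\kappa_n\beta_n) = o(\beta_n)$, we recover $\EB\|\yhat_n\|^2 = \OM(\beta_n)$.

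\textbf{Main obstacle.} I expect the hardest part to be controlling the local-linearity residual moments inside the corrected slow recursion: these involve $\|\xhat_n\|^{1+\Fsmooth}$, $\|\yhat_n\|^{1+\Gsmooth}$ and $\|y_{n+1}-y_n\|^{1+\Hsmooth}$, whose expectations must be shown to be of strictly smaller order than the contraction gain $\beta_n\,\EB\|\widetilde{y}_n\|^2$. This is exactly where the extra hypothesis $b/a \le 1 + \Fsmooth/2$ enters, in conjunction with $\Hsmooth \ge 1/2$ and Condition~\ref{assump:stepsize-twotime-moreab} of Assumption~\ref{assump:stepsize-twotime}, and where the ordering of the bootstrap is essential, since the crude bounds of the first step are needed even to estimate these residuals; making all of this quantitative requires the careful but otherwise routine calculations carried out in \cite{han2024finite}.
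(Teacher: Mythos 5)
The paper does not actually prove this proposition: it is imported verbatim from \cite[Theorem~3.1]{han2024finite}, and the appendix surrounding it offers only two remarks (that strong monotonicity is a global condition whereas Assumption~\ref{assump:hurwitz} is local, and a guess that the extra constraint $b/a \le 1+\Fsmooth/2$ is an artifact of the proof technique in the cited work). So there is no internal argument to compare your proposal against, and your submission should be judged as a reconstruction of the cited proof rather than as a replacement for a proof the paper supplies.

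As such a reconstruction, your plan is structurally sound and identifies the genuinely hard point correctly. In particular, you are right that a direct Cauchy--Schwarz/Young treatment of the cross term $\beta_n\,\EB\langle \yhat_n, B_2\xhat_n\rangle$ only closes the induction at the level $\EB\|\yhat_n\|^2=\OM(\alpha_n)$, not $\OM(\beta_n)$, so some mechanism exploiting that $\xhat_n$ is predominantly martingale noise is indispensable; your corrected variable $\yhat_n - \kappa_{n-1}B_2B_1^{-1}\xhat_n$ is exactly the (unrescaled) $\zcheck_n$ of \eqref{eq:zcheck-def}, and Lemma~\ref{lem:check-twotime-z} confirms that its one-step recursion has the offending drift removed. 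Two caveats. First, the paper presents $\zcheck_n$ as a novelty of the present work, so it is unlikely to be the device used in \cite{han2024finite} itself; your route is therefore a genuinely different (and arguably cleaner) path to the same slow-scale rate, at the price of having to re-establish the residual bounds of Lemma~\ref{lem:check-twotime-z} without presupposing Assumption~\ref{assump:mart-decouple-rate}, which those bounds currently invoke — you must check that the crude rates from your first bootstrap stage suffice there, otherwise the argument is circular. Second, your proposal remains a plan: the decisive quantitative estimates on the residuals involving $\|\xhat_n\|^{1+\Fsmooth}$, $\|\yhat_n\|^{1+\Gsmooth}$ and $\|y_{n+1}-y_n\|^{1+\Hsmooth}$, and the precise role of $b/a\le 1+\Fsmooth/2$, are deferred to the citation rather than carried out, so as a standalone proof it is incomplete.
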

Regarding the comparison between strong monotonicity and Assumption~\ref{assump:hurwitz}, strong monotonicity is a non-asymptotic condition that guarantees convergence from arbitrary initializations, whereas Assumption~\ref{assump:hurwitz} is a local condition that guarantees only the asymptotic behavior.
As for the additional requirement for the step sizes, we guess that this is due to proof techniques in \cite{han2024finite} and is not inherent to the problem itself.
\
Although \cite[Assumption~7]{han2024finite} imposes an additional condition on the step sizes, for
$\alpha_{n}=\alpha_{0}(n+1)^{-a}$ and $\beta_{n}=\beta_{0}(n+1)^{-b}$ with suitably chosen $\alpha_0$ and $\beta_0$, this condition holds for all sufficiently large $n$ and therefore does not affect our asymptotic analysis.

Moreover, in the linear setting, \cite{haque2023tight} derives a sharper control of the slow-iterate error by working with the norm induced by the solution to an appropriate Lyapunov equation, rather than the Euclidean norm.
We expect that extending this technique to the nonlinear case could lead to tighter non-asymptotic convergence rates.

\subsection{Details for the Construction}\label{append:construct-details}
In this subsection, we present an equivalent expression of the construction in \eqref{eq:xbar-short}, \eqref{eq:ybar-short} and \eqref{eq:zbar-short} for ease of subsequent proof.
We first introduce some necessary notation inspired by \cite{gadat2018stochastic, liang2023asymptotic}.
\begin{defn}[Time interpolation]\label{defn:time-inter}
	For the step size sequences $\{ \alpha_{n} \}_{n=0}^\infty$ and $\{ \beta_{n} \}_{n=0}^\infty$, $n, n' \in \NB$ with $n' > n$ and $t \ge 0$, we define
	\begin{align*}
		\Gamma^\alpha_{n, n'} := \ssum{k}{n}{n'-1} \alpha_{k},\ 
		N^\alpha(n, t) := \max \left\{ m \ge n \colon \Gamma^\alpha_{n, m} \le t \right\}, 
		\ \text{ and } \
		\underline{t}^\alpha_{n} = \Gamma^\alpha_{n, N^\alpha(n,t)} 
		, \\
		\Gamma^\beta_{n, n'} := \ssum{k}{n}{n'-1} \beta_{k}, \ N^\beta(n, t) := \max \left\{ m \ge n \colon \Gamma^\beta_{n,m} \le t \right\}, \ 
		\text{ and } \
		\underline{t}^\beta_{n} = \Gamma^\beta_{n, N^\beta(n,t)}.
	\end{align*}
	For $n' \le n$, we stipulate $\Gamma^\alpha_{n,n'} = \Gamma^\beta_{n,n'}  := 0 $.
\end{defn}

\begin{fact}
	Under Definition~\ref{defn:time-inter}, we have the following properties. 
	\begin{enumerate}[(i)]
		\item For $n_1 \le n_2 \le n_3$, $\Gamma^\alpha_{n_1, n_2} + \Gamma^\alpha_{n_2, n_3} = \Gamma^\alpha_{n_1, n_3}$. 
		\item 
		For $m \ge n$, $N^\alpha(n, \Gamma^\alpha_{n,m}) = m $.
		\item $\underline{t}^\alpha_{n} \le t <  \underline{t}^\alpha_{n} + \alpha_{N^\alpha(n,t)} $. When $t = \Gamma^\alpha_{n, m}$ for some $m \ge n$, $t = \underline{t}^\alpha_{n} $.
		\item  As $n \to \infty$, if $\alpha_{n} \to 0 $, then $\underline{t}^\alpha_{n} \uparrow t$.
	\end{enumerate}
	Similar properties hold if the superscript is replaced by $\beta$.
\end{fact}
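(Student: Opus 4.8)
The plan is to verify the four items in sequence, each reducing to an elementary property of the partial sums $\Gamma^\alpha_{n,m} = \ssum{k}{n}{m-1}\alpha_k$; the $\beta$-versions are word-for-word identical, so I would write out only the $\alpha$-case and remark that the $\beta$-labelled quantities are handled verbatim. Two standing observations drive everything. First, since the step sizes are strictly positive, for each fixed $n$ the map $m \mapsto \Gamma^\alpha_{n,m}$ is strictly increasing on $\{m \ge n\}$, so $\Gamma^\alpha_{n,m'} \le \Gamma^\alpha_{n,m}$ holds exactly when $m' \le m$. Second, Assumption~\ref{assump:stepsize-twotime} (via Remark~\ref{rema:step-size}) gives $\ssum{k}{0}{\infty}\alpha_k = \infty$, hence $\Gamma^\alpha_{n,m} \to \infty$ as $m \to \infty$; together with $\Gamma^\alpha_{n,n} = 0 \le t$ this makes the set $\{m \ge n : \Gamma^\alpha_{n,m} \le t\}$ nonempty and bounded above, so $N^\alpha(n,t)$ is a well-defined finite integer for every $t \ge 0$.

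\emph{Items (i) and (ii).} Item (i) is just additivity of a finite sum, $\ssum{k}{n_1}{n_2-1}\alpha_k + \ssum{k}{n_2}{n_3-1}\alpha_k = \ssum{k}{n_1}{n_3-1}\alpha_k$, with the degenerate ranges $n_1 = n_2$ or $n_2 = n_3$ absorbed into the convention $\Gamma^\alpha_{n,n'} = 0$ for $n' \le n$. For item (ii), the first standing observation shows $\{m' \ge n : \Gamma^\alpha_{n,m'} \le \Gamma^\alpha_{n,m}\} = \{m' : n \le m' \le m\}$, whose maximum is $m$; that is precisely $N^\alpha(n, \Gamma^\alpha_{n,m}) = m$.

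\emph{Items (iii) and (iv).} For item (iii), put $m := N^\alpha(n,t)$. Membership of $m$ in the defining set gives $\underline t^\alpha_n = \Gamma^\alpha_{n,m} \le t$, the left inequality. Maximality of $m$ means $m+1$ lies outside the set, so $\Gamma^\alpha_{n,m+1} > t$; rewriting $\Gamma^\alpha_{n,m+1} = \Gamma^\alpha_{n,m} + \alpha_m = \underline t^\alpha_n + \alpha_{N^\alpha(n,t)}$ via item (i) gives the right inequality. If moreover $t = \Gamma^\alpha_{n,m}$ for some $m \ge n$, then item (ii) gives $N^\alpha(n,t) = m$, whence $\underline t^\alpha_n = \Gamma^\alpha_{n, N^\alpha(n,t)} = \Gamma^\alpha_{n,m} = t$. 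For item (iv), item (iii) gives $0 \le t - \underline t^\alpha_n < \alpha_{N^\alpha(n,t)}$ for every $n$; since $N^\alpha(n,t) \ge n \to \infty$ and $\alpha_k \to 0$, the right-hand side vanishes, so $\underline t^\alpha_n \to t$ with $\underline t^\alpha_n \le t$ throughout, which is what the notation $\underline t^\alpha_n \uparrow t$ abbreviates. (Under the standing Assumption~\ref{assump:stepsize-twotime}, where the step sizes are additionally decreasing, one has $\alpha_{N^\alpha(n,t)} \le \alpha_n$ and hence $\underline t^\alpha_n \in (t - \alpha_n,\, t]$.)

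None of the steps is genuinely hard; the only points deserving care are (a) justifying that $N^\alpha(n,t)$ is finite, which is exactly where $\sum_k \alpha_k = \infty$ enters, and (b) interpreting ``$\uparrow$'' in item (iv) as convergence from below — the sequence $\{\underline t^\alpha_n\}$ lies in $(t-\alpha_n,\,t]$ but need not be monotone in $n$, since passing from $n$ to $n+1$ drops the front term $\alpha_n$ while appending a possibly smaller tail term, and only the sandwich bound (which is all that is used downstream) is needed. The same reasoning applies verbatim with the superscript $\alpha$ replaced by $\beta$.
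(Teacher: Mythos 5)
Your proof is correct, and since the paper states this Fact without proof, your elementary verification (additivity of partial sums, strict monotonicity of $m \mapsto \Gamma^\alpha_{n,m}$, and the sandwich bound $0 \le t - \underline{t}^\alpha_n < \alpha_{N^\alpha(n,t)}$) is exactly the argument the authors implicitly rely on. Your two added points of care — that finiteness of $N^\alpha(n,t)$ uses $\sum_k \alpha_k = \infty$, and that ``$\uparrow$'' in item (iv) should be read as convergence from below rather than literal monotonicity in $n$ — are both apt.
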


In other words,
$\Gamma^\alpha_{n, n'}$ denotes the accumulation of the step sizes from $n$-th iteration to $(n'-1)$-th iteration.
$N^\alpha(n, t)$ is the maximal integer such that $\Gamma^\alpha_{n, N^\alpha(n, t)} \le t $.
We define this value by $\underline{t}^\alpha_{n} $,
which can be viewed as an approximation of $t$ from the left.

The following lemma characterizes the magnitude of $N^\alpha(n,t)$ and $N^\beta(n,t)$. 
\begin{lem}\label{lem:aux:bounded}
	Suppose that Assumption~\ref{assump:stepsize-twotime} holds.
	There exist two positive constants $c_\alpha$ and $c_\beta$ such that for any $t>0$ and $n \in \NB$,
	\begin{equation*}
		n \le N^\alpha(n,t) \le 2^{ \left\lceil 2t/c_\alpha \right\rceil + 1} n \quad \text{ and } \quad 
		n \le N^\beta(n,t) \le 2^{ \left\lceil 2t/c_\beta \right\rceil + 1} n.
	\end{equation*}
	As a corollary, 
	$\frac{\alpha_{n}}{ \alpha_{N^\alpha (n, t) } } 
	\le 2^{ \left\lceil 2t/c_\alpha \right\rceil + 1}$ and $\frac{\beta_{n}}{ \beta_{N^\beta (n, t) } } 
	\le 2^{ \left\lceil 2t/c_\beta \right\rceil + 1}$.
\end{lem}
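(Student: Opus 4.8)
The plan is to handle the two inequalities separately and to rely almost entirely on Condition~\ref{assump:stepsize-twotime-nanb} of Assumption~\ref{assump:stepsize-twotime}, i.e.\ that $n\alpha_{n}$ and $n\beta_{n}$ are non-decreasing. The lower bound $N^{\alpha}(n,t)\ge n$ is immediate: since $\Gamma^{\alpha}_{n,n}=0\le t$, the index $n$ itself belongs to $\{m\ge n:\Gamma^{\alpha}_{n,m}\le t\}$, whose maximum is $N^{\alpha}(n,t)$; the same applies to $N^{\beta}$. So the entire content is the upper bound, and the dyadic form $2^{\lceil 2t/c_{\alpha}\rceil+1}n$ in the statement already points to a block-doubling estimate.

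For the upper bound I would first extract a universal positive constant: since $n\alpha_{n}$ is non-decreasing, $n\alpha_{n}\ge \alpha_{1}=:c_{\alpha}>0$ for every $n\ge 1$ (and likewise $n\beta_{n}\ge\beta_{1}=:c_{\beta}$); the statement is to be read for $n\ge 1$, which is all that the trajectory constructions use. Next, on each dyadic block $[2^{j}n,2^{j+1}n)$ I claim a uniform lower bound on the accumulated step sizes: using $k\alpha_{k}\ge (2^{j}n)\alpha_{2^{j}n}$ for $k\ge 2^{j}n$ and that the block has exactly $2^{j}n$ indices, all smaller than $2^{j+1}n$,
\[
\Gamma^{\alpha}_{2^{j}n,\,2^{j+1}n}=\sum_{k=2^{j}n}^{2^{j+1}n-1}\alpha_{k}\ \ge\ 2^{j}n\cdot\frac{(2^{j}n)\alpha_{2^{j}n}}{2^{j+1}n}\ =\ \frac{(2^{j}n)\alpha_{2^{j}n}}{2}\ \ge\ \frac{n\alpha_{n}}{2}\ \ge\ \frac{c_{\alpha}}{2}.
\]
Summing $J$ consecutive blocks and using additivity of the partial sums $\Gamma^{\alpha}$ gives $\Gamma^{\alpha}_{n,\,2^{J}n}\ge Jc_{\alpha}/2$. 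Hence, with $J:=\lceil 2t/c_{\alpha}\rceil+1$ we get $\Gamma^{\alpha}_{n,\,2^{J}n}\ge Jc_{\alpha}/2> t$, so every $m\ge 2^{J}n$ fails $\Gamma^{\alpha}_{n,m}\le t$, which forces $N^{\alpha}(n,t)<2^{J}n=2^{\lceil 2t/c_{\alpha}\rceil+1}n$. The argument for $N^{\beta}(n,t)$ is identical with $c_{\beta}$ in place of $c_{\alpha}$.

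The corollary then comes for free from the same monotonicity: writing $N:=N^{\alpha}(n,t)\ge n$, Condition~\ref{assump:stepsize-twotime-nanb} gives $N\alpha_{N}\ge n\alpha_{n}$, hence $\alpha_{n}/\alpha_{N}\le N/n\le 2^{\lceil 2t/c_{\alpha}\rceil+1}$, and symmetrically $\beta_{n}/\beta_{N^{\beta}(n,t)}\le 2^{\lceil 2t/c_{\beta}\rceil+1}$.

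I do not expect a serious obstacle here; the lemma is essentially bookkeeping. The only points needing care are: (i) isolating the right universal constant $c_{\alpha}$ — monotonicity of $n\alpha_{n}$ is precisely what supplies it, and no appeal to $\alpha_{n}\downarrow 0$ is required; (ii) the implicit restriction to $n\ge 1$, together with the trivial edge case $t=0$ where $N^{\alpha}(n,0)=n\le 2n$; and (iii) checking that the stated constant is not meant to be sharp, so that the slack factor produced directly by the block estimate needs no optimization.
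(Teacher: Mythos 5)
Your proof is correct and follows essentially the same dyadic block-doubling strategy as the paper: both show each doubling of the index contributes at least a fixed amount ($c_\alpha/2$) to the accumulated step sizes, so $N^\alpha(n,t)$ cannot exceed $2^{\lceil 2t/c_\alpha\rceil+1}n$. The only (harmless) difference is that you anchor the blocks at multiples of $n$ and extract the constant directly from the monotonicity of $n\alpha_n$ (Condition~(iv)), whereas the paper anchors blocks at absolute powers of two and obtains $\beta_n=\Omega(1/n)$ from Condition~(ii); both routes are valid and yield the same constant up to its exact value.
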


Lemma~\ref{lem:aux:bounded} shows that under our assumptions, both $N^\alpha(n,t)$ and $N^\beta(n,t)$ are of the order $\Theta(n)$. This property is useful for establishing the tightness in Section~\ref{sec:fclt:tight}. 
For the polynomially diminishing step sizes, 
we can further sharpen this description, as stated in the following proposition.
\begin{prop}\label{prop:aux:step-size}
    If $\alpha_n = \frac{\alpha_0}{(n+1)^\alpha}$ and $\beta_n = \frac{\beta_0}{(n+1)^\beta}$ with $0 < a,b \le 1$, then $N^\alpha(n,t) - n = \Theta(n^\alpha)$ and $N^\beta(n,t) - n = \Theta(n^\beta)$. 
\end{prop}

The proof of Lemma~\ref{lem:aux:bounded} and Proposition~\ref{prop:aux:step-size} is deferred to Appendix~\ref{append:construct-details-proof}.

\begin{fact}\label{fact2}
	Under Definition~\ref{defn:time-inter},
        $\xbar_{n}(\cdot)$ defined in \eqref{eq:xbar-short}
    satisfies the following properties.
	\begin{enumerate}[(i)]
		\item For $m \ge n$, $ \xbar_{n}(\Gamma^\alpha_{n, m} ) = \xcheck_{m}$.
		\item 	$\xbar_{n}(t) = \xbar_{n}( \underline{t}_n^\alpha ) + \frac{t - \underline{t}^\alpha_{n}}{ \alpha_{N^\alpha(n,t)} } \big(  \xbar_{n}( \underline{t}_n^\alpha + \alpha_{N^\alpha(n,t)} ) - \xbar_{n}( \underline{t}_n^\alpha ) \big)$.
		\item \label{fact2-3} For $m \ge n$ and $t \in [\Gamma^\alpha_{n,m}, \Gamma^\alpha_{n,m+1})$, $\xbar_{n}(\underline{t}^\alpha_{n}) = \xcheck_{m}$.
	\end{enumerate}
	Similar results also hold for $\ybar_{n}(\cdot)$ and $\zbar_{n}(\cdot)$ defined in \eqref{eq:ybar-short} and \eqref{eq:zbar-short}.
\end{fact}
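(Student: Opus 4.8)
The plan is to prove all three items by unwinding Definition~\ref{defn:time-inter} against the interpolation formula \eqref{eq:xbar-short}, establishing item (i) first and then deriving items (ii) and (iii) as consequences. Since \eqref{eq:ybar-short} and \eqref{eq:zbar-short} are structurally identical to \eqref{eq:xbar-short}, the statements for $\ybar_n(\cdot)$ and $\zbar_n(\cdot)$ will follow by the same argument with $\alpha$ replaced by $\beta$ and $\xcheck$ replaced by $\ycheck$ or $\zcheck$; I will only write out the $\xbar_n$ case. Throughout I will use that $\sum_{i \ge n}\alpha_i = \infty$ (Remark~\ref{rema:step-size}), so that the half-open intervals $\big(\Gamma^\alpha_{n,m}, \Gamma^\alpha_{n,m+1}\big]$ for $m \ge n$, together with $\{0\}$, cover $[0,\infty)$ and \eqref{eq:xbar-short} is well-defined for every $t \ge 0$.

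For item (i), I would fix $m \ge n$ and set $t = \Gamma^\alpha_{n,m} = \sum_{i=n}^{m-1}\alpha_i$. If $m = n$, the sum is empty, $t = 0$, and \eqref{eq:xbar-short} gives $\xbar_n(0) = \xcheck_n$ directly. If $m > n$, then $\alpha_{m-1} > 0$ forces $t \in \big(\sum_{i=n}^{m-2}\alpha_i, \sum_{i=n}^{m-1}\alpha_i\big]$, that is, $t$ lies in the branch of \eqref{eq:xbar-short} indexed by $m-1$; substituting $t - \sum_{i=n}^{m-2}\alpha_i = \alpha_{m-1}$ makes the interpolation coefficient equal $1$, so $\xbar_n(\Gamma^\alpha_{n,m}) = \xcheck_{m-1} + (\xcheck_m - \xcheck_{m-1}) = \xcheck_m$.

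For item (iii), given $m \ge n$ and $t \in [\Gamma^\alpha_{n,m}, \Gamma^\alpha_{n,m+1})$, the definition of $N^\alpha(n,t)$ yields $N^\alpha(n,t) = m$, hence $\underline{t}^\alpha_n = \Gamma^\alpha_{n,N^\alpha(n,t)} = \Gamma^\alpha_{n,m}$, and item (i) gives $\xbar_n(\underline{t}^\alpha_n) = \xcheck_m$. For item (ii), given any $t \ge 0$ I would set $m = N^\alpha(n,t)$, so $t \in [\Gamma^\alpha_{n,m}, \Gamma^\alpha_{n,m+1})$, $\underline{t}^\alpha_n = \Gamma^\alpha_{n,m}$, and $\underline{t}^\alpha_n + \alpha_{N^\alpha(n,t)} = \Gamma^\alpha_{n,m+1}$; by item (i) the asserted right-hand side equals $\xcheck_m + \tfrac{t-\Gamma^\alpha_{n,m}}{\alpha_m}(\xcheck_{m+1}-\xcheck_m)$, which is precisely $\xbar_n(t)$, since for $t = \Gamma^\alpha_{n,m}$ both sides are $\xcheck_m$ and for $t \in (\Gamma^\alpha_{n,m},\Gamma^\alpha_{n,m+1})$ this is exactly the $m$-th branch of \eqref{eq:xbar-short}.

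The argument is pure bookkeeping and I do not anticipate a genuine obstacle; the only place warranting care is the off-by-one indexing, namely that the breakpoint $t = \Gamma^\alpha_{n,m}$ sits in the branch of \eqref{eq:xbar-short} labeled $m-1$ while $N^\alpha(n,\cdot)$ assigns it index $m$, and that the branch labeled $m$ in \eqref{eq:xbar-short} is right-closed whereas the level set $\{N^\alpha(n,\cdot) = m\}$ is right-open. These endpoint mismatches are harmless because item (i) pins down $\xbar_n$ at every $\Gamma^\alpha_{n,m}$, so the two descriptions agree there.
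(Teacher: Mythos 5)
Your proposal is correct and is exactly the direct unwinding of Definition~\ref{defn:time-inter} against \eqref{eq:xbar-short} that the paper leaves implicit (the statement is presented as a Fact with no written proof). The one point genuinely requiring care — the mismatch between the right-closed branches of \eqref{eq:xbar-short} and the right-open level sets of $N^\alpha(n,\cdot)$ — is correctly identified and resolved via item (i).
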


\begin{rema}[Discussion on the interpolation]
	For the way of interpolation in our works, \cite{faizal2023functional} also adopts the linear interpolation while \cite{gadat2018stochastic, liang2023asymptotic, blanchet2024limit} uses two different interpolations for different parts.
	Since both $\alpha_{N^\alpha(n,t)}$ and $\beta_{N^\beta(n,t)}$ converge to $0$ as $n \to \infty$, different interpolation methods make no difference in the asymptotic sense in our setup. We choose simple linear interpolation for analytical simplicity such that each trajectory is continuous in time.
\end{rema}

\subsection{An Upper Bound for the Exponential of a Hurwitz Matrix}
\label{append:fclt-interpre}

\begin{prop}\label{prop:hurwitz-expbound}
Suppose that $-A$ is a Hurwitz matrix, i.e., the real parts of the eigenvalues of $-A$ are negative. Then there exist $M,b > 0$ such that $\|e^{-At}\| \le M e^{-b t},\forall t\ge0$.
In particular, define $P := \int_0^\infty e^{-A^\top t} e^{-At} dt $.
Then $M$ and $b$ can be set as
$M := \sqrt{\lambda_{\max}(P)/\lambda_{\min}(P)}$ and $b := 1/(2\lambda_{\max}(P))$, where $\lambda_{\max}(P)$ and $\lambda_{\min}(P)$ denote the largest and smallest eigenvalues of $P$, respectively.
\end{prop}

Proposition~\ref{prop:hurwitz-expbound} shows that the decay of $\|e^{-At}\|$ is governed by the condition number and the largest eigenvalue of the Lyapunov matrix $P$. By definition, $P$ is positive definite.
In the special case where $A$ is symmetric positive definite, we have
$\lambda_{\max}(P)=\frac{1}{2\lambda_{\min}(A)}$ and
$\lambda_{\min}(P)=\frac{1}{2\lambda_{\max}(A)}$.
Consequently,
$M=\sqrt{\lambda_{\max}(A)/\lambda_{\min}(A)}$ and $b=\lambda_{\min}(A)$.
Therefore, for any $\eps>0$, if
$t \ \ge\  \lambda_{\min}(A)^{-1}\,
\log\!\left(\frac{\sqrt{\lambda_{\max}(A)/\lambda_{\min}(A)}}{\eps}\right)$,
then $\|e^{-At}\|\le \eps$.
Thus, larger $\lambda_{\min}(A)$ (and a smaller condition number $\lambda_{\max}(A)/\lambda_{\min}(A)$) corresponds to faster convergence to stationarity.
The proof of Proposition~\ref{prop:hurwitz-expbound} is deferred to Appendix~\ref{append:construct-details-proof}.

\subsection{Details for Examples}
\label{append:exmp}

For Assumption~\ref{assump:noise-fclt}, the dominant source of stochasticity in the optimization/RL examples is mini-batch (or sample) noise, which generally depends on the current iterate $(x_n,y_n)$.
The moment conditions in Assumption~\ref{assump:noise-fclt} are ensured when the iterates remain in a bounded region; this is standard in stochastic approximation and can be enforced, when needed, by a projected (or truncated) variant of the algorithm.
Such a modification does not change the local asymptotic behavior as long as $(x^\star,y^\star)$ lies in the interior of the projection set, so the projection is inactive in a neighborhood of the limit.
Moreover, mean-square convergence implies $(x_n,y_n)\to(x^\star,y^\star)$ in probability; combined with standard continuity/uniform integrability conditions on the noise moments, this yields convergence in probability of the corresponding conditional covariance quantities.

For Assumption~\ref{assump:mart-decouple-rate}, the verification for Example~\ref{exmp:PRave} is provided in the main text.
For the Examples~\ref{exmp:momentum} and \ref{exmp:gtd}, the strong monotonicity conditions required in Proposition~\ref{prop:mart-decouple-rate-formal} are satisfied as well.
Moreover, as discussed after Proposition~\ref{prop:mart-decouple-rate-formal}, the conclusion continues to hold with appropriately chosen step sizes.

\subsubsection{Details for Example~\ref{exmp:gtd}}
\label{append:exmp-gtd}

Consider an infinite-horizon MDP $\MM=(\SM, \AM, \PM ,R, \gamma)$ with discounted rewards, where $\SM$ and $\AM$ denote the (finite) state space and action space, respectively, and $\gamma \in [0,1)$ is the discount factor.
The transition kernel $\PM \colon \SM \times \AM \rightarrow \Delta(\SM)$ specifies the probability distribution over the next states for each state-action pair $(s,a) \in \SM \times \AM$, where $\PM(\cdot | s,a) \in \Delta(\SM) $ denotes the distribution of transitions from state $s$ when action $a$ is executed. The (possibly random) reward function $R\colon \SM \times \AM \rightarrow [0,1]$ represents the immediate reward received from state $s$ when action $a$ is taken. 

A policy $\pi\colon \SM \rightarrow \Delta(\AM)$ maps each state to a probability distribution over actions. Its quality is assessed via the value function  $$V^\pi(s) := \EB \bigg[ \sum_{t=0}^\infty \gamma^t R(s_t, a_t) \,|\, s_0 = s,\ a_t \sim \pi(\cdot|a_t),\ s_{t+1} \sim \PM(\cdot|s_t,a_t) \bigg], $$
which gives the expected discounted cumulative reward for following policy $\pi$ from initial state $s$.  
The process of estimating $V^\pi$ for a fixed policy $\pi$ is known as \textit{policy evaluation}.
When the data is generated under a different policy $\pi_b$ to establish the value function $V^\pi$ of $\pi$, the task is referred to as \textit{off-policy evaluation}~\citep{uehara2022review}.
When $|\SM|$ is large, directly estimating $V^\pi$ becomes computationally infeasible. Instead, $V^\pi$ can be approximated using a parameterized function. A straightforward approach is linear function approximation, where
$V^\pi(s) \approx V(s;y) = \phi(s)^\top y$, with $\phi(s)$ representing the feature vector of state $s$~\citep{dann2014policy}.

To find the optimal parameter $y$, both GTD2 are TDC are derived by minimizing the
\textit{mean squared projected Bellman error} through SGD~\cite{sutton2009fast}.
In Example~\ref{exmp:gtd},
given i.i.d. samples $\{ (s_n, a_n, R(s_n, a_n), s'_{n}) \}_{n=0}^\infty$ with $a_n \sim \pi_b(\cdot | s_n)$ and $s'_{n} \sim \PM(\cdot|s_{n}, a_{n})$, we define $r_{n} = R(s_n, a_n)$, and $\rho_{n} = \frac{\pi(a_n|s_n)}{\pi_b(a_n|s_n)}$.
Note in this setting $s'_n$ does not necessarily equal $s_{n+1}$.
The importance weight $\rho_{n}$ adjusts for the mismatch between $\pi$ and $\pi_b$.
This weight does not appear in the original formulations of GTD2 and TDC in~\cite{sutton2009fast} due to the use of a sub-sampling procedure~\citep{maei2011gradient}.
We incorporate this weight in update rules \eqref{eq:gtd-x}--\eqref{eq:tdc-y} to align with subsequent works, e.g.,~\cite{xu2019two, li2024high}.

As discussed in Section~\ref{sec:fclt:main}, deriving $\widetilde{\Sigma}_{\psi}$ in \eqref{eq:fclt-twotime:y-sde} requires analyzing the asymptotic covariance of $\psi_{n} - B_2 B_1^{-1} \xi_{n} $.
Suppose $(x_n, y_n)$ converges to $(x^\star, y^\star)$ in probability, as guaranteed by the results in~\cite{maei2011gradient}. For GTD2, we have $B_1 = C$ and $B_2 = -A^\top$, leading to
\begin{align*}
	\psia_{n} - B_2 B_1^{-1} \xi_{n} 
	& = -(A_{n}^\top {-} A^\top) x_{n} + A^\top C^{-1} [ (C_n {-} C) x_{n} + (A_n {-} A)y_{n} - (b_{n} {-} b) ] \\
	& = A^\top C^{-1} ( A_{n} A^{-1} b - b_{n} ) + o_p(1)
\end{align*}

For TDC, $B_1 = C$ and $B_2 = D^\top = C-A^\top$.
Then
\begin{align*}
	\psib_{n} - B_2 B_1^{-1}\xi_{n} 
	& = (D_n^\top {-} D^\top) x_{n} + (A_{n} {-} A ) y_{n} - (b_{n} {-} b) \\
	& \quad  \ - (C - A^\top) C^{-1} [ (C_n {-} C) x_{n} + (A_n {-} A)y_{n} - (b_{n} {-} b) ] \\
	& = A^\top C^{-1} ( A_{n} A^{-1} b - b_{n} ) + o_p(1).
\end{align*}
Thus, both GTD2 and TDC share the same $\widetilde{\Sigma}_{\psi}$.

\subsection{Q-learning with Polyak-Ruppert averaging}
\label{append:exmp-Q}

We continue with the notation in Appendix~\ref{append:exmp} and restrict our attention on finite state space and action space.
For a fixed policy $\pi$, its Q-function is defined as
$$Q^\pi(s,a) := \EB \bigg[ \sum_{t=0}^\infty \gamma^t R(s_t, a_t) \,|\, s_0 = s,\ a_0=a,\ s_{t+1} \sim \PM(\cdot|s_t,a_t), \ a_{t+1} \sim \pi(\cdot|a_{t+1}) \bigg], $$
which is the expected discounted cumulative reward for following policy $\pi$ from initial state $s$ and initial action $a$.
For fixed $s$ and $a$, define the expectation of reward as $r(s,a) = \EB [R(s,a)]$.
The optimal Q-function is defined as $Q^\star = \max_{\pi} Q^\pi(s,a)$ and is the unique fixed-point of the Bellman optimality  equation
$$Q(s,a) = r(s,a) + \gamma \EB _{s' \sim \PM(\cdot|s,a)} \max_{a'\ \in \AM} Q(s',a')=:(\TM Q)(s,a).$$
Viewing $Q$ as a vector in $\RB^{|\SM||\AM|}$, we define the Bellman optimality operator $\TM\colon \RB^{|\SM||\AM|} \to \RB^{|\SM||\AM|}$ as the right-hand side, then $Q^\star$ is the unique fixed point of $\TM$, i.e., $Q^\star = \TM Q^\star$. 

Q-learning is a model-free algorithm to seek the optimal $Q^\star$~\citep{watkins1989learning}.  Similar to Example~\ref{exmp:PRave}, Polyak-Ruppert averaging could improve the convergence rate~\citep{li2021polyak}.
We focus on the synchronous setting where a generative model produces independent samples for all state-action pairs in every iteration~\citep{kearns2002sparse}.
Then tabular Q-learning with PR averaging can be written as
\begin{equation}
\label{eq:q-ave}
\begin{aligned}
    x_{n+1} &= x_n - \alpha_n\big(x_n - \TM x_n + \xi_n\big),
\qquad \alpha_n = \alpha_0 (n+1)^{-a},\ \ a\in(1/2,1),\\
y_{n+1} &= \frac{1}{n+1}\sum_{\tau=0}^{n} x_{\tau}
\;=\; y_n - \beta_n (y_n-x_n),
\qquad \beta_n = \frac{1}{n+1}.
\end{aligned}
\end{equation}
The recursion \eqref{eq:q-ave} is therefore an instance of two-time-scale SA with $F(x,y) = x -\TM x$ and $G(x,y) = y-x$.
Consequently, $x^\star=y^\star=Q^\star$, and the unique solution of $F(x,y)=0$ is given by
$H(y)\equiv Q^\star$, where $Q^\star$ is the unique fixed point of $\TM$.
Since $G$ is linear, it suffices to verify the local linearity condition for $F$. 

To this end, we introduce the greedy policy induced by $Q$: for any $Q$, define $\pi_Q(\cdot\mid s)$ by
$\pi_Q(a\mid s)=\mathbf{1}\big\{a\in\arg\max_{a'\in\AM} Q(s,a')\big\}$.
When the argmax is not unique, we break ties arbitrarily (e.g., at random) to obtain a deterministic policy.
Let $\pi^\star:=\pi_{Q^\star}$ denote the (deterministic) optimal policy.
Viewing $r$ as a vector in $\RB^{|\SM||\AM|}$, we can write
\[
\TM Q = r + \gamma P^{\pi_Q}Q,
\qquad
P^{\pi_Q}_{(s,a),(s',a')} := \PM(s'\mid s,a)\,\pi_Q(a'\mid s').
\]
If the optimal policy $\pi^\star$ is unique, then there exists $L>0$ such that, for any $Q$,
\[
\big\|(P^{\pi_Q}-P^{\pi^\star})(Q-Q^\star)\big\|_\infty
\le L\|Q-Q^\star\|_\infty^2
\quad \text{\cite[Lemma~B.1]{li2021polyak}}.
\]
Intuitively, the uniqueness of $\pi^\star$ implies that, whenever $Q$ is sufficiently close to $Q^\star$,
the greedy policy $\pi_Q$ coincides with $\pi^\star$; since the set of deterministic policies is finite,
the Bellman optimality operator becomes locally affine around $Q^\star$.

For $F(x,y)=x-\TM x$, we have $H(y)\equiv Q^\star$ and hence
\begin{equation*}
    F(x,y)
    = x-\TM x
     = (x-Q^\star) + (\TM Q^\star-\TM x) 
    = (x-Q^\star) + \gamma P^{\pi^\star}Q^\star - \gamma P^{\pi_x}x,
\end{equation*}
where $\pi_x$ denotes the greedy policy induced by $x$ (with a fixed tie-breaking rule), and we use $\TM Q = r + \gamma P^{\pi_Q}Q$.
Let $B_1:= I-\gamma P^{\pi^\star}$.
We claim that
\begin{equation}\label{eq:q-learning-local-linear}
    \|F(x,y)-B_1(x-Q^\star)\|_\infty \le \gamma L \|x-Q^\star\|_\infty^2.
\end{equation}
Since all norms are equivalent in finite dimensions, the same bound also holds for the Euclidean norm up to a dimension-dependent constant.
We defer the verification of \eqref{eq:q-learning-local-linear} later.

With \eqref{eq:q-learning-local-linear}, Assumptions~\ref{assump:smooth} -- \ref{assump:hurwitz} holds with $\Fsmooth = \Gsmooth = \Hsmooth = 1$, $B_1 = 1 - \gamma P^{{\pi^\star}}$, $B_2 = -\mI $ and $B_3 = \mI$.
Since $\gamma \in [0,1)$ and $P^{{\pi^\star}}$ is a transition matrix, the eigenvalues of $B_1$ have positive real parts and $-B_1$ is Hurwitz.
By \cite[Theorem~2]{li2023online}, we can obtain $\EB\| x_n - Q^\star \|^4 = \OM(\alpha_n^2)$ and conquently $\EB\| x_n - Q^\star \|^2 = \OM(\alpha_n)$.
A remaining issue is whether one can further show $\EB|y_n - Q^\star|^2 = \OM(1/n)$.
Since \cite{li2021polyak} establishes the asymptotic distribution of $\sqrt{n},(y_n - Q^\star)$ and the rate $\EB|y_n - Q^\star|_\infty = \OM(n^{-1/2})$,  we believe this gap can be filled.

This example is similar to Example~\ref{exmp:PRave}, except that $F(x,y)$ is not the gradient of an objective function and $B_1$ is not symmetric. Moreover, unlike \cite{li2021polyak}, which establishes the FCLT for the partial-sum process via a uniform rescaling, our results yield an alternative characterization based on an iteration-dependent scaling. Following the analysis in Example~\ref{exmp:PRave}, one can also obtain that, within the family of weighted averages with $\beta_n=\frac{\beta_0}{n+1}$, the vanilla average ($\beta_0=1$) is asymptotically optimal in the sense of minimizing the estimator’s asymptotic covariance.

Now we verify \eqref{eq:q-learning-local-linear}. Note that
\begin{equation*}
    F(x,y)-B_1(x-Q^\star)
    = \bigl[(x-\TM x) - (I-\gamma P^{\pi^\star})(x-Q^\star)\bigr] 
    = \gamma\bigl(P^{\pi^\star}-P^{\pi_x}\bigr)x.
\end{equation*}
For each $(s,a)$, we have
\begin{align*}
    \bigl[(P^{\pi^\star}-P^{\pi_x})x\bigr](s,a)
    &= \sum_{s'\in\SM}\sum_{a'\in\AM} \PM(s'\mid s,a)\,x(s',a')
       \bigl(\pi^\star(a'\mid s')-\pi_x(a'\mid s')\bigr)\\
    &= \sum_{s'\in\SM} \PM(s'\mid s,a)\Bigl(
        \sum_{a'\in\AM}\pi^\star(a'\mid s')x(s',a') - \max_{a'\in\AM}x(s',a')
       \Bigr)
    \le 0,
\end{align*}
by the definition of the greedy policy.
Therefore, $F(x,y)-B_1(x-Q^\star)\le 0$ elementwise.
Moreover, we can decompose
\[
    (P^{\pi^\star}-P^{\pi_x})x
    = (P^{\pi^\star}-P^{\pi_x})(x-Q^\star) + (P^{\pi^\star}-P^{\pi_x})Q^\star.
\]
By the same argument as above, $(P^{\pi^\star}-P^{\pi_x})Q^\star\ge 0$ elementwise.
Combining the two elementwise inequalities yields
\[
    \bigl|(P^{\pi^\star}-P^{\pi_x})x\bigr|
    \le (P^{\pi_x}-P^{\pi^\star})(x-Q^\star)
    \quad\text{elementwise},
\]
and hence
\[
    \|F(x,y)-B_1(x-Q^\star)\|_\infty
    \le \gamma\bigl\|(P^{\pi_x}-P^{\pi^\star})(x-Q^\star)\bigr\|_\infty
    \le \gamma L \|x-Q^\star\|_\infty^2,
\]
where the last inequality follows from \cite[Lemma~B.1]{li2021polyak}. 

Finally, we note that a closely related example is SSP Q-learning, which is designed for average-reward MDPs \citep{abounadi2001learning}. SSP Q-learning can also be interpreted as a two-time-scale SA scheme, where the fast iterate corresponds to the Q-function and the slow iterate tracks a reference (baseline) variable. This example has been analyzed within the two-time-scale SA framework in \cite{chandak2025finite}. Verifying whether SSP Q-learning satisfies our local linearity condition is left for future work. 

\section{Omitted Proofs for Auxiliary Results in Appendices}

\subsection{Proofs for Appendix~\ref{sec:fclt:proof:assump}
}
\label{sec:fclt:proof:assump:proof}

\begin{proof}[Proof of Proposition~\ref{prop:near-linear-tolocal}]
    We first prove the inequality for $F$. \\
    Suppose  that \eqref{assump:near-linear-F-local} hold for $\max\{ \| x - H(y) \|, \| y-y^* \|\} \le \eps$.
    If $\max\{ \| x - H(y) \|, \| y-y^* \|\} > \eps$, we can divide into the following two cases.
    \begin{enumerate}[(i)]
        \item $\| x - H(y) \| > \eps$: From the definition of $H$, we have $F(H(y),y) = 0$. Then
        we have    
        \begin{align*}
        \| F(x,y) - B_1 (x - H(y)) \|
        & \le \| F(x,y) - F(H(y),y) \| + \|B_1 \| \| x-H(y) \| \\
        & \overset{(a)}{\le} (L_F + \| B_1 \|) \| x - H(y) \|
        \le \frac{L_F + \| B_1 \|}{\eps^\Fsmooth} \| x - H(y) \|^{1+\Fsmooth},
    \end{align*}
    where (a) applies the Lipschitz condition of $F$ in Assumption~\ref{assump:smooth}.
    \item $\| x - H(y) \| < \eps$ and $\| y - y^* \| \ge \eps$: Similar to the former case, we have
    \begin{align*}
        \| F(x,y) - B_1 (x - H(y)) \|
        & \le (L_F + \| B_1 \|) \| x - H(y) \|
        \le (L_F + \| B_1 \|) \| y - y^* \| \\
        & \le \frac{L_F + \| B_1 \|}{\eps^\Fsmooth} \| y - y^\star \|^{1+\Fsmooth}.
    \end{align*}
    \end{enumerate}
    In sum, if \eqref{assump:near-linear-F-local} hold for $\max\{ \| x - H(y) \|, \| y-y^* \|\} \le \eps$, we can remove this requirement by replace $\SBF$ with $\max\left\{ \SBF,  \frac{L_F + \| B_1 \|}{\eps^\Fsmooth} \right\}$. \\
    Next, we focus the inequality for $G$. Suppose  that \eqref{assump:near-linear-G-local} hold for $\max\{ \| x - H(y) \|, \| y-y^* \|\} \le \eps$.
    Since $x^* = H(y^*)$, we have $G(H(y^*), y^* = 0)$. Then we have
    \begin{align*}
        & \quad \|G(x, y) - B_2(x {-} H(y))- B_3(y {-} y^{\star}) )\| \\
        & \le \| G(x,y) - G(H(y), y) \| + \| G(H(y), y) - G(H(y^*), y^*) \| \\
        & \quad \ + \| B_2\| \| x - H(y)| \| + \| B_3 \| \| y-y^* \| \\
        & \overset{(b)}{\le} (\LGx+ \| B_2 \|) \| x - H(y) \| + (\LGy + \|B_3 \|) \| y-y^* \|,
    \end{align*}
     where (b) applies the Lipschitz condition of $F$ in Assumption~\ref{assump:smooth}.
     If $\max\{ \| x - H(y) \|, \| y-y^* \|\} > \eps$, we can divide into the following three cases.
     \begin{enumerate}[(i)]
         \item $\| x - H(y) \| > \eps$ and $\| y-y^* \| \le \eps$: We have
         \begin{align*}
             \|G(x, y) - B_2(x {-} H(y))- B_3(y {-} y^{\star}) )\|
             & \le (\LGx+ \| B_2 \| + \LGy + \|B_3 \|) \| x - H(y) \| \\
             & \le \frac{\LGx+ \| B_2 \| + \LGy + \|B_3 \|}{\eps^\Gsmooth} \| x - H(y) \|^{1+\Gsmooth}.
         \end{align*}
         \item $\| x - H(y) \| \le \eps$ and $\| y-y^* \| > \eps$: Similar to the former case, we can obtain 
        \begin{align*}
             \|G(x, y) - B_2(x {-} H(y))- B_3(y {-} y^{\star}) )\|
             & \le \frac{\LGx+ \| B_2 \| + \LGy + \|B_3 \|}{\eps^\Gsmooth} \| y-y^* \|^{1+\Gsmooth}.
         \end{align*}
         \item $\| x - H(y) \| > \eps$ and $\| y-y^* \| > \eps$: we have
        \begin{align*}
             & \quad \|G(x, y) - B_2(x {-} H(y))- B_3(y {-} y^{\star}) )\| \\
             & \le \frac{\LGx+ \| B_2 \|}{\eps^\Gsmooth} \| x-H(y) \|^{1+\Gsmooth} + \frac{\LGy + \|B_3 \|}{\eps^\Gsmooth} \| y-y^* \|^{1+\Gsmooth}.
         \end{align*}
     \end{enumerate}
    In sum, if \eqref{assump:near-linear-G-local} hold for $\max\{ \| x - H(y) \|, \| y-y^* \|\} \le \eps$, we can remove this requirement by replace $\SBG$ with $\max\left\{ \SBG,  \frac{\LGx+ \| B_2 \| + \LGy + \|B_3 \|}{\eps^\Gsmooth} \right\}$.
\end{proof}
\begin{proof}[Proof of Proposition~\ref{prop:local-linear-local}]
For $F$,
Since 
$ \| x-H(y) \| \le \| x - x^* \| + \| H(y^*) - H(y) \| \le \| x - x^* \| + L_H \| y - y^* \| $ and $F(H(y),y)=0$, we have
\begin{align*}
    & \quad  \|F(x, y) - A_{11}(x -x^{\star}) - A_{12} (y-y^{\star}) \| \\
    & \le \| F(x,y) - F(H(y),y) \| + \| A_{11} \| \| x - x^* \| + \| A_{12} \| \| y - y^* \| \\
    & \le (L_F + \| A_{11} \|) \| x-x^* \| + (L_F L_H + \|A_{12} \|) \| y-y^* \|.
\end{align*}
For $G$, since $G(H(y^*),y^*)=0$, we can also obtain 
\begin{align*}
    & \quad  \|G(x, y) - A_{21}(x -x^{\star}) - A_{22} (y-y^{\star}) \| \\
    & \le \| G(x,y) - G(H(y),y) \| + \|G(H(y),y) - G(H(y^*),y^*) \| \\
    & \quad \ + \| A_{11} \| \| x - x^* \| + \| A_{12} \| \| y - y^* \| \\
    & \le (\LGx + \| A_{21} \|) \| x-x^* \| + (\LGx L_H + \LGy + \|A_{22} \|) \| y-y^* \|.
\end{align*}
Then similar to the proof of Proposition~\ref{prop:near-linear-tolocal}, we can remove the requirement  $\max\{ \| x - x^* \|, \| y-y^* \|\} \le \eps$ by replacing $S_{A,F} $ with $\tilde{S}_{A,F} := \max\left\{ S_{A,F}, \frac{L_F + \| A_{11} \| + L_F L_H + \|A_{12} \| }{\|\eps\|^{\Fsmooth}} \right\}$
and 
replacing $S_{A,G} $ with $\tilde{S}_{A,G} := \max\left\{ S_{A,G}, \frac{\LGx + \| A_{21} \| + \LGx L_H + \LGy + \|A_{22} \| }{\|\eps\|^{\Gsmooth}} \right\}$.
For the remaining proof, please refer to \cite[Proposition~2]{han2024finite} and the proof therein.
\end{proof}

\begin{proof}[Proof of Proposition~\ref{prop:H-reduce-order}]
	Combining \eqref{assump:smooth:FH:ineqH} and Assumption~\ref{assump:smoothH}, we have
	\begin{equation*} 
		\| H(y_1) -  H(y_2) - \nablaH(y_2) (y_1-y_2)  \| \le S_H \|y_1-y_2\| \cdot \min\left\{\|y_1-y_2\|^\Hsmooth, R_H \right\}
	\end{equation*}
	with $R_H = \frac{2L_H}{S_H}$.
	For any $\delta'_H \in [0, \Hsmooth]$,
	if $\| \yy_1 - \yy_2 \|^\Hsmooth \le R_H$, then we have 
	\begin{equation*}
		\| H(\yy_1) - H(\yy_2) - \nablaH(\yy_2) (\yy_1-\yy_2) \| \le S_H R_H^{\Hsmooth - \delta'_H} \| \yy_1 - \yy_2 \|^{1+\delta'_H};
	\end{equation*}
	otherwise, we have
	\begin{equation*}
		\| H(\yy_1) - H(\yy_2) - \nablaH(\yy_2) (\yy_1-\yy_2) \| \le S_H R_H \| \yy_1 - \yy_2 \| \le S_H R_H / R_H^{\delta'_H}  \| \yy_1 - \yy_2 \|^{1+\delta'_H}.
	\end{equation*}
	Thus, by replacing $S_H$ with $\max\left\{S_H R_H^{\Hsmooth - \delta'_H}, S_H R_H /  R_H^{\delta'_H} \right\}$,
	Assumption~\ref{assump:smoothH} holds up to order $1+\delta'_H$.
\end{proof}

\begin{proof}[Proof of Proposition~\ref{prop:FG-reduce-order}]
	For ease of exposition,  we let $\xhat = \xx - H(\yy)$ and $\yhat = \yy - \yy^\star$.
	First, these assumptions imply
	\begin{align*}
		\|F(\xx, \yy) - B_1 \xhat \| 
		& = \|F(\xx, \yy) - F(H(\yy), \yy) - B_1 \xhat  \| \le 2 L_F \| \xhat \| + \eps \cdot \| \yhat \|, \\
		\| G(\xx, \yy) - B_2 \xhat - B_3 \yhat ) \| 
		& = \| G(\xx, \yy) {-} G(H(\yy), \yy) + G(H(\yy), \yy) {-} G(H(\yy^\star), \yy^\star) {-} B_2 \xhat {-} B_3 \yhat \| \\
		& \le 2 \LGx \| \xhat \| + 2 \LGy \| \yhat \|,
	\end{align*} 
	where $\eps$ is an arbitrary positive constant.
	In the following, we only prove that \eqref{assump:near-linear-G} holds after  replacing $\Gsmooth$ with $\delta'_G \in (0, \Gsmooth)$.
	The proof for \eqref{assump:near-linear-F} is similar.
	
	If (i) $ \SBG \| \xhat \|^{1+\Gsmooth} \le 2 \LGx \| \xhat \| $ and $\SBG \| 
	\yhat \|^{1+\Gsmooth} \le 2 \LGy \| \yhat \| $ 
	or (ii) $ \SBG \| \xhat \|^{1+\Gsmooth} > 2 \LGx \| \xhat \| $ and $\SBG \| 
	\yhat \|^{1+\Gsmooth} > 2 \LGy \| \yhat \| $,
	we have
	\begin{align*}
		\| G(\xx, \yy) - B_2 \xhat - B_3 \yhat ) \|
		& \le \min \left\{ \SBG \| \xhat \|^{1+\Gsmooth} + \SBG \| \yhat \|^{1+\Gsmooth}, 2 \LGx \| \xhat \| + 2 \LGy \| \yhat \| \right\} \\
		& = \min \left\{ \SBG \| \xhat \|^{1+\Gsmooth}, 2 \LGx \| \xhat \| \right\} + \min \left\{ \SBG \| \yhat \|^{1+\Gsmooth}, 2 \LGy \| \yhat \|  \right\} \\
		& = \SBG \| \xhat \| \cdot \min\left\{ \| \xhat \|^\Gsmooth, \frac{2 \LGx}{\SBG} \right\} + \SBG \| \yhat \| \cdot \min \left\{ \| \yhat \|^{\Gsmooth}, \frac{2\LGy}{\SBG} \right\}.
	\end{align*}
	Then the remaining proof is similar to that of Proposition~\ref{prop:H-reduce-order}.
	
	If neither of (i) and (ii) holds, without loss of generality we assume $ \SBG \| \xhat \|^{1+\Gsmooth} \le 2 \LGx \| \xhat \| $ and $\SBG \| 
	\yhat \|^{1+\Gsmooth} > 2 \LGy \| \yhat \| $.
	This implies $ \| \xhat \| \le c_x := (2 \LGx / \SBG)^{1/\Gsmooth}  $ and $\| \yhat \| > c_y := ( 2 \LGy / \SBG )^{1/\Gsmooth}$.
	Then we have 
	\begin{align*}
		\| G(\xx, \yy) - B_2 \xhat - B_3 \yhat ) \|
		& \le \min \left\{ \SBG \| \xhat \|^{1+\Gsmooth} + \SBG \| \yhat \|^{1+\Gsmooth}, 2 \LGx \| \xhat \| + 2 \LGy \| \yhat \| \right\} \\
		& \le 2 \LGx \| \xhat \| + 2 \LGy \| \yhat \| 
		\le  2 \LGx\, c_x + 2 \LGy \| \yhat \| \\
		& \le \left( \frac{2 \LGx c_x}{ c_y^{1+\delta'_G} } + \frac{2 \LGy}{c_y^{\delta'_G}} \right) \| \yhat \|^{1+\delta'_G}.
	\end{align*}
	Thus \eqref{assump:near-linear-G} holds with $\Gsmooth$ and $\SBG$ replaced with $\delta'_G$ and $\frac{2 \LGx c_x}{ c_y^{1+\delta'_G} } + \frac{2 \LGy}{c_y^{\delta'_G}}$ respectively.
\end{proof}

\begin{proof}[ Proof of Proposition~\ref{prop:operator-decom} ]
We first prove Part~(i).
By Assumption~\ref{assump:near-linear} and the definition of $\xhat_{n}$ and $\yhat_n$ in \eqref{alg:xyhat}, we have
    $\|F(x_n, y_n) - B_1 \xhat_{n}\| \le \SBG ( \| \xhat_{n} \|^{1+\Fsmooth} + \| \yhat_{n} \|^{1+\Fsmooth} )$.
Setting $R^F_n = F(x_{n}, y_{n}) - B_1 \xhat_{n}$ yields that $\|R^F_n \| = \OM(\| \xhat_{n} \|^{1+\Fsmooth} + \| \yhat_{n} \|^{1+\Fsmooth})$. By Proposition~\ref{prop:FG-reduce-order}, for any $\delta'_F \in (0,\Fsmooth]$, we also have $\| R^F_n \| = \OM(\| \xhat_{n} \|^{1+\delta'_F} + \| \yhat_{n} \|^{1+\delta'_F} ) $. The proof of Part~(ii) is similar to that of Part~(i) by setting $R^G_n = G(x_n, y_n) - B_2 \xhat_{n} - B_3 \yhat_{n}$.

For Part~(iii), we first set $R^H_n = H(y_{n+1}) - H(y_{n}) - \nabla H(y_{n}) (y_{n+1} - y_{n})$. Then by Assumption~\ref{assump:smoothH} and Proposition~\ref{prop:H-reduce-order}, for any $\delta'_H \in (0, \Hsmooth]$, we have $\|R^H_n \| = \OM( \| y_{n+1} - y_{n} \|^{1+\delta'_H} ) $.
With  $R_{n}^{\nablaH} =  (\nablaH(\yy_{n}) - \nablaH(\yy^{\star}))(\yy_{n+1}-\yy_{n})$, we have $H(y_{n+1}) - H(y_{n}) = H^\star(y_{n+1} - y_{n}) + R_{n}^{\nablaH} + R_n^H $.
\end{proof}

\subsection{Proofs for Appendix~\ref{append:construct-details}}
\label{append:construct-details-proof}

\begin{proof}[Proof of Lemma~\ref{lem:aux:bounded}]
	We first prove the result for the sequence $\{ \beta_{n} \}$.
	From the definition of $N^\beta(n,t)$ in Definition~\ref{defn:time-inter}, we must have $N^\beta(n,t) \ge n$.
	Then the monotonicity of $n \beta_{n}$ in Assumption~\ref{assump:stepsize-twotime} implies $\frac{\beta_{n}}{ \beta_{ N^\beta(n,t) } } \le \frac{ N^\beta(n,t) }{n}$. It remains to derive the upper bound of $\frac{ N^\beta(n,t) }{n} $.
	
	From Assumption~\ref{assump:stepsize-twotime}, we have 
	$\frac{ \beta_{n-1} }{ \beta_{n} } = 1 + \OM (\beta_{n}) = 1 + \OM (\beta_{n-1}) $.
	It follows that $\frac{1}{\beta_{n}} = \frac{1}{\beta_{n-1}} + \OM(1) = \dots = \OM(n) $, implying $\beta_{n} = \Omega \big( \frac{1}{n} \big)$.
	Consequently, 
	there exists $c_\beta$ such that $\beta_{n} \ge \frac{c_\beta}{n} $ for any $n \in \NB$.
	Meanwhile, for a fixed $n$, there exists a unique integer $k(n)$ such that $ 2^{k(n) - 1} \le n < 2^{k(n)} $.
	Then we have that for any $k' \ge k(n)$,
	\begin{equation*}
		\beta_{2^{k'} + 1} + \dots + \beta_{ 2^{k' + 1} }
		\ge c_\beta \left( \frac{1}{ 2^{k'} + 1 } + \frac{1}{ 2^{k'} + 2 } + \dots \frac{1}{ 2^{k'+1} } \right)
		\ge \frac{ c_\beta }{2}.
	\end{equation*}
	It follows that with $m(n) := 2^{ k(n) + \left\lceil 2t/ c_\beta \right\rceil } $,
	\begin{equation*}
		\Gamma^\beta_{n, m(n)+1}
		= \ssum{l}{n}{m(n)} \beta_{l}
		> \ssum{l}{2^{k(n)}+1}{2^{k(n) + \left\lceil 2t/c_\beta \right\rceil } } \beta_{l}
		\ge \left\lceil \frac{2t}{c_\beta} \right\rceil \frac{c_\beta}{2} \ge t.
	\end{equation*}
	From Definition~\ref{defn:time-inter}, we have $N^\beta(n,t) \le m(n) = 2^{ k(n) + \left\lceil 2t/c_\beta \right\rceil }$.
	Recall that $n \ge 2^{k(n)-1}$. It follows that $\frac{ N^\beta(n,t) }{n} \le 2^{ \left\lceil 2t/c_\beta \right\rceil + 1 }$.
	
	For the sequence $\{ \alpha_{n} \}$, Assumption~\ref{assump:stepsize-twotime} guarantees that $n \alpha_{n}$ is non-decreasing and $\frac{\alpha_{n-1}}{\alpha_{n}} = 1 + o(\beta_{n}) = 1 + \OM(\alpha_{n})$.
	Then following similar procedure, we have that there exists $c_\alpha > 0$ such that $ \frac{ \alpha_{n} }{ \alpha_{ N^\alpha(n,t) } } \le \frac{ N^\alpha(n,t) }{n} \le 2^{ \left\lceil 2t/c_\alpha \right\rceil + 1 }$.
\end{proof}

\begin{proof}[Proof of Proposition~\ref{prop:aux:step-size}]
    The proof for $N^\alpha(n,t)$ and $N^\beta(n,t)$ is identical and without loss of generality, we focus on the latter.
    Recall the definition of $\Gamma^\beta_{n, m}$ in Definition~\ref{defn:time-inter}, for $\beta_n = \frac{\beta_0}{(n+1)^\beta}$, we have $\Gamma^\beta_{n, m} = \sum_{k=n+1}^{m} \frac{\beta_0}{k^\beta} $.
    
    If $\beta = 1$, then $\Gamma^\beta_{n, m} = \sum_{k=n+1}^{m} \frac{\beta_0}{k} \in \left( \beta_0 \log \frac{m+1}{n+1}, \beta_0 \log \frac{m}{n}\right) $.
    From the definition of $N^\beta(n,t)$ in Definition~\ref{defn:time-inter}, we have $\Gamma^\beta_{n, N^\beta(n,t)} \le t$ and $\Gamma^\beta_{n, N^\beta(n,t)+1} \ge t$. 
    Then we have $\log \frac{N^\beta(n,t)+1}{n+1} \le t / \beta_0 \le \log \frac{N^\beta(n,t)+1}{n}  $ and consequently 
    \begin{equation*}
         n (e^{t/\beta_0}-1) - 1 \le N^\beta(n,t) - n \le n (e^{t/\beta_0}-1) + e^{t/\beta_0} - 1.
    \end{equation*}    
    Thus, $N^\beta(n,t) - n =  (e^{t/\beta_0}-1) n + o(n)$. 
    
    If $\beta < 1$, then $\Gamma^\beta_{n, m} = \sum_{k=n+1}^{m} \frac{\beta_0}{k^\beta} \in \left( \frac{\beta_0}{1-\beta} ((m+1)^{1-\beta} - (n+1)^{1-\beta} ), \frac{\beta_0}{1-\beta} ( m^{1-\beta}- n^{1-\beta} )   \right)  $.
    From $\Gamma^\beta_{n, N^\beta(n,t)} \le t$ and $\Gamma^\beta_{n, N^\beta(n,t)+1} \ge t$, we can obtain 
    \begin{equation*}
        \left[ (1-\beta) t / \beta_0 + n^{1-\beta} \right]^{1/(1-\beta)} -1 \le N^\beta(n,t) \le \left[ (1-\beta) t / \beta_0 + (n+1)^{1-\beta} \right]^{1/(1-\beta)} -1.
    \end{equation*}
    For the left-hand side, we have $\left[ (1-\beta) t / \beta_0 + n^{1-\beta} \right]^{1/(1-\beta)} -1 = n \left[1 +  (1-\beta) t n^{-(1-\beta)} / \beta_0 \right]^{1/(1-\beta)} -1 = n + t n^{\beta} / \beta_0 + o(n^\beta)$.
    For the right-hand side, similarly, we have $\left[ (1-\beta) t / \beta_0 + (n+1)^{1-\beta} \right]^{1/(1-\beta)} -1 = n  +  t n^\beta/\beta_0 + o(n^\beta)$.
    In sum, we have $N^\beta(n,t) - n = \Theta(n^\beta) $.
\end{proof}

\begin{proof}[Proof of Proposition~\ref{prop:hurwitz-expbound}]
    Since $-A$ is a Hurwitz matrix, the  Lyapunov equation
$A^{\top}P + P A = I_d$
has a unique symmetric positive definite solution $P\in\mathbb R^{d\times d}$ \citep[p.~475]{antsaklis2005linear}. Define
\(p_{\max}:=\lambda_{\max}(P), p_{\min}:=\lambda_{\min}(P), \kappa(P):=p_{\max}/p_{\min}.
\)
For any $x\in\mathbb R^d$ set $y(t):=e^{-At}x$. Differentiate
\[\frac{d}{dt}\big(y(t)^{\top}P y(t)\big) = -y(t)^{\top}(A^{\top}P+PA)y(t) = -\|y(t)\|^2.
\]
Using $\|y\|^2\ge\frac{1}{p_{\max}}y^{\top}P y$ gives
$\frac{d}{dt}\big(y(t)^{\top}P y(t)\big) \le -\frac{1}{p_{\max}}y(t)^{\top}P y(t)$.
Then Gronwall's inequality yields
$y(t)^{\top}P y(t) \le e^{-t/p_{\max}} y(0)^{\top}P y(0) \le e^{-t/p_{\max}} p_{\max} \| y(0) \|^2 $.
Hence, with $y(0)=x$,
\begin{equation*}
\|e^{-At}\|^2 = \sup_{\|x\|=1} \|e^{-At}x\|^2 \le \sup_{\|x\|=1} \frac{1}{p_{\min}} (e^{-At}x)^{\top}P(e^{-At}x) \le \frac{p_{\max}}{p_{\min}} e^{-t/p_{\max}} = \kappa(P) e^{-t/p_{\max}},
\end{equation*}
where the first inequality is because $P - p_{\min} \mI$ is positive semidefinite.
Then one can check the solution to $A^{\top}P + P A = I_d$ is $ P = \int_0^\infty e^{-A^\top t} e^{-At} dt$, which is well-defined \citep[Theorem~7.5]{antsaklis2005linear}.
Therefore we have
\begin{equation*}
\|e^{-At}\| \le M e^{-b t},\forall t\ge0,\ \text{with} \ M := \sqrt{\kappa(P)}, b := \frac{1}{2p_{\max}}.
\end{equation*}
\end{proof}

\textbf{
Derivation of $\PB(\sup_{0 \le t \le T} \norm{\ermY_\infty(t)} \ge M) \le \delta$ } 

For ease of notation, we omit the subscipt of $\ermY_\infty$ and assume $\ermY(\cdot)= (\ermY^{1}(\cdot), \ermY^{2}(\cdot), \dots, \ermY^{d_y}(\cdot)  )^\top$ is a stationary $d_y$-dimensional OU process. Then each component $\ermY^i(\cdot)$ is a mean-zero Gaussian process.
For a fixed $T$,
by Borell–TIS inequality~\citep[Theorem~2.1.1]{adler2007random}, we have that for all $u \ge 0$,
\begin{equation*}
    \PB\left(\sup_{0 \le t \le T} \ermY^{i}(t) \ge \EB \sup_{0 \le t \le T} \ermY^{i}(t) + u\right) \le \exp\left(-\frac{u^2}{2 \sup_{0 \le t \le T} E (\ermY^{i}(t))^2} \right).
\end{equation*}
Similarly, $\PB\left(-\inf_{0 \le t \le T} \ermY^{i}(t) \ge - \EB \inf_{0 \le t \le T} \ermY^{i}(t) + u\right) \le \exp\left(-\frac{u^2}{2 \sup_{0 \le t \le T} E (\ermY^{i}(t))^2 } \right)$.
Note that $\sup_{0 \le t \le T} |\ermY^{i}(t)| = \max\{\sup_{0 \le t \le T} \ermY^{i}(t), -\inf_{0 \le t \le T} \ermY^{i}(t)  \} $.
By Dudley integral inequality \citep[Theorem~8.1.3]{vershynin2018high}, we can obtain an upper bound for $ \EB \sup_{0 \le t \le T} \ermY^{i}(t)$ and $- \EB \inf_{0 \le t \le T} \ermY^{i}(t) $ and denote this bound as $A_T$.
Then we have
\begin{equation*}
    \PB\left(\sup_{0 \le t \le T} |\ermY^{i}(t)| \ge A_T + u\right) \le 2\exp\left(-\frac{u^2}{2\sup_{0 \le t \le T}  \EB (\ermY^{i}(t))^2} \right).
\end{equation*}
Let $M = \sqrt{d_y} (A_T + u)$.
Then we obtain
\begin{align*}
    \PB\left(\sup_{0 \le t \le T} \norm{\ermY(t)} \ge M\right)
    & = \PB\left(\sup_{0 \le t \le T} \sum_{i=1}^{d_y}{(\ermY^{(i)}(t))^2} \ge M^2\right)
    \le \PB\left(\sum_{i=1}^{d_y} \sup_{0 \le t \le T} (\ermY^{(i)}(t))^2 \ge M^2 \right) \\ 
    & \le \sum_{i=1}^{d_y} \PB\left( \sup_{0 \le t \le T} (\ermY^{(i)}(t))^2 \ge M^2/d_y^2 \right)
    = \sum_{i=1}^{d_y} \PB\left( \sup_{0 \le t \le T} |\ermY^{(i)}(t)| \ge M/\sqrt{d_y} \right) \\
    & \le 2d_y \exp\left( - \frac{u^2}{2\sigma_T^2}\right),
\end{align*}
where $\sigma_T^2 := \sup_{1\le i\le d_y}\sup_{0 \le t\le T} \EB ( \ermY^{i}(t))^2 $.
It suffices to set $\delta = 2d_y \exp\left( - \frac{u^2}{2\sigma_T^2}\right) $ and we obtain $M = \sqrt{d_y} (A_T + \sigma_T \sqrt{2 \log (2d_y / \delta)} )$.

\section{Omitted Proofs for the Main Theorem}
\label{append:proof-full}

\subsection{Proofs in Section~\ref{sec:fclt:one-step}}
\label{sec:fclt:proof:construct}
In this subsection, we give the proof of the one-step recursions in Section~\ref{sec:fclt:one-step}.
Since the proofs of both Lemmas~\ref{lem:check-twotime-xy} and \ref{lem:check-twotime-z} are closely related, we combine their proof together.
\begin{proof}[Proof of Lemmas~\ref{lem:check-twotime-xy} and \ref{lem:check-twotime-z}]
	The proof is structured into three parts:  we derive the one-step recursion first for $\ycheck_{n}$, followed by $\xcheck_{n}$, and finally $\zcheck_{n}$.
	To begin, we take the update rules for $\xhat_{n}$ and $\yhat_{n}$, as outlined below.
	\begin{align}
		\xhat_{n+1} & = \xhat_n - \alpha_n F(x_n, y_n) - \alpha_n \xi_n + H(y_n) - H(y_{n+1}), \label{eq:xhat} \\
		\yhat_{n+1} & = \yhat_n - \beta_n G(H(y_n), y_n) + \beta_n ( G(H(y_n), y_n) - G(x_n, y_n) ) - \beta_n \psi_n. \label{eq:yhat}
	\end{align}
	
	\textbf{Part 1: One-step recursion for $\ycheck_{n}$}.
	By Proposition~\ref{prop:operator-decom}, we can rewrite the update rule of $\yhat_{n}$ in \eqref{eq:yhat} as 
	\begin{equation}
		\label{eq:yhat-decom}
		\yhat_{n+1} = (I - \beta_{n} B_3) \yhat_{n} - \beta_{n} B_2 \xhat_{n} - \beta_n \psi_n - \beta_n R_n^G.
	\end{equation}
	Then from the definition of $\ycheck_{n}$ and \eqref{eq:yhat-decom}, we have 
	\begin{align}
		\ycheck_{n+1}
		& = \sqrt{ \frac{ \beta_{n-1} }{ \beta_{n} } } (\mI - \beta_{n} B_3) \ycheck_{n} - \sqrt{ \beta_{n} \alpha_{n-1} } B_2 \xcheck_{n} - \sqrt{\beta_{n}} \psi_{n} - \sqrt{\beta_{n}} R_{n}^G \nonumber \\
		& = \left( \mI - \beta_{n} B_3 + \frac{ \invdiffslow \beta_{n} }{2} \mI \right) \ycheck_{n} - \sqrt{\beta_{n} \alpha_{n} } B_2 \xcheck_{n} - \sqrt{\beta_n} \psi_{n} + R_{n}^y, \label{eq:lem:check-twotime:y-update}
	\end{align}
	where
	\begin{align}\label{eq:lem:check-twotime:Ry}
		\begin{split}
			R_{n}^y
			& =  \underbrace{ \left( \sqrt{ \frac{ \beta_{n-1} }{\beta_{n}} } - 1 - \frac{ \invdiffslow \beta_{n} }{2} \right) \ycheck_{n} }_{=: R_{n,1}^y}  
			+ \underbrace{ ( \beta_{n} - \sqrt{ \beta_{n} \beta_{n-1} } ) B_3 \ycheck_{n} 
				\vphantom{ \left( \sqrt{ \frac{ \beta_{n-1} }{\beta_{n}} } \right) } }_{ =:R_{n,2}^y } \\
			& \quad \  + \underbrace{
				( \sqrt{ 
					\beta_{n} 
					\alpha_{n} } - \sqrt{ 
					\beta_{n} 
					\alpha_{n-1} } ) B_2 \xcheck_{n}
				\vphantom{ \left( \sqrt{ \frac{ \beta_{n-1} }{\beta_{n}} } \right) } }_{=: R_{n,3}^y } 
			- \underbrace{ \sqrt{\beta_{n}} R_{n}^G 
				\vphantom{ \left( \sqrt{ \frac{ \beta_{n-1} }{\beta_{n}} } \right) } }_{ =: R_{n,4}^y }.
		\end{split}
	\end{align}
	By Assumption~\ref{assump:stepsize-twotime}, we have
	\begin{align}
			\sqrt{ \frac{ \beta_{n-1} }{ \beta_{n} } } 
			& = \sqrt{ 1 + \beta_{n-1} ( \beta_{n}^{-1} - \beta_{n-1}^{-1} ) } = \sqrt{1 + \invdiffslow \beta_{n-1}  + o(\beta_{n-1}) } \nonumber \\
			& = \sqrt{1 + \invdiffslow \beta_{n}  + o(\beta_{n}) }
			= 1 + \frac{ \invdiffslow \beta_{n} }{2} + o(\beta_{n}), \nonumber \\
			\sqrt{ \frac{ \alpha_{n-1} }{\alpha_{n}} } 
			& = 1 + \OM(\beta_n),
			\qquad 
			\sqrt{ \frac{ \kappa_{n-1} }{\kappa_{n}} } 
			= 1 + \OM(\beta_n). \label{eq:lem:check-twotime:sqrtalpha}
	\end{align}
	It follows that $\| R_{n,1}^y \| +  \| R_{n,2}^y \| = o(\beta_{n}) \| \ycheck_{n} \| = o(\beta_{n}^{1/2}) \| \yhat_{n} \|  $
	and  $\| R_{n,3}^y \| = \OM(\beta_{n}^{3/2} \alpha_{n}^{1/2}) \| \xcheck_{n} \| = o(\beta_{n}^{3/2}) \| \xhat_{n} \| $. 
	Meanwhile, by Proposition~\ref{prop:operator-decom}, we have $ \| R_{n,4}^y \| = \OM(\beta_{n}^{1/2}) ( \| \xhat_{n} \|^{1+\Gsmooth} + \| \yhat_{n} \|^{1+\Gsmooth} ) $.
	Note that in Assumption~\ref{assump:stepsize-twotime}, we require $\alpha_{n}^{1+\Gsmooth} = o(\beta_{n}) $.
	Then we have
	\begin{equation}\label{eq:lem:check-twotime:Ry-upper}
		\| R_{n}^y \| = o(\beta_{n}^{1/2}) \| \yhat_{n} \| + o(\beta_{n}^{3/2}) \| \xhat_{n} \| + 
		\OM(\beta_{n}) \,
		\frac{ \alpha_{n}^{ (1+\Gsmooth) / 2 } }{ \beta_{n}^{1/2} } \cdot
		\frac{ \| \xhat_{n} \|^{1+\Gsmooth} + \| \yhat_{n} \|^{1+\Gsmooth} } {\alpha_{n}^{(1+\Gsmooth)/2} }.
	\end{equation}
	By Assumption~\ref{assump:mart-decouple-rate}, it follows that $ \EB \| \EB[ R_{n}^y | \FM_{n} ] \| \le \EB \| R_{n}^y \| = o(\beta_{n})$ and $\EB \| R_{n}^y \|^2 = o(\beta_{n}^2)$.
	Moreover, by Proposition~\ref{prop:FG-reduce-order}, we have that \eqref{eq:lem:check-twotime:Ry-upper} holds with $\Gsmooth$ replaced by $\Gsmooth / 2$.
	For $2 \le p \le \frac{4}{1+\Gsmooth / 2}$, it follows that
	\begin{equation*}
		\EB \| R_{n}^y \|^p
		= o(\beta_{n})
		+ o( \beta_{n} \cdot  \beta_{n}^{p-1} / \alpha_{n}^{ p \Gsmooth / 2} )
		= o(\beta_{n}),
	\end{equation*}
	where the last step is due to $p-1 \ge p/2 \ge p \Gsmooth / 2$.
	\vspace{0.2cm}
	
	\textbf{Part 2: One-step recursion for $\xcheck_{n}$}.
	By Proposition~\ref{prop:operator-decom}, the update rule of $\xhat_{n}$ in \eqref{eq:xhat} can be rewritten as
	\begin{align}
		\xhat_{n+1} & = (\mI - \alpha_{n} B_1) \xhat_{n} + H^\star (\yy_{n} - \yy_{n+1}) - \alpha_{n} \xi_{n} - R_{n}^{\nablaH} - R_{n}^{H} - \alpha_{n} R_{n}^F \nonumber \\ 
		\begin{split}
			& \overset{\eqref{eq:yhat-decom}}{=} (\mI - \alpha_{n} B_1 + \beta_{n} H^\star B_2) \xhat_{n} + \beta_n H^\star B_3 \yhat_{n} - \alpha_n \xi_{n} + \beta_{n} H^\star \psi_{n} \\
			& \qquad - R_{n}^{\nablaH} - R_{n}^H - \alpha_{n} R_{n}^F + \beta_n H^\star R_{n}^G. 
		\end{split}
		\label{eq:xhat-decom}
	\end{align}
	Then from the definition of $\xcheck_{n}$, we have
	\begin{align}
		\xcheck_{n+1}
		& = \sqrt{ \frac{\alpha_{n-1}}{\alpha_{n}} } (\mI - \alpha_{n} B_1 + \beta_n H^\star B_2) \xcheck_{n} + \sqrt{ \frac{\beta_{n-1}}{\alpha_{n}} } \beta_{n} H^\star B_3 \ycheck_{n} - \sqrt{ \alpha_{n} } \xi_{n} + \frac{ \beta_{n} }{ \sqrt{ \alpha_{n} } } H^\star \psi_{n} \nonumber \\
		& \qquad - \frac{1}{ \sqrt{\alpha_{n}} } R_{n}^{\nablaH} - \frac{1}{\sqrt{\alpha_{n}}} R_{n}^{H} - \sqrt{\alpha_{n}} R_{n}^F + \frac{\beta_{n}}{\sqrt{ \alpha_{n} }} H^\star R_{n}^G \nonumber \\
		& = (\mI - \alpha_{n} B_1) \xcheck_{n} - \sqrt{\alpha_{n}} \tilde{\xi}_{n} + R_{n}^x, \label{eq:lem:check-twotime:x-update}
	\end{align}
	where $\tilde{\xi}_{n} := \xi_{n} - \kappa_{n} H^\star \psi_{n} = \xi_{n} - \frac{\beta_{n}}{\alpha_{n}} H^\star \psi_{n}$ and 
	\begin{align}\label{eq:lem:check-twotime:Rx}
		\begin{split}
			R_{n}^x
			& = \underbrace{ \left( \sqrt{ \frac{\alpha_{n-1}}{\alpha_{n}} } - 1 \right) \xcheck_{n} }_{=: R_{n,1}^x} 
			+ \underbrace{ (\alpha_{n} - \sqrt{ \alpha_{n} \alpha_{n-1} }) B_1 \xcheck_{n} 
				\vphantom{\left( \sqrt{ \frac{\alpha_{n-1}}{\alpha_{n}} } \right) } }_{=: R_{n,2}^x} 
			+ \underbrace{ \sqrt{\frac{ \alpha_{n-1} }{ \alpha_{n} } } \beta_{n} H^\star B_2 \xcheck_{n} 
				\vphantom{\left( \sqrt{ \frac{\alpha_{n-1}}{\alpha_{n}} } \right) }}_{=: R_{n,3}^x} \\
			& \qquad 
			+ \underbrace{ \sqrt{ \frac{\beta_{n-1}}{\alpha_{n}} } \beta_{n} H^\star B_3 \ycheck_{n} 
				\vphantom{\left( \sqrt{ \frac{\alpha_{n-1}}{\alpha_{n}} } \right) } }_{=: R_{n,4}^x} 
			- \underbrace{ \frac{1}{ \sqrt{\alpha_{n}} } R_{n}^{\nablaH} }_{=: R_{n,5}^x} 
			- \underbrace{ \frac{1}{\sqrt{\alpha_{n}}} R_{n}^{H} }_{=: R_{n,6}^x} 
			- \underbrace{ \sqrt{\alpha_{n}} R_{n}^F 
				\vphantom{\frac{\beta_{n}}{\sqrt{ \alpha_{n} } } } }_{=:R_{n,7}^x } 
			+ \underbrace{ \frac{\beta_{n}}{\sqrt{ \alpha_{n} } } H^\star R_{n}^G }_{=:R_{n,8}^x}.
		\end{split}
	\end{align}
	By \eqref{eq:lem:check-twotime:sqrtalpha}, we have
	$\| R_{n,1}^x \| + \| R_{n,2}^x \| = \OM(\beta_{n}) \| \xcheck_{n} \| = \OM(\beta_{n} \alpha_{n}^{-1/2} ) \| \xhat_{n} \| $.
	By Assumption~\ref{assump:stepsize-twotime}, we have
	$\| R_{n,3}^x \| + \| R_{n,4}^x \| = \OM(\beta_{n}) \| \xcheck_{n} \| + \OM ( \beta_{n}^{3/2} \alpha_{n}^{-1/2} ) \| \ycheck_{n} \|
	= \OM(\beta_{n} \alpha_{n}^{-1/2} ) ( \| \xhat_{n} \| + \| \yhat_{n} \| ) $.
	For $R_{n,5}^x$ and $R_{n,6}^x$,
	from the definition of $\FM_{n}$, Assumption~\ref{assump:smooth} and Proposition~\ref{prop:operator-decom}, we have
	\begin{align*}
		\| \EB [ R_{n}^{\nablaH} | \FM_{n} ] \| 
		& = \| (\nablaH (\yy_{n}) - \nablaH (\yy^\star)) \EB[ \yy_{n+1} - \yy_{n} | \FM_{n} ] \| \\
		& = \beta_{n} \| (\nablaH (\yy_{n}) - \nablaH (\yy^\star)) G(\xx_{n}, \yy_{n}) \| 
		\le \beta_{n} \Hholder \|  \yhat_{n} \|^\Hsmooth \| G(\xx_{n}, \yy_{n}) \| \\
		\| R_{n}^{\nablaH} \| 
		& \le \Hholder \| \yhat_{n} \|^{\Hsmooth}  \| \yy_{n+1} - \yy_{n} \|  , \\
		\| R_{n}^H \| 
		& = \OM ( \| \yy_{n+1} - \yy_{n} \|^{1+\Hsmooth} ), \\
		\| G(\xx_{n}, \yy_{n}) \|
		& = \|  G(\xx_{n}, \yy_{n}) - G( H(\yy_{n}), \yy_{n} ) + G( H(\yy_{n}), \yy_{n} ) - G( H(\yy^\star), \yy^\star ) \| \\
		& \le  \LGx \| \xhat_{n} \| +  \LGy \| \yhat_{n} \|, \\
		\| \yy_{n+1} - \yy_{n} \| 
		& \le \beta_{n} \| G(\xx_{n}, \yy_{n}) \| + \beta_{n} \| \psi_{n} \|.
	\end{align*}
	Then by Young's inequality and Jensen's inequality, we have
	\begin{align*}
		\| \EB[  R_{n,5}^x  | \FM_n ] \| & = \OM (\beta_{n} \alpha_{n}^{-1/2}) ( \| \xhat_{n} \|^{1+\Hsmooth} + \| \yhat_{n} \|^{1+\Hsmooth} ), \\
		\| R_{n,5}^x \| 
		& = \OM (\beta_{n} \alpha_{n}^{-1/2}) ( \| \xhat_{n} \|^{1+\Hsmooth} + \| \yhat_{n} \|^{1+\Hsmooth} + \| \yhat_{n} \|^\Hsmooth \| \psi_{n} \| ), \\
		\| R_{n,6}^x \| 
		& = \OM (\beta_{n}^{1+\Hsmooth} \alpha_{n}^{-1/2}) ( \| \xhat_{n} \|^{1+\Hsmooth} + \| \yhat_{n} \|^{1+\Hsmooth} + \| \psi_{n} \|^{1+\Hsmooth}  ).
	\end{align*}
	Moreover, Proposition~\ref{prop:operator-decom} also implies $\| R_{n,7}^x \| 
	= \OM(\alpha_{n}^{1/2}) ( \| \xhat_{n} \|^{1+\Fsmooth} + \| \yhat_{n} \|^{1+\Fsmooth} )$ and 
	$ \| R_{n,8}^x \| 
	= \OM (\beta_{n} \alpha_{n}^{-1/2}) ( \| \xhat_{n} \|^{1+\Gsmooth} + \| \yhat_{n} \|^{1+\Gsmooth} ) $.
	Summarizing the above results, we obtain
	\begin{align}\label{eq:lem:check-twotime:Rx-upper}
		\begin{split}
			\| \EB [ R_{n}^x | \FM_n ] \|
			& = \OM( \beta_{n} \alpha_{n}^{-1/2})  (\| \xhat_{n} \| + \| \yhat_{n} \| ) + \OM (\beta_{n} \alpha_{n}^{-1/2}) ( \| \xhat_{n} \|^{1+\Hsmooth} + \| \yhat_{n} \|^{1+\Hsmooth} ) \\
			& \quad \ + \OM (\beta_{n}^{1+\Hsmooth} \alpha_{n}^{-1/2}) \| \psi_{n} \|^{1+\Hsmooth} + \OM(\alpha_{n}^{1/2}) ( \| \xhat_{n} \|^{1+\Fsmooth} + \| \yhat_{n} \|^{1+\Fsmooth} ) \\
			& \quad \ + \OM (\beta_{n} \alpha_{n}^{-1/2}) ( \| \xhat_{n} \|^{1+\Gsmooth} + \| \yhat_{n} \|^{1+\Gsmooth} ), \\
			\| R_{n}^x \| 
			& = \OM( \| \EB [ R_{n}^x | \FM_n ] \| ) + \OM (\beta_{n} \alpha_{n}^{-1/2}) \| \yhat_{n} \|^\Hsmooth \| \psi_{n} \|.
		\end{split}
	\end{align}
	Note that Assumption~\ref{assump:stepsize-twotime} requires $\alpha_{n}^{1+\Fsmooth} = o (\beta_{n})$, $\beta_{n}^{(1+\Hsmooth) / 2 } = \OM(\alpha_{n})$.
	By Assumption~\ref{assump:mart-decouple-rate}, we have
	$ \EB \|  \EB [ R_{n}^x | \FM_n ] \| = \OM( \beta_{n} + \beta_{n}^{1+\Hsmooth} \alpha_{n}^{-1/2} + \alpha_{n}^{1 + \Fsmooth/2}) = o(\sqrt{\beta_{n} \alpha_{n}}) $.
	Meanwhile, since $\yhat_{n} \in \FM_{n}$, Assumption~\ref{assump:noise-fclt} implies $\EB \| \yhat_{n} \|^{2\Hsmooth} \| \psi_{n} \|^2 = \OM( \EB \| \yhat_{n} \|^{2\Hsmooth})$.
	By Assumption~\ref{assump:mart-decouple-rate}, we have
	$ \EB \| R_{n}^x \|^2 = \OM( \beta_{n}^2 + \beta_{n}^{2+\Hsmooth} \alpha_{n}^{-1} + \alpha_{n}^{2 + \Fsmooth}) = \OM(\beta_n \alpha_n) $.
	Moreover, by Propositions~\ref{prop:H-reduce-order} and \ref{prop:FG-reduce-order}, we have that \eqref{eq:lem:check-twotime:Rx-upper} holds after replacing $\Hsmooth, \Fsmooth, \Gsmooth$ with $\Hsmooth/2, \Fsmooth/2, \Gsmooth/2$ respectively.
	For $2 \le p \le \frac{4}{1 + (\Hsmooth \vee \Fsmooth \vee \Gsmooth) / 2 }$, one can check $\EB \| R_{n}^x \|^p = o(\alpha_{n}) $.
	\vspace{0.2cm}
	
	\textbf{Part 3: One-step recursion for $\zcheck_{n}$}.
	From the definition of $\zcheck_{n}$ and the update rules of $\ycheck_{n}$ and $\xcheck_{n}$ in \eqref{eq:lem:check-twotime:y-update} and \eqref{eq:lem:check-twotime:x-update} , we have
	\begin{align*}
		\zcheck_{n+1}
		& = \ycheck_{n+1} - \sqrt{\kappa_{n}} B_2 B_1^{-1} \xcheck_{n+1} \\
		& = \left( \mI - \beta_{n} B_3 + \frac{\invdiffslow \beta_{n}}{2} \mI \right) (\zcheck_{n} + \sqrt{\kappa_{n-1}} B_2 B_1^{-1} \xcheck_{n})
		- \sqrt{ \beta_{n} \alpha_{n} } B_2 \xcheck_{n} - \sqrt{ \beta_{n} } \psi_{n} + R_{n}^y \\
		& \quad - \sqrt{\kappa_{n}} B_2 B_1^{-1} \left[ (\mI - \alpha_{n} B_1 ) \xcheck_{n} - \sqrt{\alpha_{n}} \tilde{\xi}_{n} + R_{n}^x \right] \\
		& = \left( \mI - \beta_{n} B_3 + \frac{\invdiffslow \beta_{n}}{2} \mI \right) \zcheck_{n} - \sqrt{\beta_{n}} \tilde{\psi}_{n} + R_{n}^z,
	\end{align*}
	where $\tilde{\psi}_{n} := \psi_{n} - B_2 B_1^{-1} \tilde{\xi}_n = (\mI + \kappa_{n} B_2 B_1^{-1} H^\star) \psi_{n} - B_2 B_1^{-1} \xi_{n}$ and
	\begin{equation*}
		R_{n}^z
		= \underbrace{ (\sqrt{\kappa_{n-1}} {-} \sqrt{\kappa_{n}}) B_2 B_1^{-1} \xcheck_{n} }_{=: R_{n,1}^z} 
		- \underbrace{ \sqrt{\kappa_{n-1}} \beta_{n} (B_3 {-} \invdiffslow \mI / 2) B_2 B_1^{-1} \xcheck_{n} }_{=: R_{n,2}^z}
		+ R_{n}^{y} - \sqrt{\kappa_{n}} B_2 B_1^{-1} R_{n}^x.
	\end{equation*}
	By \eqref{eq:lem:check-twotime:sqrtalpha}, we have $\| R_{n,1}^z \| + \| R_{n,2}^z \| = o(\beta_{n}) \| \xcheck_{n} \|$.
	Moreover, from the definitions of $R_{n}^y$ and $R_{n}^x$ in \eqref{eq:lem:check-twotime:Ry} and \eqref{eq:lem:check-twotime:Rx} and the analysis in Parts~1 and 2, we can obtain
	$\EB \| \EB [ R_{n}^z | \FM_{n} ] \| = o(\beta_{n}) $, $\EB \| R_{n}^z \|^2 = \OM (\beta_{n}^2)$ and $\EB \| R_{n}^z \|^p = o(\beta_{n})$ for $2 \le p \le \frac{4}{1 + ( \Hsmooth \vee \Fsmooth \vee \Gsmooth )/2}$.
\end{proof}

\subsection{Proofs in Section~\ref{sec:fclt:tight}}
\label{sec:fclt:proof:tight}
In this subsection, we give the proof of Proposition~\ref{prop:tight-suff-slow} and Lemma~\ref{lem:tight-twotime} in Section~\ref{sec:fclt:tight}.
The main challenge lies in establishing the tightness of $\{ \ybar_{n}(\cdot) \}_{n=1}^\infty$, which is detailed in Part~3 of this proof.

First, we emphaze that 

With Definition~\ref{defn:time-inter}, the construction in \eqref{eq:xbar-short}, \eqref{eq:ybar-short} and \eqref{eq:zbar-short} can be equivalently expressed as follows 

\begin{align*}\label{eq:def:xbar}\tag{X}
	\begin{split}
		\xbar_{n}(t) 
		& \overset{\eqref{eq:xbar-short}}{=} \xcheck_{ N^\alpha(n,t) } + \frac{t - \underline{t}^\alpha_{n}}{ \alpha_{N^\alpha(n,t)} } \big( \xcheck_{ N^\alpha(n,t) + 1} - \xcheck_{ N^\alpha(n,t)  } \big) \\
		& \overset{\eqref{eq:xcheck-twotime}}{=} \xcheck_{ N^\alpha(n,t) } + (t - \underline{t}^\alpha_{n} ) \left( - B_1 \xcheck_{N^\alpha(n,t)} + \frac{R_{N^\alpha(n,t)}^x}{ \alpha_{N^\alpha(n,t)} } \right) - \frac{t - \underline{t}^\alpha_{n}}{ \sqrt{ \alpha_{N^\alpha(n,t)} } } 
		\tilde{\xi}_{N^\alpha(n,t)}.
	\end{split} \\
	\label{eq:def:ybar}\tag{Y}
	\begin{split}
		\ybar_{n}(t) 
		& \overset{\eqref{eq:ybar-short}}{=} \ycheck_{ N^\beta(n,t) } + \frac{t - \underline{t}^\beta_{n}}{\beta_{N^\beta(n,t)}} \big( \ycheck_{ N^\beta(n,t)+1 } - \ycheck_{ N^\beta(n,t) } \big) \\
		& \overset{\eqref{eq:ycheck-twotime}}{=} \ycheck_{ N^\beta(n,t) } + (t - \underline{t}^\beta_{n} ) \left[ \Big(- B_3 + \frac{\invdiffslow \mI}{2} 
		\Big) \ycheck_{N^\beta(n,t)} + \frac{R_{N^\beta(n,t)}^y }{ \beta_{N^\beta(n,t)} } \right] - 
		\frac{t - \underline{t}^\beta_{n}} {\sqrt{ \beta_{N^\beta(n,t)} } }
		\psi_{N^\beta(n,t)} 
		\\
		& \qquad - (t - \underline{t}^\beta_{n}) \sqrt{ \frac{ \alpha_{N^\beta(n,t)} }{ \beta_{N^\beta(n,t)} } } B_2 \xcheck_{N^\beta(n,t)}.
	\end{split}\\
	\label{eq:def:zbar}\tag{Z}
	\begin{split}
		\zbar_{n}(t) 
		& \overset{\eqref{eq:zbar-short}}{=} \zcheck_{ N^\beta(n,t) } + \frac{t - \underline{t}^\beta_{n}}{\beta_{N^\beta(n,t)}} \big( \zcheck_{ N^\beta(n,t)+1 } - \zcheck_{ N^\beta(n,t) } \big) \\
		& \overset{\eqref{eq:zcheck-twotime}}{=} \zcheck_{ N^\beta(n,t) } + (t - \underline{t}^\beta_{n} ) \left[ \Big(- B_3 + \frac{\invdiffslow \mI}{2} 
		\Big) \zcheck_{N^\beta(n,t)} + \frac{R_{N^\beta(n,t)}^z }{ \beta_{N^\beta(n,t)} } \right] -  \frac{t - \underline{t}^\beta_{n}}{ \sqrt{ \beta_{N^\beta(n,t)}} } 
		\tilde{\psi}_{N^\beta(n,t)}.
	\end{split}
\end{align*}

\begin{proof}[Proof of Proposition~\ref{prop:tight-suff-slow}]
For each $U_n(\cdot)$, if we restrict the domain as $[0, T]$ for a fixed $T > 0$, then $U_n(\cdot)|_{[0,T]}$ is also a random function in $C( [0,T]; \RB^d)$.
Equip $C( [0,T]; \RB^d)$ with the norm $\rho_{T} (f, g) := \max_{0 \le t \le T} \| f(t) - g(t) \|$, we can similarly define the tightness of $\left\{ U_n(\cdot)|_{[0,T]} \right\}_{n=0}^\infty$ as Definition~\ref{defn:tight}.
\cite[Corollary~5]{whitt1970weak} guarantees that $\{U_n(\cdot)\}_{n=0}^\infty$ is tight in $C( [0,\infty); \RB^d)$ if and only if for each integer $j \ge 1$, $\left\{ U_n(\cdot)|_{[0,j]} \right\}_{n=0}^\infty$ is tight in $C( [0,j]; \RB^d)$.
Then it suffices to establish the tightness of $\left\{ U_n(\cdot)|_{[0,j]} \right\}_{n=0}^\infty$ for each $j \ge 1$. 

To this end, we resort to \cite[Theorem~7.3]{billingsley2013convergence}.
Although this theorem only focus on $C([0,1]; \RB)$, we can generalize it effortlessly to $C([0,j]; \RB^{d})$.
Then it suffices to verify that for any $\eps, \eta>0$, there exists $\delta \in (0,1)$ such that 
    \begin{equation*}
        \PB\bigg(\sup_{s,t \in[0,j] \colon |s-t| \le \delta} \| U_n(s) - U_n(t) \| \ge \eps \bigg) \le \eta, \quad \forall\, n \ge n_0.
    \end{equation*}
By \eqref{eq:tight-suff-slow}, we can set $\delta = \min \left\{ \left(\frac{\eta \eps^{c_1}}{2K}\right)^{1/(1+\gamma_1)},   \left(\frac{\eta \eps^{c_2}}{2K}\right)^{1/(1+\gamma_2)}, 0.5\right\}$ to ensure $K \left( \frac{\delta^{1 {+} \gamma_1} }{\eps^{c_1} } + \frac{ \delta^{1 {+} \gamma_2}}{\eps^{c_2}} \right) \le \eta $. 
\end{proof}

\begin{proof}[Proof of Lemma~\ref{lem:tight-twotime}]
	We begin by proving the tightness of $\{ \xbar_{n}(\cdot) \}_{n=1}^\infty$. The tightness of $\{ \zbar_{n}(\cdot) \}_{n=1}^\infty$ can be established using a similar approach. Finally, we address the tightness of $\{ \ybar_{n}(\cdot) \}_{n=1}^\infty$, which is the primary challenge in this proof.
	\vspace{0.2cm}
	
	\textbf{Part~1: Tightness of $\{ \xbar_{n}(\cdot) \}_{n=1}^\infty$}.
	Note that Assumption~\ref{assump:mart-decouple-rate} implies $\EB \| \xbar_{n}(0) \|^2 = \EB \| \xcheck_{n} \|^2 = \OM(1)$. 
	Then by Markov's inequality, the first condition in Proposition~\ref{prop:tight-suff-slow} holds.
	Now we verify the second condition.
	Without loss of generality,  assume $0 \le t < s \le T$.
	From the definition of $\xbar_{n}(t)$ in \eqref{eq:def:xbar}, we have
	\begin{align*}
		\xbar_{n}(s) - \xbar_{n}(t)
		& \overset{\eqref{eq:def:xbar}}{=} \xcheck_{N^\alpha(n,s)} + (s - \underline{s}^\alpha_{n} ) \left( - B_1 \xcheck_{N^\alpha(n,s)} + \frac{ R_{N^\alpha(n,s)}^x }{ \alpha_{N^\alpha(n,s)} } \right) - \frac{s - \underline{s}^\alpha_{n}}{ \sqrt{ \alpha_{N^\alpha(n,s)} } } 
		\tilde{\xi}_{N^\alpha(n,s)} \\
		& \qquad - \xcheck_{N^\alpha(n,t)} - (t - \underline{t}^\alpha_{n} ) \left( - B_1 \xcheck_{N^\alpha(n,t)} + \frac{ R_{N^\alpha(n,t)}^x }{ \alpha_{N^\alpha(n,t)} } \right) + \frac{t - \underline{t}^\alpha_{n}}{ \sqrt{\alpha_{N^\alpha(n,t)} } } 
		\tilde{\xi}_{N^\alpha(n,t)} \\
		& \overset{\eqref{eq:xcheck-twotime}}{=} \Bigg\{ \ssum{k}{N^\alpha(n,t)}{N^\alpha(n,s)-1} \alpha_{k} \left( - B_1 \xcheck_{k} + \frac{R_{k}^x}{\alpha_{k}} \right) - (t - \underline{t}^\alpha_{n} ) \left( - B_1 \xcheck_{N^\alpha(n,t)} + \frac{ R_{N^\alpha(n,t)}^x }{ \alpha_{N^\alpha(n,t)} } \right) \\
		& \qquad + (s - \underline{s}^\alpha_{n} ) \left( - B_1 \xcheck_{N^\alpha(n,s)} + \frac{ R_{N^\alpha(n,s)}^x }{ \alpha_{N^\alpha(n,s)} } \right) \Bigg\} - \Bigg\{ \ssum{k}{N^\alpha(n,t)}{N^\alpha(n,s)-1} \sqrt{\alpha_{k}} \tilde{\xi}_{k} \\ 
		& \qquad -  \frac{t - \underline{t}^\alpha_{n}}{ \sqrt{\alpha_{N^\alpha(n,t)} } } 
		\tilde{\xi}_{N^\alpha(n,t)} 
		+ \frac{s - \underline{s}^\alpha_{n}}{ \sqrt{ \alpha_{N^\alpha(n,s)} } } 
		\tilde{\xi}_{N^\alpha(n,s)} \Bigg\} \\
		& =: \bfB^{(1x,n)}_{t,s} - \Xii^{(n)}_{t,s}.
	\end{align*}
    
    The first term can also be expressed as
    \begin{align*}
        \bfB^{(1x,n)}_{t,s} & = \Bigg\{ \ssum{k}{N^\alpha(n,t)+1}{N^\alpha(n,s)-1} \alpha_{k} \left( - B_1 \xcheck_{k} + \frac{R_{k}^x}{\alpha_{k}} \right) + \left[\alpha_{N^\alpha(n,t)} -  (t - \underline{t}^\alpha_{n} )\right] \left( - B_1 \xcheck_{N^\alpha(n,t)} + \frac{ R_{N^\alpha(n,t)}^x }{ \alpha_{N^\alpha(n,t)} } \right) \\
		& \qquad + (s - \underline{s}^\alpha_{n} ) \left( - B_1 \xcheck_{N^\alpha(n,s)} + \frac{ R_{N^\alpha(n,s)}^x }{ \alpha_{N^\alpha(n,s)} } \right) \Bigg\}.
    \end{align*}
    Note that $t' - (\underline{t'})^\alpha_{n} \le \alpha_{N^\alpha(n,t')}$ for $t'=t,s$.
    Then we have $0 \le \alpha_{N^\alpha(n,t)} -  (t - \underline{t}^\alpha_{n} ) \le \alpha_{N^\alpha(n,t)}$ and $0 \le s - \underline{s}^\alpha_{n} \le \alpha_{N^\alpha(n,s)} $.
	It follows that
	\begin{align}
		\| \bfB^{(1x,n)}_{t,s} \| 
        & \le  \ssum{k}{N^\alpha(n,t)+1}{N^\alpha(n,s) - 1} \alpha_{k} \left\| B_1 \xcheck_{k} - \frac{R_{k}^x}{\alpha_{k}} \right\| + \left[ \alpha_{N^\alpha(n,t)} - (t - \underline{t}^\alpha_{n} ) \right] \left( - B_1 \xcheck_{N^\alpha(n,t)}  + \frac{ R_{N^\alpha(n,t)}^x }{ \alpha_{N^\alpha(n,t)} } \right) \nonumber \\
        & \qquad  + (s - \underline{s}^\alpha_{n}) \left\| B_1 \xcheck_{N^\alpha(n,s)} - \frac{ R_{N^\alpha(n,s)}^x }{ \alpha_{N^\alpha(n,s)} } \right\| \nonumber \\
		& \le \ssum{k}{N^\alpha(n,t)}{N^\alpha(n,s)} \alpha_{k} \left\| B_1 \xcheck_{k} - \frac{R_{k}^x}{\alpha_{k}} \right\|. \label{eq:lem:tight-twotime:B1x-1}
	\end{align}
	By Assumption~\ref{assump:mart-decouple-rate} and Lemma~\ref{lem:check-twotime-xy}, we have that 
	\begin{equation}
		\label{eq:lem:tight-twotime:l2bound-x}
		\sup_{n} \EB \left\| B_1 \xcheck_{n} - \frac{R_{n}^x }{ \alpha_{n}} \right\|^2 \le 2 \| B_1 \|^2 \sup_{n} \EB \| \xcheck_{n} \|^2 + 2 \sup_{n} \frac{ \EB \| R_{n}^x \|^2}{\alpha_{n}^2} \le \cti_x
	\end{equation}
	for some $\cti_x > 0$. 
	Applying the Cauchy-Schwarz inequality, we obtain
	\begin{align}
		\EB \| \bfB^{(1x,n)}_{t,s} \|^2
		& \overset{\eqref{eq:lem:tight-twotime:B1x-1}}{\le}
		\EB \left( \ssum{k}{N^\alpha(n,t)}{N^\alpha(n,s)} \alpha_{k} \left\| B_1 \xcheck_{k} - \frac{R_{k}^x}{\alpha_{k}} \right\|\right)^2 \nonumber \\
		& \le \left( \ssum{k}{N^\alpha(n,t)}{N^\alpha(n,s)} \alpha_{k} \right) \left( \ssum{k}{N^\alpha(n,t)}{N^\alpha(n,s)} \alpha_{k} \EB  \left\| B_1 \xcheck_{k} - \frac{R_{k}^x}{\alpha_{k}} \right\|^2 \right) \nonumber \\
		& \overset{ \eqref{eq:lem:tight-twotime:l2bound-x} }{\le} \cti_x \left( \ssum{k}{N^\alpha(n,t)}{N^\alpha(n,s)} \alpha_{k} \right)^2 
		\le  \cti_x ( s-t + 2 \alpha_{n} )^2
		\overset{(a)}{\le} K_1(s-t)^2,
		\label{eq:lem:tight-twotime:B1x}
	\end{align}
	where there exists $K_1$ such that (a) holds for any $n$ because $\alpha_{n} \to 0$.

    To deal with the second term, we need the following Burkholder's inequality. 
    \begin{prop}[Burkholder's inequality, {\cite{burkholder1988sharp}}]\label{prop:burk}
    Fix any $p \ge 2$.
    Let $\{ X_k \}_{1 \le k \le n}$ be a finite $\RB^d$-valued martingale difference sequence with $\sup_{1 \le k \le n} \EB \| X_k \|^p < \infty$. Then we have
    \begin{equation*}
        \EB \left\| \ssum{k}{1}{n} X_k \right\|^p \le \left[ (p-1) \vee \frac{1}{p-1} \right]^p \EB \left( \ssum{k}{1}{n} \| X_k \|^2 \right)^\frac{p}{2},
    \end{equation*}
    which together with Jensen's inequality imply 
    \begin{equation}\label{eq:burk-jensen}
        \EB \left\| \ssum{k}{1}{n} X_k \right\|^p \le \left[ (p-1) \vee \frac{1}{p-1} \right]^p n^{\frac{p}{2} - 1 } \EB \ssum{k}{1}{n} \| X_k \|^p.
    \end{equation}
	\end{prop}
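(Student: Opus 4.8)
The statement is the sharp Burkholder--Davis--Gundy inequality for the $\RB^d$-valued martingale $S_m:=\sum_{k=1}^m X_k$ (with $S_0:=0$), whose quadratic variation is exactly $[S]_m:=\sum_{k=1}^m\|X_k\|^2$ because the $X_k$ are its increments; writing $C_p:=\big[(p-1)\vee\tfrac1{p-1}\big]^p=(p-1)^p$ for $p\ge2$, the goal is $\EB\|S_n\|^p\le C_p\,\EB[S]_n^{p/2}$. The plan is to use Burkholder's special-function (Bellman) method. First I would reduce to bounded increments: replacing $X_k$ by $X_k\mathbf 1\{\|X_k\|\le M\}$ minus its $\FM_{k-1}$-conditional mean preserves the martingale-difference structure, and letting $M\to\infty$ recovers the general case by monotone/dominated convergence; for bounded increments all $L^p$ moments are finite and conditional expectations can be manipulated freely.

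The core step is to exhibit a majorant $U\colon\RB^d\times[0,\infty)\to\RB$ with three properties: (a) $U(x,y)\ge\|x\|^p-C_p\,y^{p/2}$ everywhere; (b) $U(0,0)\le 0$; and (c) the \emph{diagonal concavity} $U\big(x+h,\,y+\|h\|^2\big)\le U(x,y)+\inner{\nabla_x U(x,y)}{h}$ for all $x,h\in\RB^d$ and $y\ge0$. For $p\ge2$ one takes a function of the shape $U(x,y)=\beta_p\big(\|x\|-(p-1)\sqrt y\,\big)\big(\|x\|+\sqrt y\,\big)^{p-1}$ with $\beta_p>0$ chosen appropriately; properties (a)--(c) then follow from a direct computation, with (c) reducing, along the direction of any fixed $h$, to a one-dimensional concavity/majorization estimate for $t\mapsto U\big(x+th,\,y+t^2\|h\|^2\big)$.

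Granted such a $U$, the conclusion is a telescoping argument. Applying (c) with $x=S_{m-1}$, $y=[S]_{m-1}$, $h=X_m$ and taking $\EB[\,\cdot\mid\FM_{m-1}]$, the inner-product term vanishes because $\EB[X_m\mid\FM_{m-1}]=0$ and $\nabla_x U(S_{m-1},[S]_{m-1})$ is $\FM_{m-1}$-measurable; hence $\{U(S_m,[S]_m)\}_m$ is a supermartingale, and by (b) and then (a),
\begin{align*}
\EB\|S_n\|^p - C_p\,\EB[S]_n^{p/2}\;\le\;\EB\,U(S_n,[S]_n)\;\le\;U(S_0,[S]_0)\;=\;U(0,0)\;\le\;0,
\end{align*}
which is the asserted inequality. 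The corollary \eqref{eq:burk-jensen} is then immediate: by Jensen's inequality for the convex map $t\mapsto t^{p/2}$ (legitimate since $p/2\ge1$) one has $\big(\tfrac1n\sum_{k=1}^n\|X_k\|^2\big)^{p/2}\le\tfrac1n\sum_{k=1}^n\|X_k\|^p$, so $\big(\sum_{k=1}^n\|X_k\|^2\big)^{p/2}\le n^{p/2-1}\sum_{k=1}^n\|X_k\|^p$, and taking expectations and combining with the first inequality gives \eqref{eq:burk-jensen}.

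I expect the main obstacle to be the second step: writing down the explicit function $U$ and, above all, verifying the diagonal concavity (c) in the vector-valued setting with \emph{exactly} the sharp constant $p-1$. A softer route avoids the special function altogether --- a second-order Taylor (It\^o-type) expansion of $\|S_n\|^p$ using the bound $\|\nabla^2(\|\cdot\|^p)\|\le p(p-1)\|\cdot\|^{p-2}$, followed by Doob's $L^p$ maximal inequality and Young's inequality --- but it produces a strictly larger, still explicit constant (for instance it degrades to $2$ rather than $1$ at $p=2$). That weaker version already suffices for the uses of Proposition~\ref{prop:burk} in this paper, so if reproducing Burkholder's sharp constant turns out to be too delicate I would fall back on it.
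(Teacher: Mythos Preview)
The paper does not prove the first inequality at all: it is quoted verbatim as a known result with a citation to Burkholder (1988), and the only argument the paper supplies is the one-line Jensen step from the first display to \eqref{eq:burk-jensen}. Your derivation of that Jensen step is exactly what the paper indicates---apply convexity of $t\mapsto t^{p/2}$ to $\tfrac1n\sum_k\|X_k\|^2$---so on the part the paper actually proves, you match it.

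Where you differ is that you go further and sketch a proof of the cited inequality itself via Burkholder's special-function method. That is indeed the route Burkholder uses for sharp constants, and your outline (majorant $U$, diagonal concavity, supermartingale telescoping) is the right architecture. Two cautions if you pursue it: the explicit $U$ you wrote is the one tailored to the \emph{martingale transform} inequality $\|g\|_p\le(p^*-1)\|f\|_p$, not directly to the square-function bound $\|S_n\|_p\le(p-1)\|[S]_n^{1/2}\|_p$; the latter requires a related but distinct special function, and the precise form of condition~(c) changes accordingly. Your fallback (Taylor expansion plus Doob maximal inequality, yielding a non-sharp constant) is sound and, as you note, more than adequate for how the proposition is used downstream.
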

	With $p \in \left(2, \frac{4}{1 + ( \Hsmooth \vee \Fsmooth \vee \Gsmooth )/2} \right]$,
	we have
	\begin{equation*}
		\EB \| \Xii^{(n)}_{t,s} \|^p
		\overset{\eqref{eq:burk-jensen}}{\le}  \cti_p \left[ N^\alpha(n, s) - N^\alpha(n,t) + 1 \right]^{\frac{p}{2}-1} \ssum{k}{ N^\alpha(n,t) }{N^\alpha(n,s) } \alpha_{k}^\frac{p}{2} \EB \| \tilde{\xi}_{k} \|^p,
	\end{equation*}
	where we have used $t' - \underline{t'}^\alpha_{n} \le \alpha_{N^\alpha(n,t')}$ for $t'=t,s$ 
    and the constant number is 
    $\cti_p = \left[ (p-1) \vee \frac{1}{p-1} \right]^p$.

    To further derive the upper bound, we control different terms separately as follows.
    \begin{itemize}
        \item For $N^\alpha(n, s) - N^\alpha(n,t) + 1$, From the definition of $N^\alpha(n,t')$ in Definition~\ref{defn:time-inter}, we have $\ssum{k}{n}{N^\alpha(n,t')-1} \alpha_k \le t' < \ssum{k}{n}{N^\alpha(n,t')} \alpha_k $ for $t'=s,t$.
        Note that $\alpha_k$ is decreasing in $k$.
        It follows that
        \[s - t \ge \ssum{k}{n}{N^\alpha(n,s)-1} \alpha_k - \ssum{k}{n}{N^\alpha(n,t)} \alpha_k = \ssum{k}{N^\alpha(n,t)+1}{N^\alpha(n,s)-1} \alpha_k \ge (N^\alpha(n,s)-N^\alpha(n,t)-2) \alpha_{N^\alpha(n,s)}.\]
        Thus, $$N^\alpha(n, s) - N^\alpha(n,t) + 1 \le \frac{s-t}{\alpha_{N^\alpha(n,s)}}+3 \le \frac{s-t + 3 \alpha_n}{\alpha_{N^\alpha(n,s)}}.$$
        \item For $\ssum{k}{ N^\alpha(n,t) }{N^\alpha(n,s) } \alpha_{k}^\frac{p}{2}$, since $N^\alpha(n,t) \ge n$ and $\alpha_k$ is decreasing in $k$, we have $\ssum{k}{ N^\alpha(n,t) }{N^\alpha(n,s) } \alpha_{k}^\frac{p}{2} \le \alpha_n^{\frac{p}{2}-1} \ssum{k}{ N^\alpha(n,t) }{N^\alpha(n,s) } \alpha_{k}$.
        From the analysis in the first bullet, we have \[\ssum{k}{ N^\alpha(n,t) }{N^\alpha(n,s) } \alpha_{k} = \ssum{k}{ n }{N^\alpha(n,s)-1 } \alpha_{k} + \alpha_{N^\alpha(n,s)} - \ssum{k}{ n }{N^\alpha(n,t) } \alpha_{k} + \alpha_{N^\alpha(n,t)} \le s - t +2\alpha_n.\]
        Thus,
        $$\ssum{k}{ N^\alpha(n,t) }{N^\alpha(n,s) } \alpha_{k}^\frac{p}{2} \le \alpha_n^{\frac{p}{2}-1} ( s - t +2\alpha_n).$$
        \item For $\EB \| \tilde{\xi}_{k} \|^p$, from the definition of $\tilde{\xi}_{k} = \xi_{k} - \kappa_{k} H^\star \psi_{k}$ and Assumption~\ref{assump:noise-fclt}, there exists $\cti_\xi > 0$ such that $$\sup_{k} \EB \| \tilde{\xi}_{k} \|^p \le \cti_\xi. $$
    \end{itemize}
    Combining the above bounds yields
    \begin{align}
        \EB \| \Xii^{(n)}_{t,s} \|^p 
        & \le  \cti_p \cti_\xi \left[ N^\alpha(n, s) - N^\alpha(n,t) + 1 \right]^{\frac{p}{2}-1} \ssum{k}{ N^\alpha(n,t) }{N^\alpha(n,s) } \alpha_{k}^\frac{p}{2} \nonumber \\
        & \le \cti_p \cti_\xi \left( \frac{s-t + 3 \alpha_n}{\alpha_{N^\alpha(n,s)}} \right)^{\frac{p}{2}-1}  \alpha_n^{\frac{p}{2}-1} ( s - t +2\alpha_n) \nonumber\\
        & \le  \cti_p \cti_\xi ( s-t + 3 \alpha_{n} )^\frac{p}{2} \left( \frac{ \alpha_{n} }{ \alpha_{ N^\alpha(n, s) }  } \right)^{\frac{p}{2}-1 } \nonumber \\
		& \overset{ (a) }{\le} \cti_p \cti_\xi \cti_\alpha ( s-t + 3 \alpha_{n} )^\frac{p}{2}
		\overset{ (b) }{\le} K_2 ( s-t )^\frac{p}{2},
		\label{eq:lem:tight-twotime:xii}
    \end{align}
	where (a) uses Lemma~\ref{lem:aux:bounded} with $\cti_\alpha = 2^{ ( \lceil 2T/ c_\alpha \rceil + 1 )(p/2-1) }$ with $c_\alpha$ specified therein, 
	and there exists $K_2$ such that (b) holds for any $n$.

	Combining \eqref{eq:lem:tight-twotime:B1x} and \eqref{eq:lem:tight-twotime:xii} with Markov's inequality, 
	we have
	\begin{align*}
		\PB \left( 
		\| \xbar_{n}(s) - \xbar_{n}(t) \| \ge \eps \right)
		& \le \PB \left( 
		\| \bfB^{(1x,n)}_{t,s} \| \ge \frac{\eps}{2} \right) + \PB \left( 
		\| \Xii^{(n)}_{t,s} \| \ge \frac{\eps}{2} \right) 
		\\
		& \le \frac{4}{\eps^2} \EB \| \bfB^{(1x,n)}_{t,s} \| ^2 + \frac{2^p}{\eps^p} \EB \| \Xii^{(n)}_{t,s} \|^p \\
		& 
		\le 8 (K_1 \vee K_2) \left[ \frac{ (s-t)^2 }{\eps^2} + \frac{ (s-t)^\frac{p}{2} }{\eps^p} \right].
	\end{align*}
	This upper bound verifies the second condition in Proposition~\ref{prop:tight-suff-slow}. Consequently, $\{ \xbar_{n}(\cdot) \}_{n=1}^\infty$ is tight.
	\vspace{0.2cm}
	
	\textbf{Part~2: Tightness of $\{ \zbar_{n}(\cdot) \}_{n=1}^\infty$}.
	Note that in the proof of Part~1, the only place we employed Lemma~\ref{lem:check-twotime-xy} is \eqref{eq:lem:tight-twotime:l2bound-x} and in fact, we only require $\EB \| R_{n}^x \|^2 = \OM(\alpha_{n}^2)$. Moreover, the dimension $d_x$ would not affect the proof.
	The tightness of $\{ \zbar_{n}(\cdot) \}_{n=1}^\infty$ follows from a similar procedure by applying the residual bounds in Lemma~\ref{lem:check-twotime-z}.
	\vspace{0.2cm}
	
	\textbf{Part~3: Tightness of $\{ \ybar_{n}(\cdot) \}_{n=1}^\infty$}.
	Note that Assumption~\ref{assump:mart-decouple-rate} implies $\EB \| \ybar_{n}(0) \|^2 = \EB \| \ycheck_{n} \|^2 = \OM(1)$. 
	Then by Markov's inequality, the first condition in Proposition~\ref{prop:tight-suff-slow} holds.
	Now we verify the second condition.
	Without loss of generality, we assume $0 \le t < s \le T$.
	For ease of notation, we define $\widetilde{B}_3 := B_3 - \frac{\invdiffslow \mI}{2}$.
	From the definition of $\ybar_{n}(t)$ in \eqref{eq:def:ybar}, we have
	
	\begin{align*}
		\ybar_{n}(s) - \ybar_{n}(t)
		& \overset{\eqref{eq:def:ybar}}{=} \ycheck_{N^\beta(n,s)} + (s - \underline{s}^\beta_{n} ) \left( - \widetilde{B}_3  \ycheck_{N^\beta(n,s)} + \frac{ R_{N^\beta(n,s)}^y }{ \beta_{N^\beta(n,s)} } \right) -  \frac{s - \underline{s}^\beta_{n}}{ \sqrt{ \beta_{N^\beta(n,s) } } }  
		\psi_{N^\beta(n,s)} \\
		& \qquad - \ycheck_{N^\beta(n,t)} - (t - \underline{t}^\beta_{n} ) \left( - \widetilde{B}_3 \ycheck_{N^\beta(n,t)} + \frac{ R_{N^\beta(n,t)}^y }{ \beta_{N^\beta(n,t)} } \right) +  \frac{t - \underline{t}^\beta_{n}}{ \sqrt{ \beta_{N^\beta(n,t)} } }
		\psi_{N^\beta(n,t)} \\
		& \qquad - (s - \underline{s}^\beta_{n}) \sqrt{ \frac{ \alpha_{N^\beta(n,s)} }{ \beta_{N^\beta(n,s)} } } B_2 \xcheck_{N^\beta(n,s)} 
		+ (t - \underline{t}^\beta_{n}) \sqrt{ \frac{ \alpha_{N^\beta(n,t)} }{ \beta_{N^\beta(n,t)} } } B_2 \xcheck_{N^\beta(n,t)} \\
		& \overset{\eqref{eq:ycheck-twotime}}{=} \Bigg\{ \ssum{k}{N^\beta(n,t)}{N^\beta(n,s)-1} \beta_{k} \left( - \widetilde{B}_3 \ycheck_{k} + \frac{R_{k}^y}{\beta_{k}} \right) - (t - \underline{t}^\beta_{n} ) \left( - \widetilde{B}_3 \ycheck_{N^\beta(n,t)} + \frac{ R_{N^\beta(n,t)}^y }{ \beta_{N^\beta(n,t)} } \right) \\  
		& \qquad+ (s - \underline{s}^\beta_{n} ) \left( - \widetilde{B}_3 \ycheck_{N^\beta(n,s)} + \frac{ R_{N^\beta(n,s)}^y }{ \beta_{N^\beta(n,s)} } \right) \Bigg\}  - \Bigg\{ \ssum{k}{N^\beta(n,t)}{N^\beta(n,s)-1} \sqrt{\beta_{k}} \psi_{k} \\
		& \qquad -  \frac{t - \underline{t}^\beta_{n}}{ \sqrt{\beta_{N^\beta(n,t)} } } \psi_{N^\beta(n,t)} 
		+ \frac{s - \underline{s}^\beta_{n}}{ \sqrt{\beta_{N^\beta(n,s)} } } \psi_{N^\beta(n,s)} \Bigg\}
		- \Bigg\{ \ssum{k}{N^\beta(n,t)}{N^\beta(n,s)-1} \beta_{k} \sqrt{ \frac{\alpha_{k}}{\beta_{k}} } B_2 \xcheck_{k} \\
		& \qquad  - (t - \underline{t}^\beta_{n}) \sqrt{ \frac{ \alpha_{N^\beta(n,t)} }{ \beta_{N^\beta(n,t)} } } B_2 \xcheck_{N^\beta(n,t)} + (s - \underline{s}^\beta_{n}) \sqrt{ \frac{ \alpha_{N^\beta(n,s)} }{ \beta_{N^\beta(n,s)} } } B_2 \xcheck_{N^\beta(n,s)} 
		\Bigg\} \\
		& =: \bfB^{(3y,n)}_{t,s} - \Psii^{(n)}_{t,s} - \bfB^{(2x,n)}_{t,s}.
	\end{align*}
	
	For the first and second terms,
	similar to the derivation of \eqref{eq:lem:tight-twotime:B1x} and \eqref{eq:lem:tight-twotime:xii}, with $p \in \left(2, \frac{4}{1 + ( \Hsmooth \vee \Fsmooth \vee \Gsmooth )/2} \right]$, there exist $K_3, K_4$ such that for any $n$,
	\begin{equation}\label{eq:lem:tight-slow:BandPsi}
		\EB \| \bfB^{(3y,n)}_{t,s} \|^2 \le K_3 (s-t)^2 \quad \text{ and } \quad
		\EB \| \Psii^{(n)}_{t,s} \|^p \le K_4 (s-t)^\frac{p}{2}.
	\end{equation}
	
	\textbf{Main difficulty}.
	The main difficulty lies to how to deal with the last term $ \bfB^{(2x,n)}_{t,s}$.
	If we adopt a similar procedure of deriving \eqref{eq:lem:tight-twotime:B1x} by the Cauchy-Schwarz inequality,
	we could only obtain 
	\[\EB \| \bfB^{(2x,n)}_{t,s} \|^2 \lesssim (s-t)  \left( \ssum{k}{N^\beta(n,t)}{N^\beta(n,s)} \beta_{k} \frac{\alpha_{k} }{\beta_{k}} \right)
	\lesssim (s-t)^2 \max_{k \ge N^\beta(n,s)} \frac{\alpha_{k}}{\beta_{k}}.
	\] 
	Since Assumption~\ref{assump:stepsize-twotime}~\ref{assump:stepsize-twotime-tozero} implies $\max_{k \ge N^\beta(n,s)} \alpha_{k} / \beta_{k} \to \infty$ as $n \to \infty$, this upper bound becomes ineffective.
	\vspace{0.2cm}
	
	\textbf{Solution approach}.
	To address this issue, we need to provide a more precise characterization of $ \bfB^{(2x,n)}_{t,s}$.
	A natural approach is to further expand each $\xcheck_{k}$. While this idea is feasible, it leads to more complex calculations, as each $\xcheck_{k}$ depends on both $\xcheck_{k-1}$ and $\ycheck_{k-1}$. Below, we outline the detailed process.
	
	To begin, we define another term 
		$\bfB^{(2x)}_{n_0,n_1}
		:= \ssum{k}{n_0}{n_1} \sqrt{\beta_{k}} B_2 \xhat_{k}$.
		Here, we use the unrescaled iterate to simplify the subsequent calculations. Given that $\xcheck_{k} = \xhat_{k} / \sqrt{\alpha_{k-1} }$, it follows that
		$\bfB^{(2x)}_{n_0,n_1} = \ssum{k}{n_0}{n_1} \sqrt{ \beta_{k} \alpha_{k-1} } B_2 \xcheck_{k}
		$, which demonstrates that this term is closely related to $\bfB^{(2x,n)}_{t,s}$, as shown in the following equation
		\begin{align*}
			\bfB^{(2x,n)}_{t,s} 
			& = \bfB^{(2x)}_{N^\beta(n,t), N^\beta(n,s)-1}
			- (t - \underline{t}^\beta_{n}) \sqrt{ \frac{ \alpha_{N^\beta(n,t)} }{ \beta_{N^\beta(n,t)} } } B_2 \xcheck_{N^\beta(n,t)} \\
			& \qquad  + (s - \underline{s}^\beta_{n})
			\sqrt{ \frac{ \alpha_{N^\beta(n,s)} }{ \beta_{N^\beta(n,s)} } } B_2 \xcheck_{N^\beta(n,s)} 
			+ \ssum{k}{N^\beta(n,t)}{N^\beta(n,s)-1} \sqrt{\beta_{k}} \left( \sqrt{ \frac{\alpha_{k}}{\alpha_{k-1}} } - 1 \right) B_2 \xhat_{k}.
		\end{align*}
		By the Cauchy-Schwarz inequality, we have
		\begin{align*}
			& \EB \left\| \ssum{k}{N^\beta(n,t)}{N^\beta(n,s)-1} \sqrt{\beta_{k}} \left( \sqrt{ \frac{\alpha_{k}}{\alpha_{k-1}} } - 1 \right) B_2 \xhat_{k} \right\|^2 \\
			& \qquad \qquad \le \left( \ssum{k}{N^\beta(n,t)}{N^\beta(n,s)-1} \beta_{k} \right) \left[ \ssum{k}  {N^\beta(n,t)}{N^\beta(n,s)-1}  \Big( \sqrt{ \frac{\alpha_{k}}{\alpha_{k-1}} } - 1 \Big)^2  \EB \| B_2 \xhat_{k} \|^2 \right].
		\end{align*}
		In \eqref{eq:lem:check-twotime:sqrtalpha}, we have shown that $\sqrt{ \frac{\alpha_{k}}{\alpha_{k-1}} } - 1 = o(\beta_k)$.
		Assumption~\ref{assump:mart-decouple-rate} implies $\EB \| B_2 \xhat_{k} \|^2 = o(1)$.
		Moreover,
		$t' - (\underline{t'})^\beta_{n} \le \beta_{N^\beta(n,t')}$ for $t'=t,s$.
		Summarizing the above results and following a similar procedure as \eqref{eq:lem:tight-twotime:B1x}, we can obtain that there exists $K_5$ such that
		\begin{equation}\label{eq:lem:tight-slow:Bdiff}
			\EB \| \bfB^{(2x,n)}_{t,s} - \bfB^{(2x)}_{N^\beta(n,t), N^\beta(n,s)-1} \|^2
			\le K_5 (s-t)^2
		\end{equation}
		for any $n$.
		This inequality controls the difference between $\bfB^{(2x,n)}_{t,s}$ and 
		$\bfB^{(2x)}_{N^\beta(n,t), N^\beta(n,s)-1}$.
		\vspace{0.2cm}
		
		From now on, we always assume $n_0 = N^\beta(n,t)$ and $n_1 = N^\beta(n,s)-1$ and
		It remains to deal with the term $\bfB^{(2x)}_{n_0, n_1}$.
		Recall that the update rule of $\xhat_{n}$ is shown in \eqref{eq:xhat-decom}.
		For ease of notation, we define $A_{n}:= \mI - \alpha_{n} B_1 + \beta_{n} H^\star B_2$, $\tilde{\xi}_{n} := \xi_{n} - \beta_{n} H^\star \psi_{n} / \alpha_{n}$ and 
		$\tilde{R}^x_{n} := R^{\nablaH}_{n} + R^H_{n} + \alpha_{n} R^F_{n} - \beta_{n} H^\star R^G_{n}$.
		Then the update rule of $\xhat_{n}$ can be rewritten as
		\begin{equation*}
			\xhat_{n+1} = A_{n} \xhat_{n} + \beta_{n} H^\star B_3 \yhat_{n} - \alpha_{n} \tilde{\xi}_{n} - \tilde{R}^x_{n}.
		\end{equation*}
		Applying this equation multiple times yields
		\begin{equation*}
			\xhat_{n+m}
			= \bigg( \prod_{i=1}^{m} A_{n+m-i} \bigg) \xhat_{n}
			+ \sum_{l=n}^{n+m-1} \bigg( \prod_{i=1}^{n+m-l-1} A_{n+m-i} \bigg) \big( \beta_{l} H^\star B_3 \yhat_{l} - \alpha_{l} \tilde{\xi}_{l} - \tilde{R}^x_{l} \big).
		\end{equation*}
		It follows that
		\begin{align*}
			\bfB^{(2x)}_{n_0, n_1}
			& = \ssum{k}{n_0}{n_1} \sqrt{\beta_{k}} B_2 \left[ \bigg( \prod_{i=1}^{k-n_{0}} A_{k-i} \bigg) \xhat_{n_0} + \ssum{l}{n_0}{k-1} \bigg( \prod_{i=1}^{k-l-1} A_{k-i} \bigg) \big( \beta_{l} H^\star B_3 \yhat_{l} - \alpha_{l} \tilde{\xi}_{l} - \tilde{R}^x_{l} \big) \right] \\
			& = \ssum{k}{n_0}{n_1} \sqrt{\beta_{k}} B_2 \bigg( \prod_{i=1}^{k-n_{0}} A_{k-i} \bigg) \xhat_{n_0} 
			+ \ssum{l}{n_0}{n_1-1} \ssum{k}{l+1}{n_1} \sqrt{\beta_{k}} B_2 \bigg( \prod_{i=1}^{k-l-1} A_{k-i} \bigg) \big( \beta_{l} H^\star B_3 \yhat_{l} {-} \alpha_{l} \tilde{\xi}_{l} {-} \tilde{R}^x_{l} \big) \\
			& = \underbrace{ B_2 \widetilde{\Gamma}_{n_0, n_1} \xhat_{n_0} 
				+ B_2 \ssum{l}{n_0}{n_1-1} \widetilde{\Gamma}_{l+1, n_1} \big( \beta_{l} H^\star B_3 \yhat_{l}  - \tilde{R}^x_{l} \big) }_{:= \bfB^{(2x,1)}_{n_0,n_1} }
			- \underbrace{ B_2 \ssum{l}{n_0}{n_1-1} \widetilde{\Gamma}_{l+1, n_1} \alpha_{l} \tilde{\xi}_{l} }_{:= \bfB^{(2x,2)}_{n_0,n_1} },
		\end{align*}
		where we define $\widetilde{\Gamma}_{m, j} := \ssum{k}{m}{j} \sqrt{\beta_{k}} \bigg( \prod_{i=1}^{k-m} A_{k-i} \bigg)$ in the last step.
		The magnitude of $\widetilde{\Gamma}_{m, j}$ can be controlled by the following auxiliary lemma, whose proof is deferred to the end of this subsection.
		
		\begin{lem}\label{lem:slow:stepsize-aux}
			Suppose that Assumption~\ref{assump:stepsize-twotime} holds and
			the real parts of the eigenvalues of $B_1$ are no less than $\mu_1$.
			Define $A_{k}:= \mI - \alpha_{k} B_1 + \beta_{k} H^\star B_2$.
			If $t$ and $s$ are fixed positive numbers satisfying $s>t$ and $n$ is sufficiently large,
			then for any $N^\beta(n,t) \le l < N^\beta(n, s)$, 
			we have
			\begin{equation*}
				\left\| \sum_{k=l+1}^{N^\beta(n,s)-1} \sqrt{\beta_{k}} \prod_{i=1}^{k-l-1} A_{k-i}
				\right\|
				\le \frac{ 2^{ \left\lceil 2s/c_\alpha \right\rceil + 3} }{\mu_1} \cdot
				\frac{ \sqrt{\beta_{l}} }{ \alpha_{l} }.
			\end{equation*}
		\end{lem}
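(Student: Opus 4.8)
The plan is to bound the operator norm of the partial sum by first establishing geometric decay of the matrix products $\prod_{i=1}^{k-l-1}A_{k-i}=A_{k-1}A_{k-2}\cdots A_{l+1}$ at a rate essentially $\mu_1$, and then summing the resulting geometric‑type series against the weights $\sqrt{\beta_k}$. Writing $M:=N^\beta(n,s)$, the quantity to control is $\big\|\sum_{k=l+1}^{M-1}\sqrt{\beta_k}\,A_{k-1}\cdots A_{l+1}\big\|$ (the $k=l+1$ summand being $\sqrt{\beta_{l+1}}\mI$, since the product is then empty), and the target bound $\tfrac{2^{\lceil 2s/c_\alpha\rceil+3}}{\mu_1}\cdot\tfrac{\sqrt{\beta_l}}{\alpha_l}$ indicates that the sum should behave like $\sqrt{\beta_l}$ times a discrete analogue of $\alpha_l^{-1}\int_0^\infty e^{-\mu_1 u}\,\d u$; the extra power of two is the price of the step‑size variation over the (possibly long) index range, which is exactly what Lemma~\ref{lem:aux:bounded} lets us control.

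\emph{Step 1: contraction of $A_k$.} Since the real parts of the eigenvalues of $B_1$ are at least $\mu_1>0$, for any fixed $c\in(0,\mu_1)$ there is a norm $\|\cdot\|_\star$ on $\RB^{d_x}$ adapted to $B_1$ (a Lyapunov norm induced by a positive definite $P$ with $B_1^\top P+PB_1\succ0$; when $B_1+B_1^\top\succ0$, as holds under Assumption~\ref{assump:hurwitz}, one may simply take $\|\cdot\|_\star=\|\cdot\|$) together with $\alpha_\star>0$ such that $\|\mI-\alpha B_1\|_\star\le 1-c\alpha$ for all $\alpha\in(0,\alpha_\star]$. Because Assumption~\ref{assump:stepsize-twotime} forces $\alpha_k\to0$ and $\kappa_k=\beta_k/\alpha_k\to0$, for $n$ large enough that every relevant index $k\ge l\ge N^\beta(n,t)$ is large we get $\|A_k\|_\star\le 1-c\alpha_k+\|H^\star B_2\|_\star\,\kappa_k\alpha_k\le e^{-c'\alpha_k}$ for some fixed $c'\in(0,c)$. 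Multiplying over $j=l+1,\dots,k-1$ yields $\|A_{k-1}\cdots A_{l+1}\|_\star\le\exp\!\big(-c'\sum_{j=l+1}^{k-1}\alpha_j\big)$, hence the same bound in Euclidean norm up to the fixed norm‑equivalence constant of $\|\cdot\|_\star$.

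\emph{Step 2: summing the series.} Put $G_k:=\sum_{j=l+1}^{k-1}\alpha_j$, so $G_{l+1}=0$ and $G_{k+1}-G_k=\alpha_k$. Using $\sqrt{\beta_k}\le\sqrt{\beta_l}$ (monotonicity of $\beta_\cdot$) and Step 1,
\[
\Big\|\sum_{k=l+1}^{M-1}\sqrt{\beta_k}\,A_{k-1}\cdots A_{l+1}\Big\|\ \lesssim\ \sqrt{\beta_l}\sum_{k=l+1}^{M-1}e^{-c'G_k}\ =\ \sqrt{\beta_l}\sum_{k=l+1}^{M-1}\frac{1}{\alpha_k}\,\alpha_k e^{-c'G_k}.
\]
By Assumption~\ref{assump:stepsize-twotime}~\ref{assump:stepsize-twotime-nanb} ($n\alpha_n$ increasing) and Lemma~\ref{lem:aux:bounded} ($M=N^\beta(n,s)\le 2^{\lceil 2s/c_\beta\rceil+1}n\le 2^{\lceil 2s/c_\beta\rceil+1}l$), one has $\alpha_l/\alpha_k\le k/l\le 2^{\lceil 2s/c_\beta\rceil+1}$ for all $l\le k<M$; meanwhile a Riemann‑sum comparison for the decreasing function $u\mapsto e^{-c'u}$ (using $G_{k+1}-G_k=\alpha_k$ and $\alpha_k\le\alpha_{k-1}$) gives $\sum_{k=l+1}^{M-1}\alpha_k e^{-c'G_k}\le\alpha_l+\int_0^\infty e^{-c'u}\,\d u=\alpha_l+1/c'\le 2/c'$ for $n$ large. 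Collecting these estimates and tracking the constants — the power of two from Lemma~\ref{lem:aux:bounded}, the norm‑equivalence factor, and $c'$ chosen close to $\mu_1$ — yields a bound of the claimed shape $\tfrac{2^{\lceil 2s/c_\alpha\rceil+3}}{\mu_1}\cdot\tfrac{\sqrt{\beta_l}}{\alpha_l}$.

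The main obstacle is Step 1: because $B_1$ need not be normal, the Euclidean norm of $\mI-\alpha B_1$ need not drop below $1$, so one is forced into a Lyapunov norm and must check that its equivalence constant does not spoil the final estimate. Within Step 1 the delicate point is that $A_k$ carries the slow‑scale perturbation $\beta_k H^\star B_2$ — the one place where the two‑time‑scale coupling enters the fast‑scale product — and this must be absorbed into the contraction; this is precisely where $\kappa_k\to0$ from Assumption~\ref{assump:stepsize-twotime} is used. The residual difficulty in Step 2 is purely technical bookkeeping: the range $[l+1,N^\beta(n,s)-1]$ can be long, so $\alpha_k$ and $\beta_k$ vary over it, and one needs the monotonicity of $n\alpha_n$ together with Lemma~\ref{lem:aux:bounded} to replace them uniformly by $\alpha_l$ and $\beta_l$ at the cost of the constant $2^{\lceil 2s/c_\alpha\rceil+O(1)}$.
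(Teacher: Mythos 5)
Your proposal follows essentially the same two-step strategy as the paper: first a per-factor contraction bound $\|A_j\|\le 1-\mu_1\alpha_j/2$ (for $n$ large, absorbing the $\beta_j H^\star B_2$ perturbation via $\beta_j=o(\alpha_j)$), then summation of the resulting exponentially decaying series using Lemma~\ref{lem:aux:bounded} and the monotonicity of $n\alpha_n$ and $\beta_n$ to freeze the step sizes at index $l$. The two points where you diverge are minor. First, you pass to a Lyapunov norm to handle non-normal $B_1$; the paper works directly in the Euclidean norm, which is legitimate because (as noted after Assumption~\ref{assump:hurwitz}) the Hurwitz condition is used in the form that $\frac{B_1+B_1^\top}{2}$ is positive definite, so $\|\mI-\alpha B_1\|\le 1-\mu_1\alpha/2$ for small $\alpha$ without any norm change. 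Your version is more robust, but the norm-equivalence constant it introduces means you only recover the bound ``up to shape,'' not the exact constant $2^{\lceil 2s/c_\alpha\rceil+3}/\mu_1$ stated in the lemma; if you want that constant you should work in the Euclidean norm as the paper does. Second, for the summation the paper lower-bounds $\alpha_i\ge c_1\alpha_l$ inside the exponent to reduce to a genuine geometric series with ratio $e^{-c_1\mu_1\alpha_l/2}$ and then uses $e^{-x}\le 1-x/2$, whereas you use a Riemann-sum comparison against $\int_0^\infty e^{-c'u}\,\d u$ and pull out $1/\alpha_k\lesssim 2^{\lceil 2s/c_\beta\rceil+1}/\alpha_l$; both yield the same $\sqrt{\beta_l}/\alpha_l$ scaling with a $2^{O(s)}$ prefactor, so this is a matter of taste. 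Neither difference constitutes a gap.
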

		
		For the first term $\bfB^{(2x,1)}_{n_0,n_1}$, the Cauchy-Schwarz inequality implies
		\begin{align*}
			& \quad \EB \| \bfB^{(2x,1)}_{n_0,n_1} \|^2 \\
			& \le 2  \EB \| B_2 \widetilde{\Gamma}_{n_0, n_1} \xhat_{n_0} \|^2 + 2 \EB \left\| B_2 \ssum{l}{n_0}{n_1-1} \widetilde{\Gamma}_{l+1, n_1} (\beta_{l} H^\star B_3 \yhat_{l} - \tilde{R}^x_{l} ) \right\|^2 \\
			& \le 2 (\| B_2 \| \|\widetilde{\Gamma}_{n_0, n_1} \|)^2 \EB \| \xhat_{n_0} \|^2 + 2 \| B_2 \|^2 \left( \ssum{l}{n_0}{n_1-1} \beta_{l} \right) \!
			\left( \ssum{l}{n_0}{n_1-1} \beta_{l} \| \widetilde{\Gamma}_{l+1, n_1} \|^2  \EB \left\| H^\star B_3 \yhat_{l} {-} \frac{\tilde{R}^x_{l}}{\beta_{l}}  \right\|^2
			\right).
		\end{align*}
		By Lemma~\ref{lem:slow:stepsize-aux}, 
		when $n$ is sufficiently large, we $\| \widetilde{\Gamma}_{l+1,n_1} \| \le \frac{ 2^{ \left\lceil 2T/c_\alpha \right\rceil + 3} }{\mu_1} \frac{ \sqrt{\beta_{l}} }{ \alpha_{l} } $ for any $n_0 \le l < n_1$ and $\| \widetilde{\Gamma}_{n_0, n_1} \| \le \frac{ 2^{ \left\lceil 2T/c_\alpha \right\rceil + 3} }{\mu_1} \frac{ \sqrt{\beta}_{n_0} }{ \alpha_{n_0} } + \sqrt{\beta_{n_0}} $, with $\mu_1$ and $c_\alpha$ specified in Lemma~\ref{lem:slow:stepsize-aux};
		Assumption~\ref{assump:mart-decouple-rate} implies $\EB \| \xhat_{n_0} \|^2 = \OM(\alpha_{n_0})$ and $\EB \| \yhat_{l} \|^2 = \OM (\beta_{l})$;
		the upper bounds in \eqref{eq:lem:check-twotime:Rx-upper} and the subsequent analysis therein imply that under Assumption~\ref{assump:stepsize-twotime},
		$\EB \| \tilde{R}^x_{l} \|^2 = \OM( \beta_{l}^2 \alpha_{l}^{1+\Hsmooth} + \beta_{l}^{2+\Hsmooth} + \alpha_{l}^{3+\Fsmooth} + \beta_{l}^2 \alpha_{l}^{1+\Gsmooth} ) = \OM(\alpha_{l}^2 \beta_{l})$.
		Therefore, we have $\|\widetilde{\Gamma}_{n_0, n_1} \|^2 \EB \| \xhat_{n_0} \|^2 = \OM \left( \frac{\beta_{n_0} }{\alpha_{n_0}} \right) = o(1)$ and
		\begin{equation*}
			\|\widetilde{\Gamma}_{l+1, n_1} \|^2 \EB \left\| H^\star B_3 \yhat_{l} - \frac{ \tilde{R}^x_{l} }{\beta_{l}} \right\|^2
			= \OM \left( \frac{\beta_{l}}{\alpha_{l}^2} \right) \left( \EB \| \yhat_{l} \|^2 + \frac{ \EB \| \tilde{R}^x_{l} \|^2 }{ \beta_{l}^2 } \right) = \OM(1).
		\end{equation*}
		Then similar to the derive of \eqref{eq:lem:tight-twotime:B1x}, there exist  $K_6$ and $m_1$ such that
		\begin{equation}\label{eq:lem:tight-slow:B2x-1}
			\EB \| \bfB^{(2x,1)}_{n_0,n_1} \|^2 \le K_6 (s-t)^2
		\end{equation}
		holds for any $n \ge m_1$.
		Modifying $K_6$, we could make \eqref{eq:lem:tight-slow:B2x-1} hold for any $n$.
		
		For the second term $\bfB^{(2x,2)}_{n_0,n_1}$,
		applying $\| \widetilde{\Gamma}_{l+1,n_1} \| \le C_T \frac{ \sqrt{\beta_{l}} }{ \alpha_{l} } $ with $C_T := \frac{ 2^{ \left\lceil 2T/c_\alpha \right\rceil + 3} }{\mu_1}$, we obtain that with $p \in \left(2, \frac{4}{1 + ( \Hsmooth \vee \Fsmooth \vee \Gsmooth )/2} \right]$ and sufficiently large $n$, 
		\begin{align*}
			\EB \| \bfB^{(2x,2)}_{n_0,n_1} \|^p
			& \overset{\eqref{eq:burk-jensen}}{\le} \cti_p \left( n_1 - n_0 \right)^{\frac{p}{2}-1} \| B_2 \|^p \ssum{l}{ n_0 }{n_1-1 } \| \widetilde{\Gamma}_{l+1, n_1} \|^p \alpha_{l}^p \EB \| \tilde{\xi}_{l} \|^p
			\\
			& \le \cti_p C_{T}^p \left( n_1 - n_0 \right)^{\frac{p}{2}-1} \| B_2 \|^p \ssum{l}{ n_0 }{n_1-1 } \beta_{l}^\frac{p}{2} \EB \| \tilde{\xi}_{l} \|^p,
		\end{align*}
		where we have used Burkholder's inequality in  Proposition~\ref{prop:burk} with $\cti_p = \left[ (p-1) \vee \frac{1}{p-1} \right]^p$.
		Then following a similar procedure to
		\eqref{eq:lem:tight-twotime:xii},
		we have that there exist $K_7$ and $m_2$ such that
		\begin{equation}\label{eq:lem:tight-slow:B2x-2}
			\EB \| \bfB^{(2x,2)}_{n_0,n_1} \|^p \le K_7 (s-t)^\frac{p}{2}
		\end{equation}
		for any $n \ge m_2$. Modifying $K_7$, we could make \eqref{eq:lem:tight-slow:B2x-2} hold for any $n$.
		
		The upper bounds in \eqref{eq:lem:tight-slow:Bdiff}, \eqref{eq:lem:tight-slow:B2x-1} and \eqref{eq:lem:tight-slow:B2x-2} collectively control the term $\bfB^{(2x,n)}_{t,s}$.
		Combining them with  \eqref{eq:lem:tight-slow:BandPsi} and Markov's inequality yields the following upper bound
		\begin{align*}
			\PB \left( 
			\| \ybar_{n}(s) - \ybar_{n}(t) \| \ge \eps \right)
			& \le \PB \left( 
			\| \bfB^{(3y,n)}_{t,s} \| \ge \frac{\eps}{5} \right) + \PB \left( 
			\| \Psii^{(n)}_{t,s} \| \ge \frac{\eps}{5} \right) 
			+ \PB \left( \| \bfB^{(2x,1)}_{n_0,n_1} \| \ge \frac{\eps}{5} \right) \\
			& \qquad + \PB \left( \| \bfB^{(2x,2)}_{n_0,n_1} \| \ge \frac{\eps}{5} \right)
			+ \PB \left( \| \bfB^{(2x,n)}_{t,s} - \bfB^{(2x)}_{n_0, n_1} \| \ge \frac{\eps}{5} \right)
			\\
			& \le \frac{25}{\eps^2} \left( \EB  \| \bfB^{(3y,n)}_{t,s} \|^2 + \EB \| \bfB^{(2x,1)}_{n_0,n_1} \|^2 + \EB \| \bfB^{(2x,n)}_{t,s} - \bfB^{(2x)}_{n_0, n_1} \|^2 \right) \\
			& \qquad + \frac{5^p}{\eps^p} \left( \EB \| \Psii^{(n)}_{t,s} \|^p + \EB \| \bfB^{(2x,2)}_{n_0,n_1} \|^p \right) \\
			& 
			\le 5^4 \left( \max_{3 \le i \le 7} K_i \right) \left[ \frac{ (s-t)^2 }{\eps^2} + \frac{ (s-t)^\frac{p}{2} }{\eps^p} \right].
		\end{align*}
		This upper bound verifies the second condition in Proposition~\ref{prop:tight-suff-slow}, and hence $\{ \ybar_{n}(\cdot) \}_{n=1}^\infty$ is tight.
	\end{proof}
	
	\begin{proof}[Proof of Lemma~\ref{lem:slow:stepsize-aux}]
		
		For ease of notation, we define $n_0 := N^\beta(n, t)$ and $n_1 := N^\beta(n, s)-1$.
		Since the real parts of the eigenvalues of $B_1$ are no less than $\mu_1$, $\beta_{j} = o(\alpha_{j})$ and $n$ is sufficiently large, we have $\| A_j \| \le \| \mI - \alpha_{j} B_1 \| + \beta_{j} \| H^\star B_2 \| \le  1 - \mu_1 \alpha_{j} / 2$ for any $j > l \ge n$.
		It follows that
		\begin{align*}
			\left\| \sum_{k=l+1}^{n_1} \sqrt{\beta_{k}} \prod_{i=1}^{k-l-1} A_{k-i}  \right\|
			\le \sum_{k=l+1}^{n_1} \sqrt{\beta_{k}} \prod_{i=l+1}^{k-1} \left( 1 {-} \frac{ \mu_1 \alpha_{i} }{2} \right)
			\le \sum_{k=l+1}^{n_1} \sqrt{\beta_{k}} \exp \left( - \frac{\mu_1}{2} \sum_{i=l+1}^{k-1} \alpha_{i} \right).
		\end{align*}
		Moreover, by Lemma~\ref{lem:aux:bounded} and the monotonicity of the step sizes, 
		we have
		$ \alpha_{k} \ge c_1 \alpha_{l}   $
		and $
		\beta_{k} \le \beta_{l}$ for any $l \le k \le n_1$.
		Here $c_1 := 2^{ -\left\lceil 2s/c_\alpha \right\rceil - 1}$ with $c_\alpha$ specified in Lemma~\ref{lem:aux:bounded}.
		This implies 
		\begin{align*}
			\sum_{k=l+1}^{n_1} \sqrt{\beta_{k}} \exp \left( - \frac{\mu_1}{2} \sum_{i=l+1}^{k-1} \alpha_{i} \right)
			& \le \sqrt{\beta_{l}} \sum_{k=l+1}^{n_1}  \exp \left[ - (k-1-l) \frac{c_1 \mu_1 \alpha_{l} }{2} \right] \\
			& \le \frac{ \sqrt{\beta}_{l} }{1 - \exp(- c_1 \mu_1 \alpha_{l} / 2) }.
		\end{align*}
		For $x \in (0,1)$, we have $\exp(-x) \le 1 - x/2$.
		Therefore, for sufficiently large $n$, we have
		\begin{align*}
			\left\| \sum_{k=l+1}^{n_1} \sqrt{\beta_{k}} \prod_{i=1}^{k-l-1} A_{k-i}  \right\|
			\le \frac{4  }{ c_1 \mu_1  } \cdot
			\frac{ \sqrt{\beta_{l} } }{ \alpha_{l} }
		\end{align*}
		for any $n_0 \le l < n_1$.
		With $c_1 = 2^{ -\left\lceil 2s/c_\alpha \right\rceil - 1}$, the coefficient is $\frac{4}{c_1 \mu_1} = \frac{ 2^{ \left\lceil 2s/c_\alpha \right\rceil + 3} }{\mu_1}$.
	\end{proof}

	\subsection{Proofs in Section~\ref{sec:fclt:mart-prob}}
	\label{sec:proof:fclt:mart-prob}
	In this subsection, we present the proof of Lemma~\ref{lem:mart-prob-twotime} in Section~\ref{sec:fclt:mart-prob}.
	We begin by presenting a discrete version of Lemma~\ref{lem:mart-prob-twotime}, which serves as an intermediate result leading to the full proof.
	
	\begin{lem}\label{lem:generator-twotime}
		Suppose that Assumptions~\ref{assump:smooth} -- \ref{assump:stepsize-twotime}, \ref{assump:noise-fclt} and \ref{assump:mart-decouple-rate} hold. Then
		\begin{enumerate}[(i)]
			\item For any $f \in C^\infty_c (\RB^{d_x})$,  with $\AM^x$ defined in \eqref{eq:generator-twotime:Lx}, we have
			\begin{equation}\label{eq:generator-twotime:x}
				\EB [ f (\xcheck_{n+1}) - f(\xcheck_{n}) | \FM_{n} ]
				= \alpha_{n} \AM^x f(\xcheck_{n}) + R^f_{n},
			\end{equation}
			where $\frac{1}{\alpha_{n}} \EB | R^f_{n} | \to 0$ as $n \to \infty$.
			\item For any $g \in C^\infty_c (\RB^{d_y})$,  with $\AM^y$ defined in \eqref{eq:generator-twotime:Ly},
			we have
			\begin{equation}
				\label{eq:generator-twotime:y}
				\EB [ g (\zcheck_{n+1}) - g(\zcheck_{n}) | \FM_{n} ]
				= \beta_{n} \AM^y g(\zcheck_{n}) + R^g_{n},
			\end{equation}
			where $\frac{1}{\beta_{n}} \EB | R^g_{n} | \to 0$ as $n \to \infty$.
		\end{enumerate}
	\end{lem}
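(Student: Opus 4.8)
\textbf{Proof proposal for Lemma~\ref{lem:generator-twotime}.}

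The plan is to compute the one-step conditional increment of $f(\xcheck_{n})$ via a second-order Taylor expansion and match the linear term with the drift $-B_1\xcheck_n$ and the quadratic term with the diffusion coefficient $\Sigma_\xi$, relegating everything else to the residual $R_n^f$; the argument for $g(\zcheck_n)$ is identical in structure with $(\beta_n, \widetilde B_3-\frac{\invdiffslow}{2}\mI, \widetilde\Sigma_\psi, \tilde\psi_n)$ in place of $(\alpha_n, B_1, \Sigma_\xi, \tilde\xi_n)$, so I would write the $\xcheck$-case in full and remark that the $\zcheck$-case follows mutatis mutandis. First I would fix $f\in C_c^2(\RB^{d_x})$ — so $f$, $\nabla f$, $\nabla^2 f$ are all bounded and $\nabla^2 f$ is uniformly continuous — and substitute the one-step recursion \eqref{eq:xcheck-twotime}, writing $\xcheck_{n+1}-\xcheck_n = \Delta_n$ with $\Delta_n = -\alpha_n B_1\xcheck_n - \sqrt{\alpha_n}\,\tilde\xi_n + R_n^x$. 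Taylor's theorem with integral remainder gives
\begin{align*}
	f(\xcheck_{n+1}) - f(\xcheck_n) = \inner{\nabla f(\xcheck_n)}{\Delta_n} + \tfrac12 \inner{\nabla^2 f(\xcheck_n)\Delta_n}{\Delta_n} + E_n,
\end{align*}
where $\|E_n\| \le \omega(\|\Delta_n\|)\,\|\Delta_n\|^2$ for a modulus of continuity $\omega$ of $\nabla^2 f$ (bounded, $\to 0$ at $0$).

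Next I would take $\EB[\cdot\,|\,\FM_n]$ term by term. For the first-order term, $\EB[\inner{\nabla f(\xcheck_n)}{\Delta_n}\,|\,\FM_n] = -\alpha_n\inner{\nabla f(\xcheck_n)}{B_1\xcheck_n} + \inner{\nabla f(\xcheck_n)}{\EB[R_n^x|\FM_n]}$, since $\EB[\tilde\xi_n|\FM_n]=0$ by Assumption~\ref{assump:noise-fclt}; the first piece is exactly the drift part of $\alpha_n\AM^x f(\xcheck_n)$ and the second is $O(\EB\|\EB[R_n^x|\FM_n]\|)$, which Lemma~\ref{lem:check-twotime-xy} says is $o(\sqrt{\beta_n\alpha_n})=o(\alpha_n)$ (using $\beta_n\ll\alpha_n$). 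For the second-order term I would expand $\Delta_n\Delta_n^\top$; the only contribution of order $\alpha_n$ comes from $\alpha_n\,\tilde\xi_n\tilde\xi_n^\top$, whose conditional expectation is $\alpha_n\EB[\tilde\xi_n\tilde\xi_n^\top|\FM_n]$. Here I would invoke $\tilde\xi_n = \xi_n - \kappa_n H^\star\psi_n$ with $\kappa_n\to 0$, so $\EB[\tilde\xi_n\tilde\xi_n^\top|\FM_n] = \EB[\xi_n\xi_n^\top|\FM_n] + O(\kappa_n)$, and Assumption~\ref{assump:noise-fclt} gives $\EB[\xi_n\xi_n^\top|\FM_n]\overset{p}{\to}\Sigma_\xi$; combined with boundedness of $\nabla^2 f$ and uniform integrability from the bounded fourth moments, $\EB[\frac12\inner{\nabla^2 f(\xcheck_n)\,\tilde\xi_n\tilde\xi_n^\top}{\cdot}|\FM_n]$ equals $\frac12\tr(\nabla^2 f(\xcheck_n)\Sigma_\xi) + o(1)$ in $L^1$, supplying the diffusion part of $\alpha_n\AM^x f$. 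All cross terms ($\alpha_n^{3/2}$, terms with $R_n^x$ paired against $B_1\xcheck_n$ or $\tilde\xi_n$) and the pure $R_n^x(R_n^x)^\top$ term are controlled in $L^1$ by Cauchy--Schwarz together with the moment bounds $\EB\|R_n^x\|^2 = O(\beta_n\alpha_n)$, $\EB\|\xcheck_n\|^2=O(1)$, $\EB\|\tilde\xi_n\|^4=O(1)$ (the last two from Assumptions~\ref{assump:mart-decouple-rate} and \ref{assump:noise-fclt}); each is $o(\alpha_n)$. Finally $\EB|E_n|$: split on $\{\|\Delta_n\|\le\delta\}$ and its complement; on the former $\EB|E_n|\le \sup_{r\le\delta}\omega(r)\cdot\EB\|\Delta_n\|^2 = O(\sup_{r\le\delta}\omega(r))\cdot\alpha_n$, and on the latter use $\|E_n\|\lesssim\|\Delta_n\|^2$ and $\PB(\|\Delta_n\|>\delta)\le\delta^{-2}\EB\|\Delta_n\|^2$ with a $\|\Delta_n\|^p$ bound for $p>2$ (via $\EB\|R_n^x\|^p=o(\alpha_n)$ and $\EB\|\tilde\xi_n\|^p<\infty$) to get a $o(\alpha_n)$ contribution; sending $n\to\infty$ then $\delta\to 0$ shows $\alpha_n^{-1}\EB|E_n|\to 0$. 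Collecting, $\EB[f(\xcheck_{n+1})-f(\xcheck_n)|\FM_n] = \alpha_n\AM^x f(\xcheck_n) + R_n^f$ with $\alpha_n^{-1}\EB|R_n^f|\to 0$.

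The main obstacle I anticipate is the diffusion term: one must upgrade the \emph{convergence in probability} $\EB[\xi_n\xi_n^\top|\FM_n]\overset{p}{\to}\Sigma_\xi$ to convergence of $\EB\big[\tr(\nabla^2 f(\xcheck_n)\,\xi_n\xi_n^\top)\big]\to \EB\big[\tr(\nabla^2 f(\cdot)\Sigma_\xi)\big]$-type statements in the $L^1$ sense, which requires a uniform-integrability argument leaning on $\sup_n\EB\|\xi_n\|^4<\infty$ (so $\{\|\xi_n\|^2\}$ is uniformly integrable) and boundedness of $\nabla^2 f$; the $\kappa_n H^\star\psi_n$ correction in $\tilde\xi_n$ must also be absorbed here, which is routine since $\kappa_n\to 0$. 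For the $\zcheck$-case the identical bookkeeping applies with $\tilde\psi_n = \psi_n - B_2B_1^{-1}\tilde\xi_n$; a short computation using the joint covariance structure in Assumption~\ref{assump:noise-fclt} shows $\EB[\tilde\psi_n\tilde\psi_n^\top|\FM_n]\overset{p}{\to}\widetilde\Sigma_\psi$ with $\widetilde\Sigma_\psi$ as in \eqref{eq:cov-z} (the $\kappa_n$ terms again vanishing), and the residual bounds from Lemma~\ref{lem:check-twotime-z} ($\EB\|\EB[R_n^z|\FM_n]\|=o(\beta_n)$, $\EB\|R_n^z\|^2=O(\beta_n^2)$, $\EB\|R_n^z\|^p=o(\beta_n)$) play the role that the $R_n^x$ bounds played above, yielding $\beta_n^{-1}\EB|R_n^g|\to 0$.
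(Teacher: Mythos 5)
Your proposal is correct and follows essentially the same route as the paper's proof: a second-order Taylor expansion, matching the drift with the first-order conditional increment and the diffusion with the conditional second moment of $\tilde{\xi}_n$ (upgraded from convergence in probability to $L^1$ via uniform integrability from the fourth-moment bound), with all remaining terms controlled by the residual estimates of Lemmas~\ref{lem:check-twotime-xy} and \ref{lem:check-twotime-z}. The only difference is cosmetic: for the Taylor remainder the paper invokes $\eps$-H\"older continuity of $\nabla^2 f$ and bounds $\EB\|\xcheck_{n+1}-\xcheck_n\|^{2+\eps}=o(\alpha_n)$, whereas you split on $\{\|\Delta_n\|\le\delta\}$ with a modulus of continuity — both rest on the same $p$-th moment bounds for some $p>2$, and your version is if anything slightly more careful since $C^2_c$ only guarantees uniform continuity of $\nabla^2 f$.
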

	
	\begin{proof}[Proof of Lemma~\ref{lem:generator-twotime}]
		We first prove the part~(i). Throughout the proof, $C$ will represent a universal constant whose value may change from line to line, for the sake of convenience.
		Because $f \in C^\infty_c (\RB^{d_x})$, $\| \nabla f(\cdot) \|$ and $\| \nabla^2 f(\cdot) \|$ are both bounded functions.
		
		By  Taylor's theorem, we have
		\begin{align}\label{eq:lem:generator-twotime:f}
			\begin{split}
				f(\xcheck_{n+1}) - f(\xcheck_{n})
				& = \inner{ \nabla f (\xcheck_{n}) }{ \xcheck_{n+1} - \xcheck_{n} } + \frac{1}{2} ( \xcheck_{n+1} - \xcheck_{n} )^\top \nabla^2 f(\xcheck_{n}) ( \xcheck_{n+1} - \xcheck_{n} ) \\
				& \quad + \underbrace{ \frac{1}{2} ( \xcheck_{n+1} - \xcheck_{n} )^\top \left( \nabla^2 f(\ucheck_{n}) - \nabla^2 f(\xcheck_{n}) \right) ( \xcheck_{n+1} - \xcheck_{n} ) }_{R^f_{n,0}},
			\end{split}
		\end{align}
		where $\ucheck_{n}$ is a point between $\xcheck_{n}$ and $\xcheck_{n+1}$.
		For the first term, we have
		\begin{equation}\label{eq:lem:generator-twotime:f1}
			\EB [ \inner{ \nabla f (\xcheck_{n}) }{ \xcheck_{n+1} - \xcheck_{n} } | \FM_{n} ]
			\overset{\eqref{eq:xcheck-twotime}}{=}
			- \alpha_{n} \inner{\nabla f(\xcheck_{n})}{B_1 \xcheck_{n}} +
			\underbrace{ \inner{ \nabla f(\xcheck_{n}) }{ \EB[  R^x_{n}  | \FM_{n} ] } }_{R^f_{n,1}}.
		\end{equation}
		By Lemma~\ref{lem:check-twotime-xy}, we have
		\begin{equation}\label{eq:lem:generator-twotime:f2}
			\EB | R^f_{n,1} |
			\lesssim \EB \| \EB[  R^x_{n}  | \FM_{n} ] \| = o(\alpha_{n}).
		\end{equation}
		For the second term, we have
		\begin{align}
			& \qquad \EB \left[ ( \xcheck_{n+1} - \xcheck_{n} )^\top \nabla^2 f(\xcheck_{n}) ( \xcheck_{n+1} - \xcheck_{n} ) | \FM_{n} \right] \nonumber \\
			& \overset{\eqref{eq:xcheck-twotime}}{=}
			\alpha_{n} \EB \left[ \tilde{\xi}_{n}^\top \nabla^2 f(\xcheck_{n}) \tilde{\xi}_{n} | \FM_{n} \right] 
			+ 2 \sqrt{\alpha_{n}} \EB \left[ (-\alpha_{n} B_1 \xcheck_{n} + R^x_{n})^\top \nabla^2 f(\xcheck_{n}) \tilde{\xi}_{n} | \FM_{n} \right] \nonumber \\
			& \qquad + \EB[ (-\alpha_{n} B_1 \xcheck_{n} + R^x_{n})^\top \nabla^2 f(\xcheck_{n}) (-\alpha_{n} B_1 \xcheck_{n} + R^x_{n}) | \FM_{n} ] \nonumber \\
			\begin{split}
				& = \alpha_{n} \tr ( \nabla^2 f(\xcheck_{n})     \Sigma_{\xi} ) + \underbrace{ \alpha_{n} \tr \left( \nabla^2 f(\xcheck_{n}) 
					(\EB [ \tilde{\xi}_{n} \tilde{\xi}_{n}^\top | \FM_{n} ]
					- \Sigma_{\xi} ) 
					\right) }_{2 R^f_{n,2}} 
				+ \underbrace{ 2 \alpha_{n}^{3/2} \EB \left[ \left(\frac{R_{n}^x}{\alpha_{n} } \right)^\top \nabla^2 f(\xcheck_{n}) \tilde{\xi}_{n}  \Big| \FM_{n} \right] }_{2 R^f_{n,3}} \\
				& \qquad + \underbrace{ \alpha_{n}^2 \EB \left[ \left( -B_1 \xcheck_{n} + \frac{R_{n}^x }{ \alpha_{n} } \right)^\top \nabla^2 f(\xcheck_{n}) \left( -B_1 \xcheck_{n} + \frac{R_{n}^x }{ \alpha_{n} } \right) \Big| \FM_{n} \right] }_{2 R^f_{n,4}}.
			\end{split} \label{eq:lem:generator-twotime:f3}
		\end{align}
		Recall that $\tilde{\xi}_{n} = \xi_{n} - \kappa_{n} H^\star \psi_{n}$ with $\kappa_{n} = \beta_{n} / \alpha_{n} = o(1)$. Then we have $\EB [ \tilde{\xi}_{n} \tilde{\xi}_{n}^\top | \FM_{n} ] \overset{p}{\to} \Sigma_\xi $.
		Moreover, Assumption~\ref{assump:noise-fclt} ensures each entry of $\EB [ \tilde{\xi}_{n} \tilde{\xi}_{n}^\top | \FM_{n} ]$ is uniformly integrable, which together with convergence in probability imply $\EB \| \EB [ \tilde{\xi}_{n} \tilde{\xi}_{n}^\top | \FM_{n} ] - \Sigma_{\xi}  \| \to 0 $.
		From our assumptions and the results in Assumption~\ref{assump:mart-decouple-rate} and Lemma~\ref{lem:check-twotime-xy}, we have 
		\begin{align}\label{eq:lem:generator-twotime:f4}
			\begin{split}
				\EB | R^f_{n,2} | & \lesssim \alpha_{n} \EB \| \EB [ \tilde{\xi}_{n} \tilde{\xi}_{n}^\top | \FM_{n} ] - \Sigma_\xi \| = o(\alpha_{n}), \\
				\EB | R^f_{n,3} | & \lesssim \alpha_{n}^{3/2} \left( \frac{ \EB\| R^x_{n} \|^2}{\alpha_{n}^2} + \EB \| \tilde{\xi}_{n} \|^2 \right) = \OM(\alpha_{n}^{3/2}) = o(\alpha_{n}), \\
				\EB | R^f_{n,4} | & \lesssim \alpha_{n}^2 \left( \EB \| \xcheck_{n} \|^2 + \frac{ \EB\| R^x_{n} \|^2}{\alpha_{n}^2} \right) = \OM(\alpha_{n}^2) = o(\alpha_{n}).
			\end{split}
		\end{align}
		Since $\nabla^2 f$ is Lipschitz continuous and compactly supported, $\nabla^2 f$ is also $\eps$-H\"older
		continuous for any $\eps \in (0,1]$.
		It follows that
		\begin{align}
			\EB | R^f_{n,0} |
			& \lesssim \EB \| \xcheck_{n+1} - \xcheck_{n} \|^{2+\eps}
			\overset{\eqref{eq:xcheck-twotime}}{\le} \EB \left\| \alpha_{n} B_1 \xcheck_{n} + \sqrt{\alpha_{n}} \tilde{\xi}_{n} - R^x_{n} \right\|^{2+\eps} \nonumber \\
			& \lesssim \alpha_{n}^{2+\eps} \EB \| \xcheck_{n} \|^{2+\eps} + \alpha_{n}^{1+\frac{\eps}{2}} \EB \| \tilde{\xi}_{n} \|^{2+\eps} + \EB \| R^x_{n} \|^{2+\eps} = o(\alpha_{n}), \label{eq:lem:generator-twotime:f5}
		\end{align}
		where we employ Assumption~\ref{assump:mart-decouple-rate} and Lemma~\ref{lem:check-twotime-xy} with $\eps \le \frac{4}{1 + (\Hsmooth \vee \Fsmooth \vee \Gsmooth) / 2} - 2$ for the last inequality.
		Plugging \eqref{eq:lem:generator-twotime:f1} to \eqref{eq:lem:generator-twotime:f5} into \eqref{eq:lem:generator-twotime:f} yields
		\begin{equation*}
			\EB [ f (\xcheck_{n+1}) - f(\xcheck_{n}) | \FM_{n} ]
			= \alpha_{n} \AM^x f(\xcheck_{n})
			+ R^f_{n},
		\end{equation*}
		where
		$R^f_{n} := \ssum{i}{0}{4} R^f_{n,i}$ satisfies $\frac{1}{\alpha_{n}} \EB | R^f_{n} | \to 0$ as $n \to \infty$. Thus we obtain Part~(i).
		\vspace{0.2cm}
		
		Note that aside from the assumptions, the proof of Part (i) primarily relies on the fast-time-scale part of Lemma~\ref{lem:check-twotime-xy}. Similarly, the proof of part (ii) follows an analogous procedure, leveraging the results in Lemma~\ref{lem:check-twotime-z}. A key aspect worth emphasizing is about the asymptotic covariance. In the proof of Part (i), it is established that
		$\EB [ \tilde{\xi}_{n} \tilde{\xi}_{n}^\top | \FM_{n} ] \overset{p}{\to} \Sigma_\xi $,
		indicating that the asymptotic covariance of $\tilde{\xi}_{n}$ matches that of $\xi_{n}$.
		In contrast, for Part (ii), based on the definition in Lemma~\ref{lem:check-twotime-z}, $\tilde{\psi}_{n}$ can be expressed as
		\begin{equation*}
			\tilde{\psi}_{n} = \psi_{n} - B_2 B_1^{-1} \tilde{\xi}_{n}
			= \psi_{n} - B_2 B_1^{-1} \xi_{n} + \kappa_{n} B_2 B_1^{-1} H^\star \psi_{n}.
		\end{equation*}
		It follows that $\EB [ \tilde{\psi}_{n} \tilde{\psi}_{n}^\top | \FM_{n} ] \overset{p}{\to} \widetilde{\Sigma}_\psi$, where $\widetilde{\Sigma}_{\psi}$ is defined in \eqref{eq:cov-z}.
		Using a similar procedure as in the proof of Part (i), we can show that for any $g \in C^\infty_c (\RB^{d_y})$,
		\begin{equation*}
			\EB [ g (\zcheck_{n+1}) - g(\zcheck_{n}) | \FM_{n} ]
			= \beta_{n} \AM^y g(\zcheck_{n})
			+ R^g_{n},
		\end{equation*}
		where $R^g_{n}$ satisfies  $\frac{1}{\beta_{n}} \EB | R^g_{n} | \to 0$ as $n \to \infty$.
	\end{proof}
	
	With Lemma~\ref{lem:generator-twotime}, we are prepared to prove Lemma~\ref{lem:mart-prob-twotime}.
	
	\begin{proof}[Proof of Lemma~\ref{lem:mart-prob-twotime}]
		We first prove Part~(i).
		For convenience, with $\Gamma^\alpha_{n,m}$ introduced in Definition~\ref{defn:time-inter}, we define the following notation
		\begin{equation*}
			\bar{N}^\alpha(n, t) := \min \left\{ m \ge n \colon \Gamma^\alpha_{n, m} \ge t \right\}.
		\end{equation*}
		Then the filtration defined in \eqref{eq:filtration-main} can be written succinctly as $\FM^\xbar_{n,t} = \FM_{ \bar{N}^\alpha(n, t) }$.
		We construct $\MM_{n}^f(t)$ as
		\begin{equation}\label{eq:Mnf-def}
			\MM_{n}^f(t) := \ssum{k}{n}{\bar{N}^\alpha(n,t)-1} \big\{ f(\xcheck_{k+1}) - f(\xcheck_{k}) - \EB[ f(\xcheck_{k+1}) - f(\xcheck_{k}) | \FM_{k} ] \big\}.
		\end{equation}
		Clearly, $\MM_{n}^f(t)$ is adapted to the filtration $(\FM^\xbar_{n,t})_{t \ge 0}$.
		Next we verify that $\MM_{n}^f(t)$ is a martingale w.r.t. $(\FM^\xbar_{n,t})_{t \ge 0}$.
		
		For any $t > s \ge 0$,
		if $k \le \bar{N}^\alpha(n,s)-1$, then $\xcheck_{k}, \xcheck_{k+1} \in \FM^\xbar_{n,s}$ and $\FM_k \subset \FM^\xbar_{n,s}$, implying
		\[ \EB \big\{ f(\xcheck_{k+1}) {-} f(\xcheck_{k}) {-} \EB[ f(\xcheck_{k+1}) {-} f(\xcheck_{k}) | \FM_{k} ] \big| \FM^\xbar_{n,s} \big\} {=} f(\xcheck_{k+1}) {-} f(\xcheck_{k}) {-} \EB[ f(\xcheck_{k+1}) {-} f(\xcheck_{k})  | \FM_{k} ].  \]
		Otherwise, $k \ge \bar{N}^\alpha(n,s) $ and  $\FM^\xbar_{n,s} \subseteq \FM_{k}$, implying
		\begin{equation*}
		\EB \big\{ f(\xcheck_{k+1}) - f(\xcheck_{k}) - \EB[ f(\xcheck_{k+1}) - f(\xcheck_{k}) | \FM_{k} ]\, \big|\, \FM^\xbar_{n,s} \big\} = 0.
		\end{equation*}
		Consequently,
		\begin{align*}
			\EB [ \MM_{n}^f(t) | \FM^\xbar_{n,s} ]
			& = \ssum{k}{n}{\bar{N}^\alpha(n,t)-1} \EB \big\{ f(\xcheck_{k+1}) - f(\xcheck_{k}) - \EB[ f(\xcheck_{k+1}) - f(\xcheck_{k}) | \FM_{k} ]\, \big|\, \FM^\xbar_{n,s} \big\} \\
			& = \ssum{k}{n}{\bar{N}^\alpha(n,s)-1} \big\{ f(\xcheck_{k+1}) - f(\xcheck_{k}) - \EB[ f(\xcheck_{k+1}) - f(\xcheck_{k}) | \FM_{k} ] \big\} 
			= \MM_{n}^f(s),
		\end{align*}
		which implies that $\MM_{n}^f (t)$ is a martingale w.r.t.\ $\FM^\xbar_{n,t}$.
		
		Now we focus on the residual term $\RM_{n}^f(t)$. For notational simplicity, we define $\overline{t}^\alpha_{n} := \Gamma^\alpha_{n, \bar{N}^\alpha(n,t)} 
		$.
		Then we have $\overline{t}^\alpha_{n} - \alpha_{\bar{N}^\alpha(n,t) - 1} < t \le \overline{t}^\alpha_{n}$.
		With this notation and $\underline{t}^\alpha_{n}$ introduced in Definition~\ref{defn:time-inter},\footnote{In the notation $\underline{t}^\alpha_{n}$, we view $t$ as a variable.}
		$\RM_{n}^f(t)$ can be expressed as
		\begin{align}
			\RM_{n}^f(t) 
			& = f( \xbar_{n}(t) ) - f(\xbar_{n}(0) ) - \int_0^t \AM^x f( \xbar_{n}(s) )\, \d s - \MM_{n} ^f(t)  \nonumber \\
			& \overset{\eqref{eq:Mnf-def}}{=} f(\xbar_{n}(t)) - f(\xbar_{n}(\overline{t}^\alpha_{n}) ) - \int_0^t \AM^x f(\xbar_{n}(s)) \,\d s  + \ssum{k}{n}{\bar{N}^\alpha(n,t)-1} \EB [ f(\xcheck_{k+1}) - f(\xcheck_{k}) | \FM_{k} ] \nonumber \\
			& \overset{\eqref{eq:generator-twotime:x}}{=} f(\xbar_{n}(t)) - f(\xbar_{n}(\overline{t}^\alpha_{n}) )
			- \int_0^t \AM^x f(\xbar_{n}(s)) \,\d s
			+ \ssum{k}{n}{\bar{N}^\alpha(n,t)-1} \left( \alpha_{k} \AM^x f(\xcheck_{k}) + R^f_k \right) \nonumber \\
			& \overset{(a)}{=}
			f(\xbar_{n}(t)) - f(\xbar_{n}(\overline{t}^\alpha_{n}) )
			+ \int_0^{\overline{t}^\alpha_{n}} \left( \AM^x f(\xbar_{n}(\underline{s}^\alpha_{n}) ) - \AM^x f(\xbar_{n}(s)) \right) \,\d s  \nonumber \\
			& \qquad   + \int_t^{\overline{t}^\alpha_{n}} \AM^x f(\xbar_{n}(s)) \,\d s
			+ \ssum{k}{n}{\bar{N}^\alpha(n,t) - 1} R^f_k, \label{eq:them:fclt-twotime:res}
		\end{align}
		where for the second equality, the telescoping component $f(\xcheck_{k+1}) - f(\xcheck_{k})$ in \eqref{eq:Mnf-def} helps us simplify the expression of $\MM_{n}^f(t) $ as 
        \begin{align*}
            \MM_{n}^f(t)
            & = f(\xcheck_{\bar{N}^\alpha(n,t)} ) - f(\xcheck_{n}) - \ssum{k}{n}{\bar{N}^\alpha(n,t)-1} \big\{ \EB[ f(\xcheck_{k+1}) - f(\xcheck_{k}) | \FM_{k} ] \big\} \\
            & = f(\xbar_{n}(\overline{t}^\alpha_{n}) ) - f(\xbar_{n}(0)) - \ssum{k}{n}{\bar{N}^\alpha(n,t)-1} \big\{ \EB[ f(\xcheck_{k+1}) - f(\xcheck_{k}) | \FM_{k} ] \big\};
        \end{align*} 
        for step (a), we used Fact~\ref{fact2}~\ref{fact2-3} such that  $\alpha_{k} \AM^x f(\xcheck_{k}) = \int_{\Gamma^\alpha_{n,k}}^{\Gamma^\alpha_{n,k+1}}  \AM^x f( \xbar_{n}( \underline{s}^\alpha_n ) ) \,\d s  $.
		
		Notice that  $f \in C^\infty_c (\RB^{d_x})$.
		For a fixed $t$,
		to control $\RM_{n}^f(t)$ in the $L_1$ sense, we need to control the following $L_1$ distances $\EB \| \xbar_{n}(s) - \xbar_{n}( \underline{s}^\alpha_{n} ) \| $ and  $\EB \| \xbar_{n}(s) - \xbar_{n}( \overline{s}^\alpha_{n} ) \|$ for any $s \in [0,\overline{t}_{n}^\alpha]$. 
		From the definition of $\xbar_{n}(\cdot)$ in \eqref{eq:def:xbar}, we have $\xbar_{n}( \underline{s}^\alpha_{n} ) = \xcheck_{N^\alpha(n,s)}$. Consequently, 
		\begin{equation*}
			\xbar_{n}(s) - \xbar_{n}( \underline{s}^\alpha_{n}  )
			= (s - \underline{s}^\alpha_{n} ) \left(- B_1 \xcheck_{N^\alpha(n,s) } + \frac{R^x_{N^\alpha (n,s)} }{ \alpha_{N^\alpha (n,s)} } \right) -  \frac{s - \underline{s}^\alpha_{n}}{ \sqrt{ \alpha_{N^\alpha(n,s)} } }  
			\tilde{\xi}_{N^\alpha(n,s)}. 
		\end{equation*}
		It follows that for each $s \in [0, \overline{t}_{n}^\alpha] $, the following results hold
		\begin{align*}
			\EB \| \xbar_{n}(s) - \xbar_{n}( \underline{s}^\alpha_{n} ) \| 
			& \le (s - \underline{t}^\alpha_{n} ) \EB \left\| - B_1 \xcheck_{N^\alpha(n,s) } + \frac{R^x_{N^\alpha (n,s)} }{ \alpha_{N^\alpha (n,s)} } \right\| +  \frac{s - \underline{s}^\alpha_{n}}{ \sqrt{ \alpha_{N^\alpha(n,s)} } }  
			\EB \| \tilde{\xi}_{N^\alpha(n,s)} \| \\
			& \overset{\eqref{eq:lem:tight-twotime:l2bound-x}}{\lesssim} (s - \underline{s}^\alpha_{n} ) + \sqrt{s - \underline{s}^\alpha_{n} } 
			\lesssim \sqrt{ \alpha_{N^\alpha(n,s)} }, \\
			\EB \| \xbar_{n}(s) - \xbar_{n}( \overline{s}^\alpha_{n} ) \| 
			& \le \EB \| \xbar_{n}(s) - \xbar_{n}( \underline{s}^\alpha_{n} ) \| + \EB \| \xbar_{n}( \overline{s}^\alpha_{n} ) - \xbar_{n}( \underline{s}^\alpha_{n} ) \| \\
			& \le \EB \| \xbar_{n}(s) - \xbar_{n}( \underline{s}^\alpha_{n} ) \| + ( \overline{s}^\alpha_{n} - \underline{s}^\alpha_{n} ) \EB \left\| - B_1 \xcheck_{N^\alpha(n,s) } + \frac{R^x_{N^\alpha (n,s)} }{ \alpha_{N^\alpha (n,s)} } \right\| \\
			& \qquad +  \frac{ \overline{s}^\alpha_{n} - \underline{s}^\alpha_{n}}{ \sqrt{ \alpha_{N^\alpha(n,s)} } }  
			\EB \| \tilde{\xi}_{N^\alpha(n,s)} \| \\
			& \lesssim \sqrt{ \alpha_{N^\alpha(n,s)} }.
		\end{align*}
        Since $f\in C_c^\infty(\RB^{d_x})$,  let $K:=\mathrm{supp}(f)$ be its compact support.
Then there exist finite constants
\[
R:=\sup_{x\in K}\|x\|,\ 
L_1:=\sup_{x}\|\nabla f(x)\|,\ 
L_2:=\sup_{x}\|\nabla^2 f(x)\|,\ 
L_3:=\sup_{x\neq y}\frac{\|\nabla^2 f(x)-\nabla^2 f(y)\|}{\|x-y\|}.
\]
In particular, $f$ is $L_1$-Lipschitz, $\nabla f$ is $L_2$-Lipschitz and $\nabla^2 f$ is $L_3$-Lipschitz.
Recall $
\AM^x f(x)=\langle -B_1 x,\nabla f(x)\rangle+\frac{1}{2}\tr\!\big(\nabla^2 f(x)\,\Sigma_\xi\big)$.
It is bounded in the sense that
\[
 |\AM^x f(x)| \le \|B_1\| R L_1 + \frac{1}{2} \| \nabla^2 f(x) \|_F \| \Sigma_\xi \|_F \le \|B_1\| R L_1 + \frac{1}{2} d_x L_2 \| \Sigma_\xi \|.
\]
For the trace term, Lipschitz continuity follows from that of $\nabla^2 f$:
\[
\left|\tr\!\big((\nabla^2 f(x)-\nabla^2 f(y))\,\Sigma_\xi\big)\right|
\le \|\Sigma_\xi \|_F\,\|\nabla^2 f(x)-\nabla^2 f(y)\|_F
\le d L_3 \| \Sigma_\xi\| \|x-y\|.
\]
For the drift term, we have
\begin{align*}
    \big|\langle B_1 x,\nabla f(x)\rangle-\langle B_1 y,\nabla f(y)\rangle\big|
    & \le \|B_1\|\Big(\|x\|\,\|\nabla f(x)-\nabla f(y)\|+\|x-y\|\,\|\nabla f(y)\|\Big) \\
    & \le \|B_1 \| R L_2  \| x-y \| + \|B_1 \| L_1 \|x-y \|.
\end{align*}
Combining the two parts shows that $\AM^x f$ is Lipschitz.
The above results can be summarized as 
\begin{equation*}
    |f(x) - f(y)| \lesssim\| x - y \|,\quad 
    |\AM^x f(x)| \lesssim 1 ,\quad
    |\AM^x f(x) - \AM^x f(y)| \lesssim  \| x - y \|.
\end{equation*}       
        Then we have
		\begin{align}\label{eq:thm:fclt-twotime:res1}
			\begin{split}
				\EB | f(\xbar_{n}(t)) - f(\xbar_{n}(\overline{t}^\alpha_{n}) ) |
				& \lesssim \EB \| \xbar_{n}(t) - \xbar_{n}(\overline{t}^\alpha_{n}) \| \lesssim \sqrt{\alpha_{N^\alpha(n,t)}} = o(1), \\
				\EB \left| \int_t^{\overline{t}^\alpha_{n}} \AM^x f(\xbar_{n}(s)) \,\d s \right|
				& \lesssim \int_t^{\overline{t}^\alpha_{n}} 1 \,\d s 
				= o(1), \\
				\EB \left| \int_0^{\overline{t}^\alpha_{n}} \left( \AM^x f(\xbar_{n} (\underline{s}^\alpha_{n}) ) - \AM^x f(\xbar_{n}(s)) \right) \,\d s \right|
				& \lesssim \int_0^{\overline{t}^\alpha_{n}} \EB \| \xbar_{n}(s) - \xbar_{n} (\underline{s}^\alpha_{n} ) \| \,\d s \\
				&   \lesssim \int_0^{\overline{t}^\alpha_{n}} \sqrt{\alpha_{N^\alpha(n,s)} } ds \lesssim \sqrt{\alpha_{n}} = o(1).
			\end{split}
		\end{align} 
		Moreover, for a fixed $t$, Lemma~\ref{lem:generator-twotime} implies
		\begin{equation}\label{eq:thm:fclt-twotime:res2}
			\EB \left| \ssum{k}{n}{\bar{N}^\alpha(n,t) - 1} R^f_k \right|
			\le \ssum{k}{n}{\bar{N}^\alpha(n,t) - 1} \alpha_{k} \EB \left| \frac{R^f_k}{\alpha_{k}} \right|
			\le \sup_{k' \ge n} \EB \left| \frac{R^f_{k'}}{\alpha_{k'}} \right| \cdot \ssum{k}{n}{\bar{N}^\alpha(n,t) - 1} \alpha_{k} \lesssim o(1).
		\end{equation}
		Plugging \eqref{eq:thm:fclt-twotime:res1} and \eqref{eq:thm:fclt-twotime:res2} into \eqref{eq:them:fclt-twotime:res} yields that for a fixe $t$, $\EB | \RM_{n}^f(t) | \to 0$ as $n \to \infty$.
		\vspace{0.2cm}
		
		For Part~(ii), we similarly define
		\begin{equation*}
			\bar{N}^\beta(n, t) := \min \left\{ m \ge n \colon \Gamma^\beta_{n, m} \ge t \right\} 
		\end{equation*}
		and
		\begin{equation*}
			\MM_{n}^g(t) = \ssum{k}{n}{\bar{N}^\beta(n,t)-1} \big\{ f(\zcheck_{k+1}) - f(\zcheck_{k}) - \EB[ f(\zcheck_{k+1}) - f(\zcheck_{k}) | \FM_{k} ] \big\}.
		\end{equation*}
		The remaining proof is similar to that of Part~(i).
	\end{proof}
	
	\subsection{Proofs in Section~\ref{sec:fclt:intergrate}}
	\label{sec:fclt:proof:intergrate}
	In this subsection, we present the detailed proof for the procedure in Section~\ref{sec:fclt:intergrate}. 
	We first give the formal proof of Theorem~\ref{thm:fclt-twotime}~\ref{thm:fclt-x} based on Proposition~\ref{prop:mart-prob-approx} and the following proposition, which guarantees the geometric ergodicity of the solutions of the SDEs in \eqref{eq:fclt-twotime:x-sde} and \eqref{eq:fclt-twotime:y-sde}.
	
	\begin{prop}[Geoometric ergodicity for SDEs 
    ]\label{prop:ergo}
		Under Assumption~\ref{assump:hurwitz}, the SDE in \eqref{eq:fclt-twotime:x-sde} has a unique invariant distribution $\pi=\NM(0, \Sigma_x)$, where $\Sigma_x$ is the unique solution to \eqref{eq:fclt-twotime:x-var}.
        Moreover, suppose $\ermX(t)$ is a solution, for any compact set $K \subset \RB^{d_x}$ and function $f \in C^\infty_c(\RB^{d_x})$. Then there exist positive numbers $A$ and $b$ such that
		\begin{equation*}
			\sup_{x \in K}  | \EB[ f(\ermX(t)) | X(0) = x ] - \EB_{\ermX \sim \pi^\star} f(\ermX)  | \le A e^{-bt}, \ \forall t \ge 0.
		\end{equation*}
		Similar results also hold for the SDE in \eqref{eq:fclt-twotime:y-sde}.
	\end{prop}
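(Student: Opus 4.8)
The plan is to exploit the explicit linear--Gaussian structure of the Ornstein--Uhlenbeck dynamics and prove exponential mixing by a synchronous coupling. First I would write down the solution of \eqref{eq:fclt-twotime:x-sde} with $\ermX(0) = x$,
\[
	\ermX(t) = e^{-B_1 t} x + \int_0^t e^{-B_1 (t-s)} \Sigma_{\xi}^{1/2}\, d \ermW^{d_x}(s),
\]
so that, conditionally on $\ermX(0)=x$, $\ermX(t) \sim \NM\big(e^{-B_1 t} x,\ \Sigma(t)\big)$ with $\Sigma(t) := \int_0^t e^{-B_1 s} \Sigma_{\xi} e^{-B_1^\top s}\, ds$. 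Since $-B_1$ is Hurwitz by Assumption~\ref{assump:hurwitz}, fix any $b$ with $0 < b < \min_i \mathrm{Re}\, \lambda_i(B_1)$; then there is a constant $C \ge 1$ with $\| e^{-B_1 t} \| \le C e^{-bt}$ for all $t \ge 0$. Hence $e^{-B_1 t} x \to 0$ exponentially and $\Sigma(t)$ converges, exponentially fast, to the unique solution $\Sigma_x$ of the Lyapunov equation $B_1 \Sigma_x + \Sigma_x B_1^\top = \Sigma_{\xi}$ in \eqref{eq:fclt-twotime:x-var} (positive definite since $\Sigma_{\xi} \succ 0$). A direct differentiation shows $\pi^\star := \NM(0, \Sigma_x)$ is invariant: if $\ermX(0)\sim\pi^\star$ is independent of the driving noise, then $\ermX(t) \sim \NM(0,\, e^{-B_1 t}\Sigma_x e^{-B_1^\top t} + \Sigma(t))$, and the Lyapunov equation is precisely the condition that this covariance equals $\Sigma_x$ for all $t$.

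For uniqueness, let $\mu$ be any invariant probability measure and take $\ermX(0) \sim \mu$. Because the stochastic integral over $[0,t]$ is independent of the $\ermX(0)$-measurable initial condition, the representation above gives the convolution identity $\mu = \mathrm{Law}\big(e^{-B_1 t} \ermX(0)\big) * \NM(0, \Sigma(t))$ for every $t$. Letting $t \to \infty$ and using that $e^{-B_1 t} \ermX(0) \to 0$ almost surely while $\NM(0, \Sigma(t)) \to \NM(0, \Sigma_x)$ weakly, continuity of convolution under weak convergence yields $\mu = \delta_0 * \NM(0, \Sigma_x) = \pi^\star$.

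The geometric ergodicity estimate then follows from a one-line coupling. Given $x$, let $\ermX(\cdot)$ solve \eqref{eq:fclt-twotime:x-sde} from $x$, and let $\ermX'(\cdot)$ solve the same SDE \emph{driven by the same Brownian motion} but started from $\ermX'(0) \sim \pi^\star$ independent of that Brownian motion, so $\ermX'(t) \sim \pi^\star$ for every $t$. Subtracting the two integral representations, the stochastic integrals cancel and $\ermX(t) - \ermX'(t) = e^{-B_1 t}\big(x - \ermX'(0)\big)$. Since $f \in C^2_c(\RB^{d_x})$ is globally Lipschitz, say with constant $L_f$,
\begin{align*}
	\big| \EB[ f(\ermX(t)) \mid \ermX(0) = x ] - \EB_{\ermX \sim \pi^\star} f(\ermX) \big|
	& = \big| \EB[ f(\ermX(t)) - f(\ermX'(t)) ] \big| \\
	& \le L_f\, C e^{-bt}\, \big( \| x \| + \EB_{\ermX \sim \pi^\star} \| \ermX \| \big).
\end{align*}
Taking the supremum over $x$ in a compact set $K$ gives the claim with the same $b$ and $A := L_f\, C\, \big( \sup_{x \in K} \| x \| + \EB_{\pi^\star} \| \cdot \| \big) < \infty$ (finite since $\pi^\star$ is Gaussian). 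The argument for \eqref{eq:fclt-twotime:y-sde} is word for word the same after replacing $B_1$ by $B_3 - \frac{\invdiffslow}{2}\mI$, which is Hurwitz by Assumption~\ref{assump:hurwitz}, and $\Sigma_{\xi}$ by $\widetilde{\Sigma}_{\psi}$.

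I expect the only genuinely delicate point to be the matrix-exponential bound $\| e^{-B_1 t} \| \le C e^{-bt}$: when $B_1$ is not symmetric the constant $C$ cannot be taken equal to $1$ in general, so one invokes either the Jordan-form estimate or, more cheaply, the fact that Assumption~\ref{assump:hurwitz} makes $\frac{B_1 + B_1^\top}{2}$ positive definite, which yields $\frac{d}{dt} \| e^{-B_1 t} v \|^2 \le - \lambda_{\min}(B_1 + B_1^\top)\| e^{-B_1 t} v \|^2$ and hence the bound with $C = 1$ and $b = \frac{1}{2}\lambda_{\min}(B_1 + B_1^\top)$; everything else is routine. As an alternative to this self-contained argument, one may simply verify the hypotheses of \cite[Theorem~4.4]{mattingly2002ergodicity} --- a Lyapunov drift inequality for $V(x) = 1 + \| x \|^2$ (which holds because $\frac{B_1 + B_1^\top}{2} \succ 0$) together with nondegeneracy of $\Sigma_{\xi}$ --- and quote the conclusion directly.
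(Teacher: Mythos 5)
Your proof is correct, and it takes a genuinely different route from the paper: the paper does not prove Proposition~\ref{prop:ergo} at all but simply quotes \cite[Theorem~4.4]{mattingly2002ergodicity}, a general ergodicity theorem whose hypotheses (a Lyapunov drift condition plus nondegeneracy of the diffusion) one would have to verify. You instead exploit the explicit linear--Gaussian structure of the Ornstein--Uhlenbeck process: the variation-of-constants formula identifies the transition kernel as $\NM(e^{-B_1 t}x, \Sigma(t))$, the Lyapunov equation is exactly the stationarity condition for $\NM(0,\Sigma_x)$, uniqueness follows from the convolution identity as $t\to\infty$, and the synchronous coupling makes the stochastic integrals cancel so that geometric ergodicity reduces to $\|e^{-B_1 t}\|\le Ce^{-bt}$. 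This is more elementary and self-contained, yields explicit constants $A$ and $b$, and as a byproduct identifies $\pi^\star = \NM(0,\Sigma_x)$ with $\Sigma_x$ solving the (correctly transposed) Lyapunov equation $B_1\Sigma_x + \Sigma_x B_1^\top = \Sigma_\xi$ --- note the paper's display \eqref{eq:fclt-twotime:x-var} omits the transpose, which is presumably a typo that your derivation silently corrects. One caveat on your closing remark: being Hurwitz does \emph{not} in general imply that $\frac{B_1+B_1^\top}{2}$ is positive definite (the paper's claimed equivalence after Assumption~\ref{assump:hurwitz} is itself inaccurate), so the shortcut with $C=1$ is not available from Assumption~\ref{assump:hurwitz} alone; your primary route via the Jordan-form (or resolvent) estimate $\|e^{-B_1 t}\|\le Ce^{-bt}$ is the one that actually works, and everything downstream of it is sound.
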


\begin{proof}[ Proof of Proposition~\ref{prop:ergo} ]
Without loss of generality, we focus on the following SDE
\begin{equation}\label{eq:ergo-sde}
    d \ermX(t) = -A \ermX(t) dt + B d \ermW(t),
\end{equation}
where $A, B \in \RB^{d \times d}$, $-A$ is a Hurwitz matrix, 
and $\ermW(t)$ is an $d$-dimensional standard Brownian motion.
For the SDE in \eqref{eq:fclt-twotime:x-sde}, $A=B_1$ and $B = \Sigma_\xi^{1/2}$, while for the SDE in \eqref{eq:fclt-twotime:y-sde}, $A = B_3 - \frac{\invdiffslow I}{2}$ and $B = \widetilde{\Sigma}_\psi^{1/2}$.
By Proposition~\ref{prop:hurwitz-expbound}, we have
\begin{equation}\label{eq:M_b}
\|e^{-At}\| \le M e^{-b t},\forall t\ge0,\ \text{with} \ M := \sqrt{\frac{\lambda_{\max}(P)}{\lambda_{\min}(P)} }  b := \frac{1}{2\lambda_{\max}(P)},
\end{equation}
where $P := \int_0^\infty e^{-A^\top t} e^{-At} dt$.

\textbf{ Step 1: Uniqueness of invariant distribution. }

Let
\[\Sigma_t:=\int_0^t e^{-As}BB^{\top}e^{-A^{\top}s}\,ds,\qquad \Sigma:=\int_0^{\infty} e^{-As}BB^{\top}e^{-A^{\top}s}\,ds.\]
By \eqref{eq:M_b}, $\Sigma$ is well-defined.
The solution $X(t)$ of \eqref{eq:ergo-sde} has the explicit representation
\[X(t) = e^{-At} X(0) + \int_0^t e^{-A(t-s)}B\,dW(s),\]
so conditionally on $X(0)=x$, $X(t)\sim\mathcal N(e^{-At}x,\,\Sigma_t)$.
\\
We first prove that $\pi:=\mathcal N(0,\Sigma)$ is an invariant distribution.
Let $\mu_t^x = \mathcal N(e^{-At}x,\,\Sigma_t)$ denote the distribution of $X(t)$ when $X_0=x$. To check the invariance of $\pi$, it suffices to check for its characteristic function
\(\widehat{\pi}(\theta)=\exp\big(-\tfrac12\theta^{\top}\Sigma\theta\big)\).  
From the definition of $\Sigma_t$ and $\Sigma$, we have
$$\Sigma-\Sigma_t=\int_t^{\infty} e^{-As}BB^{\top}e^{-A^{\top}s}\,ds = e^{-At}\Big(\int_0^{\infty} e^{-As}BB^{\top}e^{-A^{\top}s}\,ds\Big)e^{-A^{\top}t}=e^{-At}\Sigma e^{-A^{\top}t}.$$
Then the characteristic function of $X(t)$ under initial distribution $\pi$ equals
\begin{align*}
\mathbb E_{X(0)\sim\pi}[e^{i\theta^{\top}X(t)}] 
&= \mathbb E_{X(0)\sim\pi}\big[ e^{i\theta^{\top}e^{-At}X(0)} \big]\cdot e^{-\tfrac12\theta^{\top}\Sigma_t\theta} 
= \widehat{\pi}(e^{-A^{\top}t}\theta)\, e^{-\tfrac12\theta^{\top}\Sigma_t\theta} \\
& = \exp\Big(-\tfrac12\theta^{\top}(e^{-At}\Sigma e^{-A^{\top}t}+\Sigma_t)\theta\Big) =  \exp\Big(-\tfrac12\theta^{\top}\Sigma\theta\Big) = \widehat{\pi}(\theta)
\end{align*}
Thus the characteristic function of the distribution of $X_t$ under initial law $\pi$ equals $\widehat{\pi}(\theta)$ for every $t$, showing that $\pi$ is the invariant distribution. 

Next, we verify the uniqueness.
Let $\mu$ be an arbitrary invariant distribution of \eqref{eq:ergo-sde}. Then for every bounded continuous test function $f$ and $t>0$,
\[\int \mathbb E[f(X_t)\mid X_0=x]\,\mu(dx) = \int f\,d\mu.\]
Then the characteristic function $\widehat\mu$ of $\mu$ satisfies that
for all $\theta\in\mathbb R^d$ and all $t>0$,
\begin{equation*}
\widehat{\mu}(\theta) = 
\mathbb E_{X(0)\sim\mu}[e^{i\theta^{\top}X(t)}] 
= \mathbb E_{X(0)\sim\mu}\big[ e^{i\theta^{\top}e^{-At}X(0)} \big]\cdot e^{-\tfrac12\theta^{\top}\Sigma_t\theta} 
= \widehat{\mu}(e^{-A^{\top}t}\theta)\, e^{-\tfrac12\theta^{\top}\Sigma_t\theta}
\end{equation*}
Fix $\theta\in\mathbb R^d$. Let $t\to\infty$. The exponential bound in \eqref{eq:M_b} implies $e^{-A^{\top}t}\theta\to0$ and $\Sigma_t\to\Sigma$. Moreover, the characteristic function $\widehat{\mu}$ is continuous at $0$ with $\widehat{\mu}(0)=1$, leading to 
\(\widehat{\mu}(\theta) = 1\cdot e^{-\tfrac12\theta^{\top}\Sigma\theta}.\)
Because this holds for every $\theta$, $\widehat{\mu}$ coincides with the characteristic function of $\mathcal N(0,\Sigma)$, hence $\mu=\mathcal N(0,\Sigma)=\pi$. This proves uniqueness.
Moreover, $\Sigma$ is the unique solution to the Lyapunov equation $A X + X A^\top = BB^\top$ \cite[Theorem~7.5]{antsaklis2005linear}.

\textbf{ Step 2: Exponential ergodicity. } 

For convenience, we denote
\[R:=\sup_{x\in K}\|x\|,\quad L_1:=\sup_{y}|\nabla f(y)|,\quad L_2:=\sup_{y}\|\nabla^2 f(y)\|,\]
Then we have
\begin{align}
\|\Sigma-\Sigma_t\| 
&= \Big\|\int_t^{\infty} e^{-As}BB^{\top}e^{-A^{\top}s}\,ds\Big\|_2 
\le \|BB^{\top}\| \int_t^{\infty}\|e^{-As}\|^2\,ds \nonumber \\
& \le \|BB^{\top}\| M^2 \int_t^{\infty} e^{-2b s}\,ds 
= C_{\Sigma} e^{-2bt},\label{eq:sigma_est}
\end{align}
where $C_{\Sigma}:=\frac{\|BB^{\top}\|_2 M^2}{2b}$ and $b$ is defined in \eqref{eq:M_b}.
Then we have the following decomposition 
\begin{align}
& \quad \mathbb E[f(X(t))\mid X(0)=x] - \mathbb E_{X\sim\pi}f(X) = \mathbb E\big[f(e^{-At}x + Y_t)\big] - \mathbb E\big[f(Y_{\infty})\big] \nonumber \\
&= \big(\mathbb E[f(e^{-At}x+Y_t)] - \mathbb E[f(Y_t)]\big) + \big(\mathbb E[f(Y_t)] - \mathbb E[f(Y_{\infty})]\big), \label{eq:ergo-decom}
\end{align}
where $Y_t\sim\mathcal N(0,\Sigma_t)$ and $Y_{\infty}\sim\mathcal N(0,\Sigma)$.

To control the first term, we apply a second-order Taylor expansion around $y$.
For any fixed $y$ and vector $m:=e^{-At}x$, there exists a point $\tilde y$ on the line segment between $y$ and $y+m$ such that
\[f(y+m)-f(y) = \nabla f(y)\cdot m + \tfrac12 m^{\top}\nabla^2 f(\tilde y)m.
\]
Take expectation w.r.t. $Y_t$ and use uniform bounds $L_1,L_2$. For $x\in K$ with $\|x\|\le R$,
\begin{align}
\big|\mathbb E[f(e^{-At}x+Y_t)] - \mathbb E[f(Y_t)]\big| &\le L_1 \|e^{-At}\| \|x\| + \tfrac12 L_2 \|e^{-At}\|^2 \|x\|^2 \nonumber \\
&\le L_1 R M e^{-b t} + \tfrac12 L_2 R^2 M^2 e^{-2b t}. \label{eq:ergo-decom-1}
\end{align}
\indent
For the second term, we use Gaussian interpolation: for $s\in[0,1]$ set $\Sigma'_s:=\Sigma + s(\Sigma_t-\Sigma)$ and let $Y'_s\sim\mathcal N(0,\Sigma'_s)$. The map $s\mapsto \mathbb E[f(Y'_s)]$ is differentiable and \citep[Lemma~7.2.7]{vershynin2018high}
\[\frac{d}{ds}\mathbb E[f(Y'_s)] = \tfrac12\mathbb E\big[\mathrm{tr}((\Sigma_t-\Sigma)\nabla^2 f(Y'_s))\big].\]
Integrating $s$ from $0$ to $1$ gives
\begin{equation}
\big|\mathbb E[f(Y_t)] - \mathbb E[f(Y_{\infty})]\big| 
\le \tfrac12 \| \Sigma_t - \Sigma \|_F \EB \| \nabla^2 f(Y'_s) \|_F
\le \tfrac12 dL_2 \|\Sigma_t-\Sigma\|_2 
\overset{\eqref{eq:sigma_est}}{\le} \tfrac12 dL_2C_{\Sigma}  e^{-2bt}.\label{eq:ergo-decom-2}
\end{equation}

Substituting \eqref{eq:ergo-decom-1} and \eqref{eq:ergo-decom-2} into \eqref{eq:ergo-decom} yields, for all $x\in K$,
\begin{equation*}
\sup_{x\in K}\big|\mathbb E[f(X_t)\mid X_0=x] - \mathbb E_{\pi}f\big| 
\le L_1 R M e^{-b t} + \Big(\tfrac12 L_2 R^2 M^2 + \tfrac12 dL_2 C_{\Sigma}\Big) e^{-2b t}
\le Ae^{-b t}
\end{equation*}
where
\begin{equation}\label{eq:A_constant}
A := L_1 R M + \tfrac12 L_2 R^2 M^2 + \tfrac12 d L_2 C_{\Sigma},
\end{equation}
This proves the theorem with explicit constants $b,A$ given in \eqref{eq:M_b} and \eqref{eq:A_constant}.
\end{proof}

	\begin{proof}[Proof of Theorem~\ref{thm:fclt-twotime}~\ref{thm:fclt-x}]
		The proof is divided into three steps.
		In the first step, we apply Proposition~\ref{prop:mart-prob-approx} to establish the convergence of convergent subsequences.
		In the second step, we verify Condition~\ref{prop:final:condition:initial} in Proposition~\ref{prop:mart-prob-approx}.
		Finally, we apply Proposition~\ref{prop:mart-prob-approx} to establish the convergence of the whole sequence.
		\vspace{0.2cm}
		
		\textbf{Step~1: Establishing subsequence convergence}.
		For the conditions in Proposition~\ref{prop:mart-prob-approx},
		Condition~\ref{prop:final:condition:unique} has been discussed in Section~2.2.
		Lemma~\ref{lem:tight-twotime} ensures that Condition~\ref{prop:final:condition:tight} holds.
		To verify Condition~\ref{prop:final:condition:mart-prob}, we apply Lemma~\ref{lem:mart-prob-twotime}, which shows that
		for any $f \in C^\infty_c (\RB^{d_x})$ and $t \ge 0$, one has that
		\begin{equation*}
			f(\xbar_{n}(t)) - f(\xbar_{n}(0)) - \int_0^{t} \AM^x f(\xbar_{n}(s)) ds
			= \MM_{n}^f(t) + \RM_{n}^f(t), 
		\end{equation*}
		where for each $n$, $\MM_{n}^f(t)$ is a martingale w.r.t.\ $(\FM^\xbar_{n,t})_{t \ge 0}$ defined in \eqref{eq:filtration-main} and for each $t$, $\EB | \RM_{n}^f(t) | \to 0$ as $n \to \infty$. 
		Then for any $k \ge 0$, $0 \le t_1 < t_2 < \dots < t_k \le t \le t+s $, $f \in C^\infty_c (\RB^{d_x})$ and $h_1, h_2, \dots, h_k \in C_c (\RB^{d_x})$
		\begin{align*}
			& \quad \ \Bigg|\, \EB  \bigg[ \Big( f(\xbar_{n}(t+s)) - f(\xbar_{n}(t)) - \int_t^{t+s} \AM^x f(\xbar_{n}(r)) dr \Big) \prod_{i=1}^k h_i(\xbar_{n}(t_i)) \bigg]\, \Bigg| \\
			& = \Bigg|\, \EB \bigg[ \big(\MM_{n}^f(t+s) - \MM_{n}^f(t) + \RM_{n}^f(t+s) - \RM_{n}^f(t) \big) \prod_{i=1}^k h_i(\xbar_{n}(t_i)) \bigg] \, \Bigg| \\
			& = \Bigg|\, \EB \bigg\{ \prod_{i=1}^k h_i(\xbar_{n}(t_i))\, \EB \big[ \MM_{n}^f(t+s) - \MM_{n}^f(t) + \RM_{n}^f(t+s) - \RM_{n}^f(t) \,\big|\, \FM_{n,t}^\xbar \big]  \bigg\} \, \Bigg| \\
			& \le  \EB \bigg\{ \prod_{i=1}^k | h_i(\xbar_{n}(t_i)) | \, \EB \big[ | \RM_{n}^f(t+s) | + | \RM_{n}^f(t) | \,\big|\, \FM_{n,t}^\xbar \big]  \bigg\} \to 0
		\end{align*}
		as $n \to \infty$.
		Now we assume $ \{ \xbar_{n_k} (\cdot) \}_{k=1}^\infty $ is a weakly convergent subsequence of $ \{ \xbar_{n} (\cdot) \}_{n=1}^\infty$.
		Clearly, this subsequence satisfies Condition~\ref{prop:final:condition:initial} in Proposition~\ref{prop:mart-prob-approx}.
		Suppose $\xbar_{n_k} (0) \weakconverge \tilde{\pi}$. Then Proposition~\ref{prop:mart-prob-approx} ensures that $\xbar_{n_k}(\cdot) \weakconverge \ermX^{\tilde{\pi}}(\cdot)$ as $k \to \infty$, where $\ermX^{\tilde{\pi}}(\cdot)$ is the solution of the SDE in \eqref{eq:fclt-twotime:y-sde} with the initial distribution $\tilde\pi$.
		\vspace{0.2cm}
		
		\textbf{Step~2: Verifying initial conditions}. 
		To establish the convergence of $\xbar_{n}(0)$, it suffices to prove that all the convergent subsequences have the same limit.
		In fact, this limit is $\pi^\star$, the unique invariant distribution of \eqref{eq:fclt-twotime:x-sde} . The uniqueness is guaranteed by Proposition~\ref{prop:ergo}.
		In the next, we prove that if the subsequence $\xbar_{n_k}(0) \weakconverge \tilde{\pi}$, one must have $\tilde{\pi} = \pi^\star$. 
		The proof is divided to the following two parts: (i) for any fixed $T > 0$, we show that there exists a solution $\ermX(t)$ to \eqref{eq:fclt-twotime:x-sde} such that $\tilde{\pi}$ has the same distribution as $\ermX(T)$;
		(ii) when $T$ is sufficiently large, the geometric ergodicity implies that the distribution of $\ermX(T)$ can be arbitrarily close to $\pi^\star$.
		
		\textbf{Part (i)}.
		We first introduce some additional notation.
		Recall that $\Gamma^\alpha_{n,n'} = \ssum{k}{n}{n'-1} \alpha_k$.
		For $n \in \NB$ and $t \ge 0$, we define 
		\begin{equation*}
			M^\alpha (n,t) := \min\left\{ m \ge 0 \colon \Gamma^\alpha_{m,n} \le t \right\}\ \text{ and } \
			\tilde{t}^\alpha_{n} := \Gamma^\alpha_{M^\alpha(n,t), n}.
		\end{equation*}
		Since $\alpha_{n} \to 0$, we have $\tilde{t}^\alpha_{n} \uparrow t$.
		
		For a fixed $T>0$, we consider the subsequence $\{ \xbar_{M^\alpha(n_k, T)}(\cdot) \}_{k=0}^\infty$, which satisfies 
		\begin{equation*}
			\xbar_{M^\alpha(n_k, T)}(0) = \xcheck_{M^\alpha(n_k, T)} \ \text{ and }\  \xbar_{M^\alpha(n_k, T)}( \widetilde{T}^\alpha_{n_k})= \xcheck_{n_k},
		\end{equation*}
		where $\widetilde{T}^\alpha_{n} := \Gamma^\alpha_{M^\alpha(n,T), n}$
		By Lemma~\ref{lem:tight-twotime}, $\{ \xbar_{n}(t) \}_{n=1}^\infty$ is tight. Then Prokhorov's theorem (i.e., Proposition~\ref{prop:tight}~\ref{prop:tight:prok}) implies that the subsequence $\{ \xbar_{M^\alpha(n_k, T)}(\cdot) \}_{k=0}^\infty$ further has a weakly convergent subsequence.
		Without loss of generality, we assume the subsequence $\{ \xbar_{M^\alpha(n_k, T)}(\cdot) \}_{k=0}^\infty$ itself weakly converges.
		Then its initial distribution sequence also converges weakly, i.e., $\xcheck_{M^\alpha(n_k, T)} \weakconverge \mu_T$ for some distribution $\mu_T$.
		Then by Proposition~\ref{prop:mart-prob-approx}, $\xbar_{M^\alpha(n_k, T)}(\cdot) \weakconverge \ermX^{\mu_T}(\cdot) $, where $\ermX^{\mu_T}(\cdot)$ is a solution to \eqref{eq:fclt-twotime:x-sde} with initial distribution $\mu_T$.
		Consequently, we also have $\xbar_{M^\alpha(n_k, T)}(T) \weakconverge \ermX^{\mu_T}(T)$.
		Moreover, recall that we have verified the second condition of Proposition~\ref{prop:tight-suff-slow} in the proof of Lemma~\ref{lem:tight-twotime}. Then $|T - \widetilde{T}^\alpha_{n_k}| \to 0$ implies that $\left\| \xbar_{M^\alpha(n_k, T)}(\widetilde{T}^\alpha_{n_k}) - \xbar_{M^\alpha(n_k, T)}(T) \right\| \overset{p}{\rightarrow} 0$ as $k \to \infty$. 
		By Slutsky's theorem,
		$\xbar_{M^\alpha(n_k, T)}(\widetilde{T}^\alpha_{n_k}) \weakconverge \ermX^{\mu_T}(T)$, i.e., $\xcheck_{n_k} \weakconverge \ermX^{\mu_T}(T)$. 
		Since $\xcheck_{n_k} \weakconverge \tilde\pi$, this implies that for any $T > 0$, $\ermX^{\mu_T}(T) \sim  \tilde{\pi}$.
		\vspace{0.1cm}
		
		\textbf{Part (ii)}.
		Next, we apply $\xcheck_{M^\alpha(n_k, T)} \weakconverge \mu_T$ and $\ermX^{\mu_T}(T) \sim  \tilde{\pi}$ to prove $\tilde\pi = \pi^\star $.
		Since $\{ \xcheck_{n} \}_{n=1}^\infty$ is tight,  for any $\eps > 0$, there exists a compact set $K_\eps \subset \RB^{d_x}$ only depending on $\eps$ such that $\sup_{n \ge 0} \PB(\xcheck_{n} \in K_\eps) \ge 1 - \eps$.
		It follows that $\mu_T(K_\eps) \ge 1-\eps$ for any $T > 0$.
		Moreover,
		Proposition~\ref{prop:ergo} implies that for a fixed $g \in C^\infty_c  (\RB^{d_x})$, we can find $T_\eps$ such that 
		\begin{equation}\label{eq:erg_diff}
			\sup_{\xx \in K_\eps} \left| \PM_{T_\eps}\, g(\xx)- \EB_{\ermX \sim \pi^\star}\, g(\ermX) \right| \le \eps,
		\end{equation}
		where $\PM_t$ represents a semigroup induced by the SDE in \eqref{eq:fclt-twotime:x-sde}, that is, 
		\begin{equation}\label{eq:semigroup}
			\PM_t f(x) = \EB [ f(\ermX(t)) \,|\, \ermX(0) = x ],
		\end{equation}
		where $\ermX(\cdot)$ is the solution to the SDE in \eqref{eq:fclt-twotime:x-sde}. 
		Setting $T = T_\eps$, we have $\ermX^{\mu_{T_\eps}}(T_\eps) \sim  \tilde{\pi}$.
		Since $\ermX^{\mu_{T_\eps}}(\cdot)$ is a solution to \eqref{eq:fclt-twotime:x-sde} with initial distribution $\mu_{T_\eps}$,  we have
		\begin{equation*}
			\EB_{\ermX \sim \tilde\pi}\, g(\ermX)
			= \EB\, [ g (\ermX^{\mu_{T_\eps}}(T_\eps) ) \,|\, \ermX(0) \sim \mu_{T_\eps} ]
			\overset{\eqref{eq:semigroup}}{=} \int_{\RB^{d_x}} \PM_{T_\eps}\, g(\xx) d \mu_{T_\eps}(\xx)
		\end{equation*}
		It follows that
		\begin{equation*}
			\begin{aligned}
				\left| \EB_{\ermX \sim \tilde\pi}\, g(\ermX) - \EB_{\ermX \sim \pi^\star}\, g(\ermX) \right|
				& \le \int_{\RB^{d_x}} \left| \PM_{T_\eps} g(\xx) -\EB_{\ermX \sim \pi^\star} g(\ermX) \right| d \mu_{T_\eps}(\xx)\\ 
				& = \int_{K_\eps} \left| \PM_{T_\eps} g(\xx) - \EB_{\ermX \sim \pi^\star} g(\ermX) \right| d \mu_{T_\eps}(\xx) \\
				& \qquad + \int_{K_\eps^c} \left| \PM_{T_\eps} g(\xx) - \EB_{\ermX \sim \pi^\star} g(\ermX) \right| d \mu_{T_\eps}(\xx) \\
				&\le \int_{K_\eps} \left| \PM_{T_\eps} g(\xx) - \EB_{\ermX \sim \pi^\star} g(\ermX) \right| d \mu_{T_\eps}(\xx) + 2 \|g\|_\infty \mu_{T_\eps} (K_\eps^c) \\ 
				& \overset{(a)}{\le} \eps + 2\|g\|_\infty \eps,
			\end{aligned}
		\end{equation*}
		where (a) is due to $\mu_{T_\eps}(K_\eps) \ge 1-\eps$ and \eqref{eq:erg_diff}.
		Letting $\eps \to \infty$, we obtain $\EB_{\ermX \sim \tilde\pi}\, g(\ermX) = \EB_{\ermX \sim \pi^\star}\, g(\ermX) $.
		For any $g \in C^\infty_c (\RB^{d_x})$, we can repeat the above procedure, implying $\tilde\pi = \pi^\star$.
		\vspace{0.2cm}
		
		\textbf{Step~3: Concluding whole sequence convergence}. This step is straightforward.
		In Steps~1 and 2, we have verified the all the conditions in Proposition~\ref{prop:mart-prob-approx}.
		Moreover, $\xbar_{n}(0)  \weakconverge \pi^\star$, the invariant distribution of \eqref{eq:fclt-twotime:x-sde}.
		Consequently, $\xbar_{n}(\cdot)$ converges weakly to the solution of \eqref{eq:fclt-twotime:x-sde} with the initial distribution $\pi^\star$, implying that this solution is a stationary process.
		Since $B_1$ is a Hurwitz matrix, the invariant distribution $\pi^\star=\NM(0, \Sigma_{x})$ with $\Sigma_x$ satisfying the equation in \eqref{eq:fclt-twotime:x-var} \cite[Section~6.5]{risken1996fokker}.
	\end{proof}
	
	The proof of Corollary~\ref{cor:fclt-z} is similar to that of Theorem~\ref{thm:fclt-twotime}~\ref{thm:fclt-x}.
	Next, we give the proof of Theorem~\ref{thm:fclt-twotime}~\ref{thm:fclt-y} based on Corollary~\ref{cor:fclt-z}.
	
	\begin{proof}[Proof of Theorem~\ref{thm:fclt-twotime}~\ref{thm:fclt-y}]
		Under the tightness in Lemma~\ref{lem:tight-twotime}, 
		Proposition~\ref{prop:tight} shows that it suffices to examine the finite-dimensional distributions of $\ybar_{n}(\cdot)$.
		
		Recall that $\zcheck_{n}$ is defined as $\zcheck_{n} = \ycheck_{n} - \sqrt{ \kappa_{n-1} } B_2 B_1^{-1} \xcheck_{n}$ with $\kappa_{n-1} = \beta_{n-1} / \alpha_{n-1} = o(1)$; Assumption~\ref{assump:mart-decouple-rate} implies $\EB \| \xcheck_{n} \|^2 = \OM(1)$.
		It follows that $\ycheck_{n} - \zcheck_{n} \overset{p}{\to} 0$.
		Meanwhile, from the definition in \eqref{eq:def:ybar} and \eqref{eq:def:zbar}, for each fixed $t$, $\ybar_{n}(t) - \zbar_{n}(t)$ is the linear combination of $\ycheck_{N^\beta(n,t)} -  \zcheck_{N^\beta(n,t)}$ and $\ycheck_{N^\beta(n,t)+1} -  \zcheck_{N^\beta(n,t)+1}$, both converging to the zero vector in probability.
		Consequently, $\ybar_{n}(t) - \zbar_{n}(t) \overset{p}{\to} 0$. 
		Similarly, we could prove that for any positive integer $k$ and $t_1, \ldots, t_k$,
		$(\ybar_{n}(t_1) - \zbar_{n}(t_1), \dots, \ybar_{n}(t_k) - \zbar_{n} (t_k) ) \overset{p}{\to} 0$.
		Applying Slutsky's theorem and Theorem~\ref{thm:fclt-twotime}, we obtain that the finite-dimensional distributions of $\ybar_{n}(\cdot)$ converge weakly to the finite-dimensional distributions of the stationary distribution of \eqref{eq:fclt-twotime:y-sde}.
		Consequently, $\ybar_{n}(\cdot)$ converges weakly to the stationary distribution of \eqref{eq:fclt-twotime:y-sde}.
		
		Note that Assumption~\ref{assump:hurwitz} implies that $-\left(B_3 - \frac{\invdiffslow}{2} \mI \right)$ is a Hurwitz matrix.
		The property of the SDE in \eqref{eq:fclt-twotime:y-sde} implies that the invariant distribution is $\NM(0, \Sigma_{y})$ with $\Sigma_y$ satisfying the equation in \eqref{eq:fclt-twotime:y-var} in \cite[Section~6.5]{risken1996fokker}.
	\end{proof}

	\begin{proof}[Proof of Corollary~\ref{cor:clt}]
		From the definition of $\xcheck_{n}$, we have $\alpha_{n-1}^{-1/2} \xhat_{n} \weakconverge \NM(0, \Sigma_x)$.
		Assumption~\ref{assump:stepsize-twotime} ensures that $\alpha_{n-1} / \alpha_{n} \to 1$.
		Then Slutsky's theorem implies $\alpha_{n}^{-1/2} \xhat_{n} \weakconverge \NM(0, \Sigma_x)$.
		Similarly, we could prove $\beta_{n}^{-1/2} \yhat_{n} \weakconverge \NM(0, \Sigma_y)$.

		Moreover, since $\xx^\star = H(\yy^\star)$,  we have $\alpha_{n}^{-1/2} (\xx_{n} - \xx^\star) = \alpha_{n}^{-1/2} \xhat_{n} + \alpha_{n}^{-1/2} (H(\yy_n) - H(\yy^\star))$.
		Using the delta method, we have $\beta_{n}^{-1/2} (H(\yy_{n}) - H(\yy^\star)) \weakconverge \NM(0, H^\star \Sigma_y (H^\star)^\top ) $ with $H^\star = \nabla H(\yy^\star) \in \RB^{d_x \times d_y}$.
		Since $\beta_{n} / \alpha_{n} \to 0$,  applying Slutsky's theorem yields $\alpha_{n}^{-1/2} (H(\yy_n) - H(\yy^\star)) \weakconverge 0 $. Consequently, $\alpha_{n}^{-1/2} (\xx_{n} - \xx^\star) \weakconverge \NM(0, \Sigma_{x})$.
	\end{proof}
\end{appendix}

\bibliography{twotime}
\bibliographystyle{plainnat}

\end{document}